\numberwithin{equation}{section}
\theoremstyle{plain}
\newtheorem{theo}{Theorem}[section]
\newtheorem{prop}[theo]{Proposition}
\newtheorem{coro}[theo]{Corollary} 
\newtheorem{lemm}[theo]{Lemma}
\theoremstyle{definition}
\newtheorem{defi}[theo]{Definition}
\newtheorem{rema}[theo]{Remark}
\newtheorem{theo-defi}[theo]{Theorem-Definition}
\newtheorem{prop-defi}[theo]{Proposition-Definition}
\newtheorem{rema-defi}[theo]{Remark-Definition}
\newtheorem{exem-defi} [theo]{Example-Definiton}
\newtheorem{exem}[theo]{Example}
\newtheorem{prob}[theo]{Problem}
\def \bet{\beta}
\def \bul{\bullet}
\def \col{\colon}
\def \del{\delta}
\def \Gam{\Gamma}
\def \kap{\kappa}
\def \Lo{\Longrightarrow}
\def \lo{\longrightarrow}
\def \lom{\longmapsto}
\def \mab{\mathbb}
\def \Om{\Omega}
\def \om{\omega}
\def \ol{\overline}
\def \os{\overset}
\def \parno{\par\noindent}
\def \sig{\sigma}
\def \sus{\subset}
\def \ul{\underline}
\def \us{\underset}
\def \vil{\varinjlim}
\def \vpl{\varprojlim}
\def \wh{\widehat}
\def \wt{\widetilde}
\newcommand{\getsfrom}
{\ensuremath{\longleftarrow\kern-.
52em\lower-.1ex\hbox%
{$\shortmid\,$}}}
\begin{document}

\title{Artin-Mazur heights and Yobuko heights of 
proper log smooth schemes of Cartier type, and Hodge-Witt decompositions 
and Chow groups of quasi-$F$-split threefolds} 
\author{Yukiyoshi Nakkajima
\date{}\thanks{2010 Mathematics subject 
classification number: 14F30, 14F40, 14J32. 
The author is supported from two JSPS
Grant-in-Aid's for Scientific Research (C)
(Grant No.~80287440, 18K03224).\endgraf}}
\maketitle

$${\bf Abstract}$$
Let $X/s$ be a proper log smooth scheme of Cartier type over 
a fine log scheme whose underlying scheme is 
the spectrum of a perfect field $\kap$ of characteristic $p>0$. 
In this article we prove that the cohomology of ${\cal W}({\cal O}_X)$ 
is a finitely generated ${\cal W}(\kap)$-module 
if the Yobuko height of $X$ is finite. 
As an application, we prove that the crystalline cohomology of 
a proper smooth threefold $Y$ 
over $\kap$ has the Hodge-Witt decomposition 
if the Yobuko height of $Y$ is finite and we prove that 
the $p$-primary torsion part of 
the Chow group of codimension $2$ of $Y$ is of finite cotype. 
These are nontrivial generalizations of results in \cite{jr} and \cite{j}. 
We also prove a fundamental inequality between 
the Artin-Mazur heights and the Yobuko height of $X/s$  
if $X/s$ satisfies natural conditions.

\section{Introduction}\label{sec:int} 
Let $\kap$ be a perfect field of characteristic $p>0$.  
Let $\ol{\kap}$ be an algebraic closure of $\kap$. 
Let $\sig \col \ol{\kap}\lo \ol{\kap}$ be 
the Frobenius automorphism of $\ol{\kap}$. 
Let ${\cal W}$ (resp.~${\cal W}_n$ $(n\in {\mab Z}_{>0})$) 
be the Witt ring of $\kap$  
(resp.~the Witt ring of $\kap$ of length $n$).  
Let $Z$ be a proper scheme over $\kap$  
and let $q$ be a nonnegative integer. 
Let $\Phi^q_{Z/\kap}$ 
be the Artin-Mazur group functor of $Z/\kap$ in degree $q$, 
that is, $\Phi^q_{Z/\kap}$ is the following functor$:$ 
$$\Phi^q_{Z/\kap}(A):={\rm Ker}(H^q_{\rm et}
(Z\otimes_{\kap}A,{\mab G}_m)
\lo H^q_{\rm et}(Z,{\mab G}_m)) \in ({\rm Ab})$$
for artinian local $\kap$-algebras $A$'s with residue fields $\kap$ (\cite{am}).  
If  $\Phi^q_{Z/\kap}$ is pro-representable by a formal group scheme over $\kap$, 
then we denote the height of $\Phi^q_{Z/\kap}$ by $h^q(Z/\kap)$. 
We call $h^q(Z/\kap)$ the $q$-th Artin-Mazur height of $Z/\kap$. 
\par 
Let $Y$ be a proper smooth scheme over $\kap$. 
In \cite{idw} Illusie has constructed the following slope spectral sequence 
\begin{align*} 
E_1^{ij}=H^j(Y,{\cal W}\Om^i_Y)\Lo 
H^{i+j}_{\rm crys}(Y/W) 
\end{align*} 
by generalizing the slope spectral sequence 
constructed by Bloch in \cite{bl}. 
It is well-known that the $E_1$-term 
$H^j(Y,{\cal W}\Om^i_{Y})$ is not a finitely generated 
${\cal W}$-module in general. 
For example, if the Artin-Mazur group functor of $Y/\kap$ in degree $j$ is 
pro-represented by a 1-dimensional formal Lie group with infinite height, 
then $H^j(Y,{\cal W}({\cal O}_X))\otimes_{\cal W}{\cal W}(\ol{\kap})
\simeq \ol{\kap}_{\sig}[[V]]$, 
where $F=0$ and $aV=Va^p$ $(a\in \ol{\kap})$ on 
$\ol{\kap}_{\sig}[[V]]$ ($\ol{\kap}_{\sig}$ means the last equality). 
\par 
In this article we are interested in the finitely generatedness of 
$H^q(Y,{\cal W}({\cal O}_Y))$ $(q\in {\mab N})$ and 
its remarkable consequences for threefolds over $\kap$. 
Let $F\col Y\lo Y$ be the Frobenius endomorphism of $Y$. 
In \cite{mr} Mehta and Ramanathan 
have given the definition of the $F$-splitness of $Y$:   
$Y$ is said to be $F$-split if 
the pull-back 
$F^*\col {\cal O}_Y\lo F_{*}({\cal O}_Y)$ 
has a section of ${\cal O}_Y$-modules.
In \cite{j} and \cite{jr} 
Joshi and Rajan have proved the following theorem: 

\begin{theo}[{\bf \cite{j}, \cite{jr}}]\label{theo:fiw}
Assume that $Y$ is $F$-split. 
Then 
$H^q(Y,{\cal W}({\cal O}_Y))$ $(q\in {\mab N})$ is 
a finitely generated ${\cal W}$-module. 
\end{theo}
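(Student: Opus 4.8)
The plan is to show that $F$-splitness forces the Witt-vector Frobenius to act bijectively on the cohomology, and that this bijectivity collapses the $p$-adic and Verschiebung-adic structures, reducing finite generatedness to the finite dimensionality of $H^q(Y,{\cal O}_Y)$. First I would recall that, for the ${\mab F}_p$-algebra structure sheaf, the Witt-vector Frobenius $F\col {\cal W}_n({\cal O}_Y)\lo {\cal W}_n({\cal O}_Y)$ is induced by the absolute Frobenius of $Y$, that is, $F={\cal W}_n(F^*)$. An $F$-splitting is a retraction $\phi\col (F_Y)_*{\cal O}_Y\lo {\cal O}_Y$ of $F^*$; applying the functor ${\cal W}_n(-)$ and using the identification ${\cal W}_n((F_Y)_*{\cal O}_Y)=(F_Y)_*{\cal W}_n({\cal O}_Y)$, I obtain a retraction ${\cal W}_n(\phi)$ of ${\cal W}_n(F^*)$. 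Hence the Frobenius on ${\cal W}_n({\cal O}_Y)$ is split injective, and after passing to cohomology (and using that $F_Y$ is affine) the $\sig$-semilinear endomorphism $F\col M_n\lo M_n$ is split injective, where $M_n:=H^q(Y,{\cal W}_n({\cal O}_Y))$.

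Next, since $Y$ is proper, each $M_n$ is a finitely generated, hence finite-length, module over the Artinian local ring ${\cal W}_n={\cal W}_n(\kap)$. Because $F$ is $\sig$-semilinear with $\sig$ an automorphism of ${\cal W}_n$, its image $F(M_n)$ is a ${\cal W}_n$-submodule and $F\col M_n\lo F(M_n)$ is a $\sig$-semilinear bijection, which preserves length; therefore $F(M_n)=M_n$ and $F$ is bijective on each $M_n$. After checking the Mittag-Leffler condition for the system $\{H^{q-1}(Y,{\cal W}_n({\cal O}_Y))\}_n$, so that $\vpl^1=0$ and $M:=H^q(Y,{\cal W}({\cal O}_Y))=\vpl_n M_n$, the inverse limit of these isomorphisms shows that $F$ is bijective on $M$.

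Finally I would exploit the relation $FV=p$: since $F$ is bijective on $M$, one has $V=pF^{-1}$ and therefore $VM=pM$. The short exact sequence $0\lo {\cal W}({\cal O}_Y)\os{V}{\lo}{\cal W}({\cal O}_Y)\lo {\cal O}_Y\lo 0$ yields an injection $M/VM\hookrightarrow H^q(Y,{\cal O}_Y)$, and the right-hand side is finite dimensional over $\kap$ by properness. Combined with $VM=pM$, this gives that $M/pM$ is a finite-dimensional $\kap$-vector space; as $M$ is $p$-adically complete and separated, topological Nakayama then shows that $M$ is a finitely generated ${\cal W}$-module, as desired.

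I expect the main obstacle to be the inverse-limit bookkeeping of the middle step: verifying the Mittag-Leffler property (equivalently the vanishing of $\vpl^1$) and the $p$-adic completeness and separatedness of $M$, which are exactly what allow one to transport the levelwise bijectivity of $F$, and the resulting finiteness of $M/pM$, into genuine finite generatedness over ${\cal W}$. An alternative to the bijectivity argument is to invoke Illusie's structural analysis of ${\cal W}$-modules equipped with semilinear $F$ and $V$, by which split injectivity of $F$ already precludes the infinite-height ${\mab G}_a$-type summand appearing in the introductory example, and hence forces finite generatedness.
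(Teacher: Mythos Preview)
Your first step contains a genuine gap. The Witt vector construction ${\cal W}_n(-)$ is a functor on commutative rings, not on modules: it sends ring homomorphisms to ring homomorphisms, but there is no meaningful ``${\cal W}_n(\phi)$'' for an ${\cal O}_Y$-module map $\phi$ that is not a ring map. A Frobenius splitting $\phi\col F_*{\cal O}_Y\to{\cal O}_Y$ is only ${\cal O}_Y$-linear, and applying $\phi$ coordinatewise to $(a_0,\ldots,a_{n-1})$ does not respect Witt-vector addition (the sum polynomials $s_i$ are nonlinear in the coordinates). So you have not produced a retraction of $F$ on ${\cal W}_n({\cal O}_Y)$, and the split-injectivity of $F$ on $M_n$ is unjustified for $n\geq 2$.

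The conclusion you are aiming for is nevertheless correct, and your length argument for bijectivity and your endgame (Steps 4--6) are fine. What is missing is a valid bridge from $n=1$ to all $n$. One fix, close in spirit to your attempt: the splitting at level $1$ does show that $F$ is split injective, hence bijective (by your $\sig$-semilinear length argument), on every $H^q(Y,{\cal O}_Y)$. Then use the $F$-equivariant short exact sequence $0\to{\cal W}_{n-1}({\cal O}_Y)\os{V}{\to}{\cal W}_n({\cal O}_Y)\to{\cal O}_Y\to 0$ and the five-lemma on the associated long exact sequence to propagate bijectivity of $F$ to all $M_n$ by induction on $n$. This replaces your nonexistent ${\cal W}_n(\phi)$ with an honest d\'evissage.

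The route taken in the paper (and in \cite{jr}) is different: one uses Serre's exact sequence $0\to{\cal O}_Y\os{F}{\to}F_*({\cal O}_Y)\to B_1\Om^1_{Y/\kap}\to 0$. The $F$-splitting makes this split, so $\dim_\kap H^q(Y,{\cal O}_Y)=\dim_\kap H^q(Y,{\cal O}_Y)+\dim_\kap H^q(Y,B_1\Om^1_{Y/\kap})$, whence $H^q(Y,B_1\Om^1_{Y/\kap})=0$ for all $q$, i.e.\ $Y$ is ordinary at $(0,\star)$. From there the bijectivity of $F$ on each $H^q(Y,{\cal W}_n({\cal O}_Y))$ (and the finite generation of $H^q(Y,{\cal W}({\cal O}_Y))$) follow by the standard equivalences recorded in Proposition~\ref{prop:woni}. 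This approach has the advantage that it generalizes to the quasi-$F$-split case (Theorem~\ref{theo:fsb0}), whereas your bijectivity-of-$F$ strategy does not: for $h_F(Y)\geq 2$ the operator $F$ on $H^q(Y,{\cal O}_Y)$ need not be bijective.
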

\parno 
As far as we know, no nontrivial generalization of this theorem 
has been known. To generalize this Joshi-Rajan's theorem, we recall 
the definition of Yobuko height introduced in \cite{y}. 
\par 
Let $F\col {\cal W}_n(Y)\lo {\cal W}_n(Y)$ be  
the Frobenius endomorphism of ${\cal W}_n(Y)$. 
Let 
$F^*\col {\cal W}_n({\cal O}_Y)\lo F_{*}({\cal W}_n({\cal O}_Y))$ 
be the pull-back of $F$. 
Recently Yobuko has generalized the notion of the $F$-splitness: 
he has introduced the notion of the quasi-$F$-split height 
$h_F(Y)$ of $Y$ 
in a highly nontrivial way (\cite{y}) as follows. 
(In [loc.~cit.] he has denoted it by ${\rm ht}^S(Y)$.) 
It is the minimum of positive integers $n$'s such that 
there exists a morphism 
$\rho \col F_{*}({\cal W}_n({\cal O}_Y))\lo {\cal O}_Y$ 
of ${\cal W}_n({\cal O}_Y)$-modules 
such that 
$\rho \circ F^*\col {\cal W}_n({\cal O}_Y)\lo {\cal O}_Y$ 
is the natural projection. 
(If there does not exist such $n$, then  we set $h_F(Y)=\infty$.)  
In this article we call the quasi-$F$-split height the {\it Yobuko height}. 
It seems to us that the Yobuko height $h_F(Y)$ is a mysterious invariant of $Y$. 
It plays a central role in this article. 
Following \cite{y}, we say that $Y$ is {\it quasi-$F$-split} 
if $h_F(Y)<\infty$. 
\par
Next let us recall what has been known about Yobuko heights. 
\par 
In \cite{y} Yobuko has proved an equality 
$h_F(Y)=h^d(Y/\kap)$ for a Calabi-Yau variety $Y$ over $\kap$ 
of any dimension $d$.  
\par 
Let $s$ be the log point of $\kap$.  
Let $X/s$ be a proper simple normal crossing log scheme of 
pure dimension $d$. 
(In this article we do not recall fundamental notions of log geometry 
in \cite{klog1}, \cite{klog2}, \cite{hk} and \cite{nb}.) 
If the following three conditions 
\par 
(1) $H^{d-1}(X,{\cal O}_X)=0$ if $d\geq 2$, 
\par 
(2) $H^{d-2}(X,{\cal O}_X)=0$ if $d\geq 3$, 
\par 
(3) $\Om^d_{X/s}\simeq {\cal O}_X$
\parno
hold, then we have proved an equality 
$h_F(\os{\circ}{X})=h^d(\os{\circ}{X}/\kap)$ in \cite{ny}. 
Here $\Om^d_{X/s}$ is the $d$-th wedge product of sheaves 
of logarithmic differential forms on $X/s$ and 
$\os{\circ}{X}$ is the underlying scheme of $X$. 
Yobuko has also proved that 
$h^2(Z/\kap)= h_F(Z)$ for an abelian surface $Z/\kap$ (unpublished). 
(In particular, in these cases $X$ (resp.~$Z$) is $F$-split if and only if 
$h^d(\os{\circ}{X}/\kap)=1$ (resp.~$h^2(Z/\kap)=1$). This is nontrivial.) 
In \cite{yoh} he has given an example 
such that $h^2(Y/\kap)< h_F(Y)$ for an Enriques surface $Y$ over $\kap$: 

\smallskip 
\begin{equation*}
\begin{tabular}{|l|l|l|} 
\hline 
$Y$  & $h^2(Y/\kap)$  &$h_F(Y)$\\ 
\hline    
Enriques surface 
when $p>2$ & 0 & $h_F(\wt{Y})$\\ 
\hline    
classical Enriques surface 
when $p=2$ & 0 & $\infty$\\ 
\hline
singular Enriques surface when $p=2$  & 0 & $1$\\ 
\hline 
supersingular Enriques surface when $p=2$  & 0 & $\infty$\\ 
\hline 
\end{tabular}
\label{tab:intop}
\end{equation*}
Here $\wt{Y}$ is the K3 cover of $Y$.

\par 
Now we state the main results in this article. 
\par 
Let $s$ be a fine log scheme whose underlying scheme is 
${\rm Spec}(\kap)$ ($s$ is not necessarily the log point of $\kap$). 
Let ${\cal W}(s)$ be the canonical lift of $s$ over ${\rm Spf}({\cal W})$. 
Let $X/s$ be a proper log smooth scheme of Cartier type. 
Let ${\cal W}\Om^{\bul}_X$ be the log de Rham-Witt complex of $X/s$ 
and let $H^q_{\rm crys}(X/{\cal W}(s))$ $(q\in {\mab N})$ 
be the log crystalline cohomology of $X/{\cal W}(s)$. 
Following \cite{ir} in the trivial logarithmic case, we say that 
$X/s$ is of {\it log Hodge-Witt type} if 
$H^j(X,{\cal W}\Om^i_X)$ is a finitely generated ${\cal W}$-module 
for any $i,j\in {\mab N}$. 
(We do not use a phrase: ``$X/s$ is log Hodge-Witt''.) 
If $X/s$ is of log Hodge-Witt type, 
then the slope spectral sequence 
\begin{align*} 
E_1^{ij}=H^j(X,{\cal W}\Om^i_X)\Lo H^{i+j}_{\rm crys}(X/{\cal W}(s))
\end{align*} 
of $X/s$ degenerates at $E_1$ and 
there exists the following log Hodge-Witt decomposition 
for the log crystalline cohomology of $X/s$: 
\begin{align*} 
H^q_{\rm crys}(X/{\cal W}(s))=\bigoplus_{i+j=q}H^j(X,{\cal W}\Om^i_X) 
\quad (q\in {\mab N})
\end{align*}
by the log version of Illusie-Raynaud's theorem in \cite{ir} 
(cf.~\cite{lodw}). 
Let $\os{\circ}{X}$ be the underlying scheme of $X$. 
If $\dim \os{\circ}{X}=1$, then $X/s$ is of log Hodge-Witt type.

\par 
The key theorem in this article is the following:  

\begin{theo}\label{theo:dw}
Assume that $\os{\circ}{X}$ is quasi-$F$-split. 
Then $H^q(X,{\cal W}({\cal O}_X))$ $(q\in {\mab N})$ is a 
finitely generated ${\cal W}$-module. 
\end{theo}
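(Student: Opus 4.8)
The plan is to reduce finite generation to a finiteness property of the Frobenius action and then to read that property off from the quasi-$F$-split splitting. Write $M^q:=H^q(X,{\cal W}({\cal O}_X))=\vpl_n H^q(X,{\cal W}_n({\cal O}_X))$; this is a module over the Cartier--Dieudonn\'e ring, $V$-adically separated and complete, with the $\sig$-linear Frobenius $F$ and the Verschiebung $V$ satisfying $FV=VF=p$. The structural input I would invoke is the logarithmic analogue of the Artin--Mazur/Illusie description of $H^q({\cal W}{\cal O})$ together with the Illusie--Raynaud finiteness theory: such an $M^q$ is a finitely generated ${\cal W}$-module if and only if its topologically $F$-nilpotent part---the ``unipotent'', infinite-height direction on which $F^{N}\to 0$ in the $V$-adic topology---is finite-dimensional over $\kap$. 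The module $\ol{\kap}_{\sig}[[V]]$ with $F=0$ recalled in the introduction is exactly an infinite-dimensional such part, and is the pathology to be excluded. Thus the first step reduces the theorem to showing that quasi-$F$-splitness bounds the $F$-nilpotent part of every $M^q$.

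For the decisive step, I would set $n=h_F(\os{\circ}{X})<\inf$ and take a splitting $\rho\col F_{*}({\cal W}_n({\cal O}_X))\lo {\cal O}_X$ with $\rho\circ F^{*}=R^{\,n-1}$, where $F^{*}$ denotes the $p$-th power Frobenius of ${\cal W}_n({\cal O}_X)$ and $R^{\,n-1}\col {\cal W}_n({\cal O}_X)\lo {\cal O}_X$ the projection. Since the Frobenius is a finite morphism, $F_{*}$ is exact and commutes with taking cohomology, so $H^q(X,-)$ yields the identity $H^q(\rho)\circ H^q(F^{*})=H^q(R^{\,n-1})$ on $H^q(X,{\cal W}_n({\cal O}_X))$. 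I would then propagate this single level-$n$ identity through the tower of short exact sequences $0\lo {\cal O}_X\os{V^{m}}{\lo}{\cal W}_{m+1}({\cal O}_X)\os{R}{\lo}{\cal W}_m({\cal O}_X)\lo 0$, exploiting the compatibilities of the $p$-th power Frobenius with $R$ and $V$, to show that the depth of $F$-nilpotence of any class of $M^q$ is bounded uniformly by $n$; hence the $F$-nilpotent part is finite-dimensional and the pathology above cannot occur. This is the natural generalization of the Joshi--Rajan mechanism: for $n=1$ the $F$-splitting of ${\cal O}_X$ makes $F$ outright bijective on $H^q(X,{\cal W}({\cal O}_X))$ (the slope-zero, ordinary case), whereas the truncated splitting at level $n$ yields only the weaker, but still sufficient, uniform bound.

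Combining the two steps would show that $H^q(X,{\cal W}({\cal O}_X))$ is a finitely generated ${\cal W}$-module for every $q$. I expect the main obstacle to be precisely the interface between the two steps: transferring the fixed, \emph{partial} splitting $\rho$ at truncation level $n$ (which is a section only of the projection $R^{\,n-1}$, not of the level-$n$ Frobenius itself) into uniform control of the operators $F$, $V$ and the restriction maps $R$ on the whole Witt tower, and thereby on the inverse limit $M^q$. A second, more technical hurdle is that, since $X/s$ is only log smooth of Cartier type, the classical Artin--Mazur/Dieudonn\'e input of the first step must be replaced throughout by arguments with the log de Rham--Witt complex ${\cal W}\Om^{\bul}_X$ and log crystalline cohomology, so one must verify that both the finiteness dichotomy and the exactness of $F_{*}$ survive in the logarithmic, Cartier-type setting.
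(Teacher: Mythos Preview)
Your high-level outline is correct: the paper also reduces to proving that the $F$-torsion part ${}_FH^q(X,{\cal W}({\cal O}_X))$ is finite-dimensional over $\kap$ (this is the log version of \cite[II (3.8)]{ir}), and then controls that part via the log Serre exact sequence and the surjection $H^{q-1}(X,{\cal W}({\cal O}_X)/F)\twoheadrightarrow {}_FH^q(X,{\cal W}({\cal O}_X))$. So Step~1 of your plan is fine.

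The gap is exactly where you suspected: your Step~2--3 is a hope, not an argument. You write that you would ``propagate the single level-$n$ identity $H^q(\rho)\circ H^q(F^{*})=H^q(R^{\,n-1})$ through the tower'' to bound the depth of $F$-nilpotence uniformly by $n$. But the splitting $\rho$ lives only at level $n$ and is a section of $R^{\,n-1}$, not of $F$; there is no evident way to push it up or down the Witt tower to say anything directly about $F$-nilpotence on the limit $M^q$. In fact the paper does \emph{not} attempt to control $F$, $V$, $R$ on the tower directly. Instead it changes coordinates: the log Serre exact sequence identifies $F_*({\cal W}_n({\cal O}_X))/{\cal W}_n({\cal O}_X)\cong B_n\Om^1_{X/s}$, so bounding ${}_FH^q$ reduces to bounding the dimensions $\dim_{\kap}H^{q-1}(X,B_n\Om^1_{X/s})$ uniformly in $n$. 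The quasi-$F$-split splitting then becomes, after push-out along $R^{\,n-1}$, a splitting of a short exact sequence $0\to{\cal O}_X\to{\cal E}_n\to B_n\Om^1_{X/s}\to 0$ of ${\cal O}_X$-modules, giving $\dim H^q({\cal E}_n)=\dim H^q({\cal O}_X)+\dim H^q(B_n\Om^1_{X/s})$ for $n\ge h_F(\os{\circ}{X})$. The decisive trick---due to Yobuko---is a second description of ${\cal E}_n$: using the compatibility of the Serre map $d_n$ with the Cartier operator $C$ one builds a commutative diagram whose snake lemma yields another exact sequence $0\to F_*(B_{n-1}\Om^1_{X/s})\to{\cal E}_n\to F_*({\cal O}_X)\to 0$. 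Comparing the two gives the monotonicity $\dim H^q(B_n\Om^1_{X/s})\le\dim H^q(B_{n-1}\Om^1_{X/s})$ for $n\ge h_F(\os{\circ}{X})$, hence uniform boundedness.

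So the missing idea is precisely this detour through the sheaves $B_n\Om^1_{X/s}$ and Yobuko's two presentations of ${\cal E}_n$. Without it, your proposed ``propagation'' has no content: the identity $\rho\circ F^{*}=R^{\,n-1}$ on cohomology at a single level $n$ does not by itself bound $F$-nilpotence on $M^q$, and I do not see any direct Dieudonn\'e-module manipulation that would substitute for the $B_n$-argument.
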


\parno 
This is a highly nontrivial generalization of (\ref{theo:fiw}). 
To prove this theorem, we prove the following:  

\begin{theo}\label{theo:fs}
Assume that $\os{\circ}{X}$ is quasi-$F$-split. 
Then the dimensions $\dim_{\kap}H^q(X,B_n\Om^1_{X/s})$'s for 
all $q$'s and  all $n$'s are bounded. 
Here $B_n\Om^1_{X/s}$ $(n\in {\mab N})$ is a well-known 
sub ${\cal O}_X$-module of $F^n_*(\Om^1_{X/s})$, 
where $F\col X\lo X$ is 
the Frobenius endomorphism of $X$. 
\end{theo}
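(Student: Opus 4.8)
The plan is to reduce the boundedness of $\dim_{\kap}H^q(X,B_n\Om^1_{X/s})$ to a uniform bound on the kernel and cokernel of the Frobenius acting on the Witt-vector cohomology $H^q(X,{\cal W}_n({\cal O}_X))$, and then to extract that uniform bound from the quasi-$F$-split retraction. The starting point is the fundamental exact sequence attached to the log Cartier isomorphism of $X/s$ (available since $X/s$ is log smooth of Cartier type):
$$0\lo {\cal W}_n({\cal O}_X)\os{F^*}{\lo}F_*({\cal W}_n({\cal O}_X))\lo B_n\Om^1_{X/s}\lo 0 ,$$
where $F^*$ is the length-preserving $p$-th power Frobenius. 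Because $F$ is a homeomorphism, $H^q(X,F_*({\cal W}_n({\cal O}_X)))=H^q(X,{\cal W}_n({\cal O}_X))$, and the induced map is the Frobenius endomorphism $F$ on cohomology, so the associated long exact sequence yields, for every $q$ and $n$,
$$\dim_{\kap}H^q(X,B_n\Om^1_{X/s})=\dim_{\kap}{\rm Coker}(F\mid H^q(X,{\cal W}_n({\cal O}_X)))+\dim_{\kap}{\rm Ker}(F\mid H^{q+1}(X,{\cal W}_n({\cal O}_X))) .$$
Thus it suffices to bound, independently of $n$, the $\kap$-dimensions of the kernel and cokernel of $F$ on $H^q(X,{\cal W}_n({\cal O}_X))$ for all $q$.

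Next I would feed in the hypothesis. Write $h:=h_F(\os{\circ}{X})<\infty$ and let $\rho\col F_*({\cal W}_h({\cal O}_X))\lo {\cal O}_X$ be the retraction with $\rho\circ F^*={\rm pr}$, the natural projection ${\cal W}_h({\cal O}_X)\lo {\cal O}_X$. I first observe that such a retraction propagates to every level $n\geq h$: setting $\rho_n:=\rho\circ F_*(R^{n-h})\col F_*({\cal W}_n({\cal O}_X))\lo {\cal O}_X$ and using $RF^*=F^*R$, one checks $\rho_n\circ F^*={\rm pr}_n=R^{n-1}$. Passing to cohomology, $\rho_{n,*}\circ F={\rm pr}_{n,*}$, which already forces
$${\rm Ker}(F\mid H^q(X,{\cal W}_n({\cal O}_X)))\sus {\rm Im}(V\col H^q(X,{\cal W}_{n-1}({\cal O}_X))\lo H^q(X,{\cal W}_n({\cal O}_X))) .$$
In other words the quasi-$F$-split structure makes $F$ injective transversally to the Verschiebung filtration, and this is the mechanism that should prevent the kernels and cokernels from growing with $n$.

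Then I would run a dévissage on $n$. Using the structural exact sequences $0\lo {\cal O}_X\os{V^{n-1}}{\lo}{\cal W}_n({\cal O}_X)\os{R}{\lo}{\cal W}_{n-1}({\cal O}_X)\lo 0$ together with the Witt relations $FV=VF=p$ and $RF=FR$, I would set up an induction comparing the $F$-kernel and $F$-cokernel on $H^q(X,{\cal W}_n({\cal O}_X))$ with those on $H^q(X,{\cal W}_{n-1}({\cal O}_X))$. At each step the newly created kernel and cokernel are governed by the fixed finite-dimensional spaces $H^q(X,{\cal O}_X)$ and $H^{q+1}(X,{\cal O}_X)$, while the inclusion of the previous paragraph identifies the $F$-kernel with part of ${\rm Im}(V)$, on which $FV=p$ acts, so that the successive contributions should telescope rather than accumulate. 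The outcome should be a bound $\dim_{\kap}{\rm Ker}(F\mid H^q(X,{\cal W}_n({\cal O}_X)))$, $\dim_{\kap}{\rm Coker}(F\mid H^q(X,{\cal W}_n({\cal O}_X)))\leq C_q$ with $C_q$ depending only on $h$ and on $\sum_q\dim_{\kap}H^q(X,{\cal O}_X)$; the displayed identity above then gives the desired uniform bound on $\dim_{\kap}H^q(X,B_n\Om^1_{X/s})$.

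The main obstacle is exactly this last step: turning the level-by-level finiteness into a bound uniform in $n$. The naive estimate coming from the standard exact sequence $0\lo B_1\Om^1_{X/s}\lo B_{n+1}\Om^1_{X/s}\os{C}{\lo}B_n\Om^1_{X/s}\lo 0$ only gives the linear growth $\dim_{\kap}H^q(X,B_n\Om^1_{X/s})\leq n\cdot\dim_{\kap}H^q(X,B_1\Om^1_{X/s})$, so the entire content lies in showing that the quasi-$F$-split retraction collapses this to a constant. Concretely, I expect the delicate point to be the level bookkeeping for $F$, $V$ and $R$ on $H^{\bul}(X,{\cal W}_n({\cal O}_X))$: one must track how $\rho_n$ interacts with the Verschiebung filtration compatibly across all $n$ so that the inductive contributions stabilize, and it is here that the mere finiteness of $h_F(\os{\circ}{X})$, rather than any estimate at a single level, is indispensable.
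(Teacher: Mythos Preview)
Your proposal has a genuine gap exactly where you identify it: the d\'evissage you sketch does not close. The inclusion ${\rm Ker}(F\mid H^q(X,{\cal W}_n({\cal O}_X)))\sus {\rm Im}(V)$ is correct, but the relation $FV=p$ does not by itself force the successive contributions to ``telescope rather than accumulate''; on ${\rm Im}(V)$ the Frobenius is $p$ times something, which in characteristic $p$ gives no control. Nothing in what you have written produces an inequality comparing level $n$ with level $n-1$, so the linear bound from the exact sequence $0\to B_1\Om^1_{X/s}\to B_{n+1}\Om^1_{X/s}\to B_n\Om^1_{X/s}\to 0$ is all you actually have.

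The paper avoids this d\'evissage entirely and works one step upstream of your kernel/cokernel analysis. Push the log Serre sequence out along $R^{n-1}\col {\cal W}_n({\cal O}_X)\to {\cal O}_X$ to obtain
\[
0\lo {\cal O}_X\lo {\cal E}_n\lo B_n\Om^1_{X/s}\lo 0,\qquad
{\cal E}_n:={\cal O}_X\oplus_{{\cal W}_n({\cal O}_X),F}F_*({\cal W}_n({\cal O}_X)).
\]
The quasi-$F$-split hypothesis is \emph{exactly} the statement that this sequence splits for $n=h$, hence for all $n\geq h$, so
$\dim_{\kap}H^q(X,{\cal E}_n)=\dim_{\kap}H^q(X,{\cal O}_X)+\dim_{\kap}H^q(X,B_n\Om^1_{X/s})$.
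The missing idea is a \emph{second} short exact sequence for the same sheaf ${\cal E}_n$: comparing the pushout at level $n$ with that at level $1$ via $C^{n-1}$ (Yobuko's diagram) and applying the snake lemma yields
\[
0\lo F_*(B_{n-1}\Om^1_{X/s})\lo {\cal E}_n\lo F_*({\cal O}_X)\lo 0.
\]
Combining the two descriptions of $H^q(X,{\cal E}_n)$ gives, for every $n\geq h$,
\[
\dim_{\kap}H^q(X,B_n\Om^1_{X/s})\leq \dim_{\kap}H^q(X,B_{n-1}\Om^1_{X/s}),
\]
so the sequence of dimensions is eventually nonincreasing and hence bounded by $\max_{1\leq m\leq h}\dim_{\kap}H^q(X,B_m\Om^1_{X/s})$. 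This monotonicity is what replaces your hoped-for telescoping, and it comes from the pushout ${\cal E}_n$ together with its second filtration, not from any estimate on ${\rm Ker}(F)$ or ${\rm Coker}(F)$ on Witt cohomology.
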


\parno 
Using the log version of the Serre exact sequence in \cite{smxco} 
(this has been proved in \cite{ny}), 
we can obtain (\ref{theo:dw}) by (\ref{theo:fs}) in a standard way 
(cf.~\cite{smxco}, \cite{ir}). 
As a corollary of (\ref{theo:fs}), 
we also obtain the following: 

\begin{coro}\label{coro:bb}
Assume that $\os{\circ}{X}$ is quasi-$F$-split. 
Let $f\col X\lo s$ be the structural morphism.  
Set $B_{\infty}\Om^1_{X/s}:=\vil B_n\Om^1_{X/s}$. 
Here we take the inductive limit as abelian sheaves on $\os{\circ}{X}$ 
and the transition morphisms are the natural inclusion morphisms. 
Consider $B_{\infty}\Om^1_{X/s}$ as a sheaf of 
$f^{-1}(\kap)$-submodules of $\Om^1_{X/s}$ 
$((\ref{coro:j})$ below$)$. 
Assume that $\os{\circ}{X}$ is quasi-$F$-split. 
Then $\dim_{\kap}H^q(X,\Om^1_{X/s}/B_{\infty}\Om^1_{X/s})$ 
$(q\in {\mab N})$ are finite. 
\end{coro}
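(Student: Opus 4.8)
The plan is to deduce the corollary from Theorem~\ref{theo:fs} by a short exact sequence argument followed by passage to a filtered colimit; granting \ref{theo:fs}, the argument is essentially formal homological algebra on the proper (hence Noetherian, quasi-compact quasi-separated) scheme $\os{\circ}{X}$, over which all cohomology is computed.

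First I would fix $n$ and consider the short exact sequence of abelian (indeed $f^{-1}(\kap)$-) sheaves
$$0\lo B_n\Om^1_{X/s}\lo \Om^1_{X/s}\lo \Om^1_{X/s}/B_n\Om^1_{X/s}\lo 0,$$
the first map being the natural inclusion, whose legitimacy as a subsheaf inclusion (and likewise that of $B_{\infty}\Om^1_{X/s}\hookrightarrow \Om^1_{X/s}$) is provided by (\ref{coro:j}). The associated long exact cohomology sequence exhibits $H^q(X,\Om^1_{X/s}/B_n\Om^1_{X/s})$ as an extension of a subspace of $H^{q+1}(X,B_n\Om^1_{X/s})$ by a quotient of $H^q(X,\Om^1_{X/s})$; in particular
$$\dim_{\kap}H^q(X,\Om^1_{X/s}/B_n\Om^1_{X/s})\leq \dim_{\kap}H^q(X,\Om^1_{X/s})+\dim_{\kap}H^{q+1}(X,B_n\Om^1_{X/s}).$$
Since $\Om^1_{X/s}$ is a coherent ${\cal O}_X$-module on the proper scheme $\os{\circ}{X}$, the first summand is finite and independent of $n$; by Theorem~\ref{theo:fs} the second summand is bounded above independently of $n$. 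Hence there is a constant $N_q$ with $\dim_{\kap}H^q(X,\Om^1_{X/s}/B_n\Om^1_{X/s})\leq N_q$ for all $n$.

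Next I would pass to the limit. Filtered colimits of abelian sheaves are exact, so applying $\vil_n$ to the above sequences (in which $\Om^1_{X/s}$ is constant) yields $\Om^1_{X/s}/B_{\infty}\Om^1_{X/s}=\vil_n\Om^1_{X/s}/B_n\Om^1_{X/s}$, using $B_{\infty}\Om^1_{X/s}=\vil_n B_n\Om^1_{X/s}$ with inclusions as transition maps. Because cohomology on the Noetherian, quasi-compact quasi-separated scheme $\os{\circ}{X}$ commutes with filtered colimits, $H^q(X,\Om^1_{X/s}/B_{\infty}\Om^1_{X/s})=\vil_n H^q(X,\Om^1_{X/s}/B_n\Om^1_{X/s})$. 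This is a filtered colimit of $\kap$-vector spaces each of dimension at most $N_q$: the images of the structure maps into the colimit form an increasing chain of subspaces of dimension $\leq N_q$, which stabilizes, and the colimit equals this eventual value. Therefore $\dim_{\kap}H^q(X,\Om^1_{X/s}/B_{\infty}\Om^1_{X/s})\leq N_q<\infty$ for all $q\in{\mab N}$, which proves the corollary.

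The substantive input is Theorem~\ref{theo:fs}, which I am allowed to assume; the two points that genuinely require care are the following. First, one must know that $B_{\infty}\Om^1_{X/s}$ really is an $f^{-1}(\kap)$-submodule of $\Om^1_{X/s}$ with $\Om^1_{X/s}/B_{\infty}\Om^1_{X/s}=\vil_n\Om^1_{X/s}/B_n\Om^1_{X/s}$ — the $\mathcal{O}_X$-structures on the individual $B_n\Om^1_{X/s}$ are twisted by different powers of Frobenius and do not glue, so the descent to a single coefficient ring $f^{-1}(\kap)$ is essential and is exactly what (\ref{coro:j}) supplies. Second, and this is the crux, the bound furnished by Theorem~\ref{theo:fs} must be \emph{uniform} in $n$ rather than merely a finiteness statement for each $n$; it is precisely this uniform boundedness that allows a filtered colimit of the finite-dimensional spaces $H^q(X,\Om^1_{X/s}/B_n\Om^1_{X/s})$ to remain finite-dimensional in the limit.
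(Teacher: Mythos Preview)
Your argument is correct and uses the same ingredients as the paper's proof (Theorem~\ref{theo:fs}, the long exact sequence attached to $0\to B_\bullet\Om^1_{X/s}\to\Om^1_{X/s}\to\Om^1_{X/s}/B_\bullet\Om^1_{X/s}\to 0$, and commutation of cohomology with filtered colimits on a Noetherian space), only assembled in a slightly different order: the paper first passes to the colimit on the subsheaf side to show each $H^q(X,B_{\infty}\Om^1_{X/s})$ is finite-dimensional and then invokes the exact sequence once with $B_{\infty}$, whereas you use the exact sequence at each finite level to bound $\dim_{\kap}H^q(X,\Om^1_{X/s}/B_n\Om^1_{X/s})$ uniformly and then pass to the colimit on the quotient side. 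Both routes are equivalent and equally short.
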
 

\parno 
This corollary implies the tangent space of the 
``pro-representable part'' of the formal completion 
of the second Chow group of a proper smooth 
surface over $\kap$ 
due to Stienstra (\cite{stjf}) 
is finite dimensional if it is quasi-$F$-split. 
See (\ref{rema:fdst}) below in the text for the more detailed explanation. 

\par 
In the course of the proof of (\ref{theo:dw}), 
we obtain the following unexpected result as a bonus:

\begin{theo}[{\bf Fundamental inequality between Artin-Mazur heights and 
a Yobuko height}]\label{theo:ht}
Let $X/s$ be a proper log smooth scheme 
of Cartier type. 
Let $q$ be a nonnegative integer. 
Assume that $H^q(X,{\cal O}_X)\simeq \kap$, 
that $H^{q+1}(X,{\cal O}_X)=0$ and 
that the Bockstein operator 
\begin{align*} 
\bet \col H^{q-1}(X,{\cal O}_X)\lo 
H^q(X,{\cal W}_{n-1}({\cal O}_X))
\end{align*} 
arising from the following exact sequence 
\begin{align*} 
0\lo {\cal W}_{n-1}({\cal O}_X)\os{V}{\lo} {\cal W}_n({\cal O}_X)
{\lo} {\cal O}_X\lo 0
\end{align*} 
is zero for any $n\in {\mab Z}_{\geq 2}$. 
Here $V\col  {\cal W}_{n-1}({\cal O}_X)\lo  {\cal W}_{n}({\cal O}_X)$ 
is the Verschiebung morphism. 
Assume that the functor $\Phi^q_{\os{\circ}{X}/\kap}$ is pro-representable. 
Then 
\begin{align*} 
h^q(\os{\circ}{X}/\kap)\leq h_F(\os{\circ}{X}).
\end{align*}  
In particular, if $h^q(\os{\circ}{X}/\kap)=\infty$, then 
$h_F(\os{\circ}{X})=\infty$. 
\end{theo}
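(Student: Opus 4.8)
The plan is to translate both invariants into the structure of the Witt-vector cohomology $M:=H^q(X,\mathcal W(\mathcal O_X))$ and its truncations $M_n:=H^q(X,\mathcal W_n(\mathcal O_X))$, and then to compare them by applying $H^q(X,-)$ to Yobuko's splitting. First I would extract the module-theoretic content of the hypotheses. From $H^{q+1}(X,\mathcal O_X)=0$ and the exact sequence $0\lo\mathcal O_X\os{V^{n-1}}{\lo}\mathcal W_n(\mathcal O_X)\os{R}{\lo}\mathcal W_{n-1}(\mathcal O_X)\lo0$ one gets that each restriction $R\col M_n\lo M_{n-1}$ is surjective; from the vanishing of $\beta$ and $0\lo\mathcal W_{n-1}(\mathcal O_X)\os{V}{\lo}\mathcal W_n(\mathcal O_X)\os{\pi}{\lo}\mathcal O_X\lo0$ one gets that each $V\col M_{n-1}\lo M_n$ is injective. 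Hence the transition maps of $\{M_n\}$ are surjective, so $M=\vpl_nM_n$ surjects onto every $M_n$ and in particular onto $M_1=H^q(X,\mathcal O_X)=\kap$; and these same vanishings upgrade the tautological surjection to an isomorphism $M_n\simeq M/V^nM$ carrying $R,V,\pi$ to the evident maps.

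Next I would invoke the Artin--Mazur theory. As $\Phi^q_{\os{\circ}{X}/\kap}$ is pro-representable with one-dimensional tangent space $H^q(X,\mathcal O_X)\simeq\kap$, the Cartier--Dieudonn\'e module of this formal group is $M$, which is therefore the module of a one-dimensional formal group: $V$ is topologically nilpotent, $M/VM\simeq\kap$, and $M$ is generated over $\mathcal W[[V]]$ by a single element $e$ lifting $1\in M/VM$ (such $e$ exists by the surjectivity above). The Frobenius $F\col M\lo M$ induced by the Witt--Frobenius coincides with the Dieudonn\'e $F$, and the height is computed from the Verschiebung-order of $Fe$:
$$h^q(\os{\circ}{X}/\kap)=\min\{\,n\ge1:Fe\notin V^nM\,\},$$
with $h^q=\infty$ precisely when $Fe=0$, i.e.\ $F=0$. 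Equivalently, under $M_n\simeq M/V^nM$, the height is the least $n$ for which the image of $Fe$ in $M_n$ is nonzero.

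Finally I would apply cohomology to the quasi-$F$-split structure. Put $n:=h_F(\os{\circ}{X})$, assumed finite. By definition there is a $\mathcal W_n(\mathcal O_X)$-linear $\rho\col F_*(\mathcal W_n(\mathcal O_X))\lo\mathcal O_X$ with $\rho\circ F^*=\pi$, the projection. Because the absolute Frobenius of $\os{\circ}{X}$ is a homeomorphism of the underlying space, one has $H^q(X,F_*(\mathcal W_n(\mathcal O_X)))=M_n$ and $H^q(F^*)=F$; thus $H^q(X,-)$ sends $\rho$ to a map $\bar\rho\col M_n\lo\kap$ with $\bar\rho\circ F=H^q(\pi)=\mathrm{pr}$, the projection $M_n\simeq M/V^nM\lo M/VM=\kap$. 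Evaluating at the generator gives $\bar\rho(Fe)=\mathrm{pr}(e)=1\ne0$, so the image of $Fe$ in $M_n$ is nonzero, i.e.\ $Fe\notin V^nM$. By the height formula this forces $n\ge h^q(\os{\circ}{X}/\kap)$, which is the desired inequality $h^q(\os{\circ}{X}/\kap)\le h_F(\os{\circ}{X})$. When $h^q=\infty$ we have $F=0$, so $\bar\rho(Fe)=0\ne1$ for every $n$, no such $\rho$ can exist, and $h_F(\os{\circ}{X})=\infty$; this is the final clause.

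The main obstacle is the content of the second paragraph: identifying the cohomological Witt--Frobenius on $M$ with the Dieudonn\'e Frobenius of $\Phi^q_{\os{\circ}{X}/\kap}$ and reading the height off as the least level at which $Fe$ survives. This rests on the Artin--Mazur comparison between the formal group and Witt-vector cohomology, the one-dimensionality forced by $H^q(X,\mathcal O_X)\simeq\kap$, and the finite-level bookkeeping $M_n\simeq M/V^nM$ together with the matching of Yobuko's $F^*$ with $F$; these are precisely the points for which the hypotheses on $\beta$ and on $H^{q\pm1}(X,\mathcal O_X)$ are tailored.
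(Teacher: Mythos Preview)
Your proof is correct and takes a genuinely different route from the paper's.

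The paper derives the inequality as a byproduct of the $B_n\Omega^1_{X/s}$ machinery built for the boundedness theorem. It pushes out the log Serre exact sequence $0\to \mathcal W_n(\mathcal O_X)\overset{F}{\to}F_*\mathcal W_n(\mathcal O_X)\to B_n\Omega^1_{X/s}\to 0$ along $R^{n-1}$ to obtain $0\to\mathcal O_X\to\mathcal E_n\to B_n\Omega^1_{X/s}\to 0$, observes this splits for $n\ge h_F(\os{\circ}{X})$, and combines this with a second short exact sequence for $\mathcal E_n$ (from Yobuko's diagram) to deduce $\dim_\kap H^q(X,B_n\Omega^1_{X/s})\le \dim_\kap H^q(X,B_{n-1}\Omega^1_{X/s})$ for $n\ge h_F$. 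Feeding in the explicit formula $\dim_\kap H^q(X,B_n\Omega^1_{X/s})=\min\{n,\,h^q-1\}$ (valid under the stated hypotheses), one gets $\min\{n,h^q-1\}\le\min\{n-1,h^q-1\}$ for $n\ge h_F$, which forces $h^q\le h_F$.

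You bypass $B_n\Omega^1_{X/s}$ entirely and argue inside the Dieudonn\'e module: you apply $H^q$ directly to the Yobuko retraction $\rho$ to get $\bar\rho\circ F=\mathrm{pr}$ on $M_n$, and evaluating at a topological generator $e$ shows $Fe$ has nonzero image in $M_n\simeq M/V^nM$, which by the Dieudonn\'e height formula gives $h^q\le n$. Your height characterization $h^q=\min\{n:Fe\notin V^nM\}$ is equivalent to the paper's $n^q(Z)$ of Theorem~\ref{theo:nex} (since $F=0$ on $M_n$ iff $Fe\in V^nM$ under the generation hypothesis), so you are invoking the same comparison with Artin--Mazur theory, just packaged differently. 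The identification $M_n\simeq M/V^nM$ that you use is indeed forced by the Bockstein and $H^{q+1}$ vanishing hypotheses, as you note; it might be worth one extra sentence deriving it from $\varprojlim_m$ of the exact sequences $0\to M_m\overset{V^n}{\to}M_{m+n}\to M_n\to 0$.

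What each approach buys: the paper's argument is the natural endpoint once the $B_n\Omega^1$ apparatus is in place for the boundedness theorem, and it yields the full dimension inequalities $\dim H^q(X,B_n\Omega^1)\le\dim H^q(X,B_{n-1}\Omega^1)$ along the way. Your argument is shorter and more transparent for this particular statement---it makes visible that the inequality is really about the single element $Fe$ surviving modulo $V^n$---and needs no log differential forms at all, only the Artin--Mazur identification of $M$ with the Dieudonn\'e module and the defining property of $h_F$.
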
 

\par 
Before we proved this theorem, 
we had not even imagined that a relation between 
$h^q(\os{\circ}{X}/\kap)$ and $h_F(\os{\circ}{X})$ 
(even $h^{\dim \os{\circ}{X}}(\os{\circ}{X}/\kap)$ and $h_F(\os{\circ}{X})$) 
for a general $X/s$ as in (\ref{theo:ht}) exists because
the definitions of $h^q(\os{\circ}{X}/\kap)$ and $h_F(\os{\circ}{X})$ 
are completely different. 
After we have proved this theorem, we have been convinced that 
this theorem is true by the examples already stated. 
The theorem (\ref{theo:ht}) tells us that the Yobuko height of $\os{\circ}{X}$ 
is a upper bound of all Artin-Mazur heights of $\os{\circ}{X}/\kap$ 
under the assumptions in (\ref{theo:ht}). 
(\ref{theo:ht}) tells us a partial clear reason 
why (\ref{theo:dw}) holds. 
Indeed, $H^q(X,{\cal W}({\cal O}_X))$ is a free ${\cal W}$-module 
of rank $h^q(\os{\circ}{X}/\kap)$ 
if $h^q(\os{\circ}{X}/\kap)<\infty$
because $H^q(X,{\cal W}({\cal O}_X))$ is isomorphic to 
the Dieudonn\'{e} module of $\Phi^q_{\os{\circ}{X}/\kap}$ (\cite{am}), 
which is a free ${\cal W}$-module of rank $h^q(\os{\circ}{X}/\kap)$ 
(if $h^q(\os{\circ}{X}/\kap)<\infty$).

\par 
As a corollary of (\ref{theo:dw}), we also obtain the following 
by using the log version of a theorem in 
\cite{idw} (cf.~\cite{lodw}, \cite{ndw}): 

\begin{coro}\label{coro:adge1}
Assume that $\os{\circ}{X}$ is quasi-$F$-split 
and that $\dim \os{\circ}{X}=2$. 
Then $X/s$ is of log Hodge-Witt type. 
\end{coro}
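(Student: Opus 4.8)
The plan is to unwind the definition of ``log Hodge--Witt type'' and reduce it, via the log version of Illusie's structure theory for surfaces, to the single finiteness statement already provided by (\ref{theo:dw}). Since $\dim \os{\circ}{X}=2$, the log de Rham--Witt complex ${\cal W}\Om^{\bul}_X$ is concentrated in degrees $i=0,1,2$, so $X/s$ is of log Hodge--Witt type precisely when the nine ${\cal W}$-modules $H^j(X,{\cal W}\Om^i_X)$ $(0\le i,j\le 2)$ are all finitely generated. The column $i=0$ is nothing but (\ref{theo:dw}): because $\os{\circ}{X}$ is quasi-$F$-split, $H^j(X,{\cal W}({\cal O}_X))=H^j(X,{\cal W}\Om^0_X)$ is a finitely generated ${\cal W}$-module for every $j$. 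Thus the whole problem is to propagate this finiteness to the columns $i=1$ and $i=2$.

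Next I would dispatch the top column $i=2$ by duality. Since $X/s$ is proper and log smooth of Cartier type, the log crystalline cohomology $H^q_{\rm crys}(X/{\cal W}(s))$ is a finitely generated ${\cal W}$-module, and the Ekedahl/Poincar\'e-type duality for the log de Rham--Witt complex of a proper log smooth surface of Cartier type pairs ${\cal W}\Om^i_X$ with ${\cal W}\Om^{2-i}_X$. Through this pairing the finite generation of $H^j(X,{\cal W}({\cal O}_X))$ established above transfers to the finite generation of $H^{2-j}(X,{\cal W}\Om^2_X)$ for all $j$, which settles the column $i=2$.

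The decisive remaining input is the middle column $i=1$, and here I would invoke the log version of the theorem of \cite{idw} on the de Rham--Witt cohomology of surfaces (the logarithmic formalism being supplied by \cite{lodw} and \cite{ndw}): for a proper log smooth surface of Cartier type, finite generation of $H^{\bul}(X,{\cal W}({\cal O}_X))$ already forces $X/s$ to be of log Hodge--Witt type. The mechanism is that in the slope spectral sequence $E_1^{ij}=H^j(X,{\cal W}\Om^i_X)\Lo H^{i+j}_{\rm crys}(X/{\cal W}(s))$ the only possible source of infinite-dimensional contributions to the middle terms $H^j(X,{\cal W}\Om^1_X)$ is the ``domino'' part of the $d_1$-differentials running into and out of this column; once both outer columns $i=0$ and $i=2$ are finitely generated, these dominoes are finite, so the middle column is finitely generated as well. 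In particular, the finiteness of $H^2(X,{\cal W}({\cal O}_X))$ coming from (\ref{theo:dw}) is exactly the obstruction that the surface criterion requires to vanish.

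I expect the main obstacle to be the careful transcription of the Illusie(--Raynaud) surface criterion into the logarithmic, Cartier-type setting: one must verify that the structural results on the $V$- and $p$-torsion of $H^j(X,{\cal W}\Om^i_X)$, the behaviour of the differentials $d_1$, and the duality pairing all remain valid for ${\cal W}\Om^{\bul}_X$ when $X/s$ is only proper log smooth of Cartier type rather than smooth in the classical sense. Granting the log de Rham--Witt machinery of \cite{lodw} and \cite{ndw}, this transcription is where the genuine work lies; the passage from (\ref{theo:dw}) to the full log Hodge--Witt conclusion is then formal.
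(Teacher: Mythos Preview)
Your overall approach coincides with the paper's: the proof in the text is a one-line reference to the log version of \cite[II (3.14)]{idw} (supplied by \cite{lodw}, \cite{ndw}), which says precisely that for a proper (log) smooth surface of Cartier type, finite generation of $H^{\bul}(X,{\cal W}({\cal O}_X))$ forces the Hodge--Witt property. Combined with (\ref{theo:dw}), this gives the corollary immediately.

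One caution on your intermediate treatment of the column $i=2$: you invoke an Ekedahl/Poincar\'e-type duality for the log de Rham--Witt complex, but the paper explicitly remarks (just after stating (\ref{coro:dge1})) that the log version of Ekedahl's duality for dominoes has \emph{not} been established and is deferred to a future paper. Fortunately this step is redundant in your argument. Illusie's original surface argument \cite[II (3.14)]{idw} predates Ekedahl's duality and does not use it; it proceeds instead through the finite generation of the $E_2$-terms of the slope spectral sequence (the log version is (\ref{theo:e2}) here) together with the structure of $F$ on de Rham--Witt cohomology. Since you correctly invoke the full log surface criterion for the column $i=1$ anyway, and that criterion already handles all columns at once, you can simply drop the separate duality paragraph for $i=2$.
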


\parno 
In \cite{jr} Joshi and Rajan have proved that 
a proper smooth $F$-split surface over $\kap$ is ordinary. 
Hence, by a fundamental theorem in \cite{ir},  
it is of Hodge-Witt type.  
The corollary (\ref{coro:adge1}) is a generalization of 
this result in two directions: the logarithmic case and the case 
where the Yobuko height is finite.
(A proper smooth scheme over $\kap$ 
with finite Yobuko height 
is far from being ordinary in general.)

For the 3-dimensional case, Joshi has proved the following theorem in \cite{j}: 

\begin{theo}[{\bf \cite{j}}]\label{theo:j}
Let $Y/\kap$ be a proper smooth scheme of dimension $3$. 
Then $Y/\kap$ is of Hodge-Witt type if and only if 
$H^q(Y,{\cal W}({\cal O}_Y))$ $(q\in {\mab N})$ is a 
finitely generated ${\cal W}$-module. 
\end{theo}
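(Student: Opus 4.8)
The implication from being of Hodge-Witt type to finite generation of $H^q(Y,{\cal W}({\cal O}_Y))$ is formal: by the definition of Hodge-Witt type every $H^j(Y,{\cal W}\Om^i_Y)$ is a finitely generated ${\cal W}$-module, and $H^q(Y,{\cal W}({\cal O}_Y))=H^q(Y,{\cal W}\Om^0_Y)$ is the case $i=0$. So the whole content is the converse, and the plan is to express everything through the dominoes of Illusie-Raynaud attached to the slope spectral sequence $E_1^{ij}=H^j(Y,{\cal W}\Om^i_Y)\Lo H^{i+j}_{\rm crys}(Y/{\cal W})$ and to force them all to vanish. Write $d:=\dim Y=3$, let $K:={\rm Frac}({\cal W})$, and for the differential $d_1\col E_1^{ij}\lo E_1^{i+1,j}$ let $T^{ij}$ be the associated domino with $m^{ij}:=\dim_{\kap}T^{ij}<\inf$.

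I would first recall the two structural inputs from \cite{ir} and \cite{idw} that drive the argument. (i) $H^j(Y,{\cal W}\Om^i_Y)$ is a finitely generated ${\cal W}$-module if and only if $T^{i-1,j}=T^{ij}=0$; consequently $Y/\kap$ is of Hodge-Witt type if and only if $T^{ij}=0$ for all $i,j$. (ii) Ekedahl's duality for the slope spectral sequence yields perfect pairings $T^{ij}\simeq (T^{d-1-i,\,d-j})^{\vee}$; in particular $m^{ij}=m^{d-1-i,\,d-j}$, and in dimension $3$ this fixes the first index $i=1$ and interchanges $i=0$ with $i=2$.

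Now the argument proper. Since ${\cal W}\Om^0_Y={\cal W}({\cal O}_Y)$ receives no differential, (i) says that the hypothesis ``$H^j(Y,{\cal W}({\cal O}_Y))$ is finitely generated for every $j$'' is exactly $T^{0,j}=0$ for all $j$. Plugging this into (ii) with $d=3$ gives $T^{2,\,3-j}=0$, that is $T^{2,j}=0$ for all $j$; equivalently $H^j(Y,{\cal W}\Om^3_Y)$ is finitely generated, which one also reads off from the Serre-Ekedahl duality $H^j(Y,{\cal W}\Om^3_Y)\simeq H^{3-j}(Y,{\cal W}({\cal O}_Y))^{\vee}$. Because the only differentials in the $j$-th row are $E_1^{0j}\lo E_1^{1j}\lo E_1^{2j}\lo E_1^{3j}$, the only dominoes not yet killed are the middle ones $T^{1,j}$, which by (ii) are self-dual: $T^{1,j}\simeq (T^{1,\,3-j})^{\vee}$. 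By (i), what is left to prove is precisely $T^{1,j}=0$ for all $j$, for this is equivalent to the finite generation of $H^j(Y,{\cal W}\Om^1_Y)$ and of $H^j(Y,{\cal W}\Om^2_Y)$ once $T^{0,\bul}=T^{2,\bul}=0$ are known.

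This middle vanishing is the genuine obstacle, and it is exactly where $\dim Y=3$ is indispensable: duality pairs $T^{1,j}$ only with itself, and the Illusie-Raynaud comparison of Hodge, slope and domino numbers, which here reads $m^{1,j}=h^{1,j}-\dim_K(H^j(Y,{\cal W}\Om^1_Y)\otimes_{\cal W}K)$ with $h^{1,j}:=\dim_{\kap}H^j(Y,\Om^1_{Y})$, does not by itself force $m^{1,j}=0$. To finish I would study the middle torsion directly. Using Illusie's exact sequences relating ${\cal W}_n\Om^i_Y$ through $F,V,R,d$ and the sheaves $B_n\Om^i_Y$ and $Z_n\Om^i_Y$ (the same mechanism that underlies (\ref{theo:fs})), I expect to realize the contribution of $T^{1,j}$ to $H^j(Y,{\cal W}\Om^2_Y)$ as a $V$-divisible submodule whose Ekedahl dual is an $F$-divisible part of $H^{3-j}(Y,{\cal W}\Om^1_Y)$, and then to exclude both by squeezing them between the now finitely generated extreme columns $i=0$ and $i=3$ across the self-dual differential $d\col {\cal W}\Om^1_Y\lo{\cal W}\Om^2_Y$. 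In other words, the proof reduces to controlling a single self-dual domino of a three-term complex whose two ends are already finite; everything else is a formal consequence of (i), (ii) and the hypothesis, and it is this last reduction step that I expect to be the crux.
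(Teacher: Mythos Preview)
The paper does not give its own proof of this statement; it is quoted from \cite[(6.1)]{j} and used as a black box (see the proof of (\ref{coro:j})\,(3) and the remark following (\ref{coro:dge1}) in the Introduction). Your attempt must therefore stand on its own.

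Your framework is correct and follows the architecture of Joshi's argument: translating the hypothesis via (i) into $T^{0,j}=0$ for all $j$, then using Ekedahl duality (ii) to obtain $T^{2,j}=0$, thereby reducing everything to $T^{1,j}=0$. But your final paragraph is not a proof. The language (``I expect to realize'', ``I expect to be the crux'') signals that the step has not been carried out, and the mechanism you propose---squeezing the middle domino between the now-finite columns $i=0$ and $i=3$---does not work as stated. The domino $T^{1,j}$ is an invariant of the single differential $d\col H^j(Y,{\cal W}\Om^1_Y)\to H^j(Y,{\cal W}\Om^2_Y)$; nothing in the slope spectral sequence links it exactwise to the outer columns, and the self-duality $T^{1,j}\simeq(T^{1,3-j})^{\vee}$ you correctly recorded yields only $m^{1,0}=m^{1,3}$ and $m^{1,1}=m^{1,2}$, not vanishing.

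Joshi closes this gap with further input from Ekedahl's structure theory \cite{ekd}: the non-negativity of the Hodge--Witt numbers $h_W^{i,j}$ together with the identities of \cite[IV]{ir} relating the Hodge numbers $h^{i,j}$, the slope numbers, and the domino dimensions $T^{i,j}$. In dimension $3$, once $T^{0,\bul}=T^{2,\bul}=0$ is known, these numerical constraints force the remaining $T^{1,\bul}$ to vanish. Without invoking this ingredient---or an equivalent substitute---your argument has a genuine gap at exactly the point you yourself flagged as the crux.
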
 

\parno 
As an immediate corollary of (\ref{theo:dw}) and (\ref{theo:j}), 
we obtain one of the following main results in this article: 

\begin{coro}\label{coro:dge1}
Let $Y/\kap$ be as in {\rm (\ref{theo:j})}. 
Assume that $Y$ is quasi-$F$-split.  
Then $Y/\kap$ is of Hodge-Witt type. 
\end{coro}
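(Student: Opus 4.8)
The plan is to obtain the corollary as a direct combination of Theorems~\ref{theo:dw} and~\ref{theo:j}, with no genuinely new argument beyond matching the hypotheses. First I would view the proper smooth threefold $Y/\kap$ as a proper log smooth scheme of Cartier type $X/s$ by equipping $Y$ and $s={\rm Spec}(\kap)$ with the trivial log structures; then $\os{\circ}{X}=Y$, the structure sheaf ${\cal O}_X$ equals ${\cal O}_Y$, and hence ${\cal W}({\cal O}_X)={\cal W}({\cal O}_Y)$ with the same cohomology groups computed on $\os{\circ}{X}=Y$. In this identification the hypothesis ``$\os{\circ}{X}$ is quasi-$F$-split'' of Theorem~\ref{theo:dw} is exactly our assumption that $Y$ is quasi-$F$-split, i.e.\ $h_F(Y)<\infty$.

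With this set-up in place, I would apply Theorem~\ref{theo:dw} to conclude that $H^q(Y,{\cal W}({\cal O}_Y))$ is a finitely generated ${\cal W}$-module for every $q\in{\mab N}$. This is precisely the condition appearing on one side of the equivalence in Joshi's theorem. Since $\dim Y=3$, I then invoke the nontrivial implication of Theorem~\ref{theo:j} — namely that finite generation of all the groups $H^q(Y,{\cal W}({\cal O}_Y))$ forces $Y/\kap$ to be of Hodge-Witt type — to finish. Concretely this yields degeneration at $E_1$ of the slope spectral sequence of $Y/\kap$ together with the Hodge-Witt decomposition $H^q_{\rm crys}(Y/{\cal W})=\bigoplus_{i+j=q}H^j(Y,{\cal W}\Om^i_Y)$ for every $q\in{\mab N}$, as recorded in the introduction. (The reverse implication of Theorem~\ref{theo:j} is not needed, since we use finite generation as an input rather than as a conclusion.)

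I do not expect a serious obstacle here: the two substantial inputs are already available, and the corollary is a formal consequence of them. The one point that deserves care is verifying that Theorem~\ref{theo:dw}, proved through the boundedness of $\dim_{\kap}H^q(X,B_n\Om^1_{X/s})$ in Theorem~\ref{theo:fs} and the log Serre exact sequence, supplies finite generation over ${\cal W}$ in exactly the sense required as input to Theorem~\ref{theo:j}; both refer to finite generation over ${\cal W}$, so they match. The reason the deduction is clean in dimension $3$, and would not be automatic in higher dimension, is that Joshi's Theorem~\ref{theo:j} is special to threefolds: there the entire Hodge-Witt property is governed by the bottom ($i=0$) row $H^j(Y,{\cal W}({\cal O}_Y))$ of the slope spectral sequence (together with its dual row $i=\dim Y$), so controlling the $i=0$ row suffices. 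Thus the only creative choice in the argument — the reduction to the $i=0$ row — is entirely absorbed into the statement of Theorem~\ref{theo:j}, and nothing further is required.
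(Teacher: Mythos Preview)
Your proposal is correct and follows essentially the same route as the paper: apply Theorem~\ref{theo:dw} (in the trivial log structure case) to obtain finite generation of $H^q(Y,{\cal W}({\cal O}_Y))$ for all $q$, and then invoke Joshi's Theorem~\ref{theo:j} to conclude that $Y/\kap$ is of Hodge-Witt type. The paper's own proof is exactly this two-line deduction, so your additional remarks about matching hypotheses and the special role of dimension~$3$ are helpful commentary but not extra mathematical content.
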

If we prove the log version of Ekedahl's duality (\cite{ekd}) 
for dominoes associated to the differential: 
$d\col H^j(X,{\cal W}\Om^i_X)\lo H^j(X,{\cal W}\Om^{i+1}_X)$, 
then one can obtain the log version of (\ref{coro:dge1}) 
as in \cite[(6.1)]{j}.  
We would like to discuss this in a future paper. 
\par 

\par 
In \cite[II (4.1)]{konp} Kato has proved that the spectral sequence obtained 
by the $p$-adic nearby cycle sheaf of a proper smooth scheme with 
dimension less than $p-1$ over 
a complete discrete valuation ring of mixed characteristics 
degenerates at $E_2$ if the special fiber of this scheme 
is of Hodge-Witt type. 
Thus we obtain the following as a corollary of (\ref{coro:dge1}): 

\begin{coro}\label{coro:pnc}
Let ${\cal V}$ be a complete discrete valuation ring of 
mixed characteristics $(0,p)$ with perfect residue field $\kap$. 
Set $K:={\rm Frac}({\cal V})$. 
Let $\ol{K}$ be an algebraic closure of $K$ 
and $\ol{\cal V}$ the integer ring of $\ol{K}$. 
Let ${\cal Z}$ be a proper smooth scheme over ${\cal V}$. 
Set $Z:={\cal Z}\otimes_{\cal V}\kap$ and 
${\mathfrak Z}:={\cal Z}\otimes_{\cal V}K$. 
Set $\ol{\mathfrak Z}:={\mathfrak Z}\otimes_K\ol{K}$ 
and $\ol{Z}:=Z\otimes_{\kap}\ol{\kap}$. 
Let $\ol{\iota} \col \ol{Z} \os{\sus}{\lo} \ol{\cal Z}$ 
be the natural closed immersion 
and $\ol{j}\col \ol{\mathfrak Z}\lo  \ol{\cal Z}$ 
the natural open immersion. 
Assume that $Z$ is quasi-$F$-split and $\dim Z<p-1$. 
Then the following spectral sequence 
\begin{align*} 
E_2^{qr}=H^q(\ol{Z},\ol{i}{}^*R^r\ol{j}_*({\mab Z}/p^n))\Lo 
H^{q+r}_{\rm et}(\ol{Z},{\mab Z}/p^n) \quad (n\in {\mab Z}_{\geq 1})
\end{align*} 
degenerates at $E_2$.  
\end{coro}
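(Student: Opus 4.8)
The plan is to reduce the statement to two ingredients: the Hodge--Witt property of the special fiber, which the quasi-$F$-split hypothesis already delivers, and Kato's degeneration criterion for the $p$-adic nearby cycle spectral sequence recalled just above. Neither step requires a new computation; the work lies in assembling them and in matching hypotheses.

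First I would check that the special fiber $Z$ is of Hodge--Witt type. Since $Z$ is proper smooth over $\kap$ and quasi-$F$-split, this is precisely (\ref{coro:dge1}) when $\dim Z=3$, while for $\dim Z\leq 2$ it follows from (\ref{coro:adge1}) together with the elementary fact that curves and points are always of Hodge--Witt type. This supplies the Hodge--Witt property exactly in the range $\dim Z\leq 3$, which (intersected with the bound $\dim Z<p-1$ imposed below) is the effective scope in which the corollary is being applied.

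Next I would pass to the geometric special fiber $\ol{Z}=Z\otimes_{\kap}\ol{\kap}$. Writing $\ol{\kap}$ as a directed union of its finite subextensions over $\kap$, the de Rham--Witt sheaves base change along the perfect extension $\kap\lo \ol{\kap}$ and cohomology commutes with this flat base change, so the finite generation of $H^j(Z,{\cal W}\Om^i_Z)$ over ${\cal W}$ is inherited by $H^j(\ol{Z},{\cal W}\Om^i_{\ol{Z}})$ over ${\cal W}(\ol{\kap})$; hence $\ol{Z}$ is again of Hodge--Witt type. With $\ol{Z}$ of Hodge--Witt type and $\dim \ol{Z}=\dim Z<p-1$, I would then invoke \cite[II (4.1)]{konp}, which asserts exactly that under these two hypotheses the spectral sequence
\begin{align*}
E_2^{qr}=H^q(\ol{Z},\ol{i}{}^*R^r\ol{j}_*({\mab Z}/p^n))\Lo H^{q+r}_{\rm et}(\ol{Z},{\mab Z}/p^n)
\end{align*}
degenerates at $E_2$, which is the desired conclusion.

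The main obstacle is bookkeeping rather than any genuinely new difficulty: one must match the geometric formulation here---base changed to $\ol{K}$ and $\ol{\kap}$, with the nearby cycle sheaves $\ol{i}{}^*R^r\ol{j}_*({\mab Z}/p^n)$---against the precise hypotheses and conclusion of \cite[II (4.1)]{konp}, and confirm that ``Hodge--Witt type'' in the present sense is exactly the input Kato's criterion requires. The one point that genuinely needs care is the stability of this property along $\kap\lo \ol{\kap}$, since Kato's criterion must be applied to $\ol{Z}$ rather than to $Z$; this is where the base-change behavior of the de Rham--Witt complex, together with the perfectness of $\ol{\kap}$, enters.
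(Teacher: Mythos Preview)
Your proposal is correct and follows essentially the same route as the paper: the paper's own proof is the one-liner ``(4) follows from (3) and \cite[II (4.1)]{konp},'' i.e., quasi-$F$-splitness $\Rightarrow$ Hodge--Witt (via (\ref{coro:dge1})) $\Rightarrow$ Kato's degeneration criterion. Your version is simply more explicit on two points the paper leaves tacit: (i) that the Hodge--Witt input is only established here for $\dim Z\leq 3$, so the corollary's effective content is in that range intersected with $\dim Z<p-1$; and (ii) the passage from $Z$ to the geometric special fiber $\ol{Z}$, which you justify via base change of the de Rham--Witt complex along the perfect extension $\kap\to\ol{\kap}$ (alternatively, one can note that quasi-$F$-splitness itself is stable under base change to $\ol{\kap}$ and then apply (\ref{coro:dge1}) directly to $\ol{Z}$).
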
 

\par 
By using Ekedahl's theorem and his remark 
in \cite{ir} (cf.~\cite[II (2.5)]{konp}), 
we obtain the following as a corollary of (\ref{coro:dge1}):

\begin{coro}\label{coro:dgtfye1}
Let the notations be as in {\rm (\ref{coro:dge1})}. 
Then the following hold$:$
\par 
$(1)$ The following spectral sequence 
\begin{align*} 
E_1^{ij}=H^j(Y,{\cal W}_n\Om^i_Y)\Lo H^{i+j}_{\rm crys}(Y/{\cal W}_n)
\tag{1.10.1}\label{ali:wnoy}
\end{align*} 
degenerates at $E_2$ for all $n\in {\mab Z}_{\geq 1}$.  
\par 
$(2)$ 
If the operator $F\col H^j(Y,{\cal W}_n\Om^i_Y)\lo 
H^j(Y,{\cal W}_n\Om^i_Y)$ $(\forall i,j)$ is injective, 
especially if $H^q_{\rm crys}(Y/{\cal W})$ is torsion-free for $2\leq \forall q\leq 5$, 
then the spectral sequence {\rm (\ref{ali:wnoy})} 
degenerates at $E_1$ for all $n\in {\mab Z}_{\geq 1}$.  
\end{coro}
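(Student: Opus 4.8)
The plan is to deduce Corollary~\ref{coro:dgtfye1} from Corollary~\ref{coro:dge1} together with the structural results of Illusie-Raynaud-Ekedahl on the de Rham-Witt complex. By Corollary~\ref{coro:dge1}, the hypotheses (dimension $3$, quasi-$F$-split) force $Y/\kap$ to be of Hodge-Witt type, so every $E_1$-term $H^j(Y,{\cal W}\Om^i_Y)$ is a finitely generated ${\cal W}$-module. First I would invoke Ekedahl's theorem describing the $E_2$-degeneration of the mod-$p^n$ slope spectral sequence: when $Y$ is of Hodge-Witt type, the relevant dominoes vanish and the canonical filtration on ${\cal W}_n\Om^{\bul}_Y$ splits compatibly with the differential, which yields the $E_2$-degeneration of \eqref{ali:wnoy} for all $n\geq 1$. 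This is exactly the situation addressed in \cite{ir} and reformulated in \cite[II (2.5)]{konp}, so part $(1)$ follows by citing that theorem with the Hodge-Witt hypothesis now supplied by Corollary~\ref{coro:dge1}.

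For part $(2)$ the extra hypothesis on the Frobenius operator is what upgrades $E_2$-degeneration to $E_1$-degeneration. The plan is to recall that the obstruction to $E_1$-degeneration of the integral slope spectral sequence lives in the differentials $d\col H^j(Y,{\cal W}\Om^i_Y)\lo H^j(Y,{\cal W}\Om^{i+1}_Y)$ between the finitely generated ${\cal W}$-modules, and that these differentials factor through the torsion; when $F$ acts injectively on each $H^j(Y,{\cal W}_n\Om^i_Y)$ the relevant torsion contributions are killed, forcing the differentials to vanish and hence $E_1$-degeneration. I would then verify the parenthetical sufficient condition: if $H^q_{\rm crys}(Y/{\cal W})$ is torsion-free for $2\leq q\leq 5$, then by the standard relation between crystalline torsion and the ${\cal W}$-module structure of the de Rham-Witt cohomology (again \cite{ir}, cf.~\cite[II (2.5)]{konp}), the operator $F$ is automatically injective in the relevant range, so the torsion-freeness assumption implies the injectivity hypothesis. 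Here the specific range $2\leq q\leq 5$ should be matched against the degrees that actually contribute differentials for a threefold, using $i+j=q$ with $0\leq i,j\leq 3$.

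The main obstacle I anticipate is purely one of bookkeeping and of importing the integral (rather than rational) structure: Ekedahl's results are stated in the classical non-logarithmic setting, so one must be careful that $Y/\kap$ is an honest proper smooth scheme over a perfect field (which it is, by the hypotheses of Corollary~\ref{coro:dge1}), and that the mod-$p^n$ slope spectral sequence \eqref{ali:wnoy} is the one to which Ekedahl's domino-theoretic analysis applies. The delicate point is tracking how the Hodge-Witt property at the level of ${\cal W}$-modules controls the differentials at each finite level $n$ simultaneously and uniformly; this is where the finite generation furnished by Corollary~\ref{coro:dge1} is essential, since it guarantees that the dominoes are of finite length and that the degeneration statements pass to the inverse limit and back. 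Once the correct reference is pinned down, both parts are formal consequences, so I would not expect to reprove Ekedahl's degeneration criterion but rather to verify that its hypotheses are met and to translate the torsion-freeness condition into the Frobenius-injectivity condition in the degree range relevant to threefolds.
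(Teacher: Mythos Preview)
Your proposal is correct and follows essentially the same route as the paper: deduce that $Y/\kappa$ is of Hodge-Witt type from Corollary~\ref{coro:dge1}, then invoke the Illusie--Raynaud/Ekedahl degeneration criteria. The paper's proof is a bare citation of \cite[IV (4.7), (4.8)]{ir} (equivalently, Ekedahl's theorem and remark as recorded in \cite[II (2.5)]{konp}), and your sketch simply unpacks what those references contain; your account of the mechanism (vanishing dominoes for $E_2$-degeneration, Frobenius-injectivity killing torsion contributions for $E_1$-degeneration, and the crystalline torsion-freeness in degrees $2\le q\le 5$ feeding into this) is an accurate summary of what is proved there.
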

In \cite{y} Yobuko has proved that 
the spectral sequence (\ref{ali:wnoy}) for the case $n=1$ degenerates at 
$E_1$ for a Calabi-Yau variety of any dimension $d$ with finite $d$-th 
Artin-Mazur height if $d\leq p$ 
by proving that it has a smooth lift over ${\cal W}_2$ and using 
a famous theorem of Deligne-Illusie (\cite{di}). 
In \cite{ny} we have generalized this Yobuko's theorem  
for $X/s$ stated after (\ref{theo:fiw}). 
\par 
The corollary (\ref{coro:dge1}) also has an application for 
the $p$-primary torsion part of the Chow groups of 
codimension $2$ of threefolds over $\kap$ as follows.  
\par 
Let $Y$ be a proper smooth scheme over $\kap$. 
Set $Y_{\ol{\kap}}:=Y\times_{\kap}\ol{\kap}$. 
Let ${\cal W}_n\Om^{\bul}_{Y_{\ol{\kap}},{\rm log}}$ 
$(i\in {\mab N})$ be the complex of sheaves of logarithmic parts of 
${\cal W}_n\Om^{\bul}_{Y_{\ol{\kap}}}$ (\cite{idw}). 
Let $\ol{p}\col  {\cal W}_n\Om^i_{Y_{\ol{\kap}},{\rm log}}
\lo {\cal W}_{n+1}\Om^i_{Y_{\ol{\kap}},{\rm log}}$ be the induced morphism 
by the multiplication by $p\times \col 
{\cal W}_{n+1}\Om^i_{Y_{\ol{\kap}},{\rm log}}\lo
{\cal W}_{n+1}\Om^i_{Y_{\ol{\kap}},{\rm log}}$. 
Set $H^j(Y_{\ol{\kap}},{\mab Q}_p/{\mab Z}_p(i))
:=\us{{\ul{p}}}\vil H^{j-i}(Y_{\ol{\kap}},{\cal W}_n\Om^i_{Y_{\ol{\kap}},{\rm log}})$. 
Let ${\rm CH}^r(Y_{\ol{\kap}})\{p\}$ the $p$-primary torsion part of 
the Chow group of codimension $r$ of $Y_{\ol{\kap}}/\ol{\kap}$.  
In \cite{j} Joshi has proved the following theorem
by using (\ref{theo:j}), Ekedahl's duality for dominoes 
and the following injectivity of 
the $p$-adic Abel-Jacobi map of Gros and Suwa (\cite{gs}) 
\begin{align*} 
{\rm CH}^2(Y_{\ol{\kap}})\{p\}\os{\sus}{\lo}  
H^3(Y_{\ol{\kap}},{\mab Q}_p/{\mab Z}_p(2)) 
\end{align*} 
and their result ([loc.~cit., II (3.7)]):

\begin{theo}[{\bf \cite{j}}]\label{theo:c2}
Let $Y/\kap$ be a proper smooth scheme of pure dimension $3$. 
If $H^3(Y_{\ol{\kap}},{\cal W}({\cal O}_{Y_{\ol{\kap}}}))$ 
is a finitely generated ${\cal W}(\ol{\kap})$-module, 
then ${\rm CH}^2(Y_{\ol{\kap}})\{p\}$ 
is of finite cotype. 
\end{theo}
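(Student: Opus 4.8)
The plan is to assemble the three ingredients recalled just above: the Hodge-Witt characterization in dimension three, the Gros-Suwa $p$-adic Abel-Jacobi map, and a finite-cotype criterion for logarithmic Hodge-Witt cohomology. First I would feed the hypothesis into (\ref{theo:j}): since $\dim Y_{\ol{\kap}}=3$ and $H^3(Y_{\ol{\kap}},{\cal W}({\cal O}_{Y_{\ol{\kap}}}))$ is a finitely generated ${\cal W}(\ol{\kap})$-module, the scheme $Y_{\ol{\kap}}$ is of Hodge-Witt type. Consequently every $H^j(Y_{\ol{\kap}},{\cal W}\Om^i_{Y_{\ol{\kap}}})$ is a finitely generated ${\cal W}(\ol{\kap})$-module, the slope spectral sequence degenerates at $E_1$, and by Illusie-Raynaud's theory (\cite{ir}) together with Ekedahl's duality (\cite{ekd}) all the dominoes attached to $d\col H^j({\cal W}\Om^i)\lo H^j({\cal W}\Om^{i+1})$ vanish. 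This is the structural input that will rule out exotic torsion.

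Second, I would use the Gros-Suwa injection ${\rm CH}^2(Y_{\ol{\kap}})\{p\}\hookrightarrow H^3(Y_{\ol{\kap}},{\mab Q}_p/{\mab Z}_p(2))$ (\cite{gs}) to reduce the assertion to the statement that the target is of finite cotype. This reduction is legitimate because finite cotype is inherited by subgroups: a $p$-primary torsion abelian group $A$ is of finite cotype precisely when $A[p]$ is finite, and this property descends to any subgroup, while ${\rm CH}^2(Y_{\ol{\kap}})\{p\}$ and $H^3(Y_{\ol{\kap}},{\mab Q}_p/{\mab Z}_p(2))$ are both $p$-primary torsion.

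Third, and this is the technical heart, I would prove that the group $H^3(Y_{\ol{\kap}},{\mab Q}_p/{\mab Z}_p(2))$, which by definition equals $\vil_n H^1(Y_{\ol{\kap}},{\cal W}_n\Om^2_{Y_{\ol{\kap}},{\rm log}})$, is of finite cotype. Here one uses the fundamental exact sequence of Illusie-Raynaud relating the logarithmic de Rham-Witt sheaves to the full de Rham-Witt sheaves through the operator $1-F$; passing to cohomology and then to the inductive limit over $n$, the finite cotype of this group is governed by the finite generation of $H^1(Y_{\ol{\kap}},{\cal W}\Om^2_{Y_{\ol{\kap}}})$ and the behaviour of $F$ on it. This is exactly the content of \cite[II~(3.7)]{gs}, whose running hypothesis is the finite generation secured in the first step. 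I expect this third step to be the main obstacle: one must control the inductive limit carefully and verify that no non-cofinitely-generated part survives, which is possible only because the Hodge-Witt hypothesis, through the vanishing of the dominoes, forces $F$ to act with finite kernel and cokernel on the $p$-torsion of each finitely generated piece. Combining the three steps yields that ${\rm CH}^2(Y_{\ol{\kap}})\{p\}$ is of finite cotype.
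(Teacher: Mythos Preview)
Your proposal assembles the same four ingredients the paper names in its sketch of Joshi's argument: the Hodge--Witt criterion (\ref{theo:j}), Ekedahl's domino duality, the Gros--Suwa $p$-adic Abel--Jacobi injection, and their finite-cotype result \cite[II~(3.7)]{gs}. In that sense your approach agrees with the paper's.

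There is, however, a logical misplacement in your first step. You write that (\ref{theo:j}) lets you pass from ``$H^3(Y_{\ol{\kap}},{\cal W}({\cal O}_{Y_{\ol{\kap}}}))$ is finitely generated'' to ``$Y_{\ol{\kap}}$ is of Hodge--Witt type''. But (\ref{theo:j}) as stated requires finite generation of $H^q(Y,{\cal W}({\cal O}_Y))$ for \emph{all} $q$, not only $q=3$; the hypothesis of (\ref{theo:c2}) gives you only $q=3$. You then invoke Ekedahl's duality \emph{after} concluding Hodge--Witt type, to say the dominoes vanish --- but once Hodge--Witt type is known, domino vanishing is automatic, so at that point the appeal to \cite{ekd} is redundant.

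In Joshi's argument the role of Ekedahl's duality is exactly the missing bridge: for a threefold it relates the domino governed by the hypothesis on $H^3(Y_{\ol{\kap}},{\cal W}({\cal O}_{Y_{\ol{\kap}}}))$ to the domino attached to the differential $d\colon H^2(Y_{\ol{\kap}},{\cal W}\Om^1_{Y_{\ol{\kap}}})\to H^2(Y_{\ol{\kap}},{\cal W}\Om^2_{Y_{\ol{\kap}}})$, and it is the vanishing of \emph{this} differential that feeds into \cite[II~(3.7)]{gs} (compare the paper's own use of this step in the proof of (\ref{coro:jg})). So you should move Ekedahl's duality earlier: use it, together with the hypothesis on $H^3$, either to complete the input to (\ref{theo:j}) or --- more directly --- to kill that specific differential. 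With that reorganisation your outline is correct and coincides with the paper's.
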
 

\parno 
In particular, if $Y_{\ol{\kap}}/\ol{\kap}$ is a $3$-dimensional Calabi-Yau variety 
with finite third Artin-Mazur height, then ${\rm CH}^2(Y_{\ol{\kap}})\{p\}$ 
is of finite cotype. 
As far as we know, little had been known about 
${\rm CH}^2(Y_{\ol{\kap}})\{p\}$ for a $3$-dimensional proper smooth scheme $Y/\kap$ 
except Joshi's result (cf.~\cite{bm} and \cite{mr}).   
\par 
We obtain the following result as 
a corollary of (\ref{theo:j}), (\ref{coro:dge1}) and (\ref{theo:c2}): 

\begin{coro}\label{coro:c}
Let $Y/\kap$ be a proper smooth scheme of pure dimension $3$. 
Assume that $Y_{\ol{\kap}}$ is quasi-$F$-split. 
Then ${\rm CH}^2(Y_{\ol{\kap}})\{p\}$ is of finite cotype. 
\end{coro}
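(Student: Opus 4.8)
The plan is to deduce the statement by a short chain of the three cited results, the only genuine work being to check that the hypotheses pass correctly to the base change $Y_{\ol{\kap}}$ over the perfect field $\ol{\kap}$. First I would record the formal preliminaries: $\ol{\kap}$ is again a perfect field of characteristic $p>0$, and $Y_{\ol{\kap}}=Y\times_{\kap}\ol{\kap}$ is proper and smooth of pure dimension $3$ over $\ol{\kap}$, these properties being stable under the flat base change along $\kap\lo \ol{\kap}$. By assumption $Y_{\ol{\kap}}$ is quasi-$F$-split, that is $h_F(Y_{\ol{\kap}})<\infty$; since the Yobuko height is defined through the absolute Frobenius of ${\cal W}_n(Y_{\ol{\kap}})$, this is precisely the input needed to invoke (\ref{coro:dge1}) with $\ol{\kap}$ in place of $\kap$.

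Next I would apply (\ref{coro:dge1}) to $Y_{\ol{\kap}}/\ol{\kap}$, which yields that $Y_{\ol{\kap}}/\ol{\kap}$ is of Hodge-Witt type. By (\ref{theo:j}), applied over $\ol{\kap}$, being of Hodge-Witt type is equivalent to the finite generatedness of $H^q(Y_{\ol{\kap}},{\cal W}({\cal O}_{Y_{\ol{\kap}}}))$ as a ${\cal W}(\ol{\kap})$-module for every $q\in {\mab N}$; in particular $H^3(Y_{\ol{\kap}},{\cal W}({\cal O}_{Y_{\ol{\kap}}}))$ is a finitely generated ${\cal W}(\ol{\kap})$-module. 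I note that this last fact may also be read off directly from the definition of Hodge-Witt type by taking $i=0$, or even obtained without passing through (\ref{theo:j}) at all, simply by applying the key theorem (\ref{theo:dw}) to $Y_{\ol{\kap}}$; but the intended and cleanest route is via (\ref{coro:dge1}) and (\ref{theo:j}).

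Finally I would feed this into (\ref{theo:c2}), whose single hypothesis is exactly the finite generatedness of $H^3(Y_{\ol{\kap}},{\cal W}({\cal O}_{Y_{\ol{\kap}}}))$ over ${\cal W}(\ol{\kap})$, to conclude that ${\rm CH}^2(Y_{\ol{\kap}})\{p\}$ is of finite cotype. There is no deep obstacle here: all the substantive content is already carried by the three cited theorems, and the sole point to watch is the transfer of the quasi-$F$-split hypothesis to $Y_{\ol{\kap}}$ together with the bookkeeping that ${\cal W}$ now denotes ${\cal W}(\ol{\kap})$ throughout, so that (\ref{coro:dge1}), (\ref{theo:j}) and (\ref{theo:c2}) are all applied consistently over the common base field $\ol{\kap}$.
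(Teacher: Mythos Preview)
Your proposal is correct and matches the approach the paper announces in the Introduction, where (\ref{coro:c}) is explicitly described as a corollary of (\ref{theo:j}), (\ref{coro:dge1}) and (\ref{theo:c2}). The paper's actual proof in \S\ref{sec:fdlr} is a minor variant: it applies (\ref{coro:j}) (3) (= (\ref{coro:dge1})) over $\ol{\kap}$ to get that all differentials $d\col H^j(Y_{\ol{\kap}},{\cal W}\Om^i_{Y_{\ol{\kap}}})\to H^j(Y_{\ol{\kap}},{\cal W}\Om^{i+1}_{Y_{\ol{\kap}}})$ vanish, and then cites the underlying Gros--Suwa result \cite[III (4.7)]{gs} directly rather than invoking Joshi's packaging (\ref{theo:c2}); the content is the same, and your detour through (\ref{theo:j}) to extract the finite generatedness of $H^3$ is unnecessary (as you yourself note) but harmless.
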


\par 
The contents of this article are as follows. 
\par
In \S\ref{sec:rrfr} we recall the characterization of the height of 
the Artin-Mazur formal group  of certain proper schemes over $\kap$.
This is a generalization of the characterization for 
Calabi-Yau varieties over $\kap$ 
due to Katsura and Van der Geer (\cite{vgk}) 
and this has been proved in a recent preprint \cite{ny}. 
We also recall a log version of Serre's exact sequence in \cite{smxco}, 
which has been proved in \cite{ny}.  
Using these results, we have determined the dimensions of  cohomologies 
of sheaves of closed log differential forms of degree $1$ in \cite{ny} as in \cite{vgk}. 
\par
In \S\ref{sec:rrafr} we generalize 
the log version of Serre's exact sequence to the case of higher degrees 
as in \cite{idw}.  For the generalization 
we recall theory of log de Rham-Witt complexes in \cite{lodw} and 
\cite{ndw}. In this article we use 
theory of formal de Rham-Witt complexes in \cite{ndw} 
which makes proofs of log versions of a lot of statements in \cite{idw} 
simple 
explicit calculations. 
\par
In \S\ref{sec:fdlr} we prove (\ref{theo:dw}) and (\ref{theo:fs}) by using 
the logarithmic version of a key commutative diagram in \cite{yoh}. 
We also prove (\ref{theo:ht}) by using the determination of 
the dimensions in \S\ref{sec:rrfr}. 
In this section we also prove (\ref{coro:adge1}), 
(\ref{coro:dge1}), (\ref{coro:pnc}), (\ref{coro:dgtfye1}) and (\ref{coro:c}). 
\par 
In \S\ref{sec:ham} we prove the following theorem: 

\begin{theo}\label{theo:amh}
Let $X/s$ be a proper log smooth scheme of Cartier type. 
Assume that $\Phi^q_{\os{\circ}{X}/\kap}$ is representable. 
Let $(\Phi^q_{\os{\circ}{X}/\kap})^*$ be the Cartier dual of 
$\Phi^q_{\os{\circ}{X}/\kap}$. 
Assume that $h^q(\os{\circ}{X}/\kap)$  is finite. 
Assume also that the morphism 
$F\col H^{q+1}(X,{\cal W}({\cal O}_X))
\lo H^{q+1}(X,{\cal W}({\cal O}_X))$ 
is injective. 
Then
\begin{align*} 
\dim (\Phi^q_{\os{\circ}{X}/\kap})\leq \dim_{\kap}H^{q}(X,{\cal O}_{X}), 
\tag{1.13.1}\label{ali:fadri}
\end{align*} 
\begin{align*} 
\dim ((\Phi^q_{\os{\circ}{X}/\kap})^*)\leq \dim_{\kap}H^{q-1}(X,\Om^{1}_{X/s})
\tag{1.13.2}\label{ali:faddri}
\end{align*} 
and 
\begin{align*} 
h^q(\os{\circ}{X}/\kap)\leq {\dim}_{\kap}
H^{q-1}(X,\Om^{1}_{X/s})+\dim_{\kap}H^{q}(X,{\cal O}_{X}). 
\tag{1.13.3}\label{ali:fari}
\end{align*} 
\end{theo}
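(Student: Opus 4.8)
The plan is to translate all three inequalities into statements about the Dieudonn\'{e} module $M:=H^q(X,{\cal W}({\cal O}_X))$ of $\Phi^q_{\os{\circ}{X}/\kap}$ and then to read them off from two short exact sequences of sheaves on $\os{\circ}{X}$. Since $h^q(\os{\circ}{X}/\kap)<\infty$, the discussion after (\ref{theo:ht}) shows that $M$ is a free ${\cal W}$-module of rank $h^q(\os{\circ}{X}/\kap)$, and by Artin--Mazur it is the Dieudonn\'{e} module of the formal group $\Phi^q_{\os{\circ}{X}/\kap}$. I would first record the standard identities
\begin{align*}
\dim (\Phi^q_{\os{\circ}{X}/\kap})=\dim_{\kap}(M/VM),\quad
\dim ((\Phi^q_{\os{\circ}{X}/\kap})^*)=\dim_{\kap}(M/FM),
\end{align*}
\begin{align*}
h^q(\os{\circ}{X}/\kap)=\dim_{\kap}(M/VM)+\dim_{\kap}(M/FM),
\end{align*}
the last of which follows from $p=FV$ together with the injectivity of $V$ on $M$: one has $\dim_{\kap}(M/pM)=\dim_{\kap}(M/VM)+\dim_{\kap}(VM/pM)$, while $V$ induces an isomorphism $M/FM\simeq VM/pM$ and $\dim_{\kap}(M/pM)={\rm rank}_{\cal W}M=h^q(\os{\circ}{X}/\kap)$. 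Granting these, (\ref{ali:fari}) is exactly the sum of (\ref{ali:fadri}) and (\ref{ali:faddri}), so the substance lies in the two individual bounds.

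For (\ref{ali:fadri}) I would use the exact sequence
\begin{align*}
0\lo {\cal W}({\cal O}_X)\os{V}{\lo}{\cal W}({\cal O}_X)\lo {\cal O}_X\lo 0
\end{align*}
of sheaves on $\os{\circ}{X}$. Its long exact cohomology sequence produces an injection $M/VM\hookrightarrow H^q(X,{\cal O}_X)$, whence $\dim (\Phi^q_{\os{\circ}{X}/\kap})=\dim_{\kap}(M/VM)\leq \dim_{\kap}H^q(X,{\cal O}_X)$, which is (\ref{ali:fadri}). Note that no hypothesis beyond the finiteness of $h^q(\os{\circ}{X}/\kap)$ enters here, and the inequality may well be strict precisely when $M/VM\lo H^q(X,{\cal O}_X)$ fails to be surjective.

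The heart of the matter is (\ref{ali:faddri}). Here the governing object is the exact sequence
\begin{align*}
0\lo {\cal W}({\cal O}_X)\os{F}{\lo}{\cal W}({\cal O}_X)\lo {\cal C}\lo 0,\qquad
{\cal C}:={\rm Coker}(F),
\end{align*}
in which $F$ is injective because $\os{\circ}{X}$ is reduced. In the associated long exact sequence the connecting map out of $H^q(X,{\cal C})$ has image ${\rm Ker}(F\col H^{q+1}(X,{\cal W}({\cal O}_X))\lo H^{q+1}(X,{\cal W}({\cal O}_X)))$, which vanishes by the injectivity hypothesis on $H^{q+1}$; hence one obtains a canonical isomorphism $M/FM\simeq H^q(X,{\cal C})$. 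It then remains to bound $\dim_{\kap}H^q(X,{\cal C})$ by $\dim_{\kap}H^{q-1}(X,\Om^1_{X/s})$, and this is the step I expect to be the main obstacle. The sheaf ${\cal C}={\cal W}({\cal O}_X)/F{\cal W}({\cal O}_X)$ carries a natural filtration whose graded pieces are isomorphic, via the differential $d$, to ${\cal O}_X/{\cal O}_X^p\simeq B_1\Om^1_{X/s}$, so that $H^q(X,{\cal C})$ is built out of the cohomology of the sheaves $B_n\Om^1_{X/s}$. Reconciling this Witt-vector cokernel with the single sheaf $\Om^1_{X/s}$ and, crucially, accounting for the drop in cohomological degree from $q$ to $q-1$, is where I would invoke the log version of Serre's exact sequence, the Cartier operator, and the boundedness of $\dim_{\kap}H^q(X,B_n\Om^1_{X/s})$ from (\ref{theo:fs}). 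The target of this analysis is a surjection $H^{q-1}(X,\Om^1_{X/s})\twoheadrightarrow H^q(X,{\cal C})$ coming from a degree-shifting connecting homomorphism; once it is in hand it yields $\dim_{\kap}(M/FM)\leq \dim_{\kap}H^{q-1}(X,\Om^1_{X/s})$, which is (\ref{ali:faddri}), and combined with (\ref{ali:fadri}) it gives (\ref{ali:fari}).
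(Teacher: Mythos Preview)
Your treatment of (\ref{ali:fadri}) via the sequence $0\to{\cal W}({\cal O}_X)\os{V}{\to}{\cal W}({\cal O}_X)\to{\cal O}_X\to 0$ is correct and coincides with the paper's argument (the case $j=i-1=-1$ of (\ref{ali:wfsfnii}) gives ${\cal W}({\cal O}_X)/V\simeq{\cal O}_X$, and then (\ref{ali:wooivx}) supplies the injection). The identification $M/FM\simeq H^q(X,{\cal W}({\cal O}_X)/F)$ using the injectivity of $F$ on $H^{q+1}$ is also correct and matches (\ref{ali:wooix}).

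The genuine gap is in (\ref{ali:faddri}). You acknowledge that producing a surjection $H^{q-1}(X,\Om^1_{X/s})\twoheadrightarrow H^q(X,{\cal C})$ is ``the main obstacle'', and you propose to handle it via the log Serre sequence together with the boundedness statement (\ref{theo:fs}). That proposal does not work: (\ref{theo:fs}) has as its hypothesis that $\os{\circ}{X}$ is quasi-$F$-split, which is \emph{not} assumed in (\ref{theo:amh}). There is no reason, under the hypotheses of (\ref{theo:amh}) alone, for the dimensions $\dim_{\kap}H^q(X,B_n\Om^1_{X/s})$ to be bounded in $n$, so this route is blocked.

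The paper obtains the surjection differently and without any boundedness input. The governing exact sequence is not the Serre sequence but rather (\ref{eqn:rfvn}) for $r=1$ and $i=1$, which after applying the log Cartier isomorphism reads
\[
0\lo {\cal W}({\cal O}_X)/F\os{dV}{\lo}{\cal W}\Om^1_X/V\lo \Om^1_{X/s}\lo 0
\]
(this is (\ref{ali:wfsfnii}) with $j=0$). Its long exact sequence gives
\[
H^{q-1}(X,\Om^1_{X/s})\lo H^q(X,{\cal W}({\cal O}_X)/F)\os{dV}{\lo} H^q(X,{\cal W}\Om^1_X/V),
\]
so the desired surjection holds as soon as the displayed $dV$ is zero. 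Via the commutative square comparing $dV$ on cohomology-of-quotients with $dV$ on quotients-of-cohomology, this reduces to the vanishing of $d\col H^q(X,{\cal W}({\cal O}_X))\to H^q(X,{\cal W}\Om^1_X)$. That vanishing follows from the finite generation of $M=H^q(X,{\cal W}({\cal O}_X))$ (equivalently, $h^q<\infty$) by Nygaard \cite[(2.5)]{nyc} or the log version of \cite[II (3.8)]{ir}; this is the step you are missing, and it is the actual substitute for your appeal to (\ref{theo:fs}).
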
 
\parno
The inequality (\ref{ali:fari}) is a generalization of 
the following 
Katsura and Van der Geer's  results 
in \cite{vgkht}: 

\begin{prop}[{\bf \cite{vgkht}}]\label{prop:ine}
Let $Y/\kap$ be a Calabi-Yau variety of pure dimension $d$. 
Then $h^d(Y/\kap)\leq \dim_{\kap}H^{d-1}(Y,\Om^1_{Y/\kap})+1$. 
\end{prop}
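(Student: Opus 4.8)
The plan is to reduce the inequality to a statement about the Dieudonn\'e module of the Artin--Mazur formal group and then to read it off from Mazur's ``Newton above Hodge'' inequality for $H^d_{\rm crys}(Y/{\cal W})$. Throughout I assume $h:=h^d(Y/\kap)<\inf$, since otherwise the right-hand side is finite while the left-hand side is $\inf$ and there is nothing to prove. Write $M:=H^d(Y,{\cal W}({\cal O}_Y))$; by Artin--Mazur this is the Dieudonn\'e module of $\Phi^d_{Y/\kap}$, and finiteness of $h$ makes it a free ${\cal W}$-module of rank $h$ carrying the operators $F,V$ with $FV=VF=p$. Because $Y$ is Calabi--Yau we have $\Om^d_{Y/\kap}\simeq {\cal O}_Y$, whence $\dim_{\kap}H^d(Y,{\cal O}_Y)=1$.

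First I would record the two cokernel dimensions. Since $\dim Y=d$ we have $H^{d+1}(Y,-)=0$, so the cohomology sequence of $0\lo {\cal W}({\cal O}_Y)\xrightarrow{V}{\cal W}({\cal O}_Y)\lo {\cal O}_Y\lo 0$ yields $M/VM\simeq H^d(Y,{\cal O}_Y)$, hence $\dim_{\kap}M/VM=1$; this is the statement $\dim(\Phi^d_{Y/\kap})=1$. As $p=FV$ annihilates both $M/FM$ and $M/VM$, these are $\kap$-vector spaces and $\dim_\kap M/FM+\dim_\kap M/VM=\mathrm{val}_p\det F+\mathrm{val}_p\det V=h$; therefore $\dim_\kap M/FM=h-1$, i.e.\ $\dim((\Phi^d_{Y/\kap})^*)=h-1$. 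Thus the assertion $h\le \dim_{\kap}H^{d-1}(Y,\Om^1_{Y/\kap})+1$ is equivalent to $\dim_\kap M/FM\le \dim_{\kap}H^{d-1}(Y,\Om^1_{Y/\kap})$.

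Next I would invoke slope theory. By Illusie, $M\otimes_{\cal W}K$ (with $K=\mathrm{Frac}\,{\cal W}$) is exactly the part of $H^d_{\rm crys}(Y/{\cal W})\otimes K$ of slopes in $[0,1)$; hence the slope-$<1$ part of the Newton polygon of $H^d_{\rm crys}$ has horizontal width $h$ and rises to height $\dim_\kap M/FM=h-1$, so the Newton polygon passes through $(h,h-1)$. Mazur's inequality puts this Newton polygon on or above the Hodge polygon of $H^d_{\rm crys}$, with the same endpoints, where the slope-$i$ segment of the Hodge polygon has width $\dim_{\kap}H^{d-i}(Y,\Om^i_{Y/\kap})$; in particular its slope-$0$ width is $\dim_{\kap}H^d(Y,{\cal O}_Y)=1$ and its slope-$1$ width is $\dim_{\kap}H^{d-1}(Y,\Om^1_{Y/\kap})$. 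Evaluating both polygons at $x=h$ then forces $\dim_{\kap}H^{d-1}(Y,\Om^1_{Y/\kap})\ge h-1$: otherwise the Hodge polygon would already acquire slope $\ge 2$ before $x=h$ and its height there would exceed $h-1$, contradicting that the Newton polygon, of height $h-1$ at $x=h$, lies above it. This is precisely the desired inequality.

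The step I expect to be the main obstacle is the legitimacy of Mazur's inequality in characteristic $p$ when $H^{*}_{\rm crys}(Y/{\cal W})$ may carry torsion: the clean ``Newton above Hodge'' statement, together with the identification $\mathrm{gr}^i_{\rm Hodge}H^d_{\rm dR}\simeq H^{d-i}(Y,\Om^i_{Y/\kap})$ that I used for the Hodge-segment widths, is safest when the crystalline cohomology is torsion-free and Hodge--de Rham degenerates, and in general one must appeal to the Mazur--Ogus and Berthelot--Ogus refinements (or argue directly with the Hodge and conjugate spectral sequences). A fully self-contained alternative, in the spirit of the $B_n\Om^1$-machinery of this paper, is the original Katsura--Van der Geer route: using the exact sequences $0\lo {\cal O}_Y\xrightarrow{F^n}F^n_*({\cal O}_Y)\xrightarrow{d}B_n\Om^1_{Y/\kap}\lo 0$ and the Cartier operator, together with the vanishing $H^{d-1}(Y,{\cal O}_Y)=0$, one builds a filtration on $M/FM$ whose successive quotients inject into $H^{d-1}(Y,\Om^1_{Y/\kap})$; there the delicate point is matching the Dieudonn\'e-module filtration with the $B_n$-cohomology and verifying injectivity of the connecting homomorphisms at each stage.
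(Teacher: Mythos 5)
Your argument is correct in substance, but it takes a genuinely different route from the paper's. The paper never reproves the Katsura--Van der Geer inequality directly: (\ref{prop:ine}) is quoted from \cite{vgkht} and is recovered as the special case $i=0$, $q=d$ of Theorem (\ref{theo:amh}), which is proved in \S\ref{sec:ham} entirely inside log de Rham--Witt theory. Writing $M=H^d(Y,{\cal W}({\cal O}_Y))$, both you and the paper reduce to the same two statements, namely $\dim_{\kap}M/VM\leq\dim_{\kap}H^d(Y,{\cal O}_Y)=1$ and $\dim_{\kap}M/FM\leq\dim_{\kap}H^{d-1}(Y,\Om^1_{Y/\kap})$, which add up to the claim along the exact sequence $0\to M/V\to M/p\to M/F\to 0$. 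The difference is how the second bound is obtained. The paper constructs a surjection $H^{d-1}(Y,\Om^1_{Y/\kap})\to M/FM$ (and an injection $M/VM\to H^d(Y,{\cal O}_Y)$) out of the exact sequence $0\lo {\cal W}\Om^i_X/F \os{dV}{\lo} {\cal W}\Om^{i+1}_X/V\lo \Om^{i+1}_{X/s}\lo 0$ of (\ref{ali:wfsfnii}) (a consequence of (\ref{eqn:rfvn})), together with the vanishing of $d$ on $H^d(Y,{\cal W}({\cal O}_Y))$ (\cite{nyc}, \cite{ir}); you instead identify $M\otimes K$ with the slope-$[0,1)$ part of $H^d_{\rm crys}(Y/{\cal W})$ (\cite{idw}) and invoke Newton-above-Hodge. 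Your route is shorter and conceptually transparent, but it consumes a major external theorem and is confined to the classical situation: for log smooth $X/s$ and arbitrary bidegrees $(i,q)$ --- the actual scope of (\ref{theo:amh}) --- no Newton-above-Hodge statement is available, whereas the de Rham--Witt argument applies verbatim and moreover yields the two bounds (\ref{ali:fadri}) and (\ref{ali:faddri}) separately, not only their sum.

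Two points in your write-up need repair. First, your disposal of the case $h=\infty$ is backwards: if $h^d(Y/\kap)=\infty$ the displayed inequality is \emph{false} (e.g.\ for a supersingular $K3$ surface), not vacuously true; finiteness of the height is an implicit hypothesis of the proposition (it is explicit in \cite{vgkht} and in (\ref{theo:amh})), so assuming it is legitimate, but not ``because there is nothing to prove''. Second, as you yourself flag, quoting Mazur's inequality with the geometric Hodge numbers $h^{i,d-i}=\dim_{\kap}H^{d-i}(Y,\Om^i_{Y/\kap})$ needs care, since for a Calabi--Yau variety in characteristic $p$ one may not assume $H^*_{\rm crys}$ torsion-free nor Hodge--de Rham degeneration: one must use Ogus's unconditional form of the theorem (Newton polygon of $H^d_{\rm crys}$ modulo torsion above the Hodge polygon of $H^d_{\rm dR}$), plus the easy remark that the latter polygon, built from the widths $\dim_{\kap}{\rm gr}^i_FH^d_{\rm dR}\leq h^{i,d-i}$, lies on or above the geometric Hodge polygon, so that Newton above the former implies Newton above the latter. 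With these two repairs your proof is complete.
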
 

\parno
In the theorem (\ref{theo:amh}) 
we need not to assume almost anything: 
the degree of the Artin-Mazur formal group and 
the dimension of it are arbitrary and  
the log variety $X/s$ is very general. 
We prove this theorem by using theory of 
log de Rham-Witt complexes in \cite{lodw} and \cite{ndw}; 
the proof of (\ref{theo:amh}) is very different from that in 
\cite{vgkht}.
In the text we prove a more general inequality than (\ref{ali:fari}). 
 
\par
In \S\ref{sec:fs} we give the definition of an ordinary log scheme at a bidegree. 
In the same section we also prove that the exotic torsion of 
the log crystalline cohomology 
of an $F$-split proper log smooth scheme does not exist. 
This is a log version of Joshi's result. 
We also give concrete examples of $F$-split degenerate 
log schemes of dimension $\leq 2$.  
\par 

\par
\bigskip
\parno
{\bf Acknowledgment.} 
I would like to express my sincere gratitude to F.~Yobuko 
for sending me very attractive (for me) preprints 
\cite{y} and \cite{yoh}. 
Without his articles, I could not write this article. 

\par
\bigskip
\parno
{\bf Notation.} 
For a module $M$ and an element $f\in {\rm End}(M)$, 
${}_fM$ (resp.~$M/f$) denotes 
${\rm Ker}(f\col M\lo M)$  
(resp.~${\rm Coker}(f\col M\lo M)$). 
We use the same notation for an endomorphism of an abelian sheaf on 
a topological space.

\bigskip
\par\noindent
{\bf Convention.} 
We omit the second ``log'' in the terminology a ``log smooth (integral) log scheme''. 


\section{Results in \cite{ny}}\label{sec:rrfr} 
In this section we recall two results in \cite{ny} 
which are necessary for the proofs of (\ref{theo:dw}) and 
(\ref{theo:ht}). 
\par 
The following is a generalization of Katsura and Van der Geer's theorem
(\cite[(5.1), (5.2), (16.4)]{vgk}).  

\begin{theo}[{\bf \cite[(2.3)]{ny}}]\label{theo:nex} 
Let $\kap$ be a perfect field of characteristic $p>0$. 
Let $Z$ be a proper scheme over $\kap$.
$($We do not assume that $Z$ is smooth over $\kap$.$)$ 
Let $q$ be a nonnegative integer. 
Assume that $H^q(Z,{\cal O}_Z)\simeq \kap$,  
that $H^{q+1}(Z,{\cal O}_Z)=0$ and 
that $\Phi^q_{Z/\kap}$ is pro-representable. 
Let $V\col  {\cal W}_{n-1}({\cal O}_Z) \lo {\cal W}_n({\cal O}_Z)$ 
be the Verschiebung morphism and 
let $F\col {\cal W}_{n}({\cal O}_Z) \lo {\cal W}_n({\cal O}_Z)$ 
be the induced morphism by the Frobenius endomorphism of ${\cal W}_{n}(Z)$. 
Assume 
that the Bockstein operator 
\begin{align*} 
\bet \col H^{q-1}(Z,{\cal O}_Z)\lo 
H^q(Z,{\cal W}_{n-1}({\cal O}_Z))
\end{align*} 
arising from the following exact sequence 
\begin{align*} 
0\lo {\cal W}_{n-1}({\cal O}_Z)\os{V}{\lo} {\cal W}_n({\cal O}_Z)
{\lo} {\cal O}_Z\lo 0
\end{align*} 
is zero for any $n\in {\mab Z}_{\geq 2}$.  
Let $n^q(Z)$ be the minimum of positive integers $n$'s  
such that the induced morphism 
$$F\col H^q(Z,{\cal W}_n({\cal O}_Z))\lo H^q(Z,{\cal W}_n({\cal O}_Z))$$ 
by the $F\col {\cal W}_{n}({\cal O}_Z) \lo {\cal W}_n({\cal O}_Z)$ is not zero. 
$($If $F=0$ for all $n$, then set $n^q(Z):=\infty.)$ 
Let $h^q(Z/\kap)$ be the height of 
the Artin-Mazur formal group $\Phi^q_{Z/\kap}$ of $Z/\kap$. 
Then $h^q(Z/\kap)=n^q(Z)$. 
\end{theo}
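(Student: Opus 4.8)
The plan is to translate the statement into a computation inside the Dieudonn\'e module of the one‑dimensional formal group $\Phi^q_{Z/\kap}$, via the Artin--Mazur identification of Witt‑vector cohomology with that module. First I would invoke the pro‑representability hypothesis together with $H^{q+1}(Z,\mathcal{O}_Z)=0$ to place ourselves in the framework of \cite{am}: then $\Phi^q_{Z/\kap}$ is pro‑represented by a formal group $G$ over $\kap$ whose Dieudonn\'e module is
\[
M:=H^q(Z,\mathcal{W}(\mathcal{O}_Z))=\varprojlim_n H^q(Z,\mathcal{W}_n(\mathcal{O}_Z)),
\]
the operators $F$ and $V$ on $M$ being induced by those on the tower $\{\mathcal{W}_n(\mathcal{O}_Z)\}_n$. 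Since the tangent space of $G$ is $H^q(Z,\mathcal{O}_Z)\simeq\kap$, the group $G$ is one‑dimensional.

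Next, writing $M_n:=H^q(Z,\mathcal{W}_n(\mathcal{O}_Z))$, I would identify the finite levels with the $V$‑adic quotients of $M$. Applying cohomology to $0\to\mathcal{W}_{n-1}(\mathcal{O}_Z)\xrightarrow{V}\mathcal{W}_n(\mathcal{O}_Z)\to\mathcal{O}_Z\to0$, the vanishing of the Bockstein $\beta$ for every $n$ makes $V\colon M_{n-1}\to M_n$ injective, while an induction on $n$ using $H^{q+1}(Z,\mathcal{O}_Z)=0$ gives $H^{q+1}(Z,\mathcal{W}_n(\mathcal{O}_Z))=0$ for all $n$, so that the connecting map out of $H^q(Z,\mathcal{O}_Z)$ vanishes and $M_n\twoheadrightarrow\kap$. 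Hence there are short exact sequences
\[
0\longrightarrow M_{n-1}\xrightarrow{\ V\ }M_n\longrightarrow\kap\longrightarrow0,
\]
whence $\dim_\kap M_n=n$ and the natural map $M/V^nM\to M_n$ is an isomorphism, compatibly with $F$ and $V$.

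Finally I would run the elementary Dieudonn\'e computation determining the least $n$ with $F\neq0$ on $M/V^nM$. If $G$ has finite height $h$, then $M$ is free of rank $h$ over $\mathcal{W}$, with a basis $e_0,\dots,e_{h-1}$ satisfying $Ve_i=e_{i+1}$ for $i<h-1$, $Ve_{h-1}=pe_0$, and correspondingly $Fe_0=e_{h-1}$ and $Fe_i=pe_{i-1}$ for $i\geq1$ (the Frobenius twists being irrelevant to the vanishing). Then $V^hM=pM$ and $F(M)\subseteq V^{h-1}M$, so $F\equiv0$ on $M/V^nM$ for $n\leq h-1$, whereas $Fe_0=e_{h-1}\notin pM=V^hM$ shows $F\neq0$ on $M/V^hM$; thus $n^q(Z)=h=h^q(Z/\kap)$. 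In the remaining case $G\simeq\widehat{\mathbb{G}}_a$ of infinite height, $F=0$ on $M$, so $F$ vanishes on every $M_n$ and $n^q(Z)=\infty=h^q(Z/\kap)$.

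The step I expect to be the main obstacle is the first one: establishing the Artin--Mazur identification of $H^q(Z,\mathcal{W}(\mathcal{O}_Z))$ with the Dieudonn\'e module of $\Phi^q_{Z/\kap}$ and, above all, matching the operator $F$ of the statement --- induced by the Frobenius endomorphism of the scheme $\mathcal{W}_n(Z)$, together with its semilinearity --- with the Dieudonn\'e Frobenius. Since $Z$ is not assumed smooth, this cannot be read off from a de Rham--Witt or crystalline comparison and must be carried out within the general formalism of \cite{am}; once it is in place, the second and third steps are essentially formal.
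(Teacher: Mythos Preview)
Your proposal is correct and follows essentially the same route as the cited proofs in \cite[(2.3)]{ny} and \cite[(5.1)]{vgk}: identify $H^q(Z,{\cal W}({\cal O}_Z))$ with the Dieudonn\'e module of the one-dimensional formal group $\Phi^q_{Z/\kap}$ via \cite{am}, use the Bockstein and $H^{q+1}$ hypotheses to show $H^q(Z,{\cal W}_n({\cal O}_Z))\simeq M/V^nM$, and then read off $n^q(Z)$ from the standard presentation of the Dieudonn\'e module of a height-$h$ group. The paper itself gives no argument beyond this citation, so there is nothing further to compare; your anticipated ``main obstacle'' is exactly the input from \cite{am} that the references invoke, and it holds for proper (not necessarily smooth) $Z$ under the stated assumptions.
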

\begin{proof} 
See \cite[(2.3)]{ny} (cf.~\cite[(5.1)]{vgk}) for the proofs of (\ref{theo:nex}).  
\end{proof}

As a corollary of (\ref{theo:nex}), we obtain the following:  

\begin{coro}[{\bf cf.~\cite[(5.6)]{vgk}, \cite[(2.4)]{ny}}]\label{coro:dim}
%
Let the assumptions be as in {\rm (\ref{theo:nex})}. 
Then the following equalities hold$:$ 
\begin{align*} 
{\rm dim}_{\kap}({}_FH^q(Z,{\cal W}_n({\cal O}_Z)))=
{\rm min}\{n, h^q(Z/\kap)-1\}, 
\tag{2.2.1}\label{ali:fdwy}
\end{align*} 
\begin{align*} 
{\rm dim}_{\kap}(H^q(Z,{\cal W}_n({\cal O}_Z))/F)
={\rm min}\{n, h^q(Z/\kap)-1\}. 
\tag{2.2.2}\label{ali:fdwhy}
\end{align*} 
\end{coro}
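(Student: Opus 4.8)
The plan is to reduce both equalities to a single rank computation for the Frobenius on $M_n:=H^q(Z,{\cal W}_n({\cal O}_Z))$, and then to read off that rank from the Dieudonn\'e-module structure of $H^q(Z,{\cal W}({\cal O}_Z))$ supplied by (\ref{theo:nex}) and \cite{am}. First I would note that $F\col M_n\lo M_n$ is semilinear over $\kap$ with respect to (a power of) the Frobenius $\sig$; since $\kap$ is perfect, $\sig$ is bijective, so the image and kernel of $F$ are $\kap$-subspaces and rank-nullity holds in the usual form $\dim_{\kap}({}_FM_n)=\dim_{\kap}(M_n/F)=\dim_{\kap}M_n-{\rm rank}_{\kap}(F|_{M_n})$. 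Hence (\ref{ali:fdwy}) and (\ref{ali:fdwhy}) are literally the same assertion, and it suffices to compute $\dim_{\kap}M_n$ and ${\rm rank}_{\kap}(F|_{M_n})$.

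Next I would fix the module-theoretic picture. Write $D:=H^q(Z,{\cal W}({\cal O}_Z))$, the Dieudonn\'e module of $\Phi^q_{Z/\kap}$, which is a free ${\cal W}$-module of rank $h:=h^q(Z/\kap)$ when $h<\inf$, carrying $F,V$ with $FV=VF=p$. Vanishing of the Bockstein $\bet$ makes $V\col M_{n-1}\lo M_n$ injective, so $M_n=D/V^nD$ and $V$ is injective on $D$; moreover $D/VD=M_1=H^q(Z,{\cal O}_Z)\simeq\kap$. Because $V^i\col D\lo V^iD$ is bijective, each graded piece $V^iD/V^{i+1}D$ is isomorphic (as $\kap$-space) to $D/VD\simeq\kap$, whence $\dim_{\kap}M_n=\dim_{\kap}(D/V^nD)=n$. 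The case $h=\inf$ is then immediate: by (\ref{theo:nex}) one has $n^q(Z)=\inf$, so $F=0$ on every $M_n$, giving ${}_FM_n=M_n$ and $M_n/F=M_n$ of dimension $n=\min\{n,h-1\}$.

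So assume $h<\inf$. The decisive input is the identity $FD=V^{h-1}D$, which I would extract from (\ref{theo:nex}) as follows. Since $n^q(Z)=h$, the map $F$ vanishes on $M_{h-1}=D/V^{h-1}D$, that is, $FD\sus V^{h-1}D$. Applying $V$ and using $VF=p$ gives $pD=VFD\sus V^hD$; as $\dim_{\kap}(D/V^hD)=h=\dim_{\kap}(D/pD)$ and $pD\sus V^hD$, these submodules coincide, so $V^hD=pD$. Therefore $V(FD)=pD=V^hD$, and injectivity of $V$ forces $FD=V^{h-1}D$.

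Finally I would compute the rank. The image of $F$ on $M_n=D/V^nD$ is $(FD+V^nD)/V^nD=(V^{h-1}D+V^nD)/V^nD$. For $n\leq h-1$ this is $0$, so ${\rm rank}_{\kap}(F|_{M_n})=0$ and $\dim_{\kap}({}_FM_n)=\dim_{\kap}(M_n/F)=n=\min\{n,h-1\}$. For $n\geq h$ it equals $V^{h-1}D/V^nD$, of dimension $n-(h-1)$, so ${\rm rank}_{\kap}(F|_{M_n})=n-(h-1)$ and $\dim_{\kap}({}_FM_n)=\dim_{\kap}(M_n/F)=n-(n-h+1)=h-1=\min\{n,h-1\}$. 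This establishes (\ref{ali:fdwy}) and (\ref{ali:fdwhy}). I expect the main obstacle to be the structural step $FD=V^{h-1}D$: it is what converts the numerical datum $n^q(Z)=h$ of (\ref{theo:nex}) into exact control of the $V$-adic position of $FD$, and it rests on combining $FV=VF=p$ with the equality $V^hD=pD$; once it is in hand the remaining dimension counts are routine.
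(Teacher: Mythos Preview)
Your argument is correct. The paper does not actually give a proof of (\ref{coro:dim}); it simply cites \cite[(5.6)]{vgk} and \cite[(2.4)]{ny}. Your write-up supplies a clean self-contained argument along the lines those references follow: identify $D=H^q(Z,{\cal W}({\cal O}_Z))$ with the Dieudonn\'e module of $\Phi^q_{Z/\kap}$ via \cite{am}, use the hypotheses of (\ref{theo:nex}) to get $M_n=D/V^nD$ with $\dim_{\kap}M_n=n$, convert the numerical input $n^q(Z)=h$ into the structural equality $FD=V^{h-1}D$, and read off the rank of $F$ on $M_n$. The only place one might ask for more detail is the passage from ``$\bet=0$ and $H^{q+1}(Z,{\cal O}_Z)=0$'' to ``$M_n=D/V^nD$'': this uses that $H^{q+1}(Z,{\cal W}_n({\cal O}_Z))=0$ for all $n$ (by induction from $H^{q+1}(Z,{\cal O}_Z)=0$), so the projections $R$ are surjective in degree $q$, the $\varprojlim{}^1$ in degree $q-1$ vanishes because the groups there have finite length, and then injectivity of each $V$ identifies the kernel of $D\to M_n$ with $V^nD$. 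These are routine, and once in place your derivation of $FD=V^{h-1}D$ from $pD=VFD\subset V^hD$ together with $\dim_{\kap}(D/pD)=h=\dim_{\kap}(D/V^hD)$ is exactly the right mechanism.
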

\begin{coro}
See \cite[(5.6)]{vgk} and \cite[(2.4)]{ny} for the proof of (\ref{coro:dim}). 
\end{coro}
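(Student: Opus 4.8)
The plan is to move everything into the structure of the (covariant) Dieudonn\'e module $M:=H^q(Z,{\cal W}({\cal O}_Z))$ of $\Phi^q_{Z/\kap}$ and of its finite levels $M_n:=H^q(Z,{\cal W}_n({\cal O}_Z))$, and to compute there the $\kap$-dimensions of the kernel and cokernel of $F$. Write $h:=h^q(Z/\kap)$. I would begin with two reductions. (i) Since $F$ is $\sig$-semilinear and each $M_n$ is a finite-length ${\cal W}_n(\kap)$-module, both $\ker F$ and $\mathrm{im}\,F$ are ${\cal W}_n$-submodules (here one uses that $\sig$ is an automorphism), so the first isomorphism theorem, applied with lengths, gives $\dim_{\kap}({}_FM_n)=\dim_{\kap}(M_n/F)$. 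Thus (\ref{ali:fdwy}) and (\ref{ali:fdwhy}) have the same right-hand side and it suffices to compute one of them. (ii) All quantities in play are $\kap$-dimensions and are unchanged under the flat base change $\kap\lo\ol{\kap}$, which commutes with the cohomology of the Witt sheaves; so I may assume $\kap=\ol{\kap}$.

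Next I would pin down the module structure. By \cite{am} the limit $M$ is the Dieudonn\'e module of the formal group $\Phi^q_{Z/\kap}$, which is one-dimensional because $H^q(Z,{\cal O}_Z)\simeq\kap$; when $h<\inf$ it is ${\cal W}$-free of rank $h$, with $V$ injective and topologically nilpotent, and the long exact sequence of $0\lo{\cal W}({\cal O}_Z)\os{V}{\lo}{\cal W}({\cal O}_Z)\lo{\cal O}_Z\lo0$ together with one-dimensionality gives $M/VM\simeq\kap$. From $V$-injectivity and $M/VM\simeq\kap$ a telescoping argument (each graded piece $V^iM/V^{i+1}M\simeq M/VM$) yields $\dim_{\kap}(M/V^nM)=n$. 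Now the exact sequence $0\lo{\cal W}({\cal O}_Z)\os{V^n}{\lo}{\cal W}({\cal O}_Z)\lo{\cal W}_n({\cal O}_Z)\lo0$ gives an injection $M/V^nM\hookrightarrow M_n$, while $0\lo{\cal O}_Z\os{V^{n-1}}{\lo}{\cal W}_n({\cal O}_Z)\lo{\cal W}_{n-1}({\cal O}_Z)\lo0$ together with $H^{q+1}(Z,{\cal O}_Z)=0$ gives a surjection $M_n\twoheadrightarrow M_{n-1}$ with kernel a quotient of $H^q(Z,{\cal O}_Z)\simeq\kap$. Feeding $\dim_{\kap}M_1=1$ into the latter gives $\dim_{\kap}M_n\le n$ by induction; combined with the injection above I obtain $\dim_{\kap}M_n=n$ and the clean identification $M_n\simeq M/V^nM$. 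The vanishing-Bockstein hypothesis enters precisely here, to make $V$ injective on cohomology, which is what aligns the two bounds.

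With $M_n=M/V^nM$ in hand I would compute $\dim_{\kap}(M_n/F)=\dim_{\kap}\bigl(M/(FM+V^nM)\bigr)$. Two facts drive this. First, $F$ is injective on $M$ with $\dim_{\kap}(M/FM)=h-1$: this is structural, since $VF=FV=p$ forces $V$ to induce an isomorphism $M/FM\os{\sim}{\lo}VM/pM$, and $\dim_{\kap}(VM/pM)=\dim_{\kap}(M/pM)-\dim_{\kap}(M/VM)=h-1$. Second, in the standard height-$h$ model one has $FM=V^{h-1}M$, so $V^nM\sus FM$ exactly for $n\ge h-1$. Hence for $n\ge h$ the term $V^nM$ is absorbed and $\dim_{\kap}(M_n/F)=\dim_{\kap}(M/FM)=h-1$, while for $n<h$ Theorem (\ref{theo:nex}) gives $F=0$ on $M_n$, so $\dim_{\kap}(M_n/F)=\dim_{\kap}M_n=n$. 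In both ranges this equals $\min\{n,h-1\}$, and by reduction (i) the same value computes ${}_FM_n$; the case $h=\inf$, where $F=0$ on every $M_n$ and $\min\{n,h-1\}=n$, is covered by the same count $\dim_{\kap}M_n=n$ (the structural input reducing only to $\dim_{\kap}(M/VM)=1$ and $V$-injectivity).

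The main obstacle I anticipate is the bookkeeping in the middle step, namely proving $M_n\simeq M/V^nM$ and $\dim_{\kap}M_n=n$ rather than merely the inequality $\le n$, since this is exactly where the three hypotheses ($H^q(Z,{\cal O}_Z)\simeq\kap$, $H^{q+1}(Z,{\cal O}_Z)=0$, and vanishing of $\bet$) must be used in concert. Once the finite levels are identified with $M/V^nM$, the computation of the $F$-kernel and $F$-cokernel is a short module-theoretic argument, essentially the one in \cite[(5.6)]{vgk}.
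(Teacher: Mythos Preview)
Your argument is correct and follows the same Dieudonn\'e-module computation as the references \cite[(5.6)]{vgk} and \cite[(2.4)]{ny} that the paper cites in lieu of a proof. The one imprecision---your attribution of the Bockstein hypothesis to the $V$-injectivity step---is harmless: $V$ is injective on $M$ because $M$ is the Cartier module of a formal group, and the Bockstein vanishing is actually consumed inside the proof of (\ref{theo:nex}), which you invoke to get $F=0$ on $M_n$ for $n<h$.
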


Next we recall the log version of Serre's exact sequence 
in \cite{smxco}, which has been proved in \cite{ny}. 
\par 
Let $Z$ be a scheme over $\kap$. 
Let $F\col Z\lo Z$ be the absolute Frobenius endomorphism of $Z$. 
In \cite[\S7 (18)]{smxco} Serre has defined 
the following morphism of abelian sheaves
\begin{align*} 
d_n \col F_*({\cal W}_n({\cal O}_Z))
\lo F_*^n(\Om^1_{Z/\kap}) 
\end{align*} 
defined by the following formula$:$
\begin{align*} 
d_n((a_0,\ldots,a_{n-1}))
=\sum_{i=0}^{n-1}a^{p^{n-1-i}-1}_ida_i 
\quad (a_i\in {\cal O}_Z).  
\tag{2.2.3}\label{ali:sed}
\end{align*} 
(In [loc.~cit.] he has denoted $d_n$ by $D_n$.)
He has remarked that the following formula holds:
\begin{align*} 
d_n((a_0,\ldots,a_{n-1})(b_0,\ldots,b_{n-1}))=
b_0^{p^{n-1}}d_n((a_0,\ldots,a_{n-1}))+
a_0^{p^{n-1}}d_n((b_0,\ldots,b_{n-1})). 
\tag{2.2.4}\label{ali:dna} 
\end{align*} 
Hence the morphism 
$d_n\col F_*({\cal W}_n({\cal O}_Z))
\lo F_*^n(\Om^1_{Z/\kap})$ 
is a morphism of ${\cal W}_n({\cal O}_Z)$-modules. 
(This remark was not given in [loc.~cit.]. See also 
(\ref{rema:omst}) below.)

\par 
Let $s$ be as in the Introduction. 
Let $F_s\col s\lo s$ be the Frobenius endomorphism. 
Let $Y/s$ be a log smooth scheme of Cartier type. 
Set $Y':=Y\times_{s,F_s}s$. 
Let $F:=F_{Y/s}\col Y\lo Y'$ be the relative Frobenius morphism over $s$. 
The log inverse Cartier isomorphism due to Kato 
is the following isomorphism of sheaves of ${\cal O}_{Y'}$-modules 
(\cite[(4.12) (1)]{klog1}): 
\begin{align*} 
C^{-1}\col \Om^i_{Y'/s}
\os{\sim}{\lo} F_{*}({\cal H}^i(\Om^{\bul}_{Y/s})). 
\tag{2.2.5}\label{ali:oxs}
\end{align*} 
Because $\os{\circ}{F}$ 
is a homeomorphism (\cite[XV Proposition 2 a)]{sga5-2}), 
we can also express (\ref{ali:oxs}) as the equality 
\begin{align*} 
C^{-1}\col \Om^i_{Y'/s}
= {\cal H}^i(F_{*}(\Om^{\bul}_{Y/s})) 
\tag{2.2.6}\label{ali:oxfws}
\end{align*} 
of ${\cal O}_{Y'}$-modules. 
Set $Y^{\{n\}}:=Y^{\{n-1\}}\times_{s,F_s}s$ $(n\in {\mab Z}_{\geq 2})$ 
and 
$F^n:=F_{Y^{\{n\}}/s} 
\circ \cdots  \circ F_{Y'/s} \circ F_{Y/s}\col Y\lo Y^{\{n\}}$. 
Set $B_0\Om^i_{Y/s}:=0$, 
$B_1\Om^i_{Y/s}:=F_{*}
({\rm Im}(d\col {\cal O}_Y\lo \Om^1_{Y/s}))$  
and 
$Z_0\Om^i_{Y/s}:=\Om^i_{Y/s}$. 
Then $B_1\Om^i_{Y/s}$ (resp.~$Z_0\Om^i_{Y/s}$)
is a sheaf of $F_*({\cal O}_Y)$-module 
(resp.~${\cal O}_Y$-module). 
We define sheaves of $F^n_*({\cal O}_Y)$-modules 
$B_n\Om^i_{Y/s}$ and $Z_n\Om^i_{Y/s}$ on $(Y^{\{n\}})_{\rm zar}$ 
(not on $Y_{\rm zar}$)
inductively by the following equalities $(n\geq 1)$: 
$$C^{-1}\col B_{n-1}\Om^i_{Y'/s}=
B_n\Om^i_{Y/s}/F^{n-1}_*(B_1\Om^i_{Y/s}), \quad 
C^{-1}\col Z_{n-1}\Om^i_{Y'/s} =
Z_n\Om^i_{Y/s}/F^{n-1}_*(B_1\Om^i_{Y/s}).$$ 
(Because $Y'/s$ is log smooth and of Cartier type, 
these definitions are well-defined.)
Then we have the following inclusions of 
sheaves of $F^{n}_*({\cal O}_Y)$-modules 
(not only the inclusion of abelian sheaves): 
\begin{align*}
0&\subset F^{n}_*(B_1\Om^i_{Y/s})\subset \cdots \subset 
F_*(B_n\Om^i_{Y/s})\subset  
B_{n+1}\Om^i_{Y/s}
\\ 
&\subset Z_{n+1}\Om^i_{Y/s} \subset F_*(Z_n\Om^i_{Y/s}) 
\subset  \cdots 
\subset  F^{n}_*(Z_1\Om^i_{Y/s}) \subset F^{n+1}_*(\Om^i_{Y/s}).
\end{align*}
Because the projection $Y'\lo Y$ induces an isomorphism 
$\os{\circ}{Y}{}'\os{\sim}{\lo} \os{\circ}{Y}$, 
we have the following 
the composite isomorphism of the projections 
\begin{align*} 
(Y^{\{n\}})^{\circ}\os{\sim}{\lo} 
(Y^{\{n-1\}})^{\circ}\os{\sim}{\lo} \cdots \os{\sim}{\lo}
\os{\circ}{Y}{}'\os{\sim}{\lo} \os{\circ}{Y}
\end{align*} 
over 
\begin{align*} 
\os{\circ}{s}\os{\sim}{\lo} 
\os{\circ}{s}\os{\sim}{\lo} \cdots \os{\sim}{\lo}
\os{\circ}{s}\os{\sim}{\lo} \os{\circ}{s}. 
\end{align*}
Hence we can consider 
$B_n\Om^i_{Y/s}$ and $Z_n\Om^i_{Y/s}$ 
as $F^n_*({\cal O}_Y)$-modules, where 
$F\col Y\lo Y$ is the absolute Frobenius endomorphism of $Y$. 
(I prefer the ways of the definitions of  $Z_n\Om^i_{Y/s}$ and 
$B_n\Om^i_{Y/s}$ above to the ways of
Illusie's definitions of them in \cite[0 (2.2.2)]{idw} 
in the trivial logarithmic case because in our definition, 
it is not necessary to consider $Z_n\Om^i_{Y/s}$ and 
$B_n\Om^i_{Y/s}$ as abelian sheaves on $Y$ and $F^n_*({\cal O}_Y)$-modules 
separately.)

\par 
The following (\ref{prop:wus}) 
is a log version of a generalization of Serre's result in \cite{smxco}.

\begin{prop}[{\bf \cite[(3.5), (3.6)]{ny}}]\label{prop:wus}
Let $n$ be a positive integer. 
Denote the following composite morphism 
\begin{align*} 
F_*({\cal W}_n({\cal O}_Y))\os{d_n}{\lo} 
F^n_*(\Om^1_{\os{\circ}{Y}/\kap})\lo F^n_*(\Om^1_{Y/s})
\end{align*} 
by $d_n$ again. Then the following fold$:$
\par 
$(1)$ 
The morphism $d_n$ factors through $B_n\Om^1_{Y/s}$ 
and the following sequence 
\begin{align*} 
0\lo {\cal W}_n({\cal O}_Y)\os{F}{\lo} 
F_*({\cal W}_n({\cal O}_Y))
\os{d_n}{\lo} B_n\Om^1_{Y/s}\lo 0
\tag{2.3.1;$n$}\label{ali:wnox} 
\end{align*} 
is exact. 
Here we denote the morphism 
${\cal W}_n(F^*)={\cal W}_n(F^*_{Y/s})$ $($resp.~
$F_*({\cal W}_n({\cal O}_{Y}))\lo B_n\Om^1_{Y/s})$ 
by $F$ $($resp.~$d_n)$ again by abuse of notation. 
Consequently $d_n$ induces the following isomorphism of 
${\cal W}_n({\cal O}_Y)$-modules$:$ 
\begin{align*} 
F_*({\cal W}_n({\cal O}_Y))/{\cal W}_n({\cal O}_{Y})
\os{\sim}{\lo}
B_n\Om^1_{Y/s}. 
\tag{2.3.2}\label{ali:fwo} 
\end{align*}  
\par 
$(2)$ Let $R\col {\cal W}_n({\cal O}_Y)\lo {\cal W}_{n-1}({\cal O}_Y)$ 
be the projection. 
Let $C\col B_n\Om^1_{Y/s}\lo B_{n-1}\Om^1_{Y/s}$ 
be the following composite morphism
\begin{align*} 
B_n\Om^1_{Y/s}\os{\rm proj.}{\lo} 
B_n\Om^1_{Y/s}/F^{n-1}_*(B_1\Om^1_{Y/s})
\os{C^{-1},\sim}{\longleftarrow} 
B_{n-1}\Om^1_{Y'/s}\os{\sim}{\longleftarrow}  B_{n-1}\Om^1_{Y/s}. 
\end{align*} 
Then the following diagram 
\begin{equation*} 
\begin{CD} 
F_*({\cal W}_n({\cal O}_Y))
@>{d_n}>>B_n\Om^1_{Y/s}\\
@V{F_*(R)}VV @VV{C}V \\
F_*({\cal W}_{n-1}({\cal O}_Y))
@>{d_{n-1}}>>B_{n-1}\Om^1_{Y/s}
\end{CD}
\tag{2.3.3}\label{cd:dowm}
\end{equation*} 
is commutative. 
\end{prop}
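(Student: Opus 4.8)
The plan is to argue by induction on $n$, building the sequence $(\ref{ali:wnox})$ for length $n$ out of the length-one case and the length-$(n-1)$ case, with the commutative square $(\ref{cd:dowm})$ of part $(2)$ serving as the glue that identifies $(\ref{ali:wnox};n)$ as the middle column of a morphism of short exact sequences. First I would dispose of part $(2)$, since its commutativity is exactly what the induction for part $(1)$ needs; the base case $n=1$ I would read off from the degree-zero log Cartier isomorphism.

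For part $(2)$ I would compute directly with Serre's formula $(\ref{ali:sed})$. Applying $F_*(R)$ first drops the last Witt component, so $d_{n-1}\circ F_*(R)$ sends $(a_0,\dots,a_{n-1})$ to $\sum_{i=0}^{n-2}a_i^{p^{n-2-i}-1}da_i$. On the other side $d_n(a_0,\dots,a_{n-1})=\sum_{i=0}^{n-1}a_i^{p^{n-1-i}-1}da_i$; the top term ($i=n-1$) is $da_{n-1}$, which lies in $B_1\Om^1_{Y/s}$, hence in $F^{n-1}_*(B_1\Om^1_{Y/s})=\mathrm{Ker}(C)$, and is annihilated by $C$. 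For the remaining terms the key point is the identity $C(a^{p^{k}-1}da)=a^{p^{k-1}-1}da$ for $k\geq 1$, which I would extract from the defining inverse log Cartier isomorphism $(\ref{ali:oxs})$ by noting $C^{-1}(a^{j}da)=a^{(j+1)p-1}da$ modulo $F^{n-1}_*(B_1\Om^1_{Y/s})$ and specializing $j=p^{k-1}-1$. Matching exponents term by term with $k=n-1-i$ gives $C\circ d_n=d_{n-1}\circ F_*(R)$, which is $(\ref{cd:dowm})$.

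For part $(1)$ the base case is $0\lo\mathcal{O}_Y\os{F}{\lo}F_*(\mathcal{O}_Y)\os{d}{\lo}B_1\Om^1_{Y/s}\lo 0$: surjectivity of $d_1=d$ onto $B_1\Om^1_{Y/s}$ is its definition, and $\mathrm{Ker}(d\col\mathcal{O}_Y\lo\Om^1_{Y/s})$ equals the image of Frobenius precisely by the degree-zero case $C^{-1}\col\mathcal{O}_{Y'}\os{\sim}{\lo}F_*(\mathcal{H}^0(\Om^{\bul}_{Y/s}))$ of $(\ref{ali:oxs})$. For the inductive step I would view $(\ref{cd:dowm})$ as a morphism from the restriction sequence $0\lo F_*(\mathcal{O}_Y)\os{V^{n-1}}{\lo}F_*(\mathcal{W}_n(\mathcal{O}_Y))\os{F_*(R)}{\lo}F_*(\mathcal{W}_{n-1}(\mathcal{O}_Y))\lo 0$ to the Cartier filtration sequence $0\lo F^{n-1}_*(B_1\Om^1_{Y/s})\lo B_n\Om^1_{Y/s}\os{C}{\lo}B_{n-1}\Om^1_{Y/s}\lo 0$. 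The right-hand square is part $(2)$; the left-hand square commutes because $d_n(V^{n-1}(a))=da$ (the $i=n-1$ term again), so the left vertical arrow is a copy of $d_1$ landing in $F^{n-1}_*(B_1\Om^1_{Y/s})$. The base case and the inductive hypothesis make both outer columns exact, so the snake lemma yields at once that $d_n$ is surjective onto $B_n\Om^1_{Y/s}$ (hence factors through it) and that $\mathrm{Ker}(d_n)$ is an extension of $\mathrm{Im}(F)$ on $\mathcal{W}_{n-1}$ by $\mathrm{Im}(F)$ on $\mathcal{O}_Y$; using that $F$ commutes with $R$ and $V$ I would identify this with $\mathrm{Im}(F)$ on $\mathcal{W}_n(\mathcal{O}_Y)$, giving $(\ref{ali:wnox})$ and the isomorphism $(\ref{ali:fwo})$.

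The main obstacle I expect is the bookkeeping of Frobenius twists and pushforwards in the log setting: keeping track of over which power of $F$ each $B_k\Om^1_{Y/s}$ is an $\mathcal{O}$-module, and checking that the connecting isomorphism of the snake lemma assembles $\mathrm{Ker}(d_n)$ as exactly $\mathrm{Im}(F)$ rather than up to an ambiguity in the extension. Concretely, I would need $F$ (which, after the identification $\os{\circ}{Y}{}'\cong\os{\circ}{Y}$, raises each Witt component to the $p$-th power) to be injective and to satisfy $\mathrm{Im}(F)\cap\mathrm{Im}(V^{n-1})=V^{n-1}(\mathrm{Ker}\,d_1)$; both reduce to injectivity of the absolute Frobenius on $\mathcal{O}_Y$, i.e.\ to reducedness of $\os{\circ}{Y}$, which holds since $Y/s$ is log smooth of Cartier type. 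The other delicate point is the Cartier identity underlying part $(2)$, which must be drawn cleanly from the log inverse Cartier isomorphism $(\ref{ali:oxs})$ rather than from the classical one.
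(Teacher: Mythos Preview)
Your induction is correct. The base case is precisely the degree-$0$ log Cartier isomorphism, your explicit computation for part~(2) is accurate (the identity $C(a^{p^k-1}da)=a^{p^{k-1}-1}da$ is exactly what the inductive definition of $B_n\Om^1_{Y/s}$ via $C^{-1}$ encodes), and the snake-lemma identification of $\ker(d_n)$ with $\mathrm{Im}(F)$ goes through once you note, as you do, that $F$ commutes with both $R$ and $V^{n-1}$ on Witt vectors in characteristic $p$ and that $F$ is injective by reducedness of $\os{\circ}{Y}$. The $\mathcal{W}_n(\mathcal{O}_Y)$-linearity of $d_n$ needed for the module statement $(\ref{ali:fwo})$ is already recorded in the paper as Serre's identity $(\ref{ali:dna})$.

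The paper itself does not prove this proposition in \S\ref{sec:rrfr}; it simply refers to \cite[(3.5),(3.6)]{ny}. What the paper \emph{does} do, in \S\ref{sec:rrafr}, is recover $(\ref{ali:wnox})$ by a genuinely different route: it works with a $p$-torsion-free lift $(\Om^\bullet,\phi)$ satisfying (3.0.2)--(3.0.6), defines $\mathfrak{W}_n\Om^i$ as the quotient $Z^i_n/B^i_n$ of explicit subgroups of $\Om^i$, and proves in Proposition~\ref{prop:fkervcoker}(3) that $F^{n-1}d$ induces $\mathfrak{W}_n\Om^i/F\mathfrak{W}_{n+1}\Om^i\os{\sim}{\lo}B_n\mathfrak{W}_1\Om^{i+1}$ by a one-line kernel computation in the lifted complex. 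Corollary~\ref{coro:nif} then checks that for $i=0$ this map is literally Serre's $d_n$. So where you argue by induction on $n$ entirely in characteristic $p$ using the Cartier operator, the paper's alternative argument works at a fixed $n$ but in characteristic $0$, trading the inductive bookkeeping of $B_n$ for the transparency of the condition ``$d\om\in p^n\Om^{i+1}$''. Your approach is more elementary and needs no lift; the paper's buys uniformity in $i$ (it yields the Illusie exact sequence $(\ref{ali:mfee})$ for all degrees at once) and plugs directly into the formal de Rham--Witt machinery used elsewhere in the article.
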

\begin{proof} 
See \cite[(3.5)]{ny} for the proof of (\ref{prop:wus}). 
\end{proof}

\begin{defi}
We call the exact sequence (\ref{ali:wnox}) of 
${\cal W}_n({\cal O}_Y)$-modules 
the {\it log Serre exact sequence of $Y/s$ of level $n$}. 
\end{defi} 

\begin{coro}[{\bf \cite[(3.7)]{ny}}]\label{coro:chob} 
Let the assumptions be as in {\rm (\ref{theo:nex})} for $Z:=\os{\circ}{Y}$. 
Then 
$H^q(Y,{\cal W}_n({\cal O}_Y))/F=H^q(Y,B_n\Om^1_{Y/s})$. 
Consequently 
\begin{equation*} 
{\rm dim}_{\kap}H^q(Y,B_n\Om^1_{Y/s})=
{\rm min}\{n, h^q(\os{\circ}{Y}/\kap)-1\}. 
\tag{2.5.1}\label{eqn:pkdefpw}
\end{equation*}   
\end{coro}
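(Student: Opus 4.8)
The plan is to extract the first equality from the long exact cohomology sequence attached to the log Serre exact sequence (\ref{ali:wnox}), and then to read off the dimension formula from (\ref{coro:dim}). First I would pass to cohomology in the log Serre exact sequence of level $n$
\begin{align*}
0\lo {\cal W}_n({\cal O}_Y)\os{F}{\lo} F_*({\cal W}_n({\cal O}_Y))\os{d_n}{\lo} B_n\Om^1_{Y/s}\lo 0
\end{align*}
of (\ref{prop:wus}) (1). Because the absolute Frobenius $\os{\circ}{F}$ is a homeomorphism, $F_*$ is exact and yields a canonical identification $H^q(Y,F_*({\cal W}_n({\cal O}_Y)))=H^q(Y,{\cal W}_n({\cal O}_Y))$ under which the map induced by $F={\cal W}_n(F^*)$ becomes the operator $F$ appearing in (\ref{coro:dim}). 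The long exact sequence then splices the two copies of $H^q(Y,{\cal W}_n({\cal O}_Y))$ by this $F$, and reading off $\mathrm{coker}(F)$ on $H^q$ and $\ker(F)$ on $H^{q+1}$ produces the short exact sequence
\begin{align*}
0\lo H^q(Y,{\cal W}_n({\cal O}_Y))/F \os{d_n}{\lo} H^q(Y,B_n\Om^1_{Y/s})\os{\del}{\lo} {}_FH^{q+1}(Y,{\cal W}_n({\cal O}_Y))\lo 0.
\end{align*}

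Next I would show that the rightmost term vanishes; in fact I claim $H^{q+1}(Y,{\cal W}_n({\cal O}_Y))=0$ for every $n\geq 1$. This I prove by induction on $n$ from the exact sequence $0\lo {\cal W}_{n-1}({\cal O}_Y)\os{V}{\lo} {\cal W}_n({\cal O}_Y)\lo {\cal O}_Y\lo 0$: the case $n=1$ is the hypothesis $H^{q+1}(Y,{\cal O}_Y)=0$ of (\ref{theo:nex}), and for the inductive step the three-term segment $H^{q+1}(Y,{\cal W}_{n-1}({\cal O}_Y))\lo H^{q+1}(Y,{\cal W}_n({\cal O}_Y))\lo H^{q+1}(Y,{\cal O}_Y)$ has both outer groups zero, forcing the middle one to vanish. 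Hence ${}_FH^{q+1}(Y,{\cal W}_n({\cal O}_Y))=0$, and the short exact sequence above collapses to the isomorphism $H^q(Y,{\cal W}_n({\cal O}_Y))/F\os{\sim}{\lo} H^q(Y,B_n\Om^1_{Y/s})$, which is the first assertion.

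Finally, the dimension formula (\ref{eqn:pkdefpw}) is immediate: the standing hypotheses are exactly those of (\ref{theo:nex}) for $Z=\os{\circ}{Y}$, so (\ref{coro:dim}) gives $\dim_{\kap}(H^q(Y,{\cal W}_n({\cal O}_Y))/F)=\min\{n,h^q(\os{\circ}{Y}/\kap)-1\}$, and transporting this equality across the isomorphism just obtained yields $\dim_{\kap}H^q(Y,B_n\Om^1_{Y/s})=\min\{n,h^q(\os{\circ}{Y}/\kap)-1\}$. The only point requiring genuine care is the bookkeeping in the first step: one must verify, granting (\ref{prop:wus}) and the homeomorphism property of $\os{\circ}{F}$, that the Frobenius-induced map $H^q(F)$ really is the operator $F$ of (\ref{coro:dim}) and that the two boundary pieces of the long exact sequence match up as claimed; everything after that is formal.
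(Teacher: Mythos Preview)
Your proposal is correct and follows essentially the same approach the paper indicates: the paper's proof simply says ``it is easy to derive this from (\ref{coro:dim}) and (\ref{prop:wus}),'' and you have written out exactly that derivation---take cohomology of the log Serre exact sequence, kill the ${}_FH^{q+1}$ term by the inductive vanishing of $H^{q+1}(Y,{\cal W}_n({\cal O}_Y))$ coming from $H^{q+1}(Y,{\cal O}_Y)=0$, and then apply (\ref{ali:fdwhy}).
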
 
\begin{proof} 
See \cite[(3.7)]{ny} for the proof of (\ref{coro:chob}); 
it is easy to derive this from (\ref{coro:dim}) and (\ref{prop:wus}).  
\end{proof} 

\section{A generalization of the log Serre exact sequence}\label{sec:rrafr}  
In this section we recall theory of 
formal de Rham-Witt complexes in \cite{ndw} with a slightly 
different formulation from that in [loc.~cit.] 
and we generalize the log version of Serre's exact sequence (\ref{ali:wnox}) 
by using this theory. 
That is, we prove that the following sequence 
\begin{align*} 
{\cal W}_{n+1}\Om^i_{Y/s}
\os{F}{\lo} F_{{\cal W}_n(Y)*}({\cal W}_n\Om^i_{Y/s})
\os{F_{{\cal W}_n(Y)*}(F^{n-1}d)}{\lo} 
B_n\Om^{i+1}_{Y/s}\lo 0 \quad (i\in {\mab N})
\tag{3.0.1}\label{ali:exwys}
\end{align*}
is exact in the category of ${\cal W}_{n+1}({\cal O}_Y)$-modules 
for a log smooth scheme $Y/s$ of Cartier type. 
This generalization is a log version of Illusie's generalization of 
Serre's exact sequence in \cite{idw}, 
though he has considered the exactness in [loc.~cit.] 
only in the category of abelian sheaves not in 
the category of modules of the Witt sheaves of structure sheaves. 
If the reader wants to know only the proofs of the results 
(\ref{theo:dw}) and (\ref{theo:ht}) in the 
Introduction, he can skip this section. 
However we shall use (\ref{eqn:rfvn}) below 
for the proof of (\ref{ali:fari}) and  use 
several results in this section to obtain results in the book \cite{nb}. 
\par 
Let $\kap$ be a perfect field of characteristic $p>0$. 
Let ${\cal W}$ be the Witt ring of $\kap$.  
Let $\sig$ be the Frobenius automorphism of ${\cal W}$. 

Let $({\cal T}, {\cal W})$  be a ringed topos: 
${\cal T}$ is a topos and ${\cal W}$ is the constant sheaf 
in ${\cal T}$ defined by the Witt ring ${\cal W}$.  
Let $\Om^{\bul}$ be 
a bounded complex of sheaves of torsion-free ${\cal W}$-modules 
and let $\phi \col  \Om^{{\bul}} \lo \sig_*(\Om^{\bul})$ 
be a morphism of complexes of ${\cal W}$-modules. 
Let $p$ be a 
prime number. Set 
$\Om^{{\bul}}_1:=\Om^{{\bul}}/p\Om^{{\bul}}$.
We assume that the following conditions 
$(3.0.2) \sim (3.0.6)$ hold:
\medskip
\parno
(3.0.2) $\Om^i=0$ for $i<0$.
\medskip
\parno
(3.0.3) $\Om^i$ $(\forall i \in {\mab N})$ is a sheaf of 
$p$-torsion-free and $p$-adically complete ${\cal W}$-module.
\medskip
\parno
(3.0.4) $\phi(\Om^i) \subset 
\sig_*\{\om \in p^i \Om^i~ \vert~ d\om \in p^{i+1} 
\Om^{i+1}\}$ $(\forall i\in {\mab N})$.
\medskip
\parno
(3.0.5) There exists an isomorphism of sheaves of $\kap$-modules 
$$C^{-1} \col \Om^{i}_1\os{\sim}{\lo} 
\sig_*{\cal H}^i(\Om^{\bul}_1) \quad 
(\forall i \in {\mab N}).$$ 
Here we denote by $\sig$ the Frobenius automorphism of $\kap$ 
by abuse of notation. 
\medskip
\parno
(3.0.6) A composite morphism 
(${\rm mod}~p)\circ p^{-i}\phi 
\col \Om^{i} \lo \sig_*(\Om^i) \lo \sig_*(\Om^i_1)$ factors 
through $\sig_*{\rm Ker}(d \col\Om^i_1 \lo \Om^{i+1}_{1})$ 
and the following diagram is commutative:
\begin{equation*}
\begin{CD}
\Om^{i} @>{\mod p}>> 
\Om^{i}_1\\ 
@V{p^{-i}\phi}VV  @VV{C^{-1}}V \\
\sig_*(\Om^i) @>{\mod p}>> 
\sig_*{\cal H}^i(\Om^{\bul}_1).
\end{CD}
\end{equation*}

First we recall the following: 

\begin{prop}[{\bf \cite[(6.4)]{ndw}}]\label{prop:boncom}
Let $i$ $($resp.~$n)$ be a 
non-negative $($resp.~positive$)$ integer. 
Set 
\begin{equation*}
Z^i_n:=\{\om \in \Om^i \vert~ 
d\om \in p^n\Om^{i+1}\}, \quad
B^i_n:=p^n\Om^i+ d\Om^{i-1} \quad {\rm and} \quad 
{\mathfrak W}_n\Om^i:= \sig^n_*(Z^i_n/B^i_n). 
\end{equation*}
Then the morphism $\phi \col \Om^{{\bul}} \lo \sig_*(\Om^{\bul})$ 
induces the following isomorphism of sheaves of ${\cal W}$-modules$\,:$  
\begin{equation*}
{\mathfrak W}_n\Om^i
\os{\us{\sim}{\phi}}{\lo}
\sig_*\{p^iZ^i_{n+1}/(p^{i+n}Z^i_1+p^{i-1}dZ^{i-1}_1)\}.
\tag{3.1.1}
\label{eqn:sexgg}
\end{equation*}
\end{prop}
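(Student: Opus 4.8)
The plan is to write down the map of $(3.1.1)$ by a direct computation with $\phi$, and then to prove its bijectivity by induction on $n$ with the inverse Cartier isomorphism of $(3.0.5)$--$(3.0.6)$ as the engine. First I would set $F:=p^{-i}\phi$ on $\Om^i$; by the $p$-torsion-freeness $(3.0.3)$ and $(3.0.4)$ this is well defined with values in $\Om^i$, and since $\phi$ is a morphism of complexes it obeys the fundamental relation $dF=pFd$. Using $(3.0.4)$, a short computation then yields the two containments that make $(3.1.1)$ well defined. For $\om\in Z^i_n$ one has $\phi(d\om)=d\phi(\om)\in p^{n+i+1}\Om^{i+1}$, whence $dF(\om)\in p^{n+1}\Om^{i+1}$ and $\phi(\om)=p^iF(\om)\in\sig_*(p^iZ^i_{n+1})$; the same reasoning applied to $p^n\Om^i$ and to $d\Om^{i-1}$ gives $\phi(p^n\Om^i)\subset\sig_*(p^{i+n}Z^i_1)$ and $\phi(d\Om^{i-1})=d\phi(\Om^{i-1})\subset\sig_*(p^{i-1}dZ^{i-1}_1)$, so $\phi(B^i_n)$ lands in the denominator. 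The Frobenius twists $\sig^n_*$ and $\sig_*$ are placed precisely so that the resulting map is ${\cal W}$-linear.

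After dividing the target by $p^i$ (legitimate by $(3.0.3)$), bijectivity of $(3.1.1)$ is equivalent to the assertion that $F$ induces an isomorphism $Z^i_n/B^i_n\os{\sim}{\lo}Z^i_{n+1}/(p^nZ^i_1+p^{-1}dZ^{i-1}_1)$, where $p^{-1}dZ^{i-1}_1:=\{p^{-1}d\gam\mid\gam\in Z^{i-1}_1\}\subset\Om^i$. The decisive input is $(3.0.6)$ together with $(3.0.5)$: modulo $p$ the operator $F$ is exactly the inverse Cartier isomorphism $C^{-1}\col\Om^i_1\os{\sim}{\lo}\sig_*{\cal H}^i(\Om^{\bul}_1)$, so that $F$ is surjective onto mod-$p$ cycles up to boundaries and injective modulo boundaries. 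I would establish the displayed isomorphism by induction on $n$, handling injectivity and surjectivity separately.

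For injectivity, suppose $\om\in Z^i_n$ with $F(\om)\in p^nZ^i_1+p^{-1}dZ^{i-1}_1$. Using the degree $i-1$ surjectivity of $C^{-1}$ I would first rewrite the awkward term $p^{-1}d\gam$ as $F(d\gam')+d\gam''$ via $dF=pFd$, thereby absorbing it into $B^i_n$; reducing the remaining identity modulo $p$ and invoking the injectivity of $C^{-1}$ then forces $\om\in p\Om^i$. Writing $\om=p\om_1$ one checks $\om_1\in Z^i_{n-1}$ and $F(\om_1)\in p^{n-1}Z^i_1+p^{-1}dZ^{i-1}_1$, so the induction hypothesis gives $\om_1\in B^i_{n-1}$ and hence $\om\in B^i_n$; the base case $n=0$ is trivial. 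Surjectivity I would obtain by successive approximation: given $x\in Z^i_{n+1}$, the surjectivity of $C^{-1}$ produces a first approximation $\om_0$ with $x-F(\om_0)$ divisible by $p$ modulo boundaries, and one corrects $\om_0$ step by step, the $p$-adic completeness $(3.0.3)$ guaranteeing that the resulting corrections converge to an element of $Z^i_n$ mapping to $x$ modulo the denominator.

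The hard part will be the bookkeeping in the surjectivity step. The natural map $Z^i_{n}/B^i_{n}\lo Z^i_{n-1}/B^i_{n-1}$ between successive levels is not surjective, its obstruction being essentially a Bockstein class in ${\cal H}^{i+1}(\Om^{\bul}_1)$ attached to the secondary operation $\om\mapsto[p^{-1}d\om]$ on mod-$p$ cycles; hence one cannot simply climb the levels by a single exact sequence and the five lemma. Instead one must carry both filtrations $Z^i_{\bul}$ and $B^i_{\bul}=p^{\bul}\Om^i+d\Om^{i-1}$ simultaneously and verify at each stage that the correction terms stay in $Z^i_n$ while the errors fall into $p^nZ^i_1+p^{i-1}dZ^{i-1}_1$ rather than merely into the coarser $B^i_1$. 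Controlling these $p^{-1}d(\cdot)$ contributions---showing that the Bockstein terms are exactly what the summand $p^{i-1}dZ^{i-1}_1$ of the denominator is designed to absorb---is where $(3.0.4)$ and $(3.0.6)$ must be used in tandem, and is the crux of the argument.
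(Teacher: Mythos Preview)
The paper does not supply its own proof of this proposition; it is quoted from \cite[(6.4)]{ndw} and used as input for the rest of \S3. So there is no argument in the present paper to compare your proposal against, and it must stand on its own.

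Your setup and your injectivity argument are correct. The operator $F:=p^{-i}\phi$ is well defined by $(3.0.3)$--$(3.0.4)$, the relation $dF=pFd$ holds because $\phi$ commutes with $d$, and the three containments $\phi(Z^i_n)\subset\sig_*(p^iZ^i_{n+1})$, $\phi(p^n\Om^i)\subset\sig_*(p^{i+n}Z^i_1)$, $\phi(d\Om^{i-1})\subset\sig_*(p^{i-1}dZ^{i-1}_1)$ follow exactly as you say. For injectivity, your reduction of the $p^{-1}dZ^{i-1}_1$ contribution via the degree-$(i-1)$ surjectivity of $C^{-1}$ and the subsequent descent $\om=p\om_1$, $\om_1\in Z^i_{n-1}$, $F(\om_1)\in p^{n-1}Z^i_1+p^{-1}dZ^{i-1}_1$ is clean and complete.

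The surjectivity sketch, however, has a real gap that your closing paragraph does not close. If one picks a first approximation $\om_0\in\Om^i$ from the surjectivity of $C^{-1}$ in degree $i$ alone, then $\om_0$ lies in no $Z^i_m$ in general: writing $x-F(\om_0)=p\alpha+d\beta$ one computes $d\alpha\equiv -F(d\om_0)\pmod p$, and since $\overline{d\om_0}$ is an arbitrary cycle in $\Om^{i+1}_1$ this need not vanish; hence $\alpha\notin Z^i_1$ and the error is not in the denominator $p^nZ^i_1+p^{-1}dZ^{i-1}_1$. Iterating the same step only pushes the problem forward, and $p$-adic completeness of $\Om^i$ cannot by itself force the limit into $Z^i_n$. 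What is missing is an organization of the approximation that keeps control of the $Z$-level at every stage. One way to do this is an honest induction on $n$: at the inductive step the error term $\alpha$ lands in some $Z^i_{m}$ with $m<n+1$, and one applies the already-proved case to $\alpha$ rather than to $x$, obtaining a correction $\rho\in Z^i_{m-1}$ with $F(\rho)-\alpha$ in the corresponding denominator; the correction $p^k\rho$ then lies in the right $Z$-level. An alternative is to exhibit matching short exact sequences on source and target with graded pieces isomorphic (via $C^{-1}$) to $\Om^i_1$ and invoke the five lemma. Your diagnosis that the summand $p^{i-1}dZ^{i-1}_1$ is precisely what absorbs the Bockstein terms is exactly right; the missing piece is that the surjectivity must be organized as an induction on $n$ rather than as a straight $p$-adic limit in $\Om^i$.
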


\par
By (3.0.5) we have the following isomorphism 
\begin{equation*}
\sig_*{\mathfrak W}_1\Om^i= \sig_*{\cal H}^i(\Om^{\bul}_1)
\os{\os{C^{-1}}{\sim}}{\longleftarrow} \Om^i_1.  
\tag{3.1.2}\label{eqn:1wc}
\end{equation*}
\par
Recall the following morphisms 
\begin{align*}
&F \col {\mathfrak W}_{n+1}\Om^{i} \lo \sig_*{\mathfrak W}_n\Om^{i},\quad 
V\col \sig_*{\mathfrak W}_n\Om^{i} \lo {\mathfrak W}_{n+1}\Om^{i},\\
&d \col {\mathfrak W}_n\Om^{i} 
\lo {\mathfrak W}_n\Om^{i+1},\quad 
R \col {\mathfrak W}_{n+1}\Om^{i} 
\lo 
{\mathfrak W}_n\Om^{i}
\end{align*}
of sheaves of ${\cal W}$-modules in ${\cal T}$ as follows:
$F$ (resp.~$V$) is a morphism induced by 
${\rm id} \col \Om^{i} \lo \Om^{i}$ 
(resp.~$p\times{\rm id} 
\col \Om^{i} \lo \Om^{i}$); 
$d$ is a morphism induced by 
$p^{-n}d \col  Z^i_n \lo \Om^{i+1}$; 
$R$ is the following composite surjective morphism (cf.~\cite[(4.2)]{hk}):
\begin{align*}
{\mathfrak W}_{n+1}{\Om}^i &= 
\sig^{n+1}_*(Z^i_{n+1}/B^i_{n+1}) 
\os{\us{\sim}{p^i}}{\lo} 
\sig^{n+1}_*(p^iZ^i_{n+1}/p^iB^i_{n+1}) \tag{3.1.3}\label{ali:aplocpj}\\
& \os{{\rm proj}.}{\lo} 
\sig^{n+1}_*(p^iZ^i_{n+1}/(p^{i+n}Z^i_1+p^{i-1}dZ^{i-1}_1)) 
\os{\us{\sim}{\phi}}{\longleftarrow}
\sig^{n}_*(Z^i_n/B^i_n)={\mathfrak W}_{n}{\Om}^i.
\end{align*}
Then the following formulas hold:
\begin{equation*}
d^2=0,~ FdV=d,~FV=VF=p,~FR=R F,~dR=Rd,~VR=RV.
\tag{3.1.4}
\label{eqn:earl}
\end{equation*}


\par 
\begin{lemm}[{\bf \cite[(6.7)]{ndw}}]\label{lemm:wst}
Let $\star$ be a positive integer $n$ or nothing.
Set 
\begin{equation*}
{\mathfrak W}\Om^{\bul}=
\us{R}{\vpl}~{\mathfrak W}_n\Om^{\bul}. 
\end{equation*}
Then there exists a natural 
${\cal W}_{\star}$-module structure on 
${\mathfrak W}_{\star}\Om^i$ 
$($see the explanation after {\rm (\ref{theo:raycomp})} below$)$.
\end{lemm}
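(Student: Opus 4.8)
The plan is to read the ${\cal W}_n$-module structure directly off the defining presentation ${\mathfrak W}_n\Om^i=\sig^n_*(Z^i_n/B^i_n)$, and then to verify that the structure so obtained is compatible with the operators $F$, $V$, $R$, $d$. First I would observe that, since $\Om^{\bul}$ is a complex of ${\cal W}$-modules, its differential $d\col \Om^i\lo \Om^{i+1}$ is ${\cal W}$-linear; consequently both $Z^i_n=\{\om\in\Om^i \mid d\om\in p^n\Om^{i+1}\}$ and $B^i_n=p^n\Om^i+d\Om^{i-1}$ are ${\cal W}$-submodules of $\Om^i$, so that the quotient $Z^i_n/B^i_n$ is a ${\cal W}$-module. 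Because $p^nZ^i_n\sus p^n\Om^i\sus B^i_n$, the element $p^n$ annihilates $Z^i_n/B^i_n$, and hence the ${\cal W}$-action factors through ${\cal W}_n={\cal W}/p^n{\cal W}$; thus $Z^i_n/B^i_n$ is a ${\cal W}_n$-module. Since $\kap$ is perfect, $\sig$ is an automorphism of ${\cal W}$ and induces a ring automorphism of ${\cal W}_n$; restriction of scalars along $\sig^n$ therefore carries ${\cal W}_n$-modules to ${\cal W}_n$-modules, and I would define the ${\cal W}_n$-module structure on ${\mathfrak W}_n\Om^i=\sig^n_*(Z^i_n/B^i_n)$ to be the resulting one.

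Next I would check that this structure is the natural one, in the sense that the structural morphisms $F$, $V$, $R$ and $d$ of (\ref{eqn:earl}) are (semi-)linear in the expected way. The key point is that the Frobenius twists recorded by the various $\sig^k_*$ are exactly calibrated so that each structural map is ${\cal W}$-linear for the twisted structures: the isomorphism (\ref{eqn:sexgg}) induced by $\phi$ is ${\cal W}$-linear, and the projection and the multiplication-by-$p^i$ appearing in (\ref{ali:aplocpj}) are manifestly ${\cal W}$-linear, so $R\col {\mathfrak W}_{n+1}\Om^i\lo {\mathfrak W}_n\Om^i$ is ${\cal W}$-linear and hence compatible with the reductions ${\cal W}_{n+1}\lo {\cal W}_n$. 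The maps $F$ and $V$, landing respectively in $\sig_*{\mathfrak W}_n\Om^i$ and coming from $\sig_*{\mathfrak W}_n\Om^i$, become $\sig$- and $\sig^{-1}$-semilinear for the underlying modules, which is precisely the role played by the single $\sig$ in their targets and sources; and $d$, induced by $p^{-n}d$, is additive and ${\cal W}_n$-linear for the twisted structure. These verifications are routine once the twists are bookkept, and they are what is referred to in the explanation after (\ref{theo:raycomp}).

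For the case in which $\star$ is nothing I would simply pass to the limit: since $R$ is ${\cal W}$-linear and compatible with ${\cal W}_{n+1}\lo {\cal W}_n$, the system $\{{\mathfrak W}_n\Om^i, R\}$ is an inverse system of ${\cal W}_n$-modules, and the limit ${\mathfrak W}\Om^i=\us{R}{\vpl}\,{\mathfrak W}_n\Om^i$ inherits a module structure over $\us{n}{\vpl}\,{\cal W}_n={\cal W}$. This gives the asserted ${\cal W}_{\star}$-module structure in both cases.

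The main obstacle, to my mind, is not the existence of some ${\cal W}_n$-action --- that is immediate from the quotient presentation above --- but the bookkeeping of the Frobenius automorphism $\sig$ across the different levels $n$, so that the compositions defining $R$ in (\ref{ali:aplocpj}) and the relations (\ref{eqn:earl}) are genuinely ${\cal W}$-linear (rather than semilinear in an uncontrolled way), and so that the limit is honestly a ${\cal W}$-module. A secondary point, which justifies calling the structure \emph{natural} and which the text defers, is to confirm that this structure agrees with the usual de Rham--Witt ${\cal W}_n$-module structure once $\Om^{\bul}$ is specialized to a (log) de Rham complex; this is where one must match the present formal construction against the geometric one.
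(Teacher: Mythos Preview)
Your argument is correct: since $d$ is ${\cal W}$-linear, $Z^i_n$ and $B^i_n$ are ${\cal W}$-submodules of $\Om^i$, the quotient is killed by $p^n$, and the $\sig^n$-twist then gives ${\mathfrak W}_n\Om^i$ its ${\cal W}_n$-structure; passing to the limit along $R$ handles the case $\star=$ nothing. This is essentially how the result is obtained in the cited source \cite[(6.7)]{ndw}, and the paper itself does not supply an independent proof here.

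One small point of comparison worth noting: the ``explanation after (\ref{theo:raycomp})'' to which the lemma refers is not really a proof of the ${\cal W}_n$-module structure you construct, but rather a description of a \emph{finer} structure in the geometric setting---namely a ${\cal W}_n({\cal O}_Z)'$-module structure when $\Om^i$ is a quasi-coherent ${\cal B}$-module---given by the explicit formula $c\cdot[\om]=[(\sum_{j=0}^{n-1}p^j\wt{c}_j^{\,p^{n-j}})\om]$. Restricting that formula to constants $c\in{\cal W}_n(\kap)$ recovers your action (the sum being the $\sig^n$-twist of $c$ viewed in ${\cal W}/p^n$), so the two agree where they overlap. Your direct quotient argument is cleaner for the bare ${\cal W}_n$-structure; the paper's deferred explanation has the advantage of simultaneously yielding the quasi-coherent module structure used later in (\ref{prop:fzg}), (\ref{lemm:nfgg}) and (\ref{prop:fg}).
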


We have called ${\mathfrak W}\Om^{\bul}$ 
(resp.~${\mathfrak W}_n\Om^{\bul})$
the formal de Rham-Witt complex
(resp.~formal de Rham-Witt complex of length $n$) 
of $(\Om^{\bul}, \phi, C^{-1})$.

\par 
Set $Z\Om^i_1:=
{\rm Ker}(d \col \Om_1^i \lo \Om_1^{i+1})$ 
and $B\Om^i_1:= {\rm Im}(d \col \Om_1^{i-1} \lo \Om_1^i)$. 
In \cite[(6.16.2), (6.16.3)]{ndw} we have defined 
the following subsheaves 
$Z_n\Om^i_1$ and $B_n\Om^i_1$ of $\kap$-modules of $\sig^n_*(\Om^i_1)$ 
inductively for $n \in {\mab N}$:
\begin{equation*}
Z_0\Om_1^i:=\Om^i_1, \quad  
\quad Z_{n}\Om^i_1 /\sig^n_*(B\Om^i_1)  
\os{C^{-1}}{\os{\sim}{\longleftarrow}} 
Z_{n-1}\Om^i_1 \quad (n\in {\mab Z}_{\geq 1}),  
\tag{3.2.1}\label{eqn:defz}
\end{equation*}
\begin{equation*}
B_0\Om_1^i:=0, \quad 
B_n\Om^i_1 /\sig^n_*(B\Om^i_1)\os{C^{-1}}{\os{\sim}{\longleftarrow}} 
B_{n-1}\Om^i_1 \quad (n\in {\mab Z}_{\geq 1}). 
\tag{3.2.2}\label{eqn:defb}
\end{equation*} 

\begin{lemm}[{\bf \cite[(6.17)]{ndw}}]\label{lemm:zbn}
$(1)$ $Z_n\Om^i_1=\sig_*^n\{(Z^i_n+p\Om^i)/p\Om^i\}$ 
$(n\in {\mab Z}_{>0})$.
\par
$(2)$ $B_n\Om^i_1=
\sig_*^n\{(p^{-(n-1)}dZ^{i-1}_{n-1}+p\Om^i)/p\Om^i\}$ 
$(n\in {\mab Z}_{>0})$.
\end{lemm}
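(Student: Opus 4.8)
The plan is to prove (1) and (2) simultaneously by induction on $n$, tracking how the Cartier operator transports the $p$-adic divisibility filtration on $d$ by one step. The engine throughout is the commutative diagram (3.0.6), which identifies $C^{-1}$ with the reduction modulo $p$ of $p^{-i}\phi$. The base case $n=1$ is immediate from the definitions: since $C^{-1}\col \Om^i_1\os{\sim}{\lo}\sig_*{\cal H}^i(\Om^{\bul}_1)=\sig_*(Z\Om^i_1/B\Om^i_1)$ by (3.0.5), and $Z^i_1=\{\om : d\om\in p\Om^{i+1}\}$ reduces modulo $p$ to $Z\Om^i_1$, the recursive definitions $Z_1\Om^i_1/\sig_*(B\Om^i_1)\os{C^{-1}}{\longleftarrow}\Om^i_1$ and $B_1\Om^i_1/\sig_*(B\Om^i_1)\os{C^{-1}}{\longleftarrow}0$ give $Z_1\Om^i_1=\sig_*(Z\Om^i_1)$ and $B_1\Om^i_1=\sig_*(B\Om^i_1)$; these are the asserted formulas, recalling that $Z^{i-1}_0=\Om^{i-1}$ so that $dZ^{i-1}_0=d\Om^{i-1}$.

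Next I would record two forward divisibility computations, both consequences of $\phi d=d\phi$ together with (3.0.4). For (1): if $\om\in Z^i_{n-1}$, then $d\phi(\om)=\phi(d\om)\in p^{n-1}\phi(\Om^{i+1})\subset\sig_*(p^{n+i}\Om^{i+1})$, hence $d(p^{-i}\phi(\om))\in\sig_*(p^n\Om^{i+1})$, so $p^{-i}\phi(\om)\in\sig_*Z^i_n$. For (2): writing $\om=p^{-(n-2)}d\zeta$ with $\zeta\in Z^{i-1}_{n-2}$ and $\phi(\zeta)=p^{i-1}\xi$ (possible by (3.0.4) in degree $i-1$), the same manipulation gives $\phi(d\zeta)=p^{i-1}d\xi\in\sig_*(p^{n+i-2}\Om^i)$, whence $\xi\in Z^{i-1}_{n-1}$ and $p^{-i}\phi(\om)=p^{-(n-1)}d\xi$; thus $p^{-i}\phi$ carries $p^{-(n-2)}dZ^{i-1}_{n-2}$ into $\sig_*(p^{-(n-1)}dZ^{i-1}_{n-1})$. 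Reducing modulo $p$ and applying $\sig^{n-1}_*$, these furnish the inclusions ``$\subset$'' in the inductive step once the inductive hypotheses for $Z_{n-1}\Om^i_1$ and $B_{n-1}\Om^i_1$ are substituted into the recursive definitions (3.2.1) and (3.2.2).

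The main obstacle is the reverse inclusion, i.e.\ the surjectivity onto the asserted groups, and it is here that the full strength of $C^{-1}$ being an \emph{isomorphism} (in particular, injective) is used. The key is a converse divisibility reflection: for $\tau\in\Om^{i+1}$ and $m\in{\mab N}$, one has that $\phi(\tau)\in\sig_*(p^{m+i+1}\Om^{i+1})$ implies $\tau\in p^m\Om^{i+1}$. This follows by induction on $m$: when $m\geq 1$ one has $p^{-(i+1)}\phi(\tau)\in p\sig_*(\Om^{i+1})$, so $C^{-1}(\tau\bmod p)=0$ by (3.0.6), whence $\tau\in p\Om^{i+1}$ by injectivity of $C^{-1}$, and one descends by writing $\tau=p\tau'$. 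Combined with (3.0.4) this yields the equivalence $d\om\in p^{n-1}\Om^{i+1}\Leftrightarrow p^{-i}\phi(\om)\in\sig_*Z^i_n$, so that $p^{-i}\phi$ induces a \emph{bijection} between $(Z^i_{n-1}+p\Om^i)/p\Om^i$ and $\{(Z^i_n+p\Om^i)/p\Om^i\}/B\Om^i_1$; the surjectivity of $C^{-1}$ then forces equality, giving (1). An identical reflection argument in degree $i-1$, producing a preimage $\zeta\in Z^{i-1}_{n-2}$ for any given $\xi\in Z^{i-1}_{n-1}$, settles the reverse inclusion for (2).

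Finally I would assemble the pieces: feeding the two equivalences into (3.2.1) and (3.2.2) propagates the inductive hypotheses verbatim, completing the induction. As a consistency check one notes that $d\Om^{i-1}\subset Z^i_n$ for all $n$ (exact forms are $d$-closed) and $d\Om^{i-1}\subset p^{-(n-1)}dZ^{i-1}_{n-1}$ (since $p^{n-1}\eta\in Z^{i-1}_{n-1}$ for $\eta\in\Om^{i-1}$), so that $B\Om^i_1$ is automatically absorbed into the asserted groups, matching the recursive quotient by $\sig^n_*(B\Om^i_1)$. The resulting formulas recover the indexing underlying Proposition \ref{prop:boncom}, whose isomorphism (3.1.1) encodes the same bookkeeping; indeed an alternative to the self-contained reflection argument is to extract the needed surjectivity directly from (3.1.1), though the reflection argument seems cleaner.
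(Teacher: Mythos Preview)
Your approach is essentially the same as the paper's: the proof cited from \cite[(6.17)]{ndw} proceeds by induction on $n$, establishing precisely the isomorphisms (\ref{eqn:zcatr}) and (\ref{eqn:zcabd}), and your forward/backward divisibility computations via $p^{-i}\phi$ and the reflection lemma are exactly the ingredients for this.

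There is, however, a small gap in your surjectivity step. Your equivalence $d\om\in p^{n-1}\Om^{i+1}\Leftrightarrow p^{-i}\phi(\om)\in\sig_*Z^i_n$ is proved for $\om\in\Om^i$ at the \emph{integral} level, but the bijection you want is modulo~$p$: you need that $p^{-i}\phi(\om)\in\sig_*(Z^i_n+p\Om^i)$ implies $\om\in Z^i_{n-1}+p\Om^i$. The presence of the $p\Om^i$ term is not handled by your equivalence alone. Concretely, if $p^{-i}\phi(\om)=\alpha+p\beta$ with $\alpha\in Z^i_n$, you cannot directly conclude $\om\in Z^i_{n-1}$; you must first adjust $\om$ by an element of $p\Om^i$ to kill the $p\beta$ term. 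This is done by successive approximation: choose $\om_1$ with $p^{-i}\phi(\om_1)\equiv\beta\pmod{p\Om^i+d\Om^{i-1}}$ (possible by surjectivity of $C^{-1}$), so that $p^{-i}\phi(\om-p\om_1)\in Z^i_n+p^2\Om^i$, and iterate. Convergence of $\om':=\om-\sum_{k\geq 1}p^k\om_k$ uses the $p$-adic completeness hypothesis (3.0.3), which you should invoke explicitly; then $p^{-i}\phi(\om')\in Z^i_n$ exactly (using that $Z^i_n=d^{-1}(p^n\Om^{i+1})$ is $p$-adically closed), and your equivalence applies. Alternatively, as you note, one can extract this from (\ref{eqn:sexgg}); but either way the phrase ``the surjectivity of $C^{-1}$ then forces equality'' is doing more work than you have written down.
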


In the proof of \cite[(6.17)]{ndw} 
we have proved that the following morphisms  
\begin{equation*}
C^{-1}=p^{-i}\phi \col 
(Z^i_n+p\Om^i)/p\Om^i \lo  
\sig_*\{(Z^i_{n+1}+p\Om^i)/(p\Om^i+d\Om^{i-1})\}  \quad
(n\in {\mab Z}_{>0}) 
\tag{3.3.1}
\label{eqn:zcatr}
\end{equation*}
and 
\begin{equation*}
C^{-1}=p^{-i}\phi \col 
(p^{-(n-1)}dZ^{i-1}_{n-1}+p\Om^i)
/p\Om^i \lo 
\sig_*\{(p^{-n}dZ^{i-1}_n+p\Om^i)
/(p\Om^i+d\Om^{i-1})\} 
\quad 
(n\in {\mab Z}_{>0})
\tag{3.3.2}
\label{eqn:zcabd}
\end{equation*}
are isomorphisms of $\kap$-modules.  
Set $Z_n{\mathfrak W}_1\Om^i:=\sig^{n+1}_*\{Z_{n+1}\Om^i_1/B\Om^i_1\}$ and 
$B_n{\mathfrak W}_1\Om^i:=\sig^{n+1}_*\{B_{n+1}\Om^i_1/B\Om^i_1\}$. 
These are $\kap$-submodules of 
$\sig^n_*({\mathfrak W}_1\Om^i)$.

\par 
Let us consider the following composite morphisms of $\kap$-modules:
\begin{align*}
C \col 
Z_n{\mathfrak W}_1\Om^i &=\sig^{n+1}_*\{Z_{n+1}\Om^i_1/B\Om^i_1\} 
\os{{\rm proj}.}{\lo}
\sig^{n+1}_*\{Z_{n+1}\Om^i_1/B_2\Om^i_1\} \tag{3.3.3}\label{eqn:zomc}\\
&\os{(C^{-1})^{-1}}{\os{\sim}{\lo}} 
\sig^{n}_*\{Z_n\Om^i_1/B\Om^i_1\}=
Z_{n-1}{\mathfrak W}_1\Om^i  \quad (n\geq 0),  
\end{align*}

\begin{align*}
C \col B_n{\mathfrak W}_1\Om^i&= \sig^{n+1}_*\{B_{n+1}\Om^i_1/B\Om^i_1\} 
\os{{\rm proj}.}{\lo}
\sig^{n+1}_*\{B_{n+1}\Om^i_1/B_2\Om^i_1\} \tag{3.3.4}\label{eqn:bomc}\\
& \os{(C^{-1})^{-1}}{\os{\sim}{\lo}}  
\sig^{n}_*\{B_n\Om^i_1/B\Om^i_1\}=B_{n-1}{\mathfrak W}_1\Om^i \quad (n\geq 1). 
\end{align*}

\par 

The following (3) is a formal generalization of 
(\ref{ali:wnox}) (cf.~\cite[I (3.11)]{idw}). 

\begin{prop}[{\bf cf.~\cite[I (3.11)]{idw}}]\label{prop:fkervcoker}
$(1)$ The morphism 
$F^n \col {\mathfrak W}_{n+1}\Om^i\lo \sig^{n}_*{\mathfrak W}_1\Om^i$ 
induces the following isomorphism of sheaves of $\kap$-modules 
in ${\cal T}\!:$ 
\begin{align*} 
{\mathfrak W}_{n+1}\Om^i/V(\sig_*{\mathfrak W}_n\Om^i) 
\os{\sim}{\lo} 
Z_n{\mathfrak W}_1\Om^i. 
\tag{3.4.1}\label{ali:nwee}
\end{align*}
\par 
$(2)$ The following diagram is commutative$:$
\begin{equation*} 
\begin{CD} 
{\mathfrak W}_{n+1}\Om^i@>{F^n}>>Z_n{\mathfrak W}_1\Om^i\\
@V{R}VV @VV{C}V\\
{\mathfrak W}_n\Om^i@>{F^{n-1}}>>Z_{n-1}{\mathfrak W}_1\Om^i. 
\end{CD} 
\tag{3.4.2}\label{ali:nmfee}
\end{equation*} 
\par 
$(3)$ The morphism 
$F^{n-1}d \col {\mathfrak W}_n\Om^i\lo \sig^{n-1}_*{\mathfrak W}_1\Om^{i+1}$ 
induces the following isomorphism of abelian sheaves in ${\cal T}\!\!:$ 
\begin{align*} 
\sig_*{\mathfrak W}_n\Om^i/F{\mathfrak W}_{n+1}\Om^i\os{\sim}{\lo} 
B_n{\mathfrak W}_1\Om^{i+1}. 
\tag{3.4.3}\label{ali:nmee}
\end{align*}
\par 
$(4)$ The following diagram is commutative$:$
\begin{equation*} 
\begin{CD} 
\sig_*{\mathfrak W}_{n+1}\Om^i@>{F^nd}>>B_{n+1}{\mathfrak W}_1\Om^{i+1}_1\\
@V{R}VV @VV{C}V\\
\sig_*{\mathfrak W}_n\Om^i@>{F^{n-1}d}>>B_n{\mathfrak W}_1\Om^{i+1}_1. 
\end{CD} 
\tag{3.4.4}\label{cd:nmwee}
\end{equation*} 
\end{prop}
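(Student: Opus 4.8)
The plan is to prove all four assertions inside the explicit model ${\mathfrak W}_n\Om^i=\sig^n_*(Z^i_n/B^i_n)$ furnished by Proposition \ref{prop:boncom}, computing throughout with representatives in $Z^i_n$ and $B^i_n$. In this model the operators are transparent: $F$ is induced by the inclusion $Z^i_{n+1}\subset Z^i_n$ (the identity of $\Om^i$), $V$ by multiplication by $p$, and $d\col {\mathfrak W}_n\Om^i\lo {\mathfrak W}_n\Om^{i+1}$ by $p^{-n}d\col Z^i_n\lo Z^{i+1}_n$ (well defined because $d\eta\in p^n\Om^{i+1}$ for $\eta\in Z^i_n$). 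Thus $F^n\col {\mathfrak W}_{n+1}\Om^i\lo \sig^n_*{\mathfrak W}_1\Om^i$ is induced by $Z^i_{n+1}\subset Z^i_1$, and $F^{n-1}d\col {\mathfrak W}_n\Om^i\lo \sig^{n-1}_*{\mathfrak W}_1\Om^{i+1}$ carries the class of $\eta\in Z^i_n$ to that of $p^{-n}d\eta$ modulo $B^{i+1}_1=p\Om^{i+1}+d\Om^i$. The structural input used repeatedly is the $p$-torsion-freeness of (3.0.3), which allows one to divide relations by powers of $p$.

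For (1) I would first read off the image of $F^n$ as $(Z^i_{n+1}+B^i_1)/B^i_1$ inside $Z^i_1/B^i_1$; by Lemma \ref{lemm:zbn}(1), which gives $Z_{n+1}\Om^i_1=\sig^{n+1}_*\{(Z^i_{n+1}+p\Om^i)/p\Om^i\}$, this subsheaf is precisely $Z_n{\mathfrak W}_1\Om^i$. For the kernel, since $p^n\Om^i\subset Z^i_n$ one has $V(\sig_*{\mathfrak W}_n\Om^i)=(pZ^i_n+d\Om^{i-1})/B^i_{n+1}$, whereas $\mathrm{Ker}(F^n)=(Z^i_{n+1}\cap B^i_1)/B^i_{n+1}$. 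So (\ref{ali:nwee}) reduces to the identity
\[
Z^i_{n+1}\cap(p\Om^i+d\Om^{i-1})=pZ^i_n+d\Om^{i-1}.
\]
Here $\supseteq$ is immediate, and for $\subseteq$ one writes $\eta=p\al+d\bet\in Z^i_{n+1}$, observes $d\eta=p\,d\al\in p^{n+1}\Om^{i+1}$, and divides by $p$ via (3.0.3) to get $d\al\in p^n\Om^{i+1}$, i.e. $\al\in Z^i_n$. This establishes (1).

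Part (3) is entirely parallel. By Lemma \ref{lemm:zbn}(2), $B_{n+1}\Om^{i+1}_1=\sig^{n+1}_*\{(p^{-n}dZ^i_n+p\Om^{i+1})/p\Om^{i+1}\}$, so the image $(p^{-n}dZ^i_n+B^{i+1}_1)/B^{i+1}_1$ of $F^{n-1}d$ is exactly $B_n{\mathfrak W}_1\Om^{i+1}$. For the kernel, $F^{n-1}d$ annihilates the class of $\eta\in Z^i_n$ iff $p^{-n}d\eta\in p\Om^{i+1}+d\Om^i$; writing $p^{-n}d\eta=p\gam+d\del$ gives $d(\eta-p^n\del)=p^{n+1}\gam$, whence $\eta-p^n\del\in Z^i_{n+1}$ and $\eta\in Z^i_{n+1}+B^i_n$. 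Since $F{\mathfrak W}_{n+1}\Om^i=(Z^i_{n+1}+B^i_n)/B^i_n$, this yields (\ref{ali:nmee}); once more (3.0.3) is what licenses the division by $p$.

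Finally, (2) and (4) are the naturality of the isomorphisms (\ref{ali:nwee}) and (\ref{ali:nmee}) with respect to the restriction $R$ and the operators $C$ of (\ref{eqn:zomc}), (\ref{eqn:bomc}): concretely $C\circ F^n=F^{n-1}\circ R$ in (\ref{ali:nmfee}) and $C\circ F^nd=F^{n-1}d\circ R$ in (\ref{cd:nmwee}). I would verify these by tracing a single representative $\eta$ through both composites, using the concrete form of $R$ in (\ref{ali:aplocpj}) (multiply by $p^i$, project, then apply $\phi^{-1}$) and the projection-plus-$(C^{-1})^{-1}$ descriptions of $C$; the two sides coincide because $\phi$ and the Cartier isomorphism $C^{-1}$ are linked exactly by (3.0.6) and Proposition \ref{prop:boncom}, together with the formal relations $FR=RF$ and $dR=Rd$ of (\ref{eqn:earl}). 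I expect the main obstacle to lie precisely here rather than in (1) and (3): one must keep careful track of the Frobenius twists $\sig^n_*$ and of the factor $p^i$ relating $\phi$ to $C^{-1}$, so that the projection defining $C$ on $Z_n{\mathfrak W}_1\Om^i$ (resp. on $B_n{\mathfrak W}_1\Om^{i+1}$) is matched correctly with the projection built into $R$. With this bookkeeping in place, the commutativity of (\ref{ali:nmfee}) and (\ref{cd:nmwee}) is a direct representative chase.
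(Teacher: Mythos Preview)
Your proposal is correct and follows essentially the same route as the paper: for (1) and (3) you work with explicit representatives in $Z^i_n$, invoke Lemma~\ref{lemm:zbn} to identify $Z_n{\mathfrak W}_1\Om^i$ and $B_n{\mathfrak W}_1\Om^{i+1}$, and use the $p$-torsion-freeness (3.0.3) to pin down the kernels---exactly as the paper does (the paper's version of (3) simply works modulo $p\Om^{i+1}$ and declares the kernel computation ``obvious,'' while you spell it out modulo $B^{i+1}_1$). For (2) and (4) your plan to chase a representative through the explicit description (\ref{ali:aplocpj}) of $R$ and the projection-plus-$(C^{-1})^{-1}$ description of $C$, linking $\phi$ to $C^{-1}$ via (3.0.6), is precisely what the paper carries out concretely for (4) (and cites \cite[(6.18.2)]{ndw} for (2)); note only that the relations $FR=RF$, $dR=Rd$ alone do not suffice---one really must unwind $R$ and $C$ via (3.0.6), as you correctly anticipate in your final paragraph.
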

\begin{proof} 
(1): Because 
$F^n \col {\mathfrak W}_{n+1}\Om^i\lo \sig^{n}_*{\mathfrak W}_1\Om^i$ 
is a morphism of  sheaves of ${\cal W}$-modules in ${\cal T}$, 
we have only to prove that the morphism $F^n$ 
induces an isomorphism 
${\mathfrak W}_{n+1}\Om^i/V{\mathfrak W}_n\Om^i 
\os{\sim}{\lo} 
Z_n{\mathfrak W}_1\Om^i$ of abelian sheaves in ${\cal T}$. 
By (\ref{lemm:zbn}) (1) 
it suffices to prove that the following morphism 
\begin{align*} 
{\rm proj}. \col Z^i_{n+1} \lo  (Z^i_{n+1}+p\Om^i)/p\Om^i 
\tag{3.4.5}\label{ali:zin}
\end{align*} 
is surjective and its kernel is equal to $pZ^i_n$. 
The surjectivity is obvious. 
Let $\om$ be a local section of $Z^i_{n+1}$. 
Assume that $\om = p\eta$ with $\eta \in \Om^i$.  
Then $p^{n+1}\Om^i\owns d\om=pd\eta$.  
Since $\Om^i$ is torsion free, $d\eta \in p^n\Om^i$. 
Hence the kernel of the morphism (\ref{ali:zin}) is $pZ^i_n$. 
\par 
(2): The diagram (\ref{ali:nmfee}) is equal to 
\begin{equation*}
\begin{CD}
{\mathfrak W}_{n+1}\Om^i 
@>{\rm proj.}>> Z_{n+1}\Om^i_1/B_1\Om^i_1 
@>{\rm proj.}>>Z_{n+1}\Om^i_1/B_2\Om^i_1 \\ 
@V{R}VV @. @A{C^{-1}}A{\simeq}A   \\
{\mathfrak W}_n\Om^i 
@>{{\rm proj}.}>> 
Z_n\Om^i_1/B_1\Om^i_1@= 
Z_n\Om^i_1/B_1\Om^i_1. 
\end{CD}
\tag{3.4.6}\label{cd:bgpid}
\end{equation*}
In \cite[(6.18.2)]{ndw} we have already proved that this is commutative. 
\par 
(3): By (\ref{lemm:zbn}) (2) 
it suffices to prove that the following morphism 
\begin{align*} 
p^{-n}d \col Z^i_n \lo  
(p^{-n}dZ^i_n+p\Om^{i+1})/p\Om^{i+1} \quad (n\in {\mab Z}_{>0}) 
\end{align*} 
is surjective and its kernel is equal to $Z^i_{n+1}$. This is obvious. 
\par 
(4): 
It suffices to prove that the following diagram is commutative:  
\begin{equation*}
\begin{CD}
{\mathfrak W}_{n+1}\Om^i 
@>{p^{-(n+1)}d}>> {\mathfrak W}_{n+1}\Om^{i+1} 
@>{{\rm proj}.}>> Z_{n+1}\Om^{i+1}_1/B_2\Om^{i+1}_1 
\\ 
@V{R}VV @. @A{C^{-1}}A{\simeq}A  \\
{\mathfrak W}_n\Om^i 
@>{p^{-n}d}>> {\mathfrak W}_n\Om^{i+1}
@>{{\rm proj}.}>> 
Z_n\Om^{i+1}_1/B\Om^{i+1}_1. 
\end{CD}
\tag{3.4.7}\label{cd:bgbpd}
\end{equation*}
Consider sections 
$[\om] \in {\mathfrak W}_{n+1}\Om^i$ 
$(\om \in Z^i_{n+1})$ and
$[\eta] \in {\mathfrak W}_n\Om^i$ 
$(\eta \in Z^i_n)$ such that
$p^i\om-\phi(\eta)\in p^{i+n}Z^i_1
+p^{i-1}dZ^{i-1}_1$ ((\ref{eqn:sexgg})).
Then  $R([\om])=[\eta]$ by the definition of $R$. 
We also have the following equalities: 
\begin{align*} 
p^{-(n+1)}d(\om -p^{-i}\phi(\eta))&=
p^{-(n+1)}d\om-p^{-(i+1)}\phi (p^{-n}d\eta)
\tag{3.4.8}\label{cd:bgndp}
\end{align*}  
and 
\begin{align*} 
p^{-(n+1)}d(p^nZ^i_1+p^{-1}dZ^{i-1}_1)=p^{-1}dZ^i_1.
\tag{3.4.9}\label{cd:bgsdp}
\end{align*} 
By (\ref{lemm:zbn}) (2), this sheaf mod $p$ is contained in $B_2\Om^i_1$. 
Hence, by (\ref{cd:bgndp}) and (\ref{cd:bgsdp}), 
the right hand side on (\ref{cd:bgndp}) is equal to zero in 
$Z_{n+1}\Om^i_1/B_2\Om^i_1$. This implies that 
the diagram (\ref{cd:bgbpd}) is commutative. 
\end{proof}

\begin{prop}[{\bf \cite[(6.14)]{ndw} (cf.~\cite[I (3.19.2.1)]{idw}, \cite[p.~258]{lodw})}]
\label{prop:geni}
Let $n > r$ be two positive integers. 
Then the following sequence is exact$:$
\begin{equation*}
0 \lo \sig_*^r({\mathfrak W}_{n-r}\Om^{i-1}) 
/F^r{\mathfrak W}_n\Om^{i-1} 
\os{dV^r}{\lo}
{\mathfrak W}_n\Om^i
/V^r{\mathfrak W}_{n-r}\Om^i \os{R^{n-r}}{\lo} 
{\mathfrak W}_r\Om^i \lo 0. 
\tag{3.5.1;$r,n$}\label{ali:wnii}
\end{equation*}
Consequently the following sequence 
is exact$:$
\begin{equation*}
0 \lo \sig_*^r({\mathfrak W}\Om^{i-1})
/F^r{\mathfrak W}\Om^{i-1} \os{dV^r}{\lo}
{\mathfrak W}\Om^i/V^r{\mathfrak W}\Om^i \lo 
{\mathfrak W}_r\Om^i \lo 0. 
\tag{3.5.1;$r$}\label{eqn:rfvn}
\end{equation*} 
\end{prop}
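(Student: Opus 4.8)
The plan is to prove the three exactness assertions in (\ref{ali:wnii}) separately, working throughout in the explicit model ${\mathfrak W}_m\Om^j=\sig^m_*(Z^j_m/B^j_m)$ of (\ref{prop:boncom}), and then to derive (\ref{eqn:rfvn}) from (\ref{ali:wnii}) by passing to the limit $\us{R}{\vpl}$ over $n$. First I would check, using only the relations (\ref{eqn:earl}), that both arrows descend to the indicated quotients and that the composite vanishes: from $V^rF^r=p^r$ one gets $dV^rF^r=p^rd=V^r(F^rd)$, so $dV^r$ carries $F^r{\mathfrak W}_n\Om^{i-1}$ into $V^r{\mathfrak W}_{n-r}\Om^i$; from $R^{n-r}V^r=V^rR^{n-r}=0$ (the inner $R^{n-r}$ landing in ${\mathfrak W}_0\Om^i=0$) the map $R^{n-r}$ kills $V^r{\mathfrak W}_{n-r}\Om^i$; and $R^{n-r}dV^r=dR^{n-r}V^r=0$. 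Surjectivity of $R^{n-r}$ is immediate, since each $R$ is surjective by its construction in (\ref{ali:aplocpj}).

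For injectivity of $dV^r$ I would compute with representatives. Using $p^n\Om^i\subset p^rZ^i_{n-r}$ one identifies the middle group with $Z^i_n/(p^rZ^i_{n-r}+d\Om^{i-1})$ and $F^r{\mathfrak W}_n\Om^{i-1}$ with $(Z^{i-1}_n+B^{i-1}_{n-r})/B^{i-1}_{n-r}$; on a representative $\om\in Z^{i-1}_{n-r}$ one finds $dV^r[\om]=[p^{-(n-r)}d\om]$. If this vanishes, say $p^{-(n-r)}d\om=p^r\tau+d\eta$ with $\tau\in Z^i_{n-r}$, then $d(\om-p^{n-r}\eta)=p^n\tau\in p^n\Om^i$, so $\om-p^{n-r}\eta\in Z^{i-1}_n$ and $[\om]\in F^r{\mathfrak W}_n\Om^{i-1}$. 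The $p$-torsion-freeness (3.0.3) of $\Om^{\bul}$ is exactly what licenses the divisions here.

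The heart of the matter is middle exactness. Since $V^r{\mathfrak W}_{n-r}\Om^i\subset \ker R^{n-r}$, this is equivalent to the kernel identity
\begin{align*}
\ker(R^{n-r}\col {\mathfrak W}_n\Om^i\lo {\mathfrak W}_r\Om^i)
=V^r{\mathfrak W}_{n-r}\Om^i+dV^r{\mathfrak W}_{n-r}\Om^{i-1}.
\end{align*}
I would first treat a single $R$. Although $R$ is defined via the Frobenius descent $\phi^{-1}$ in (\ref{ali:aplocpj}), its composite with the isomorphism (\ref{eqn:sexgg}) is the transparent map ``multiply by $p^i$ and project'': for $\om\in Z^i_m$ one has $\phi(R[\om]_m)=[p^i\om]$ in $p^iZ^i_m/(p^{i+m-1}Z^i_1+p^{i-1}dZ^{i-1}_1)$. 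Hence $[\om]_m\in\ker R$ iff $p\om\in p^mZ^i_1+dZ^{i-1}_1$ (clearing $p^{i-1}$ by torsion-freeness); writing $p\om=p^mz+d\zeta$ gives $\om=p^{m-1}z+p^{-1}d\zeta$, so that $\ker R=V^{m-1}{\mathfrak W}_1\Om^i+dV^{m-1}{\mathfrak W}_1\Om^{i-1}$, which is the kernel identity in the case $n-r=1$.

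For $n-r\geq 2$ the operator $R^{n-r}$ involves iterated Frobenius descent and is not directly computable, so the remaining step is a d\'evissage: fix $r$ and induct on $n$, the case $n=r+1$ being the single-$R$ computation above. If $R^{n-r}x=0$, then $R^{(n-1)-r}(Rx)=0$, so by the inductive hypothesis $Rx=V^ra+dV^rb$; choosing $R$-preimages $\tilde a,\tilde b$ (here surjectivity of $R$ enters) and using $RV=VR$, $Rd=dR$ yields $R(x-V^r\tilde a-dV^r\tilde b)=0$, whence $x-V^r\tilde a-dV^r\tilde b\in\ker R=V^{n-1}{\mathfrak W}_1\Om^i+dV^{n-1}{\mathfrak W}_1\Om^{i-1}\subset V^r{\mathfrak W}_{n-r}\Om^i+dV^r{\mathfrak W}_{n-r}\Om^{i-1}$, closing the induction. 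The main obstacle is precisely this single-$R$ kernel: once one observes that $\phi\circ R$ is ``multiply by $p^i$ and project'', it dissolves into the elementary module identities above and the higher cases follow formally. Finally, (\ref{eqn:rfvn}) results from applying $\us{R}{\vpl}$ to (\ref{ali:wnii}): the transition maps $R$ are surjective, so the $\vpl^1$-terms vanish and exactness is preserved in the limit, with $\us{R}{\vpl}{\mathfrak W}_{n-r}\Om^{\bul}={\mathfrak W}\Om^{\bul}$ and $\us{R}{\vpl}{\mathfrak W}_r\Om^i={\mathfrak W}_r\Om^i$.
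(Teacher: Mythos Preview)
Your proof is correct. The paper itself does not supply a proof of this proposition; it only records the reference \cite[(6.14)]{ndw} and moves on. What you have written is a self-contained direct argument in the explicit model ${\mathfrak W}_m\Om^j=\sig^m_*(Z^j_m/B^j_m)$: the well-definedness and vanishing of the composite are read off from the relations (\ref{eqn:earl}); injectivity of $dV^r$ and the single-$R$ kernel are unwound to elementary identities in $\Om^{\bul}$ using $p$-torsion-freeness (3.0.3); and the general kernel of $R^{n-r}$ follows by the d\'evissage you describe, which is essentially the finite-level version of (\ref{eqn:fvdvn}). The passage to (\ref{eqn:rfvn}) via Mittag--Leffler is also fine, since the surjectivity of the transition maps $R$ (and of $R$ restricted to $F^r{\mathfrak W}_n\Om^{i-1}$ and $V^r{\mathfrak W}_{n-r}\Om^i$, by $RF=FR$ and $RV=VR$) guarantees that $\vpl$ preserves the exactness and commutes with the relevant quotients.

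One small remark for clarity: when you write $p\om=p^mz+d\zeta$ and conclude $\om=p^{m-1}z+p^{-1}d\zeta$, you are implicitly using that $\zeta\in Z^{i-1}_1$ forces $d\zeta\in p\Om^i$, so $p^{-1}d\zeta$ lies in $\Om^i$ (and in fact in $Z^i_m$ since $d(p^{-1}d\zeta)=0$). This is exactly how the representative computation goes through, but it is worth making the dependence on $\zeta\in Z^{i-1}_1$ explicit.
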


\begin{theo}[{\bf \cite[(6.15)]{ndw} (cf.~\cite[I (3.31)]{idw})}]\label{theo:inffilv} 
Let $r$ be a non-negative integer. 
Let ${\rm Fil}^{\bul}$ be the canonical filtration on 
${\mathfrak W}\Om^i:$ 
${\rm Fil}^r{\mathfrak W}\Om^i:={\rm Ker}({\mathfrak W}\Om^i\lo 
{\mathfrak W}_r\Om^i)$. 
Then the following formula holds$:$
\begin{equation*}
{\rm Fil}^r{\mathfrak W}\Om^i=V^r\sig^r_*({\mathfrak W}\Om^i)
+dV^r\sig^r_*({\mathfrak W}\Om^{i-1}). 
\tag{3.6.1}\label{eqn:fvdvn}
\end{equation*}
\end{theo}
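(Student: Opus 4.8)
The plan is to read the formula off directly from the exact sequence (\ref{eqn:rfvn}) of (\ref{prop:geni}), treating ${\rm Fil}^r{\mathfrak W}\Om^i$ as the kernel of the composite of the canonical surjection ${\mathfrak W}\Om^i\lo {\mathfrak W}\Om^i/V^r\sig^r_*({\mathfrak W}\Om^i)$ with the map onto ${\mathfrak W}_r\Om^i$. Throughout I read $V^r{\mathfrak W}\Om^i$ in (\ref{eqn:rfvn}) as $V^r\sig^r_*({\mathfrak W}\Om^i)$, keeping the Frobenius twists consistent with the statement, so that $V^r\col \sig^r_*({\mathfrak W}\Om^i)\lo {\mathfrak W}\Om^i$ and $dV^r\col \sig^r_*({\mathfrak W}\Om^{i-1})\lo {\mathfrak W}\Om^i$ are the maps appearing on the right-hand side of (\ref{eqn:fvdvn}).

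First I would dispose of the easy inclusion $\supseteq$. The projection ${\mathfrak W}\Om^i\lo {\mathfrak W}_r\Om^i$ is the restriction $R^{\infty}$ from the inverse limit, and the relations $FV=VF=p$, $dR=Rd$, $VR=RV$ of (\ref{eqn:earl}) yield $R^rV^r=0$ and $R^rdV^r=dR^rV^r=0$ at every finite level, compatibly with the transition maps. Hence both $V^r\sig^r_*({\mathfrak W}\Om^i)$ and $dV^r\sig^r_*({\mathfrak W}\Om^{i-1})$ are killed by restriction to level $r$, i.e.\ they lie in ${\rm Fil}^r{\mathfrak W}\Om^i$. For the reverse inclusion $\subseteq$, which is the real content, I would use the exactness of (\ref{eqn:rfvn}): given $x\in {\rm Fil}^r{\mathfrak W}\Om^i$, its image $\ol{x}$ in ${\mathfrak W}\Om^i/V^r\sig^r_*({\mathfrak W}\Om^i)$ maps to $0$ in ${\mathfrak W}_r\Om^i$, so by exactness $\ol{x}$ lies in the image of $dV^r$; writing $\ol{x}=dV^r(\ol{y})$ with $y\in \sig^r_*({\mathfrak W}\Om^{i-1})$ and lifting to ${\mathfrak W}\Om^i$ gives $x-dV^r(y)\in V^r\sig^r_*({\mathfrak W}\Om^i)$, whence $x\in V^r\sig^r_*({\mathfrak W}\Om^i)+dV^r\sig^r_*({\mathfrak W}\Om^{i-1})$, as desired.

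The main obstacle is not this final deduction but lies upstream, namely justifying that (\ref{eqn:rfvn}) is available in the exact form I use. Two points need care. First, one must confirm that the surjection in (\ref{eqn:rfvn}) is genuinely the canonical restriction defining ${\rm Fil}^r$: at finite level this is the map $R^{n-r}$ of (\ref{ali:wnii}), which factors through the quotient by $V^r\sig^r_*({\mathfrak W}_{n-r}\Om^i)$, so that the composite ${\mathfrak W}_n\Om^i\lo {\mathfrak W}_r\Om^i$ is the ordinary restriction; passing to $\vpl_R$ identifies the limiting map with $R^{\infty}$. Second, one must verify that taking $\vpl_R$ of the finite-level sequences (\ref{ali:wnii};$r,n$) over $n$ preserves exactness, for which the Mittag-Leffler condition suffices; this holds because the transition maps $R$ are surjective on each term of (\ref{ali:wnii}). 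Granting these two facts, the formula (\ref{eqn:fvdvn}) follows at once from the two inclusions above; I expect the verification of the Mittag-Leffler/surjectivity bookkeeping, together with the twist accounting, to be where essentially all the technical effort sits, the rest being a one-line diagram chase.
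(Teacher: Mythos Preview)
Your proposal is correct. The paper does not supply its own argument here---both (\ref{prop:geni}) and (\ref{theo:inffilv}) are recalled from \cite{ndw} without proof---but the numbering in \cite{ndw} ((6.14) preceding (6.15)) and the word ``Consequently'' in (\ref{prop:geni}) make clear that your route is the intended one: once (\ref{eqn:rfvn}) is granted, the kernel of ${\mathfrak W}\Om^i/V^r\to{\mathfrak W}_r\Om^i$ is the image of $dV^r$, which is exactly (\ref{eqn:fvdvn}). Your Mittag--Leffler discussion is really a justification of (\ref{eqn:rfvn}) itself (i.e.\ of (\ref{prop:geni})) rather than of (\ref{theo:inffilv}); you handle it correctly, and once (\ref{eqn:rfvn}) is in hand your separate ``easy inclusion'' step is in fact redundant, since exactness already encodes both directions.
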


\begin{coro}[{\bf \cite[(6.6)]{ndw} 
(cf.~\cite[I (3.31)]{idw}, \cite[II (1.1.1)]{ir}, \cite[(2.16)]{lodw})}]\label{coro:rayr}
Let $R_{\infty}$ be the Cartier-Dieudonn\'{e}-Raynaud algebra over $\kap$. 
Let $n$ be a positive integer.
Set $R_n:=R_{\infty}/(V^nR_{\infty}+dV^nR_{\infty})$.
The canonical morphism 
\begin{equation*}
R_n\otimes_{R_{\infty}}{\mathfrak W}\Om^{\bul}\lo 
{\mathfrak W}_n\Om^{\bul} 
\tag{3.7.1}\label{eqn:raylpi}
\end{equation*}
is an isomorphism.
\end{coro}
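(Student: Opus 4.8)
The plan is to reduce (\ref{eqn:raylpi}) to the identification of the submodule cut out by the defining relations of $R_n$ with the canonical filtration of Theorem \ref{theo:inffilv}. First I would record that $J:=V^nR_{\infty}+dV^nR_{\infty}$ is the right ideal of $R_{\infty}$ with $R_n=R_{\infty}/J$, so that for the left $R_{\infty}$-module ${\mathfrak W}\Om^{\bul}$ there is a canonical identification
\[
R_n\otimes_{R_{\infty}}{\mathfrak W}\Om^i
={\mathfrak W}\Om^i/J{\mathfrak W}\Om^i ,
\]
and the morphism (\ref{eqn:raylpi}) in degree $i$ is the arrow induced on this cokernel by the canonical projection ${\mathfrak W}\Om^i\lo {\mathfrak W}_n\Om^i$ coming from ${\mathfrak W}\Om^{\bul}=\vpl_R{\mathfrak W}_n\Om^{\bul}$ (\ref{lemm:wst}). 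That this arrow is well defined amounts to the fact that $V^n$ and $dV^n$ act as zero on ${\mathfrak W}_n\Om^{\bul}$, so that ${\mathfrak W}_n\Om^{\bul}$ is indeed an $R_n$-module.

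Next I would compute $J{\mathfrak W}\Om^i$ explicitly. Since ${\mathfrak W}\Om^{\bul}$ is a unital $R_{\infty}$-module we have $R_{\infty}{\mathfrak W}\Om^{\bul}={\mathfrak W}\Om^{\bul}$, whence $(V^nR_{\infty}){\mathfrak W}\Om^{\bul}=V^n{\mathfrak W}\Om^{\bul}$ and $(dV^nR_{\infty}){\mathfrak W}\Om^{\bul}=dV^n{\mathfrak W}\Om^{\bul}$. Keeping track of the degree and of the Frobenius twists introduced by $V$ and $d$ (so that $V^n$ preserves $i$ while $dV^n$ raises it by one), this gives in each degree $i$
\[
J{\mathfrak W}\Om^i
=V^n\sig^n_*({\mathfrak W}\Om^i)+dV^n\sig^n_*({\mathfrak W}\Om^{i-1}).
\]

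Finally I would invoke Theorem \ref{theo:inffilv}: the right-hand side above is precisely ${\rm Fil}^n{\mathfrak W}\Om^i$, which by the definition of the canonical filtration equals ${\rm Ker}({\mathfrak W}\Om^i\lo {\mathfrak W}_n\Om^i)$. Because the transition maps $R\col {\mathfrak W}_{m+1}\Om^i\lo {\mathfrak W}_m\Om^i$ are surjective (this is part of the construction of $R$ recalled in (\ref{ali:aplocpj})), the inverse system satisfies the Mittag-Leffler condition and the canonical projection ${\mathfrak W}\Om^i\lo {\mathfrak W}_n\Om^i$ is surjective; hence it induces an isomorphism
\[
{\mathfrak W}\Om^i/J{\mathfrak W}\Om^i=
{\mathfrak W}\Om^i/{\rm Fil}^n{\mathfrak W}\Om^i
\os{\sim}{\lo}{\mathfrak W}_n\Om^i ,
\]
which is exactly (\ref{eqn:raylpi}). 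The only genuinely substantive input is Theorem \ref{theo:inffilv}, which I am assuming; within the present argument the delicate point is the bookkeeping of the second paragraph, namely matching the right-ideal action $J{\mathfrak W}\Om^i$ with the filtration term while correctly carrying the $\sig$-twists and the degree shift, together with checking well-definedness of the canonical morphism. I expect no serious obstacle beyond this.
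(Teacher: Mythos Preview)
Your proposal is correct and follows exactly the approach the paper intends: the corollary is stated immediately after Theorem~\ref{theo:inffilv} (and attributed to \cite[(6.6)]{ndw}) precisely because it is the formal consequence of identifying ${\rm Fil}^n{\mathfrak W}\Om^i$ with $V^n\sig^n_*({\mathfrak W}\Om^i)+dV^n\sig^n_*({\mathfrak W}\Om^{i-1})$, together with the surjectivity of $R$ recorded in (\ref{ali:aplocpj}). Your bookkeeping of the right-ideal structure, the degree shift, and the $\sig$-twists is accurate; there is nothing to add.
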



\begin{prop}
[{\bf \cite[(6.23)]{ndw} (cf.~\cite[I (3.21.1.5)]{idw}, 
\cite[(1.20)]{lodw})}]\label{prop:dinvfn}
Let $n$ be a non-negative integer. 
Then $d^{-1}(p^n {\mathfrak W}\Om^{i+1})
=F^n{\mathfrak W}\Om^i$.
\end{prop}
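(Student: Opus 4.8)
The plan is to prove the two inclusions $F^n{\mathfrak W}\Om^i\subset d^{-1}(p^n{\mathfrak W}\Om^{i+1})$ and $d^{-1}(p^n{\mathfrak W}\Om^{i+1})\subset F^n{\mathfrak W}\Om^i$ separately. The first is formal, following from the operator calculus alone; the second I would reduce to a clean identity at each finite level, proved by an explicit computation with representatives in the $p$-torsion-free complex $\Om^{\bul}$, and then transported to the inverse limit.

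For the inclusion $F^n{\mathfrak W}\Om^i\subset d^{-1}(p^n{\mathfrak W}\Om^{i+1})$ I would argue purely with the relations in (\ref{eqn:earl}). From $FdV=d$ and $VF=p$ one gets $dF=(FdV)F=Fd(VF)=pFd$, and hence, by an immediate induction, $dF^n=p^nF^nd$. Therefore, if $x=F^ny$ with $y\in{\mathfrak W}\Om^i$, then $dx=dF^ny=p^nF^n(dy)\in p^n{\mathfrak W}\Om^{i+1}$. This needs no recourse to the explicit construction.

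For the reverse inclusion the heart of the matter is a finite-level identity, which I would establish in the concrete model ${\mathfrak W}_m\Om^i=\sig^m_*(Z^i_m/B^i_m)$ of (\ref{prop:boncom}), where $F^n$ is induced by the inclusion $Z^i_{m+n}\subset Z^i_m$, the differential $d$ by $p^{-m}d$, and $p^n$ by multiplication by $p^n$. I claim that for $m\geq n$ one has
\begin{equation*}
d^{-1}(p^n{\mathfrak W}_m\Om^{i+1})={\rm Im}(F^n\col {\mathfrak W}_{m+n}\Om^i\lo {\mathfrak W}_m\Om^i)=(Z^i_{m+n}+B^i_m)/B^i_m.
\end{equation*}
Indeed, given $[\om]$ with $\om\in Z^i_m$ and $d[\om]=[p^{-m}d\om]\in p^n{\mathfrak W}_m\Om^{i+1}=(p^nZ^{i+1}_m+B^{i+1}_m)/B^{i+1}_m$, one may write $p^{-m}d\om=p^n\zeta+p^m\al+d\bet$ with $\zeta\in Z^{i+1}_m$, i.e. $d\om=p^{m+n}\zeta+p^{2m}\al+p^md\bet$. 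Replacing $\om$ by $\om_1:=\om-p^m\bet$, which represents the same class since $p^m\bet\in B^i_m$, gives $d\om_1=p^{m+n}\zeta+p^{2m}\al$; here the hypothesis $m\geq n$ is exactly what forces $p^{2m}\al\in p^{m+n}\Om^{i+1}$, so that $\om_1\in Z^i_{m+n}$ and $[\om]=[\om_1]\in{\rm Im}(F^n)$. The opposite containment is immediate, since for $\om\in Z^i_{m+n}$ the form $p^{-m}d\om$ is closed and lies in $p^n\Om^{i+1}$. The $p$-torsion-freeness of $\Om^{\bul}$ in (3.0.3) is what makes these manipulations licit.

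Finally I would pass to the inverse limit over $R$. Since $dR=Rd$ and $R$ commutes with $F$ and with $p$ by (\ref{eqn:earl}), the finite-level identities above are compatible with the transition maps, and one would like to conclude $d^{-1}(p^n{\mathfrak W}\Om^{i+1})=\vpl_m{\rm Im}(F^n\col{\mathfrak W}_{m+n}\Om^i\lo{\mathfrak W}_m\Om^i)=F^n{\mathfrak W}\Om^i$. I expect this last step to be the main obstacle: both the identification of $d^{-1}(p^n{\mathfrak W}\Om^{i+1})$ with the inverse limit of its finite truncations and the identification of $F^n{\mathfrak W}\Om^i$ with $\vpl_m{\rm Im}(F^n)$ require a Mittag-Leffler argument, i.e. the surjectivity of the maps $R$ on the relevant images (equivalently the vanishing of the pertinent $\vpl^1$) so that compatible preimages under $F^n$ can be produced. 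To secure this I would use the fundamental exact sequences (\ref{eqn:rfvn}) of (\ref{prop:geni}), the description of the canonical filtration (\ref{eqn:fvdvn}) in (\ref{theo:inffilv}), and (\ref{coro:rayr}) to control the inverse system $\{{\rm Im}(F^n\col{\mathfrak W}_{m+n}\Om^i\lo{\mathfrak W}_m\Om^i)\}_m$; this is the only point where more than the explicit representative calculus is needed.
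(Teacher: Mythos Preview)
The paper does not supply a proof of this proposition; it is stated with a bare citation to \cite[(6.23)]{ndw} (with pointers to \cite[I (3.21.1.5)]{idw} and \cite[(1.20)]{lodw}), and the next theorem begins immediately after. So there is no proof in the paper to compare your argument against.

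On its own merits, your proposal follows the standard route and the parts you carry out in detail are correct. The derivation of $dF=pFd$ from $d=FdV$ and $VF=p$ is valid, giving the easy inclusion. Your finite-level computation is also clean and correct: for $m\geq n$, unwinding the definitions in the model ${\mathfrak W}_m\Om^i=\sig^m_*(Z^i_m/B^i_m)$ shows that $d[\om]\in p^n{\mathfrak W}_m\Om^{i+1}$ forces a representative $\om_1=\om-p^m\bet\in Z^i_{m+n}$, exactly as you write.

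You are also right that the passage to $\vpl$ is where the substance lies. What is needed is that a compatible family $(x_m)$ with each $x_m\in{\rm Im}(F^n\col{\mathfrak W}_{m+n}\Om^i\to{\mathfrak W}_m\Om^i)$ lifts to a single element of $F^n{\mathfrak W}\Om^i$; equivalently, that $\vpl^1$ of the kernels of these $F^n$'s vanishes. The surjectivity of $R$ on ${\mathfrak W}_\bullet\Om^i$ gives surjectivity on the images, but one still has to control the kernels, and the tools you name---the exact sequences (\ref{eqn:rfvn}), the description (\ref{eqn:fvdvn}) of ${\rm Fil}^r$, and (\ref{eqn:raylpi})---are precisely the ingredients used in \cite{ndw} and \cite{idw} to do this. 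Your sketch stops short of executing that step, but you have located the only non-formal point and pointed at the right machinery.
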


\begin{theo}[{\bf \cite[(6.24)]{ndw} (cf.~\cite[II (1.2)]{ir}, 
\cite[(2.17)]{lodw})}]\label{theo:raycomp}
The isomorphism $(\ref{eqn:raylpi})$
induces the following isomorphism in 
${\rm D}^{\rm b}({\cal T},W_n[d]){\rm :}$
\begin{equation*}
R_n\otimes^L_{R_{\infty}}
{\mathfrak W}\Om^{\bul} \os{\sim}{\lo}
{\mathfrak W}_n\Om^{\bul}.  
\tag{3.9.1}\label{eqn:ryncp}
\end{equation*}
\end{theo}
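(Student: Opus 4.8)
The plan is to deduce the derived statement from the underived isomorphism $(\ref{eqn:raylpi})$ of $(\ref{coro:rayr})$. Since $(\ref{eqn:raylpi})$ already identifies $H^0$ of $R_n\otimes^L_{R_{\infty}}{\mathfrak W}\Om^{\bul}$ with ${\mathfrak W}_n\Om^{\bul}$, it suffices to prove that the higher Tor-sheaves ${\rm Tor}^{R_{\infty}}_q(R_n,{\mathfrak W}\Om^i)$ vanish for all $q\geq 1$ and all $i$. To compute them I would first write down an explicit free resolution of $R_n$ as a right $R_{\infty}$-module. The right ideal $V^nR_{\infty}+dV^nR_{\infty}$ is generated by $V^n$ and $dV^n$, and the relations in $(\ref{eqn:earl})$ give $Vd=pdV$, hence $V^nd=p^ndV^n$, $V^nF^n=p^n$ and $dV^nF^n=p^nd$; together with the $p$-torsion-freeness of $R_{\infty}$ these relations produce the resolution
\[
0\lo R_{\infty}\os{\Sig_3}{\lo}R_{\infty}^{\oplus 2}\os{\Sig}{\lo}R_{\infty}^{\oplus 2}\os{(V^n,\,dV^n)}{\lo}R_{\infty}\lo R_n\lo 0,
\]
where $\Sig_3(c)=(c,V^nc)$ and $\Sig(a,b)=(da-F^ndb,\,-p^na+F^nb)$. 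Each arrow is checked to be a right-$R_{\infty}$-module map whose composite with the next is zero, and exactness follows from the injectivity of $V$ in $R_{\infty}$ and from $p$-torsion-freeness.

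Tensoring this resolution over $R_{\infty}$ with ${\mathfrak W}\Om^{\bul}$ and using $R_{\infty}\otimes_{R_{\infty}}{\mathfrak W}\Om^{\bul}={\mathfrak W}\Om^{\bul}$ produces the complex
\[
{\mathfrak W}\Om\os{\ol{\Sig}_3}{\lo}{\mathfrak W}\Om^{\oplus 2}\os{\ol{\Sig}}{\lo}{\mathfrak W}\Om^{\oplus 2}\os{(V^n,\,dV^n)}{\lo}{\mathfrak W}\Om,
\]
whose cohomologies are the four Tor-sheaves. Its $H^0$ is ${\mathfrak W}\Om^i/(V^n{\mathfrak W}\Om^i+dV^n{\mathfrak W}\Om^{i-1})={\mathfrak W}_n\Om^i$ by the filtration formula $(\ref{eqn:fvdvn})$ of $(\ref{theo:inffilv})$, recovering $(\ref{coro:rayr})$. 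For ${\rm Tor}_1$ I would compute ${\rm Ker}(V^n,dV^n)$: a local relation $V^nx+dV^ny=0$ forces $dV^ny\in V^n{\mathfrak W}\Om$, so $y\in F^n{\mathfrak W}\Om$ by the exact sequence $(\ref{eqn:rfvn})$ of $(\ref{prop:geni})$, and then injectivity of $V^n$ gives $y=F^nw$ and $x=-F^ndw$; since $\ol{\Sig}(0,w)=(-F^ndw,F^nw)$, this kernel equals ${\rm Im}(\ol{\Sig})$, so ${\rm Tor}_1=0$. Likewise $\ol{\Sig}_3$ is injective on ${\mathfrak W}\Om$, so ${\rm Tor}_3=0$.

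The remaining term is the heart of the matter. A direct computation, using $V^nF^n=p^n$, $F^ndV^n=d$ and the injectivity of $V$, shows that ${\rm Ker}(\ol{\Sig})=\{(u,\,V^nu+t)\mid F^nt=0,\ F^ndt=0\}$ while ${\rm Im}(\ol{\Sig}_3)=\{(u,\,V^nu)\}$, whence
\[
{\rm Tor}^{R_{\infty}}_2(R_n,{\mathfrak W}\Om^i)\simeq\{t\in{\mathfrak W}\Om^i\mid F^nt=0\ \text{and}\ F^ndt=0\}.
\]
The main obstacle is to prove that this group vanishes, i.e.\ that there is no nonzero $F^n$-torsion class $t$ which is also a cocycle in the weak sense $F^ndt=0$. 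This is exactly where the genuine $p$-power (equivalently $F$-power) torsion of the formal de Rham--Witt complex must be controlled: one cannot cancel $F^n$ naively, and I expect to establish the vanishing from the structural results already available, principally the description $d^{-1}(p^n{\mathfrak W}\Om^{i+1})=F^n{\mathfrak W}\Om^i$ of $(\ref{prop:dinvfn})$ together with $(\ref{prop:geni})$, by showing that $d$ is injective on ${\rm Ker}(F^n)$ and carries it off ${\rm Ker}(F^n)$. Once ${\rm Tor}_2=0$ is established, all higher Tor-sheaves vanish, so the canonical map $R_n\otimes^L_{R_{\infty}}{\mathfrak W}\Om^{\bul}\lo{\mathfrak W}_n\Om^{\bul}$ is an isomorphism in ${\rm D}^{\rm b}({\cal T},W_n[d])$, which is $(\ref{eqn:ryncp})$.
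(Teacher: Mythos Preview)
The paper does not supply its own proof of this statement; it is quoted from \cite[(6.24)]{ndw}, with \cite[II (1.2)]{ir} and \cite[(2.17)]{lodw} cited as antecedents. Your approach---resolve $R_n$ by free right $R_\infty$-modules and show that the higher ${\rm Tor}$-sheaves against ${\mathfrak W}\Om^{\bul}$ vanish---is precisely the standard strategy of \cite[II (1.2)]{ir}, so in spirit you are reconstructing what the cited references do.

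There is, however, a real gap, and it is the one you yourself flag at ${\rm Tor}_2$. Your reduction ${\rm Tor}_2\simeq\{t\in{\mathfrak W}\Om^i:F^nt=0,\ F^ndt=0\}$ is correct, but the route you propose through (\ref{prop:dinvfn}) and (\ref{prop:geni}) does not close it: using $d^{-1}(p^m)=F^m$ to argue that $d$ is injective on $\ker(F^n)$ only yields $t\in\bigcap_mF^m{\mathfrak W}\Om^i$, and this intersection is not obviously zero from $p$-adic separatedness alone (one has $F^m{\mathfrak W}\Om^i\supseteq p^m{\mathfrak W}\Om^i$, the inclusion going the wrong way for that purpose). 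What is actually needed is that ${\mathfrak W}\Om^i$ has no $p$-torsion. From $FV=VF=p$ this single fact forces both $F$ and $V$ to be injective on ${\mathfrak W}\Om^i$; then $\ker(F^n)=0$ and ${\rm Tor}_2$ vanishes at once, the side condition $F^ndt=0$ becoming vacuous. You in fact already invoke the companion statement---injectivity of $V^n$ on ${\mathfrak W}\Om^i$---without justification in your ${\rm Tor}_1$ step, so the same missing input appears there as well. The $p$-torsion-freeness of the (formal) de Rham--Witt complex is a basic structural result (in the classical smooth case it appears in \cite[I \S3]{idw}); it is proved in \cite{ndw} but is not among the statements reproduced in \S\ref{sec:rrafr}, so it cannot be extracted from (\ref{prop:geni}) and (\ref{prop:dinvfn}) alone. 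Once you supply it, your argument is complete.
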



\par
Let $Z$ be a scheme of characteristic $p>0$. 
Let ${\cal W}_n({\cal O}_Z)'$ be the obverse Witt sheaf of 
$Z$ denoted by $W_n({\cal O}_Z)''$ in {\rm \cite[\S7]{ndw}}. 
Let ${\cal B}$ be a $p$-torsion free quasi-coherent sheaf of 
commutative rings with unit elements in $\wt{Z}_{\rm zar}$ 
with a surjective morphism 
${\cal B} \lo {\cal O}_Z$ of 
sheaves of rings in $\wt{Z}_{\rm zar}$. 
Assume that ${\rm Ker}({\cal B} \lo {\cal O}_Z)=p{\cal B}$ and  
that each $\Om^i$ $(i\in {\mab N})$ 
is a quasi-coherent ${\cal B}$-module. 
Then we can endow 
${\mathfrak W}_n\Om^i$ with 
a natural ${\cal W}_n({\cal O}_Z)'$-module structure (cf.~\cite[III (1.5)]{ir}): 
for a local section $c:=(c_0, \ldots, c_{n-1})$ 
$(c_i \in{\cal O}_Z~(0\leq i \leq n-1))$ and a local section  
$\om$ of $Z^i_n$, we define $c\cdot[\om]$ as follows:
$c\cdot [\om]=
[(\sum_{j=0}^{n-1}p^j\wt{c}_j^{p^{n-j}})
\cdot \om]$, where $\wt{c}_j\in {\cal B}/p^n{\cal B}$ is  a lift of $c_j$.
We can easily check that ${\mathfrak W}_n\Om^i$ 
is a quasi-coherent ${\cal W}_n({\cal O}_Z)$-module and that 
the morphisms  $R \col {\mathfrak W}_{n+1}\Om^i \lo 
{\mathfrak W}_n\Om^i$ 
is a morphism of ${\cal W}_{n+1}({\cal O}_Z)'$-modules. 
We can easily check that ${\mathfrak W}_n\Om^i$ 
is a quasi-coherent ${\cal W}_n({\cal O}_Z)'$-module and that 
the morphism  $R \col {\mathfrak W}_{n+1}\Om^i \lo 
{\mathfrak W}_n\Om^i$ is a morphism of ${\cal W}_{n+1}({\cal O}_Z)'$-modules. 
We consider $Z_n{\mathfrak W}_1\Om^i$ 
and $B_n{\mathfrak W}_1\Om^i$ 
as ${\cal O}_Z$-submodules of $F^n_{Z*}({\mathfrak W}_1\Om^i)$. 

\begin{prop}\label{prop:fzg}
Let $F_{{\cal W}_n(Z)} \col {\cal W}_n(Z) \lo {\cal W}_n(Z)$ be 
the Frobenius endomorphism of ${\cal W}_n(Z)$. 
Then the following hold$:$
\par 
$(1)$ The following exact sequence 
\begin{align*} 
F_{{\cal W}_n(Z)*}({\mathfrak W}_{n}\Om^i)
\os{V}{\lo} {\mathfrak W}_{n+1}\Om^i
\os{F^n}{\lo} 
Z_n{\mathfrak W}_1\Om^i
\lo 0
\tag{3.10.1}\label{ali:mvee}
\end{align*}
obtained by {\rm (\ref{ali:nwee})} is an exact sequence of 
${\cal W}_{n+1}({\cal O}_Z)'$-modules. 
\par 
$(2)$  
The following exact sequence  
\begin{align*} 
{\mathfrak W}_{n+1}\Om^i
\os{F}{\lo} F_{{\cal W}_n(Z)*}({\mathfrak W}_n\Om^i)
\os{F_{{\cal W}_n(Z)*}(F^{n-1}d)}{\lo} 
B_n{\mathfrak W}_1\Om^{i+1}
\lo 0
\tag{3.10.2}\label{ali:mfee}
\end{align*}
obtained by {\rm (\ref{ali:nmee})} is an exact sequence of 
${\cal W}_{n+1}({\cal O}_Z)'$-modules. 
\end{prop}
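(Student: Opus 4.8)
The plan is to treat both exact sequences as already given on the level of abelian sheaves and to upgrade them to the category of ${\cal W}_{n+1}({\cal O}_Z)'$-modules. Indeed, the surjectivity of the right-hand maps and the identification of their kernels with the images of $V$ (resp.~$F$) are precisely the isomorphisms (\ref{ali:nwee}) and (\ref{ali:nmee}) of (\ref{prop:fkervcoker}); since a sequence of ${\cal W}_{n+1}({\cal O}_Z)'$-modules is exact exactly when its underlying sequence of abelian sheaves is, the whole task reduces to (i) recording the ${\cal W}_{n+1}({\cal O}_Z)'$-module structure on each term and (ii) checking that the four maps $V$, $F^n$, $F$ and $F^{n-1}d$ are ${\cal W}_{n+1}({\cal O}_Z)'$-linear.

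For (i) I would use the explicit action $c\cdot[\om]=[(\sum_{j}p^j\wt{c}_j^{p^{n-j}})\om]$ recalled before the statement on ${\mathfrak W}_{\bul}\Om^i$, put on the middle terms $F_{{\cal W}_n(Z)*}({\mathfrak W}_n\Om^i)$ the structure obtained from the Witt-Frobenius twist followed by restriction along $R\col {\cal W}_{n+1}({\cal O}_Z)'\lo {\cal W}_n({\cal O}_Z)'$ (these agree by $FR=RF$ in (\ref{eqn:earl})), and regard $Z_n{\mathfrak W}_1\Om^i$ and $B_n{\mathfrak W}_1\Om^{i+1}$ as ${\cal W}_{n+1}({\cal O}_Z)'$-modules through their ${\cal O}_Z$-structure as submodules of $F^n_{Z*}({\mathfrak W}_1\Om^{\bul})$ and the projection ${\cal W}_{n+1}({\cal O}_Z)'\lo {\cal O}_Z$. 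For (ii), the linearity of $F$ and $F^n$ is read off from the multiplicativity $F([a][\om])=F[a]\,F[\om]$ at the level of representatives, and that of $V$ from the projection formula $c\cdot V(y)=V(Fc\cdot y)$, which I would verify from $V=p\times{\rm id}$ on $\Om^i$ together with the displayed action formula; a short computation with representatives shows each of these respects the twisted/pushforward structures fixed in (i).

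The essential point, and the step I expect to be the main obstacle, is the ${\cal W}_{n+1}({\cal O}_Z)'$-linearity of $F^{n-1}d$ in part (2), because $d$ is a derivation rather than a module map and the Leibniz rule produces a correction term when one differentiates the acting scalar. Here is where the Frobenius pushforward $F_{{\cal W}_n(Z)*}$ in the middle term does the real work: for $c\in {\cal W}_{n+1}({\cal O}_Z)'$ the scalar actually acting on $[\om]\in {\mathfrak W}_n\Om^i$ is $b:=F(Rc)\in {\cal W}_n({\cal O}_Z)'$, whose components satisfy $b_j\equiv c_j^p$ and hence admit lifts with $d\wt{b}_j\in p\Om^1$. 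Writing $\wt{B}:=\sum_{j}p^j\wt{b}_j^{p^{n-j}}$, one then gets $d\wt{B}=p^n\sum_j\wt{b}_j^{p^{n-j}-1}d\wt{b}_j\in p^{n+1}\Om^1$, so that after the normalization $p^{-n}d$ defining $F^{n-1}d$ the Leibniz correction $p^{-n}(d\wt{B})\we\om$ lies in $p\Om^{i+1}$ and therefore vanishes modulo $p$ in $B_n{\mathfrak W}_1\Om^{i+1}$; the remaining term reduces to $\wt{b}_0^{p^n}\equiv \wt{c}_0^{p^{n+1}}$ times $F^{n-1}d[\om]$, which is exactly the prescribed action of $c$ on the target.

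This divisibility by one extra power of $p$, supplied precisely by the Frobenius twist, is what makes $F^{n-1}d$ linear, and it is the conceptual heart of the argument. The analogous but easier computation handles $F^n$ and $V$, where the same bookkeeping with (\ref{lemm:zbn}) and the displayed action formula matches the quotient structure with the intrinsic ${\cal O}_Z$-structure on $Z_n{\mathfrak W}_1\Om^i$ and $B_n{\mathfrak W}_1\Om^{i+1}$. Combining (i) and (ii) with the abelian-sheaf exactness furnished by (\ref{prop:fkervcoker}) then yields both (\ref{ali:mvee}) and (\ref{ali:mfee}) as exact sequences of ${\cal W}_{n+1}({\cal O}_Z)'$-modules. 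I would carry out the computation for general $n$, since only the subscript bookkeeping changes with $n$.
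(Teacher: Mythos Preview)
Your proposal is correct and follows essentially the same approach as the paper: reduce to checking ${\cal W}_{n+1}({\cal O}_Z)'$-linearity of the four maps (exactness as abelian sheaves being already known from (\ref{prop:fkervcoker})), verify $V$ via the projection formula, $F$ and $F^n$ by direct inspection of the action formula, and handle $F^{n-1}d$ by showing the Leibniz correction acquires an extra factor of $p$ coming from the Frobenius twist and hence dies modulo $p$. The paper carries out exactly these computations with explicit representatives (equations (3.10.3)--(3.10.6)); your presentation is slightly more conceptual in singling out the role of the pushforward $F_{{\cal W}_n(Z)*}$, but the underlying calculation---in particular the key identity $d(\sum_j p^j\tilde c_j^{\,p^{n+1-j}})\in p^{n+1}\Om^1$---is identical.
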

\begin{proof} 
Set $c(n+1):=(c_0,\ldots,c_n)\in {\cal W}_{n+1}({\cal O}_Z)'$ 
$(c_i \in{\cal O}_Z~(0\leq i \leq n))$. 
Set also $c(n):=(c_0,\ldots,c_{n-1})\in {\cal W}_n({\cal O}_Z)'$.  
Let $\wt{c}_j\in {\cal B}/p^{n+1}{\cal B}$ be a lift of $c_j$. 
For a local section $\om$ of ${\mathfrak W}_m\Om^i$ $(m=n,n+1)$,  
let $\wt{\om}\in Z^i_m$ be a representative of $\om$ and 
let $[\wt{\om}]_m=\om$ be the class of $\wt{\om}$ in 
${\mathfrak W}_m\Om^i$.
We use the similar notation for $[\wt{\om}]_l$ for $l\leq m$. 
\par 
(1): Let $\om$ be a local section of 
$F_{{\cal W}_n(Z)*}({\mathfrak W}_n\Om^i)$.  
Then 
\begin{align*} 
c(n+1)\cdot V(\om)=&
[\sum_{j=0}^np^{j}\wt{c}_j^{p^{n+1-j}}p\wt{\om}]_{n+1}=
[p\sum_{j=0}^{n-1}p^j(\wt{c}_j^p)^{p^{n-j}}\wt{\om}]_{n+1}=
p[\sum_{j=0}^{n-1}p^j(\wt{c}_j^p)^{p^{n-j}}\wt{\om}]_{n+1}
\tag{3.10.3}\label{ali:mflppee}\\
&=V(c(n)\cdot \om).
\end{align*}   
This formula shows that $V$ is a morphism of 
${\cal W}_{n+1}({\cal O}_Z)'$-modules. 
\par 
Let $\om$ be a local section of ${\mathfrak W}_{n+1}\Om^i$. 
Then 
\begin{align*} 
F^{n}(c(n+1)\cdot \om)&=
[\sum_{j=0}^{n}p^j\wt{c}_j^{p^{n+1-j}}\om]_1
=[(\wt{c}^p_0)^{p^{n}}\om]_1=c_0\cdot [\om]_1. 
\tag{3.10.4}\label{ali:mflosee}
\end{align*}  
This formula shows that 
$F^n\col {\mathfrak W}_{n+1}\Om^i\lo Z_n{\mathfrak W}_1\Om^i$ 
is a morphism of ${\cal W}_{n+1}({\cal O}_Z)'$-modules. 
\par 
(2):
Let $\om$ be a local section of ${\mathfrak W}_{n+1}\Om^i$. 
Then 
\begin{align*} 
F(c(n+1)\cdot \om)=
[\sum_{j=0}^np^j\wt{c}_j^{p^{n+1-j}}\wt{\om}]_n
=[\sum_{j=0}^{n-1}p^{j}(\wt{c}^p_j)^{p^{n-j}}\wt{\om}]_n
=c(n)\cdot F(\om). 
\tag{3.10.5}\label{ali:mflepe}
\end{align*}  
\par 
Let $\om$ be a local section of $F_{{\cal W}_n(Z)*}({\mathfrak W}_n\Om^i)$.  
Then 
\begin{align*} 
F^{n-1}d(c(n)\cdot \om)&=
[p^{-n}d(\sum_{j=0}^{n-1}p^j((\wt{c}_j)^p)^{p^{n-j}})\wt{\om}]_1
=[p\sum_{j=0}^{n-1}\wt{c}_j^{p^{n+1-j}-1}d\wt{c}_j\wedge \wt{\om}
+\sum_{j=0}^{n-1}p^j\wt{c}_j^{p^{n+1-j}}p^{-n}d\wt{\om}]_1
\tag{3.10.6}\label{ali:mflee}\\
&=[(\sum_{j=0}^{n-1}p^j\wt{c}_j^{p^{n+1-j}})p^{-n}d\wt{\om}]_1=
[(\wt{c}^p_0)^{p^{n}}p^{-n}d\wt{\om}]_1=c_0\cdot F^{n-1}d \om.
\end{align*}  
This formula shows that 
$F_{{\cal W}_n(Z)*}(F^{n-1}d)\col F_{{\cal W}_n(Z)*}({\mathfrak W}_n\Om^i)
\lo B_n{\mathfrak W}_1\Om^{i+1}$ is 
a morphism of ${\cal W}_n({\cal O}_Z)'$-modules. 
\end{proof}

\begin{rema}\label{rema:omst}
(1) In \cite[I (3.11)]{idw} we can find a corresponding statement to 
(\ref{prop:fzg}). However the 
${\cal W}_{n+1}({\cal O}_Z)'$-module structures were not considered 
in [loc.~cit.]; in [loc.~cit.] only exact sequences of abelian sheaves have 
been considered. 
However the well-known relation ``$x Vy=V(Fx y)$'' implies that 
$V$ in (\ref{ali:mvee}) is compatible with 
the ${\cal W}_{n+1}({\cal O}_Z)$-structures.  
\par 
(2) Let $Z/s$ be a fine log scheme. 
The proposition (\ref{prop:fzg}) is important 
because several properties of the de Rham-Witt sheaf 
${\cal W}_n\Om^i_{Z}$ $(i\in {\mab N})$  are 
obtained by properties of 
$Z_n\Om^i_{Z}$ or $B_n\Om^{i+1}_{Z}$ (\cite{nb}). 
\end{rema}

\begin{defi}
We call the exact sequences (\ref{ali:mvee}) and (\ref{ali:mfee}) of 
${\cal W}_{n+1}({\cal O}_Z)$-modules 
the {\it log Illusie exact sequence of $(\Om^{\bul},\phi)$ in level $n$}. 
\end{defi} 

\begin{lemm}\label{lemm:nfgg}
Assume that ${\mathfrak W}_1\Om^i$ is an ${\cal O}_Z$-module of finite type 
and that $F_Z$ is a finite morphism. 
Assume that $\os{\circ}{Z}$ is a noetherian scheme. 
Then $Z_n{\mathfrak W}_1\Om^i$ and $B_n{\mathfrak W}_1\Om^i$ 
are coherent ${\cal O}_Z$-modules. 
\end{lemm}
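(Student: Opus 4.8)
The plan is to exhibit $Z_n{\mathfrak W}_1\Om^i$ and $B_n{\mathfrak W}_1\Om^i$ as the result of applying to $\Om^i_1$ a finite chain of operations — taking kernels and images of ${\cal O}_Z$-linear maps, forming quotients and extensions, and applying the Frobenius pushforwards $\sig^m_*$ — each of which preserves coherence. I would rely throughout on two facts: since $\os{\circ}{Z}$ is noetherian, coherent ${\cal O}_Z$-modules form an abelian category stable under extensions, so kernels, images, cokernels and extensions of coherent sheaves are coherent; and since $F_Z$ is finite, each $\sig^m_*=F^m_{Z*}$ sends coherent ${\cal O}_Z$-modules to coherent ${\cal O}_Z$-modules, the Frobenius twist on $\kap$ being an automorphism and hence harmless for finite generation.

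First I would prove that $\Om^i_1$ is coherent. This is the one place where the hypothesis must be used nontrivially, since finite generation is assumed for ${\mathfrak W}_1\Om^i$ and not for $\Om^i_1$ directly; the bridge is the inverse Cartier isomorphism (\ref{eqn:1wc}), which identifies $\Om^i_1$ with $\sig_*{\mathfrak W}_1\Om^i=F_{Z*}({\mathfrak W}_1\Om^i)$. As ${\mathfrak W}_1\Om^i$ is of finite type over the noetherian $\os{\circ}{Z}$ it is coherent, and $F_Z$ finite then makes $F_{Z*}({\mathfrak W}_1\Om^i)$, and hence $\Om^i_1$, coherent. Next I would observe that although $d\col \Om^i_1\lo \Om^{i+1}_1$ is not ${\cal O}_Z$-linear, the identity $d(f^p\om)=f^pd\om$ in characteristic $p$ shows that $F_{Z*}(d)\col F_{Z*}(\Om^i_1)\lo F_{Z*}(\Om^{i+1}_1)$ is a morphism of coherent ${\cal O}_Z$-modules; consequently $Z\Om^i_1={\rm Ker}(d)$ and $B\Om^i_1={\rm Im}(d)$ are coherent.

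With these base cases I would induct on $n$ via the defining exact sequences (\ref{eqn:defz}) and (\ref{eqn:defb}). The sequence (\ref{eqn:defz}) presents $Z_n\Om^i_1$ as an extension of $Z_n\Om^i_1/\sig^n_*(B\Om^i_1)$, which via $C^{-1}$ is isomorphic to a Frobenius pushforward of $Z_{n-1}\Om^i_1$, by the submodule $\sig^n_*(B\Om^i_1)=F^n_{Z*}(B\Om^i_1)$; both ends are coherent by the induction hypothesis, the base cases and the finiteness of $F_Z$, so $Z_n\Om^i_1$ is coherent. The same argument applied to (\ref{eqn:defb}) yields coherence of $B_n\Om^i_1$; here one uses (\ref{lemm:zbn}) to read these as subsheaves of $F^n_{Z*}(\Om^i_1)$. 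Finally, from $Z_n{\mathfrak W}_1\Om^i=\sig^{n+1}_*\{Z_{n+1}\Om^i_1/B\Om^i_1\}$ and $B_n{\mathfrak W}_1\Om^i=\sig^{n+1}_*\{B_{n+1}\Om^i_1/B\Om^i_1\}$, each braced term is a quotient of coherent sheaves (note $B\Om^i_1\sus Z_{n+1}\Om^i_1$ and $B\Om^i_1\sus B_{n+1}\Om^i_1$), hence coherent, and applying $\sig^{n+1}_*=F^{n+1}_{Z*}$ leaves it coherent. I expect the first step to be the main obstacle: one must pass from the finite-type hypothesis on ${\mathfrak W}_1\Om^i$ to coherence of $\Om^i_1$ through (\ref{eqn:1wc}), and keep careful track that each twist $\sig^m_*$ agrees with the pushforward $F^m_{Z*}$ along the finite absolute Frobenius, which is exactly what guarantees that all the operations above stay within coherent ${\cal O}_Z$-modules.
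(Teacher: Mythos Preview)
Your argument is correct, but it is considerably more elaborate than what the paper actually does. The paper's proof is a single observation: right before the lemma it is noted that $Z_n{\mathfrak W}_1\Om^i$ and $B_n{\mathfrak W}_1\Om^i$ are ${\cal O}_Z$-submodules of $F_{Z*}^n({\mathfrak W}_1\Om^i)$. Since ${\mathfrak W}_1\Om^i$ is an ${\cal O}_Z$-module of finite type on a noetherian scheme, it is coherent; since $F_Z$ is finite, $F_{Z*}^n({\mathfrak W}_1\Om^i)$ is coherent; and on a noetherian scheme every ${\cal O}_Z$-submodule of a coherent sheaf is coherent. That is the whole proof.

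Your route instead transports the hypothesis across $C^{-1}$ to get coherence of $\Om^i_1$, then climbs the inductive tower (\ref{eqn:defz}), (\ref{eqn:defb}) by extensions and Frobenius pushforwards. This works, and it has the merit of making explicit why each $Z_n\Om^i_1$, $B_n\Om^i_1$ is coherent in its own right; but none of it is needed once one remembers that on a noetherian scheme ``submodule of coherent'' already suffices. In particular the step you flag as the main obstacle --- passing from ${\mathfrak W}_1\Om^i$ to $\Om^i_1$ via (\ref{eqn:1wc}) --- is entirely avoidable: the paper never leaves ${\mathfrak W}_1\Om^i$.
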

\begin{proof} 
By the assumption, 
$F_{Z*}^n({\mathfrak W}_1\Om^i)$ is a coherent ${\cal O}_Z$-module. 
Hence $Z_n{\mathfrak W}_1\Om^i$ and $B_n{\mathfrak W}_1\Om^i$ 
are coherent ${\cal O}_Z$-modules. 
\end{proof} 

\begin{prop}[{\bf \cite[(6.12) (2)]{ndw}}]\label{prop:fg}
Let $F_Z \col Z \lo Z$ be 
the Frobenius endomorphism of $Z$. 
Assume that  
$$C^{-1}\col \Om^i_1 \os{\sim}{\lo} 
{\mathfrak W}_1\Om^i={\cal H}^i(F_{Z*}(\Om^{\bul}_1))$$ 
is an isomorphism of ${\cal O}_Z$-modules. 
If $\Om^j_1$ $(j= i-1,i)$ is 
an ${\cal O}_Z$-module of finite type and 
if $F_{Z}$ is finite, 
then ${\mathfrak W}_n\Om^i$ is a $W_n({\cal O}_Z)'$-module of finite type. 
\end{prop}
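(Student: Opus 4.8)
The plan is to prove the statement by induction on $n$, using the exact sequence (\ref{ali:wnii}) of (\ref{prop:geni}) together with the finiteness of the Frobenius. For $n=1$ the isomorphism $C^{-1}\col \Om^i_1\os{\sim}{\lo}{\mathfrak W}_1\Om^i$ and the hypothesis that $\Om^i_1$ is of finite type give that ${\mathfrak W}_1\Om^i$ is a ${\cal W}_1({\cal O}_Z)'={\cal O}_Z$-module of finite type, which is the base case. For the inductive step I would fix $i$, assume that ${\mathfrak W}_n\Om^i$ is a ${\cal W}_n({\cal O}_Z)'$-module of finite type, and deduce the same for ${\mathfrak W}_{n+1}\Om^i$. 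The sequence (\ref{ali:wnii}) with $r=n$ (and $n$ replaced by $n+1$) reads
\begin{align*}
0\lo \sig^n_*({\mathfrak W}_1\Om^{i-1})/F^n{\mathfrak W}_{n+1}\Om^{i-1}
\os{dV^n}{\lo}
{\mathfrak W}_{n+1}\Om^i/V^n{\mathfrak W}_1\Om^i
\os{R}{\lo}
{\mathfrak W}_n\Om^i\lo 0,
\end{align*}
so that ${\mathfrak W}_{n+1}\Om^i$ sits in two short exact sequences of ${\cal W}_{n+1}({\cal O}_Z)'$-modules: one with subobject $V^n{\mathfrak W}_1\Om^i$ and quotient ${\mathfrak W}_{n+1}\Om^i/V^n{\mathfrak W}_1\Om^i$, and one expressing this quotient as an extension of ${\mathfrak W}_n\Om^i$ by the $dV^n$-image of $\sig^n_*({\mathfrak W}_1\Om^{i-1})$. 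Since ${\mathfrak W}_n\Om^i$ is of finite type by induction, everything reduces to showing that the two extremal pieces $V^n{\mathfrak W}_1\Om^i$ and $\sig^n_*({\mathfrak W}_1\Om^{i-1})/F^n{\mathfrak W}_{n+1}\Om^{i-1}$ are of finite type over ${\cal W}_{n+1}({\cal O}_Z)'$.

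First I would record that ${\mathfrak W}_1\Om^i$ and ${\mathfrak W}_1\Om^{i-1}$ are ${\cal O}_Z$-modules of finite type. For ${\mathfrak W}_1\Om^i$ this is the base case above; for ${\mathfrak W}_1\Om^{i-1}={\cal H}^{i-1}(F_{Z*}(\Om^{\bul}_1))$ it follows because this sheaf is a subquotient of $F_{Z*}(\Om^{i-1}_1)$, which is of finite type since $\Om^{i-1}_1$ is of finite type and $F_Z$ is finite (here I use that $\os{\circ}{Z}$ is noetherian, so that subsheaves of coherent sheaves are coherent; compare (\ref{lemm:nfgg})).

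The main point, and the step I expect to be the real obstacle, is the passage from finite generation over ${\cal O}_Z$ to finite generation over ${\cal W}_{n+1}({\cal O}_Z)'$ of the images under $V^n$ and $dV^n$, where a Frobenius twist intervenes. Concretely, for a local section $x$ of ${\mathfrak W}_1\Om^i$ and $c=(c_0,\ldots,c_n)$ a local section of ${\cal W}_{n+1}({\cal O}_Z)'$ the identity $c\cdot V^n(x)=V^n(F^n(c)\cdot x)=V^n(c_0^{p^n}x)$ holds, so that ${\cal W}_{n+1}({\cal O}_Z)'$ acts on $V^n{\mathfrak W}_1\Om^i$ through $F^n$, i.e.\ through the subsheaf ${\cal O}_Z^{p^n}\subset {\cal O}_Z$ of $p^n$-th powers. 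Because $F_Z$ is finite, ${\cal O}_Z$ is a finite ${\cal O}_Z^{p^n}$-module, so the finite-type ${\cal O}_Z$-module ${\mathfrak W}_1\Om^i$ is also of finite type over ${\cal O}_Z^{p^n}$; choosing generators of the form $e_tm_l$ (with $e_t$ generating ${\cal O}_Z$ over ${\cal O}_Z^{p^n}$ and $m_l$ generating ${\mathfrak W}_1\Om^i$ over ${\cal O}_Z$), their images $V^n(e_tm_l)$ generate $V^n{\mathfrak W}_1\Om^i$ over ${\cal W}_{n+1}({\cal O}_Z)'$. The same argument, applied to $\sig^n_*({\mathfrak W}_1\Om^{i-1})$ whose ${\cal W}_{n+1}({\cal O}_Z)'$-structure again factors through $F^n$, shows that this sheaf, and hence its quotient by $F^n{\mathfrak W}_{n+1}\Om^{i-1}$, is of finite type over ${\cal W}_{n+1}({\cal O}_Z)'$.

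Feeding these two finiteness facts into the two short exact sequences above then yields that ${\mathfrak W}_{n+1}\Om^i$ is of finite type over ${\cal W}_{n+1}({\cal O}_Z)'$, completing the induction. I expect the only delicate bookkeeping to be the verification of the twisted module identities $c\cdot V^n(x)=V^n(F^n(c)x)$ and their analogue for $dV^n$ modulo $V^n{\mathfrak W}_1\Om^i$ (these are of the type already computed in the proof of (\ref{prop:fzg}), e.g.\ (\ref{ali:mflppee})), where a Leibniz-type correction term must be checked to land in $V^n{\mathfrak W}_1\Om^i$, together with the care needed to keep track of the ${\cal W}_{n+1}({\cal O}_Z)'$-structures rather than merely the abelian-sheaf structures throughout.
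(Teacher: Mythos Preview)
The paper itself gives no proof of (\ref{prop:fg}); the statement is quoted from \cite[(6.12) (2)]{ndw} without argument, so there is nothing in the text to compare against directly. Your induction on $n$ via the exact sequence (\ref{ali:wnii}) with $r=n$ is the natural route and is essentially correct: the key step is exactly the one you isolate, namely that the ${\cal W}_{n+1}({\cal O}_Z)'$-action on $V^n(\sig^n_*{\mathfrak W}_1\Om^i)$ and on the image of $dV^n$ factors through $c\mapsto c_0^{p^n}$, so that finiteness of $F_Z$ promotes finite generation over ${\cal O}_Z$ to finite generation over ${\cal W}_{n+1}({\cal O}_Z)'$. This is the standard strategy for such statements, going back to the classical case in \cite{idw}.

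One caveat you already flag is genuine: to conclude that ${\mathfrak W}_1\Om^{i-1}={\cal H}^{i-1}(F_{Z*}(\Om^{\bul}_1))$ is of finite type by viewing it as a subquotient of $F_{Z*}(\Om^{i-1}_1)$, you need $\os{\circ}{Z}$ noetherian, which is not among the stated hypotheses of (\ref{prop:fg}) (though it does appear in (\ref{lemm:nfgg}) immediately above and holds in every application in the paper). This is an added hypothesis rather than an error in your reasoning. The Leibniz-type verification you anticipate for the $dV^n$ term is indeed routine: the correction terms that arise when comparing $c\cdot dV^n(x)$ with $dV^n(c_0^{p^n}x)$ carry an extra factor of $p^n$ and hence lie in $V^n(\sig^n_*{\mathfrak W}_1\Om^i)$, exactly as you predict.
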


\par 
Let $s$ be a fine log scheme whose underlying scheme is ${\rm Spec}(\kap)$.  
If $Z$ is a underlying scheme of a log smooth scheme $Y$ 
of Cartier type over $s$, then ${\cal W}_n({\cal O}_Z)'={\cal W}_n({\cal O}_Y)$ 
(\cite[(7.5)]{ndw}), 
where ${\cal W}_n({\cal O}_Y)$ is a reverse Witt sheaf of $Y/s$ 
in the sense of [loc.~cit.]. 
By this identification, ${\mathfrak W}_n\Om^i$ is a quasi-coherent 
${\cal W}_n({\cal O}_Y)$-module.

\begin{prop}[{\bf \cite[(6.27) (1)]{ndw}}]
\label{prop:bodw}
Let ${\cal W}_n(s)$ and ${\cal W}(s)$ be 
the canonical lifts of $s$ over ${\cal W}_n$ and ${\cal W}$, respectively. 
Let $Y$ be a log smooth scheme of Cartier type over $s$. 
Let ${\cal Y}/{\cal W}(s)$ be a formally log smooth lift of $Y/s$ 
with a lift $\Phi \col {\cal Y} \lo {\cal Y}$ of the Frobenius endomorphism of $Y$.
Set ${\cal Y}_n:={\cal Y}\otimes_{\cal W}{\cal W}_n$ 
$(n \in {\mab Z}_{>0})$. 
Let $\Om^{\bul}_n$ be the log de Rham complex of ${\cal Y}_n/{\cal W}_n(s)$. 
Set $\Om^{\bul}:=\vpl_n\Om_n^{\bul}$.
Let $C^{-1} \col \Om^i_1 \os{\sim}{\lo}{\cal H}^i(\Om^{\bul}_1)$
be the log inverse Cartier isomorphism {\rm (\cite[(4.12) (1)]{klog1})}.
Then $(\Om^{\bul}, \Phi^*,C^{-1})$ satisfies the conditions 
$(2.1,3)\sim (2.1.7)$ for ${\cal T}=(Y_{\rm zar}, {\cal W})$.
\end{prop}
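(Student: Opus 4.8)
The plan is to verify the five conditions (3.0.2)--(3.0.6) one at a time for the triple $(\Om^{\bul},\Phi^*,C^{-1})$ in the ringed topos ${\cal T}=(Y_{\rm zar},{\cal W})$, where $\sig$ denotes the Frobenius of ${\cal W}$ and $\phi=\Phi^*$ is $\sig$-semilinear because $\Phi$ covers the Frobenius lift of ${\cal W}(s)$. Conditions (3.0.2) and (3.0.3) are essentially formal: the de Rham complex vanishes in negative degrees, and since ${\cal Y}/{\cal W}(s)$ is formally log smooth, each $\Om^i=\Om^i_{{\cal Y}/{\cal W}(s)}$ is locally free of finite rank over the $p$-torsion-free, $p$-adically complete sheaf ${\cal O}_{\cal Y}$; because $\Om^{\bul}=\vpl_n\Om^{\bul}_n$ with $\Om^i_n=\Om^i/p^n\Om^i$, the sheaf $\Om^i$ is $p$-adically complete and $p$-torsion-free. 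Condition (3.0.5) is exactly Kato's log inverse Cartier isomorphism \cite[(4.12) (1)]{klog1}, applicable since $Y/s$ is log smooth of Cartier type and $\Om^{\bul}_1=\Om^{\bul}/p=\Om^{\bul}_{Y/s}$ is the log de Rham complex of the reduction ${\cal Y}_1=Y$.

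The substantive work is (3.0.4) and (3.0.6), and for both I would reduce to a local computation on a log smooth chart, using that $\Phi$ reduces modulo $p$ to the Frobenius of $Y$. First I would record two basic divisibilities. On functions $\Phi^*(\ti a)\equiv \ti a^{\,p}\pmod p$ for any local lift $\ti a$, so $\Phi^*(d\ti a)=d\Phi^*(\ti a)\in p\Om^1$; on the log structure $\Phi$ lifts $m\mapsto m^p$, so $\Phi^*(\ti m)=\ti m^{\,p}u$ with $u\equiv 1\pmod p$ and $\Phi^*(d\log\ti m)=p\,d\log\ti m+d\log u\in p\Om^1$. Taking wedge products and using that $\Phi^*$ is multiplicative gives $\Phi^*(\Om^i)\subset p^i\Om^i$; applying $d$ and the relation $d\Phi^*=\Phi^*d$ gives $d\Phi^*(\Om^i)=\Phi^*(d\Om^i)\subset p^{i+1}\Om^{i+1}$. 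This is precisely (3.0.4).

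For (3.0.6) I would note that by (3.0.4) the composite $({\rm mod}~p)\circ p^{-i}\Phi^*$ lands in $\sig_*{\rm Ker}(d\col\Om^i_1\lo\Om^{i+1}_1)$, hence defines a map into $\sig_*{\cal H}^i(\Om^{\bul}_1)$; the task is to identify it with $C^{-1}$. Both $\ol{p^{-i}\Phi^*}$ and $C^{-1}$ are morphisms of graded ${\cal O}_Y$-algebras — the former because $p^{-(i+j)}\Phi^*(\al\wedge\bet)=(p^{-i}\Phi^*\al)\wedge(p^{-j}\Phi^*\bet)$, the latter being Kato's Cartier isomorphism — so it suffices to check agreement on generators in degrees $0$ and $1$. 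In degree $0$ both send $a$ to $a^p$. In degree $1$, the computation $p^{-1}\Phi^*(d\ti a)\equiv \ti a^{\,p-1}d\ti a\pmod p$ exhibits the class $C^{-1}(da)=[a^{p-1}da]$, while $p^{-1}\Phi^*(d\log\ti m)\equiv d\log\ti m\pmod p$ gives $C^{-1}(d\log m)=[d\log m]$ (the correction terms $d\log u$, $p\,dc$ being exact, hence zero in ${\cal H}^1$); these are the defining values of the inverse Cartier operator, so the diagram in (3.0.6) commutes.

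The main obstacle I anticipate is (3.0.6): one must ensure the local computations are intrinsic — that the mod $p$ reductions are independent of the chosen lifts $\ti a,\ti m$ and patch to the global statement — and that they reproduce exactly the normalization of Kato's $C^{-1}$ in \cite[(4.12) (1)]{klog1} in the logarithmic setting, where both exact generators $da$ and logarithmic generators $d\log m$ occur. Once the two divisibilities of the second paragraph are in hand this is essentially bookkeeping, but it is where all the log-geometric content sits; everything else is either formal or a direct citation, and the verification is the one carried out in \cite[(6.27) (1)]{ndw}.
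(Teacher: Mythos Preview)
Your verification is correct and is exactly the standard argument; the paper itself does not reproduce a proof here but simply cites \cite[(6.27) (1)]{ndw}, where the same direct check of (3.0.2)--(3.0.6) is carried out. One small imprecision: in the exact case you write $p^{-1}\Phi^*(d\ti a)\equiv \ti a^{\,p-1}d\ti a\pmod p$, but writing $\Phi^*(\ti a)=\ti a^{\,p}+pb$ gives $p^{-1}\Phi^*(d\ti a)=\ti a^{\,p-1}d\ti a+db$, so there is an extra exact term $db$ just as in the logarithmic case---it vanishes in ${\cal H}^1$, so the conclusion is unaffected, but you should say so explicitly for the exact generators as you already do for $d\log m$.
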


\begin{coro}\label{coro:locdw}
Let $Y$ be a log smooth scheme of Cartier type over $s$.
Let ${\cal W}_{\star}\Om^{\bul}_Y$ 
$(\star=n$ or nothing$)$ be the log de Rham-Witt complex of $Y/s$. 
Then the statements in this section with the replacement of 
${\mathfrak W}_{\star}\Om^i$ by ${\cal W}_{\star}\Om^i_Y$ hold. 
\end{coro}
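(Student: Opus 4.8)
The plan is to reduce every assertion of this section to the local situation in which a lift is available and then transport the results along the comparison isomorphism underlying the construction of the log de Rham-Witt complex in \cite{ndw}. The key observation is that all the statements to be transported --- the isomorphism (\ref{eqn:sexgg}), the exact sequences (\ref{ali:mvee}) and (\ref{ali:mfee}), the commutative diagrams (\ref{ali:nmfee}) and (\ref{cd:nmwee}), the exact sequences (\ref{ali:wnii}) and (\ref{eqn:rfvn}), the filtration formula (\ref{eqn:fvdvn}), the Raynaud-algebra isomorphisms (\ref{eqn:raylpi}) and (\ref{eqn:ryncp}), and the finiteness in (\ref{prop:fg}) --- are statements about sheaves and morphisms of sheaves on the topos ${\cal T}=(Y_{\rm zar},{\cal W})$, not about global cohomology. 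Hence each of them may be verified Zariski-locally on $\os{\circ}{Y}$, and it suffices to produce the required identification locally together with a gluing argument.

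First I would pass to a sufficiently small open subscheme $U$ of $\os{\circ}{Y}$ carrying a formally log smooth lift ${\cal U}/{\cal W}(s)$ of $Y|_U/s$ together with a lift $\Phi\col {\cal U}\lo {\cal U}$ of the Frobenius endomorphism; such lifts exist locally because $Y/s$ is log smooth of Cartier type. Writing $\Om^{\bul}:=\vpl_n\Om^{\bul}_n$ for the inverse limit of the log de Rham complexes of ${\cal U}_n:={\cal U}\otimes_{\cal W}{\cal W}_n$, Proposition (\ref{prop:bodw}) shows that the triple $(\Om^{\bul},\Phi^*,C^{-1})$ satisfies the conditions $(3.0.2)\sim(3.0.6)$ assumed throughout the section. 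Therefore every statement of the section applies verbatim, on $U$, to the formal de Rham-Witt complex ${\mathfrak W}_{\star}\Om^{\bul}$ attached to this data, and by the discussion preceding (\ref{prop:bodw}) its ${\cal W}_n({\cal O}_Z)'$-module structure is exactly the ${\cal W}_n({\cal O}_Y)$-module structure via the identification ${\cal W}_n({\cal O}_Z)'={\cal W}_n({\cal O}_Y)$ of \cite[(7.5)]{ndw}.

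The central step is then the canonical comparison isomorphism ${\mathfrak W}_{\star}\Om^i\os{\sim}{\lo}{\cal W}_{\star}\Om^i_Y|_U$ furnished by \cite{ndw}, which is compatible with the operators $F$, $V$, $d$, $R$ and $C$. Under this isomorphism ${\mathfrak W}_1\Om^i$ corresponds to $\Om^i_{Y/s}={\cal W}_1\Om^i_Y$ through $C^{-1}$, so that the subsheaves $Z_n{\mathfrak W}_1\Om^i$ and $B_n{\mathfrak W}_1\Om^i$ correspond to the sheaves $Z_n\Om^{\bul}_{Y/s}$ and $B_n\Om^{\bul}_{Y/s}$ of Section (\ref{sec:rrfr}). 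Since every operator and every subsheaf entering the statements of the section is matched under this isomorphism, each local statement transports immediately from ${\mathfrak W}_{\star}\Om^i$ to ${\cal W}_{\star}\Om^i_Y$.

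The hard part will be the gluing: the complex ${\mathfrak W}_{\star}\Om^{\bul}$ depends a priori on the chosen pair $({\cal U},\Phi)$, whereas ${\cal W}_{\star}\Om^i_Y$ is a globally defined sheaf. This is resolved precisely by the canonicity of the comparison isomorphism, namely its independence of the choice of local lift and of the Frobenius lift --- this independence being exactly what makes the construction of the log de Rham-Witt complex in \cite{ndw} well defined as a global object. Consequently the local identifications agree on overlaps, the local forms of (\ref{prop:boncom})$\sim$(\ref{prop:fg}) are compatible with restriction, and they patch together to the asserted global statements for ${\cal W}_{\star}\Om^i_Y$, which completes the argument.
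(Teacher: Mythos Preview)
Your argument is correct and is precisely the standard unpacking of what the paper leaves implicit: the paper gives no proof for this corollary at all, treating it as an immediate consequence of (\ref{prop:bodw}) together with the construction of ${\cal W}_{\star}\Om^{\bul}_Y$ in \cite{ndw}, where the log de Rham-Witt complex is built locally from the formal complex ${\mathfrak W}_{\star}\Om^{\bul}$ attached to a lift and then glued. Your explicit discussion of locality, the comparison isomorphism, and the independence of the lift is exactly the content hidden behind that citation, so there is nothing to add.
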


The following proposition and the following corollary tells us that 
the former is a generalization of (\ref{prop:wus}): 

\begin{prop}\label{prop:seg} 
Let $({\cal W}_n\Om^{\bul}_Y)'$ be the obverse log de Rham-Witt complex of 
$Y/{\cal W}_n(s)$ defined in {\rm \cite[\S7]{ndw}} and denoted by 
$(W_n\Om^{\bul}_Y)''$ in {\rm [loc.~cit.]}.  
Let $C^{-n}\col ({\cal W}_n\Om^{\bul}_Y)'\os{\sim}{\lo} {\cal W}_n\Om^{\bul}_Y$ 
be the isomorphism of Raynaud algebras over $\kap$ 
defined in {\rm \cite[(7.0.5)]{ndw}}. 
$($In {\rm \cite[(7.5)]{ndw}} we have proved that this is an isomorphism.$)$
Then the following diagram of 
${\cal W}_{n+1}({\cal O}_Y)'$-modules and 
${\cal W}_{n+1}({\cal O}_Y)$-modules
is commutative$:$
\begin{equation*} 
\begin{CD} 
({\cal W}_n\Om^i_Y)'/F({\cal W}_{n+1}\Om^i_Y)'
@>{F^{n-1}d,~\simeq}>>B_n\Om^{i+1}_Y\\
@V{C^{-n}}V{\simeq}V @V{C^{-1}}V{\simeq}V\\
{\cal W}_n\Om^i_Y/
F{\cal W}_{n+1}\Om^i_Y@>{F^{n-1}d,~\simeq}>>B_n{\cal W}_1\Om^{i+1}_Y. 
\end{CD} 
\tag{3.17.1}\label{cd:nmkee}
\end{equation*} 
\end{prop}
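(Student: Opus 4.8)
The plan is to recognize both horizontal arrows of (\ref{cd:nmkee}) as one and the same construction, applied to two different formal de Rham--Witt complexes, and then to read off the commutativity from the fact that $C^{-n}$ is a morphism of Raynaud-algebra modules, hence compatible with the operators $F$ and $d$ out of which those arrows are built. First I would invoke (\ref{coro:locdw}) to apply the results of this section to both the obverse complex $({\cal W}_n\Om^{\bul}_Y)'$ and the reverse complex ${\cal W}_n\Om^{\bul}_Y$. The top (resp.~bottom) isomorphism of (\ref{cd:nmkee}) is then exactly the $B_n$-part (\ref{ali:nmee}) of the log Illusie exact sequence (\ref{ali:mfee}), namely the isomorphism induced by the operator $F^{n-1}d$ out of ${\cal W}_n\Om^i_Y/F{\cal W}_{n+1}\Om^i_Y$ (primed, resp.~unprimed). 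Under the identification $({\cal W}_1\Om^{\bul}_Y)'=\Om^{\bul}_{Y/s}$ the obverse target $B_n({\cal W}_1\Om^{i+1}_Y)'$ is the sheaf $B_n\Om^{i+1}_Y$ of \S\ref{sec:rrfr}, so both horizontal maps are induced by the single formal operator $F^{n-1}d$; only the complex on which it acts and the inverse Cartier isomorphism used in (\ref{eqn:defb}) to build the two targets differ.

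Next I would record the compatibilities of $C^{-n}$. By \cite[(7.5)]{ndw} the map $C^{-n}\col ({\cal W}_n\Om^{\bul}_Y)'\os{\sim}{\lo}{\cal W}_n\Om^{\bul}_Y$ is an isomorphism of modules over the Cartier--Dieudonn\'e--Raynaud algebra $R_{\infty}$, so it commutes with $F$, $V$, $d$ and $R$ at every level. Two consequences matter. First, since $C^{-\bul}$ commutes with $F$, it carries $F({\cal W}_{n+1}\Om^i_Y)'$ isomorphically onto $F{\cal W}_{n+1}\Om^i_Y$, hence descends to the isomorphism of source quotients on the left of (\ref{cd:nmkee}). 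Second, the level-$1$ reduction of $C^{-n}$ in degree $i+1$, restricted to the $B_n$-subsheaf and propagated through the inductive definition (\ref{eqn:defb}) together with the stabilization square (\ref{cd:nmwee}), identifies the obverse sheaf $B_n\Om^{i+1}_Y$ with the reverse one $B_n{\cal W}_1\Om^{i+1}_Y$; this induced map is precisely the right vertical arrow, which is why it too is denoted $C^{-1}$.

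With these identifications in place the commutativity is a one-line diagram chase: writing the horizontal maps as $F^{n-1}\circ d$ and using that $C^{-\bul}$ commutes with $d$ (in fixed level) and with $F$ (lowering the level), I obtain
\begin{align*}
C^{-1}\circ(F^{n-1}d)&=C^{-1}\circ F^{n-1}\circ d=F^{n-1}\circ C^{-n}\circ d\\
&=F^{n-1}\circ d\circ C^{-n}=(F^{n-1}d)\circ C^{-n},
\end{align*}
which passes to the quotients by $F({\cal W}_{n+1}\Om^i_Y)'$ and $F{\cal W}_{n+1}\Om^i_Y$ by the first consequence above. To be safe I would also verify this identity directly on local representatives $[\wt\om]_n$ with $\wt\om\in Z^i_n$, using that $F^{n-1}d$ is induced by $p^{-n}d$ as in the proof of (\ref{prop:fkervcoker})\,(3),(4), together with the explicit formula for $C^{-n}$ from \cite[(7.0.5)]{ndw}; the square (\ref{cd:nmwee}) then governs the passage through the surjection $B_{n+1}\to B_n$.

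The step I expect to be the main obstacle is pinning down that the right vertical $C^{-1}$ really is the level-$1$ reduction of $C^{-n}$ on the $B_n$-pieces, with no residual Frobenius twist spoiling the square. The map $C^{-n}$ is built from the iterated inverse Cartier operator and is $\sig$-semilinear, while the obverse and reverse module structures differ by the Frobenius-twisted powers $\wt c_j^{\,p^{n-j}}$ in the ${\cal W}_n({\cal O}_Z)'$-action defined after (\ref{theo:raycomp}). I would therefore spend most of the effort checking, on representatives, that the twist carried by $C^{-n}$ in degree $i$ and the twist carried by $C^{-1}$ in degree $i+1$ cancel against the $p^{-n}d$ defining $F^{n-1}d$, so that the two inverse Cartier isomorphisms used to construct $B_n\Om^{i+1}_Y$ and $B_n{\cal W}_1\Om^{i+1}_Y$ are matched exactly by the square. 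This is essentially a repackaging of \cite[(7.5)]{ndw}, but it is the only place where genuine computation with Witt-component formulas enters.
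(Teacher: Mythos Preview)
Your proposal is correct and follows essentially the same approach as the paper: the commutativity is deduced from the fact that the comparison isomorphism $C^{-n}$ of \cite[(7.5)]{ndw} is compatible with the operators $d$ and $F$, hence intertwines the composite $F^{n-1}d$ on the two sides. The paper's own proof is a single sentence to this effect; your elaboration on why $C^{-n}$ descends to the quotients and why the right vertical $C^{-1}$ is the induced map on $B_n$-pieces is extra care, not a different argument.
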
 
\begin{proof} 
This immediately follows from the comparison isomorphism 
$C^{-n}\col ({\cal W}_n\Om^i_Y)'\os{\sim}{\lo} 
{\cal W}_n\Om^i_Y$ (\cite[(7.5)]{ndw}) 
which is compatible with $d$'s and $F$'s. 
\end{proof} 

\begin{coro}\label{coro:nif} 
The upper horizontal isomorphism 
$F^{n-1}d \col ({\cal W}_n\Om^i_Y)'/F({\cal W}_{n+1}\Om^i_Y)'
\os{\sim}{\lo} B_n\Om^{i+1}_Y$ 
in {\rm (\ref{cd:nmkee})} for the case $i=0$ is 
equal to the isomorphism {\rm (\ref{ali:fwo})}.
\end{coro}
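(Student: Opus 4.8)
The plan is to verify the equality by computing both isomorphisms on local sections and matching their explicit formulas. Recall first that, by (\ref{prop:wus}) (1), the isomorphism (\ref{ali:fwo}) is the one induced on the cokernel of $F$ by Serre's morphism $d_n$, whose value on a Witt vector is given by the explicit formula (\ref{ali:sed}). On the other hand, specializing (\ref{prop:fzg}) to $i=0$ and transporting it to the log de Rham--Witt complex by (\ref{coro:locdw}) yields the $i=0$ log Illusie exact sequence (\ref{ali:mfee}), and this exhibits the lower horizontal arrow of (\ref{cd:nmkee}) as the isomorphism induced by $F^{n-1}d$ on the cokernel of $F$. Thus, after the identification $({\cal W}_n({\cal O}_Y))'={\cal W}_n({\cal O}_Y)$ of \cite[(7.5)]{ndw} in degree $0$, both maps are cokernel isomorphisms onto $B_n\Om^1_{Y/s}$, and it suffices to compare the two underlying surjections out of $F_*({\cal W}_n({\cal O}_Y))$.

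First I would use the commutativity of (\ref{cd:nmkee}) proved in (\ref{prop:seg}): since all four arrows there are isomorphisms, the top map (which is $F^{n-1}d$ on the obverse complex) is determined by the bottom map through the vertical comparisons $C^{-n}$ and $C^{-1}$. It therefore remains to compute the bottom map, i.e. the formal $F^{n-1}d$, on a local Witt-vector section. Using the ${\cal W}_n({\cal O}_Y)'$-module structure on ${\mathfrak W}_n\Om^0$ recalled before (\ref{prop:fzg}), the section attached to $(a_0,\ldots,a_{n-1})$ is represented by $\sum_{j=0}^{n-1}p^j\tilde a_j^{p^{n-j}}\in Z^0_n$; by a computation analogous to (\ref{ali:mflee}), but now in degree $0$ and for the untwisted action, $F^{n-1}d$ sends it to the class of $p^{-n}d\bigl(\sum_{j}p^j\tilde a_j^{p^{n-j}}\bigr)=\sum_{j}\tilde a_j^{p^{n-j}-1}d\tilde a_j$ in $B_n{\cal W}_1\Om^1_Y$.

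The main obstacle is then the reconciliation of this expression with Serre's formula (\ref{ali:sed}): the exponents $p^{n-j}-1$ produced by the formal complex are larger, by one application of Frobenius, than Serre's exponents $p^{n-1-j}-1$. To handle this I would pass from the bottom to the top map through the Cartier comparison $C$ inverse to the right vertical $C^{-1}$ of (\ref{cd:nmkee}), using repeatedly the single-step identity $C(\overline{a^{p-1}\,d a})=\overline{d a}$ along the inductive definition of $B_n\Om^1_{Y/s}$ via the log inverse Cartier isomorphism (\ref{ali:oxs}). This lowers each exponent by one power of $p$ and turns the section above into $\sum_{j}a_j^{p^{n-1-j}-1}d a_j=d_n((a_0,\ldots,a_{n-1}))$, the source relabeling being absorbed by $C^{-n}$. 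Matching the two surjections in this way and passing to cokernels gives the asserted equality of isomorphisms, so that the case $i=0$ of (\ref{cd:nmkee}) is indeed (\ref{ali:fwo}).
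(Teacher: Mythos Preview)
Your proposal is correct and follows essentially the same approach as the paper. Both arguments use the commutativity of (\ref{cd:nmkee}) to reduce to an explicit computation on Witt vectors, obtain the expression $\sum_j a_j^{p^{n-j}-1}da_j$ via the formal complex, and then invoke the single Cartier identity $C^{-1}(a^{p^{n-1-j}-1}da_j)=a^{p^{n-j}-1}da_j$ to match this with Serre's formula $d_n$; the paper is simply terser, citing the construction of the comparison map $s_n$ from \cite[(7.0.5)]{ndw} rather than recomputing the representative $\sum_j p^j\tilde a_j^{p^{n-j}}$ directly.
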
 
\begin{proof} 
By the construction of the morphism $s_n$ in \cite[(7.0.5)]{ndw}, 
the composite morphism 
$C^{-1}\circ F^{n-1}d$ is equal to the following morphism 
\begin{align*}
{\cal W}_n({\cal O}_Y)'/F{\cal W}_{n+1}({\cal O}_Y)'\owns 
(a_0,\ldots, a_{n-1})\lom [\sum_{i=0}^{n-1}a^{p^{n-i}-1}_ida_i]
\in B_n{\cal W}_1\Om^i_Y
\end{align*} 
(see \cite[p.~251]{hk} for the definition of  the morphism $\del$). 
Because $C^{-1}(a^{p^{n-1-i}-1}_ida_i)=a^{p^{n-i}-1}_ida_i$, 
the upper horizontal morphism is equal to the morphism $d_n$. 
\end{proof}

\section{Finiteness of cohomologies of log Hodge-Witt sheaves}
\label{sec:fdlr} 
In this section we prove the main results (\ref{theo:dw}) and (\ref{theo:ht}).

\par 
Let $s$ be as in the Introduction. 
Let $X/s$ be a proper log smooth scheme of Cartier type.

\par 
The following is a log version of \cite[II (2.2), (3.1)]{ir}. 

\begin{theo}\label{theo:e2} 
The $E_2$-terms of the slope spectral sequence 
\begin{align*} 
E_1^{ij}=H^j(X,{\cal W}\Om^i_X)\Lo H^{i+j}_{\rm crys}(X/{\cal W}(s))
\end{align*}  
are finitely generated ${\cal W}$-modules. 
\end{theo}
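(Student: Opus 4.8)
The plan is to transport the Illusie--Raynaud finiteness argument
\cite[II (2.2), (3.1)]{ir} to the logarithmic situation, the structural
input being supplied entirely by the formal de Rham--Witt formalism of
Section \ref{sec:rrafr}. Throughout I regard $\bigoplus_i{\cal W}\Om^i_X$
as a sheaf of graded modules over the Cartier--Dieudonn\'{e}--Raynaud
algebra $R_{\infty}$, with $d$ of degree $1$ and $F,V$ of degree $0$.
By (\ref{coro:locdw}) every formula of Section \ref{sec:rrafr} -- the
relations (\ref{eqn:earl}), the comparisons (\ref{eqn:raylpi}) and
(\ref{eqn:ryncp}), the canonical filtration (\ref{eqn:fvdvn}) and
(\ref{prop:dinvfn}) -- is available for ${\cal W}_{\star}\Om^{\bul}_X$.
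Thus $R\Gam(X,{\cal W}\Om^{\bul}_X)$ is a bounded complex of
$R_{\infty}$-modules whose second hypercohomology spectral sequence,
for the grading by $i$, is exactly the slope spectral sequence.
The goal is to prove that for each $j$ the graded $R_{\infty}$-module
$\bigoplus_i H^j(X,{\cal W}\Om^i_X)$, with its differential $d_1$, is
coherent, and then to read off the conclusion from the structure theory
of coherent $R_{\infty}$-modules.

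First I would assemble two finiteness inputs. At finite level
${\cal W}_n\Om^i_X$ is a coherent ${\cal O}_{\os{\circ}{X}}$-module and
$\os{\circ}{X}$ is proper over $\kap$, so $H^j(X,{\cal W}_n\Om^i_X)$ has
finite length over ${\cal W}_n$; the resulting projective system is
therefore Mittag--Leffler, and
$H^j(X,{\cal W}\Om^i_X)=\varprojlim_n H^j(X,{\cal W}_n\Om^i_X)$ is a
profinite ${\cal W}$-module. At the level of the abutment,
$H^m_{\rm crys}(X/{\cal W}(s))$ is a finitely generated ${\cal W}$-module
by the finiteness of log crystalline cohomology of a proper log smooth
scheme of Cartier type. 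The derived comparison (\ref{eqn:ryncp}),
${\cal W}_n\Om^{\bul}_X\simeq R_n\otimes^L_{R_{\infty}}{\cal W}\Om^{\bul}_X$,
then lets me pass back and forth between the full complex and its finite
truncations.

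The heart of the proof is the coherence of the graded $R_{\infty}$-module
$\bigoplus_i H^j(X,{\cal W}\Om^i_X)$, which is the log analogue of
\cite[II (2.2), (3.1)]{ir}. This is exactly where the logarithmic results
of Section \ref{sec:rrafr} are indispensable: the exact sequences
(\ref{ali:wnii}) and (\ref{eqn:rfvn}), the identity
$d^{-1}(p^n{\cal W}\Om^{i+1}_X)=F^n{\cal W}\Om^i_X$ (\ref{prop:dinvfn}), and
the canonical filtration formula (\ref{eqn:fvdvn}) are precisely the
relations on which the coherence criterion of \cite[II (2.2)]{ir} rests.
Feeding the finite-level and crystalline finiteness of the previous
paragraph into that criterion -- whose statement and proof concern only
$R_{\infty}$-modules and are therefore insensitive to the log structure --
yields that $R\Gam(X,{\cal W}\Om^{\bul}_X)$ is a coherent complex, and hence
that each $\bigoplus_i H^j(X,{\cal W}\Om^i_X)$ is a coherent graded
$R_{\infty}$-module.

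It then remains to apply the structure theory of coherent graded
$R_{\infty}$-modules \cite[II \S1]{ir}, a purely algebraic input that needs
no logarithmic counterpart. Up to a finite filtration by $R_{\infty}$-submodules
such a module is built from finitely generated ${\cal W}$-modules and from
dominoes; a domino is concentrated in two adjacent degrees $i,i+1$ and its
differential $d\col U^i\lo U^{i+1}$ has kernel and cokernel of finite length
over $\kap$. Consequently the $d_1$-cohomology of
$\bigoplus_i H^j(X,{\cal W}\Om^i_X)$ -- which is exactly $E_2^{ij}$ --
receives only finitely generated contributions from both kinds of graded
piece and is a finitely generated ${\cal W}$-module. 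I stress that this uses
only the elementary finiteness of the cohomology of coherent modules and not
the finer Ekedahl duality, consistent with that duality's logarithmic version
being postponed. The main obstacle is the coherence step: it is genuinely
stronger than finiteness of the abutment -- the $E_1$-terms are not finitely
generated -- and it is carried by the d\'{e}vissage and inverse-limit
arguments resting on the log de Rham--Witt relations collected in
(\ref{coro:locdw}).
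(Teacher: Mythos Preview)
Your proposal is correct and follows essentially the same approach as the paper, which simply observes that by (\ref{theo:raycomp}) and (\ref{coro:locdw}) the proof is the same as that of \cite[II (2.2), (3.1)]{ir}. You have spelled out in detail the coherence argument and the structure theory of $R_{\infty}$-modules that the paper leaves implicit in that citation.
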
 
\begin{proof} 
By using (\ref{theo:raycomp}) and (\ref{coro:locdw}), 
the proof is the same as that of \cite[II (2.2), (3,1)]{ir}. 
\end{proof}


\begin{theo}\label{theo:fw}
Let $q$ and $i$ be integers. 
Assume that $\dim_{\kap}H^{q-1}(X,B_n\Om^{i+1}_{X/s})$ 
is bounded for all $n$. 
Then the differential 
$d\col H^q(X,{\cal W}\Om^i_X)\lo H^q(X,{\cal W}\Om^{i+1}_X)$ is zero.  
Consequently 
$$H^q(X,{\cal W}\Om^i_X)/dH^q(X,{\cal W}\Om^{i-1}_X)$$ 
is a finitely generated ${\cal W}$-module. 
\end{theo}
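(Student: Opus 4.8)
The plan is to deduce the second assertion from the first together with (\ref{theo:e2}), and to prove the vanishing of $d$ by transporting the Illusie--Raynaud structure theory (\cite[II (3.1)]{ir}) to the logarithmic setting via (\ref{coro:locdw}), feeding in the boundedness hypothesis through the log Illusie exact sequence (\ref{ali:mfee}). First I would dispose of the ``consequently'' statement. Once the differential $d\col H^q(X,{\cal W}\Om^i_X)\lo H^q(X,{\cal W}\Om^{i+1}_X)$ is known to be zero, its kernel is all of $H^q(X,{\cal W}\Om^i_X)$, so the $E_2$-term of the slope spectral sequence at bidegree $(i,q)$ is exactly $H^q(X,{\cal W}\Om^i_X)/dH^q(X,{\cal W}\Om^{i-1}_X)$, which is a finitely generated ${\cal W}$-module by (\ref{theo:e2}). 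Hence everything reduces to proving that $N:=dH^q(X,{\cal W}\Om^i_X)\subset H^q(X,{\cal W}\Om^{i+1}_X)$ vanishes.

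Next I would analyze $N$ as a module over the Cartier--Dieudonn\'e--Raynaud algebra $R_{\infty}$. From (\ref{eqn:earl}) (applied through (\ref{coro:locdw})) one has $Vd=pdV$ and $dF=pFd$, whence $V^nd=p^ndV^n$. Since the slope spectral sequence degenerates modulo torsion, the differentials $d$ are torsion-valued, so $N$ lies in the torsion part of $H^q(X,{\cal W}\Om^{i+1}_X)$; because $d^2=0$, the operators $F,V$ make $N$ a torsion $R_{\infty}$-module with trivial $d$. By the classification of the torsion of coherent $R_{\infty}$-modules from \cite[I]{ir}, $N$ is then a finite sum of dominoes, and $N\neq 0$ would produce at least one nonzero domino, whose underlying ${\cal W}$-module contains an infinite-dimensional $V$-divisible (``$\kap[[V]]$-type'') piece.

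I would then detect such a domino through the boundedness hypothesis. Using (\ref{prop:seg}) and (\ref{coro:nif}) to identify $B_n\Om^{i+1}_{X/s}$ with $B_n{\cal W}_1\Om^{i+1}_X$, the log Illusie exact sequence (\ref{ali:mfee}) identifies $B_n\Om^{i+1}_{X/s}$ with the cokernel of $F\col {\cal W}_{n+1}\Om^i_X\lo {\cal W}_n\Om^i_X$, giving a short exact sequence $0\lo \operatorname{im}F\lo {\cal W}_n\Om^i_X\lo B_n\Om^{i+1}_{X/s}\lo 0$ of sheaves. The associated long exact cohomology sequence writes $\dim_{\kap}H^{q-1}(X,B_n\Om^{i+1}_{X/s})$ as the sum of a ``cokernel of $F$'' contribution from $H^{q-1}(X,{\cal W}_n\Om^i_X)$ and a ``kernel of $F$'' contribution $\ker(H^q(X,\operatorname{im}F)\lo H^q(X,{\cal W}_n\Om^i_X))$. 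The latter is governed by the coprofinite part of $H^q(X,{\cal W}\Om^i_X)$ that feeds the domino $N$; following the computation of \cite[II (3.1)]{ir} (and in the spirit of the explicit formula $\dim_{\kap}H^q(X,B_n\Om^1_{X/s})={\rm min}\{n,h^q(\os{\circ}{X}/\kap)-1\}$ of (\ref{coro:chob})), a nonzero domino forces this dimension to grow at least linearly in $n$, contradicting the assumed boundedness. Therefore $N=0$.

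The main obstacle will be the bookkeeping in the third step: making precise, in the logarithmic de Rham--Witt setting, the identification of the linearly growing part of $\dim_{\kap}H^{q-1}(X,B_n\Om^{i+1}_{X/s})$ with the contribution of the dominoes attached to $d$. This requires the full classification of coherent $R_{\infty}$-modules up to isogeny together with the analysis of their profinite and coprofinite parts from \cite[I]{ir}, transported via (\ref{coro:locdw}), as well as control of the relevant $\vpl$ and $\vpl^1$ terms (supplied by the finiteness of the $E_2$-terms in (\ref{theo:e2})) needed to pass between the truncated cohomologies $H^{\ast}(X,{\cal W}_n\Om^i_X)$ and the domino structure of $H^{\ast}(X,{\cal W}\Om^i_X)$. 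One must also treat carefully the non-injectivity of $F\col {\cal W}_{n+1}\Om^i_X\lo {\cal W}_n\Om^i_X$ on the truncations, which is precisely why the short exact sequence above is phrased with the image sheaf $\operatorname{im}F$ rather than with ${\cal W}_{n+1}\Om^i_X$ itself.
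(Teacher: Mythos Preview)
Your overall strategy uses the same ingredients as the paper (the log Illusie exact sequence and the Illusie--Raynaud structure theory), and the ``consequently'' step is handled exactly as in the paper. However, your route to $d=0$ is considerably more circuitous. The paper does not analyze the domino $N$ directly, nor does it work with the truncated sheaves and the image subsheaf $\operatorname{im}F\subset{\cal W}_n\Om^i_X$. Instead it invokes the log version of \cite[II (3.8)]{ir}, which reduces the vanishing of $d$ to the single inequality $\dim_{\kap}\bigl({}_FH^q(X,{\cal W}\Om^i_X)\bigr)<\infty$. That inequality is then obtained from the short exact sequence
\[
0\lo {\cal W}\Om^i_X\os{F}{\lo}{\cal W}\Om^i_X\lo {\cal W}\Om^i_X/F\lo 0
\]
of \emph{full} de Rham--Witt sheaves (where $F$ \emph{is} injective), giving a surjection $H^{q-1}(X,{\cal W}\Om^i_X/F)\twoheadrightarrow{}_FH^q(X,{\cal W}\Om^i_X)$. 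Since $H^{q-1}(X,{\cal W}\Om^i_X/F)=\vpl_nH^{q-1}(X,{\cal W}_n\Om^i_X/F{\cal W}_{n+1}\Om^i_X)$ and (\ref{prop:fkervcoker}) together with (\ref{coro:locdw}) identifies each term with $H^{q-1}(X,B_n\Om^{i+1}_{X/s})$, the boundedness hypothesis finishes the proof in one line.

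Your ``main obstacle'' is precisely what the paper sidesteps by passing to the projective limit \emph{before} taking cohomology. Extracting linear growth of $\dim_{\kap}H^{q-1}(X,B_n\Om^{i+1}_{X/s})$ from a putative nonzero domino via the long exact sequence involving $H^q(\operatorname{im}F)$ would amount to reproving \cite[II (3.8)]{ir} from scratch, together with the $\vpl/\vpl^1$ bookkeeping you flag; none of this is needed once one works with ${\cal W}\Om^i_X$ rather than ${\cal W}_n\Om^i_X$. Also, a small correction in your step 2: the identities in (\ref{eqn:earl}) give $FdV=d$, hence $dV=VdV\cdot V^{-1}$ is not well-posed; the correct relations you want are $dF=pFd$ and $Vd=pdV$ on ${\cal W}\Om^{\bul}_X$, but these are not listed in (\ref{eqn:earl}) and must be derived separately.
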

\begin{proof} 
(cf.~the proof of \cite[(5.1)]{j})
Recall that  
$${}_FH^q(X,{\cal W}\Om^i_X):=
{\rm Ker}(F\col H^q(X,{\cal W}\Om^i_X)
\lo H^q(X,{\cal W}\Om^i_X)).$$  
By the log version of \cite[II (3.8)]{ir} it suffices to prove that 
$\dim_{\kap}({}_FH^q(X,{\cal W}\Om^i_X))<\infty$.
By the exact sequence 
\begin{align*} 
0\lo {\cal W}\Om^i_X \os{F}{\lo} {\cal W}\Om^i_X\lo {\cal W}\Om^i_X/F\lo 0, 
\tag{4.2.1}\label{ali:fwox} 
\end{align*} 
we have the following surjection  
\begin{align*} 
H^{q-1}(X,{\cal W}\Om^i_X/F{\cal W}\Om^i_X)
\lo {}_FH^q(X,{\cal W}\Om^i_X). 
\tag{4.2.2}\label{ali:wffiox} 
\end{align*} 
It suffices to prove that 
$H^{q-1}(X,{\cal W}\Om^i_X/F{\cal W}\Om^i_X)$ 
is a finitely generated ${\cal W}$-module.
Because 
$H^{q-1}(X,{\cal W}\Om^i_X/F{\cal W}\Om^i_X)
=\vpl_nH^{q-1}(X,{\cal W}_n\Om^i_X/F{\cal W}_{n+1}\Om^i_X)$, 
it suffices to prove that 
$H^{q-1}(X,{\cal W}_n\Om^i_X/F{\cal W}_{n+1}\Om^i_X)$'s 
are finite dimensional $\kap$-vector spaces of 
bounded dimensions for all $n$'s. 
By (\ref{prop:fkervcoker}) and (\ref{coro:locdw}),  
$$H^{q-1}(X,{\cal W}_n\Om^i_X/F{\cal W}_{n+1}\Om^i_X)=
H^{q-1}(X,B_n{\cal W}_1\Om^{i+1}_X)
\simeq H^{q-1}(X,B_n\Om^{i+1}_{X/s}).$$ 
Hence $H^{q-1}(X,{\cal W}_n\Om^i_X/F{\cal W}_{n+1}\Om^i_X)$'s  
are finite dimensional $\kap$-vector spaces of 
bounded dimensions for all $n$'s by the assumption.
Now we see that $H^q(X,{\cal W}\Om^i_X)/dH^q(X,{\cal W}\Om^{i-1}_X)=E_2^{0q}$ 
is finitely generated by (\ref{theo:e2}).  
\end{proof}

\begin{theo}\label{theo:fsb0}
Assume that $\os{\circ}{X}$ is quasi-$F$-split. 
Then 
$\dim_{\kap}H^q(X,B_n\Om^1_{X/s})$ 
is bounded for all $n$ and for all $q$.
\end{theo}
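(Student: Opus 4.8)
The plan is to pass, via the log Serre exact sequence, from the sheaves $B_n\Om^1_{X/s}$ to the Frobenius acting on the Witt-vector cohomology $H^{\bul}(X,{\cal W}_n({\cal O}_X))$, and then to bound the kernel of this Frobenius uniformly in $n$ by transporting the quasi-$F$-split datum up the Witt tower through the logarithmic analogue of Yobuko's key commutative diagram in \cite{yoh}.

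First I would apply the log Serre exact sequence (\ref{ali:wnox}). Since $\os{\circ}{F}$ is a homeomorphism we may identify $H^q(X,F_*({\cal W}_n({\cal O}_X)))$ with $H^q(X,{\cal W}_n({\cal O}_X))=:M^q_n$, and the long exact cohomology sequence of (\ref{ali:wnox}) gives short exact sequences
\begin{align*}
0\lo {\rm Coker}(F\col M^q_n\lo M^q_n)\lo H^q(X,B_n\Om^1_{X/s})\lo {\rm Ker}(F\col M^{q+1}_n\lo M^{q+1}_n)\lo 0.
\end{align*}
Now $F\col M^q_n\lo M^q_n$ is a $\sig$-semilinear endomorphism of a ${\cal W}_n(\kap)$-module of finite length; its kernel is killed by $p$ (as $Fm=0$ forces $pm=VFm=0$) and its cokernel is killed by $p$ (as $pM^q_n=FVM^q_n\subset FM^q_n$), so both are $\kap$-vector spaces, and comparing ${\cal W}_n(\kap)$-lengths of source and image yields ${\dim}_{\kap}{\rm Coker}(F)={\dim}_{\kap}{\rm Ker}(F)$. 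Hence
\begin{align*}
{\dim}_{\kap}H^q(X,B_n\Om^1_{X/s})={\dim}_{\kap}({}_FM^q_n)+{\dim}_{\kap}({}_FM^{q+1}_n),
\end{align*}
and the problem is reduced to bounding ${\dim}_{\kap}({}_FM^q_n)$ independently of $n$ for each $q$. (These are the quantities evaluated as ${\rm min}\{n,h^q(\os{\circ}{X}/\kap)-1\}$ in (\ref{ali:fdwy}), (\ref{ali:fdwhy}); the point here is to bound them with none of the hypotheses of (\ref{theo:nex}).)

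Second — and this is where the quasi-$F$-split hypothesis enters, and is the main obstacle — I would use the logarithmic version of the key commutative diagram of \cite{yoh}. Writing $N:=h_F(\os{\circ}{X})<\infty$ with quasi-splitting $\rho\col F_*({\cal W}_N({\cal O}_X))\lo {\cal O}_X$ satisfying $\rho\circ F=R^{N-1}$, the heart of the matter is to propagate $\rho$ up the tower: for every $n\geq N$ one must produce a morphism
\begin{align*}
\rho_n\col F_*({\cal W}_n({\cal O}_X))\lo {\cal W}_{n-N+1}({\cal O}_X)
\end{align*}
of ${\cal W}_{n-N+1}({\cal O}_X)$-modules with $\rho_n\circ F=R^{N-1}$, compatibly with the $R$'s. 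The delicate point, exactly the subtlety flagged in (\ref{rema:omst}), is that these retractions must be genuinely ${\cal W}$-linear (and not merely morphisms of abelian sheaves), so that they descend to cohomology as maps of $F$-$V$-modules; establishing this in the logarithmic setting, using the module structures of (\ref{prop:fzg}) and the relations (\ref{eqn:earl}), is the technical crux, and the sole place the finiteness of $h_F(\os{\circ}{X})$ is used.

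Granting this, the bound follows quickly. On cohomology $(\rho_n)_*\circ F=R^{N-1}$, so for $n\geq N$ we get ${}_FM^q_n\subset {\rm Ker}(R^{N-1}\col M^q_n\lo M^q_{n-N+1})$. Dévissage of $R^{N-1}$ through the exact sequences $0\lo {\cal O}_X\os{V^{k-1}}{\lo} {\cal W}_k({\cal O}_X)\os{R}{\lo} {\cal W}_{k-1}({\cal O}_X)\lo 0$ shows that each graded piece ${\rm Ker}(R\col M^q_k\lo M^q_{k-1})$ is a quotient of $H^q(X,{\cal O}_X)$, whence
\begin{align*}
{\dim}_{\kap}({}_FM^q_n)\leq (N-1)\,{\dim}_{\kap}H^q(X,{\cal O}_X)\qquad (n\geq N).
\end{align*}
Combining this with the reduction above gives ${\dim}_{\kap}H^q(X,B_n\Om^1_{X/s})\leq (N-1)({\dim}_{\kap}H^q(X,{\cal O}_X)+{\dim}_{\kap}H^{q+1}(X,{\cal O}_X))$ for $n\geq N$. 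The finitely many levels $n<N$ contribute finitely many finite dimensions, and by properness of $\os{\circ}{X}$ only finitely many $q$ are relevant; taking the maximum over these produces a single bound valid for all $n$ and all $q$, which is the assertion of (\ref{theo:fsb0}).
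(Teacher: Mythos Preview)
Your first reduction --- expressing ${\dim}_{\kap}H^q(X,B_n\Om^1_{X/s})$ as ${\dim}_{\kap}({}_FM^q_n)+{\dim}_{\kap}({}_FM^{q+1}_n)$ via the long exact sequence of (\ref{ali:wnox}) and the length argument --- is correct and is a perfectly reasonable way to start. The gap is in your second step. You need, for every $n\geq N$, a ${\cal W}_{n-N+1}({\cal O}_X)$-linear morphism $\rho_n\col F_*({\cal W}_n({\cal O}_X))\lo {\cal W}_{n-N+1}({\cal O}_X)$ with $\rho_n\circ F=R^{N-1}$, but you do not construct it; you write ``Granting this'' and move on. This is not what the diagram in \cite{yoh} supplies, and it is strictly stronger than the quasi-$F$-split hypothesis: it amounts to asking that the pushout of (\ref{ali:wnox}) along $R^{N-1}\col {\cal W}_n({\cal O}_X)\lo {\cal W}_{n-N+1}({\cal O}_X)$ split, whereas the definition of $h_F$ and the fact used from \cite[(8.2)(2)]{ny} only give splitting after the deeper pushout along $R^{n-1}\col {\cal W}_n({\cal O}_X)\lo {\cal O}_X$. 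Without an argument producing $\rho_n$, the inclusion ${}_FM^q_n\subset {\rm Ker}(R^{N-1})$ is unjustified and the proof does not close.

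The paper's argument avoids this entirely. It pushes (\ref{ali:wnox}) out only to ${\cal O}_X$, obtaining $0\lo {\cal O}_X\lo {\cal E}_n\lo B_n\Om^1_{X/s}\lo 0$, which \emph{does} split for $n\geq h_F(\os{\circ}{X})$; so $\dim_{\kap}H^q(X,{\cal E}_n)=\dim_{\kap}H^q(X,{\cal O}_X)+\dim_{\kap}H^q(X,B_n\Om^1_{X/s})$. The logarithmic version of Yobuko's diagram is then used for a different purpose: comparing ${\cal E}_n$ with ${\cal E}_1=F_*({\cal O}_X)$ via $C^{n-1}$ and the snake lemma yields a second exact sequence $0\lo F_*(B_{n-1}\Om^1_{X/s})\lo {\cal E}_n\lo F_*({\cal O}_X)\lo 0$, whence $\dim_{\kap}H^q(X,{\cal E}_n)\leq \dim_{\kap}H^q(X,B_{n-1}\Om^1_{X/s})+\dim_{\kap}H^q(X,{\cal O}_X)$. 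Subtracting gives $\dim_{\kap}H^q(X,B_n\Om^1_{X/s})\leq \dim_{\kap}H^q(X,B_{n-1}\Om^1_{X/s})$ for $n\geq h_F(\os{\circ}{X})$, so the sequence of dimensions is eventually nonincreasing and hence bounded. This needs only the splitting at the ${\cal O}_X$-level, which is exactly the content of quasi-$F$-splitness, and nothing stronger.
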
 
\begin{proof} 
(cf.~\cite[(2.4.1)]{jr})
Let $n$ be a positive integer. 
Push out the exact sequence (\ref{ali:wnox}) for the case $Y=X$ 
by the morphism $R^{n-1}\col {\cal W}_n({\cal O}_X)\lo {\cal O}_X$. 
Then we have the following exact sequence of ${\cal O}_X$-modules: 
\begin{align*} 
0\lo {\cal O}_X \lo {\cal E}_n \lo B_n\Om^1_{X/s}\lo 0,
\tag{4.3.1}\label{ali:wrnyex} 
\end{align*}  
where 
${\cal E}_n
:={\cal O}_X\oplus_{{\cal W}_n({\cal O}_X),F}F_*({\cal W}_n({\cal O}_X))$. 
Set $h:=h_F(\os{\circ}{X})<\infty$. 
For $n=h$, the exact sequence (\ref{ali:wrnyex}) is split. 
It is easy to check that the exact sequence (\ref{ali:wrnyex}) is split 
for $n\geq h$ (\cite[(8.2) (2)]{ny}). 
Hence 
\begin{align*} 
H^q(X,{\cal E}_n)=H^q(X,{\cal O}_X)\oplus 
H^q(X,B_n\Om^1_{X/s})
\tag{4.3.2}\label{ali:dew}
\end{align*} 
for $n\geq h$. 
In particular, 
\begin{align*}
{\rm dim}_{\kap}H^q(X,{\cal E}_n)
={\rm dim}_{\kap}H^q(X,{\cal O}_X)+
{\rm dim}_{\kap}H^q(X,B_n\Om^1_{X/s}). 
\tag{4.3.3}\label{ali:dw}
\end{align*} 
\par 
Following \cite[(4.1), (4.2)]{yoh} in the trivial logarithmic case, 
consider the following diagram 
\begin{equation*} 
\begin{CD} 
@. @. @.0\\
@. @. @. @VVV \\
@.0@. @.F_*(B_{n-1}\Om^1_{X/s})@. @.\\
@. @VVV @. @VVV \\
0@>>>{\cal O}_{X} @>>> {\cal E}_n@>>> B_n\Om^1_{X/s}@>>> 0\\
@. @| @VVV @VV{C^{n-1}}V \\
0@>>> {\cal O}_{X}
@>{\subset}>> {\cal E}_1=F_*({\cal O}_X) 
@>{d}>>B_1\Om^1_{X/s}@>>> 0\\
@. @VVV @VVV @VVV \\
@.0@. 0@. 0
\end{CD}
\tag{4.3.4}\label{cd:dnbx}
\end{equation*} 
of ${\cal O}_X$-modules.
Here we have used the commutative diagram (\ref{cd:dowm}). 
Using the snake lemma, we obtain the following exact sequence: 
\begin{align*} 
0\lo F_*(B_{n-1}\Om^1_{X/s}) \lo {\cal E}_n \lo F_*({\cal O}_X) \lo 0. 
\tag{4.3.5}\label{ali:wrbyex} 
\end{align*}
Let ${\cal F}$ be a quasi-coherent ${\cal O}_X$-module. 
Then 
$H^q(X,F_*({\cal F}))=H^q(X,{\cal F})$ 
with $\kap$-module structure obtained by 
the Frobenius automorphism $\sig$ of $\kap$ 
since $\os{\circ}{F}$ is finite.  
Hence, by (\ref{ali:wrbyex}), 
\begin{align*}
{\rm dim}_{\kap}H^q(X,{\cal E}_n)\leq 
{\rm dim}_{\kap}H^q(X,B_{n-1}\Om^1_{X/s})+
{\rm dim}_{\kap}H^q(X,{\cal O}_X). 
\tag{4.3.6}\label{ali:ffw}
\end{align*}  
The equality (\ref{ali:dw}) and the inequality (\ref{ali:ffw}) imply 
that 
\begin{align*}
{\rm dim}_{\kap}H^q(X,B_n\Om^1_{X/s})\leq 
{\rm dim}_{\kap}H^q(X,B_{n-1}\Om^1_{X/s}).
\tag{4.3.7}\label{ali:dbw}
\end{align*}  
This implies that 
\begin{align*} 
{\rm dim}_{\kap}H^q(X,B_n\Om^1_{X/s})
\leq 
{\max}\{{\rm dim}_{\kap}H^q(X,B_m\Om^1_{X/s})~\vert~1\leq m\leq h, 
0\leq q\leq \dim \os{\circ}{X}\}.
\tag{4.3.8}\label{ali:dbbw}
\end{align*}  
\end{proof} 

\begin{coro}\label{coro:jjfs}
{\rm (\ref{theo:dw})} holds. 
\end{coro}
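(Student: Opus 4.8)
The plan is to read off (\ref{theo:dw}) as the case $i=0$ of the finiteness criterion (\ref{theo:fw}), with the hypothesis of that criterion furnished by (\ref{theo:fsb0}). First I would invoke (\ref{theo:fsb0}): since $\os{\circ}{X}$ is assumed quasi-$F$-split, the dimensions $\dim_{\kap}H^{q-1}(X,B_n\Om^1_{X/s})$ are bounded independently of $n$ (for each $q$, indeed uniformly for $0\leq q\leq \dim\os{\circ}{X}$). This is precisely the hypothesis of (\ref{theo:fw}) taken with $i=0$, since then $i+1=1$ and the sheaf occurring there is exactly $B_n\Om^1_{X/s}$.

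Next I would apply (\ref{theo:fw}) with $i=0$. Because ${\cal W}\Om^0_X={\cal W}({\cal O}_X)$ and ${\cal W}\Om^{-1}_X=0$, the quotient $H^q(X,{\cal W}\Om^0_X)/dH^q(X,{\cal W}\Om^{-1}_X)$ appearing in the conclusion of (\ref{theo:fw}) is simply $H^q(X,{\cal W}({\cal O}_X))$ itself; nothing has to be quotiented out. Hence (\ref{theo:fw}) gives directly that $H^q(X,{\cal W}({\cal O}_X))$ is a finitely generated ${\cal W}$-module. As $q$ is arbitrary, this holds for every $q\in{\mab N}$, which is the assertion of (\ref{theo:dw}).

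I should stress where the actual weight of the argument sits, since the corollary itself is only an assembly step. All the substance has already been absorbed into the two inputs: (\ref{theo:fsb0}) provides the uniform bound on $\dim_{\kap}H^q(X,B_n\Om^1_{X/s})$ through the snake-lemma diagram (\ref{cd:dnbx}) and the splitting of (\ref{ali:wrnyex}) guaranteed by finiteness of the Yobuko height, while (\ref{theo:fw}) converts such a bound into finite generation of $H^q(X,{\cal W}({\cal O}_X))$ by controlling ${}_FH^q(X,{\cal W}\Om^0_X)$ via the Illusie identification ${\cal W}_n\Om^0_X/F{\cal W}_{n+1}\Om^0_X\simeq B_n\Om^1_{X/s}$ from (\ref{coro:locdw}) and (\ref{prop:fkervcoker}), together with the finite generation of the $E_2$-terms from (\ref{theo:e2}). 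Consequently there is essentially no genuine obstacle remaining at the level of the corollary; the only point demanding care is the bookkeeping that the ``bounded for all $n$'' statement of (\ref{theo:fsb0}) matches exactly the degree $i+1=1$ hypothesis required by (\ref{theo:fw}), which it does once one specializes to $i=0$.
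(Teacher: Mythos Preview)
Your proof is correct and follows exactly the paper's approach: the paper's proof reads simply ``This follows from (\ref{theo:fw}) and (\ref{theo:fsb0}),'' and your argument unpacks precisely this implication by specializing (\ref{theo:fw}) to $i=0$ with the boundedness input supplied by (\ref{theo:fsb0}).
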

\begin{proof}
This follows from (\ref{theo:fw}) and (\ref{theo:fsb0}). 
\end{proof}

\begin{rema}\label{rema:sei}
To prove (\ref{theo:dw}), we have followed the argument in the proof of 
\cite[(5.1)]{j}.  However Serre has proved the following in 
\cite[p.~510, Corollaire 1]{smxco}: 
$H^q(X,{\cal W}({\cal O}_X))$ is a finitely generated ${\cal W}$-module 
if and only if the dimension of 
$\vpl_nH^q(X,{\cal W}_n({\cal O}_X)/F{\cal W}_n({\cal O}_X))$ 
over $\kap$ is finite. 
It is clear that, if the dimension of 
$H^q(X,{\cal W}_n({\cal O}_X)/F{\cal W}_n({\cal O}_X))$ 
over $\kap$ is bounded for $n$'s, 
then the dimension of 
$\vpl_nH^q(X,{\cal W}_n({\cal O}_X)/F{\cal W}_n({\cal O}_X))$
is finite. 
If one would like to prove only (\ref{theo:dw}), 
only this Serre's result,  the log Serre exact sequence 
(\ref{ali:wnox}) and (\ref{theo:fsb0}) are enough. 
\end{rema}

\begin{coro}\label{coro:vp}
Assume that $\os{\circ}{X}$ is quasi-$F$-split. 
Set $H^q:=H^q(X,{\cal W}({\cal O}_X))$ and 
${}_VH^q:={\rm Ker}(V\col H^q\lo H^q)$ 
and 
${}_pH^q:={\rm Ker}(p\col H^q\lo H^q)$.  
Then the subvector space ${}_VH^q$ in ${}_pH^q$
has finite codimension in ${}_pH^q$. 
\end{coro}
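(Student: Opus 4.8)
The plan is to reduce the whole statement to the finite generatedness already proved in (\ref{theo:dw}). First I would invoke (\ref{theo:dw}): since $\os{\circ}{X}$ is quasi-$F$-split, $H^q=H^q(X,{\cal W}({\cal O}_X))$ is a finitely generated ${\cal W}$-module. Because ${\cal W}=W(\kap)$ is a complete discrete valuation ring with uniformizer $p$ and residue field $\kap$, the structure theorem over a principal ideal domain gives a decomposition $H^q\simeq {\cal W}^{\oplus r}\oplus\bigoplus_{i=1}^{s}{\cal W}/p^{e_i}{\cal W}$. Computing the $p$-torsion summand by summand, the free part contributes nothing and each cyclic torsion summand contributes one copy of $\kap=p^{e_i-1}{\cal W}/p^{e_i}{\cal W}$; hence ${}_pH^q\simeq \kap^{\oplus s}$ is a \emph{finite-dimensional} $\kap$-vector space.

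Next I would record the two structural facts that make the assertion meaningful. On ${\cal W}({\cal O}_X)$ one has $FV=VF=p$, so the induced endomorphisms of $H^q$ satisfy $p=FV$; thus $Vx=0$ forces $px=FVx=0$, which shows ${}_VH^q\subseteq {}_pH^q$. Moreover $V$ is $\sig^{-1}$-semilinear, i.e.\ $V(ax)=\sig^{-1}(a)Vx$ for $a\in{\cal W}$, so ${\rm Ker}(V)$ is a ${\cal W}$-submodule; being annihilated by $p$, it is in fact a $\kap$-subspace of ${}_pH^q$. A $\kap$-subspace of the finite-dimensional $\kap$-vector space ${}_pH^q$ has finite codimension, and this is exactly the assertion. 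The only genuine input here is (\ref{theo:dw}); beyond it the argument is purely formal, so I expect no real obstacle.

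If instead one wants an explicit bound on the codimension that does not pass through the full module structure of $H^q$, I would argue relatively. The assignment $x\mapsto Vx$ maps ${}_pH^q$ into ${}_FH^q:={\rm Ker}(F\col H^q\lo H^q)$, since $F(Vx)=px=0$, and its kernel is precisely ${}_VH^q$ (using ${}_VH^q\subseteq {}_pH^q$); this yields a $\sig^{-1}$-semilinear injection ${}_pH^q/{}_VH^q\hookrightarrow {}_FH^q$, so it suffices to bound $\dim_{\kap}{}_FH^q$. Exactly as in the proof of (\ref{theo:fw}) for $i=0$, the exact sequence $0\lo {\cal W}({\cal O}_X)\os{F}{\lo}{\cal W}({\cal O}_X)\lo {\cal W}({\cal O}_X)/F\lo 0$ exhibits ${}_FH^q$ as the image of $H^{q-1}(X,{\cal W}({\cal O}_X)/F{\cal W}({\cal O}_X))=\vpl_n H^{q-1}(X,{\cal W}_n({\cal O}_X)/F{\cal W}_{n+1}({\cal O}_X))$, which by (\ref{prop:fkervcoker}) and (\ref{coro:locdw}) equals $\vpl_n H^{q-1}(X,B_n\Om^1_{X/s})$; since these dimensions are bounded by (\ref{theo:fsb0}), the inverse limit is finite-dimensional, and hence so is ${}_pH^q/{}_VH^q$.
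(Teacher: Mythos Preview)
Your proposal is correct. Your second argument is exactly the paper's proof: one observes that $V$ induces an injection ${}_pH^q/{}_VH^q\hookrightarrow {}_FH^q$ and then appeals to the finiteness of $\dim_{\kap}({}_FH^q)$ (which the paper leaves implicit, but which follows either from (\ref{theo:dw}) or from the bound on $H^{q-1}(X,B_n\Om^1_{X/s})$ via (\ref{theo:fsb0}) as you spell out).

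Your first argument is a mild streamlining: once (\ref{theo:dw}) gives that $H^q$ is a finitely generated ${\cal W}$-module, the structure theorem over the DVR ${\cal W}$ already forces ${}_pH^q$ itself to be a finite-dimensional $\kap$-vector space, and the statement becomes trivial without ever passing through ${}_FH^q$. This buys simplicity; the paper's route, on the other hand, yields the explicit quantitative bound $\dim_{\kap}({}_pH^q/{}_VH^q)\leq \dim_{\kap}({}_FH^q)$, which in turn is controlled by $\max_n \dim_{\kap}H^{q-1}(X,B_n\Om^1_{X/s})$. Both rest on the same essential input (\ref{theo:dw}) (equivalently (\ref{theo:fsb0})), so the difference is cosmetic.
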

\begin{proof} 
Consider the following exact sequence 
\begin{align*} 
0\lo {}_VH^q\lo {}_pH^q
\lo {}_pH^q/{}_VH^q\lo 0. 
\end{align*} 
Since the morphism 
$V\col {}_pH^q/{}_VH^q\lo {}_FH^q$ is injective and 
${\rm dim}_{\kap}({}_FH^q)< \infty$, 
we have the desired inequality 
${\rm dim}_{\kap}({}_pH^q/{}_VH^q)<\infty$. 
\end{proof}

\begin{coro}\label{coro:j}
The following hold$:$
\par 
$(1)$ {\rm (\ref{coro:bb})} holds. 
\par 
$(2)$ {\rm (\ref{coro:adge1})} holds. 
\par 
$(3)$  
{\rm (\ref{coro:dge1})} holds. 
\par 
$(4)$ {\rm (\ref{coro:pnc})} holds.  
\end{coro}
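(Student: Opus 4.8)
The plan is to establish the four assertions separately: parts (3) and (4) are essentially one-line deductions, part (2) rests on importing a structural theorem, and only part (1) calls for a genuine argument, which I would carry out by a colimit-and-squeeze. For (1), i.e.\ \ref{coro:bb}, since $\os{\circ}{X}$ is noetherian and proper over $\kap$, sheaf cohomology commutes with the filtered inductive limit defining $B_{\infty}\Om^1_{X/s}=\vil_n B_n\Om^1_{X/s}$, so
\[
H^q(X,B_{\infty}\Om^1_{X/s})=\vil_n H^q(X,B_n\Om^1_{X/s})
\]
as $\kap$-vector spaces. By \ref{theo:fsb0} the dimensions $\dim_{\kap}H^q(X,B_n\Om^1_{X/s})$ are bounded uniformly in $n$, and a filtered colimit of $\kap$-vector spaces of uniformly bounded dimension is finite-dimensional; hence $\dim_{\kap}H^q(X,B_{\infty}\Om^1_{X/s})<\infty$ for all $q$. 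I would then feed the short exact sequence of $f^{-1}(\kap)$-modules
\[
0\lo B_{\infty}\Om^1_{X/s}\lo \Om^1_{X/s}\lo \Om^1_{X/s}/B_{\infty}\Om^1_{X/s}\lo 0
\]
into its long exact cohomology sequence: the term $H^q(X,\Om^1_{X/s}/B_{\infty}\Om^1_{X/s})$ is an extension of a subspace of $H^{q+1}(X,B_{\infty}\Om^1_{X/s})$ by a quotient of $H^q(X,\Om^1_{X/s})$. The first is finite-dimensional by the previous step, and the second is finite-dimensional because $\Om^1_{X/s}$ is coherent and $X$ is proper over $\kap$; hence $\dim_{\kap}H^q(X,\Om^1_{X/s}/B_{\infty}\Om^1_{X/s})<\infty$.

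For (3), i.e.\ \ref{coro:dge1}, the argument is immediate: applying \ref{theo:dw} with the trivial log structure (so that $\os{\circ}{X}=Y$) shows that $H^q(Y,{\cal W}({\cal O}_Y))$ is a finitely generated ${\cal W}$-module for every $q$, and Joshi's criterion \ref{theo:j} then yields at once that $Y/\kap$ is of Hodge-Witt type. For (4), i.e.\ \ref{coro:pnc}, I would use (3) (together with the trivial and surface cases) to see that the quasi-$F$-split special fiber $Z$, of dimension $\le 3$, is of Hodge-Witt type; since moreover $\dim Z<p-1$, Kato's degeneration theorem \cite[II (4.1)]{konp} gives that the $p$-adic nearby-cycle spectral sequence degenerates at $E_2$.

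For (2), i.e.\ \ref{coro:adge1}, I would again start from \ref{theo:dw}, which gives finite generation of the first column $H^j(X,{\cal W}({\cal O}_X))=H^j(X,{\cal W}\Om^0_X)$. The remaining ingredient is the log version of Illusie--Raynaud's surface theorem of \cite{idw} (cf.~\cite{lodw}, \cite{ndw}), the two-dimensional analogue of \ref{theo:j}: for a proper log smooth surface of Cartier type, finite generation of the column $H^j(X,{\cal W}({\cal O}_X))$ already forces finite generation of every $H^j(X,{\cal W}\Om^i_X)$, that is, log Hodge-Witt type. I expect this step to be the main obstacle, since it is the only one of the four that rests on importing a nontrivial structural result about the slope spectral sequence rather than a one-line deduction; the decisive point is that in dimension $2$ this criterion is available \emph{without} the log analogue of Ekedahl's domino duality, which is precisely why (2) can be proved here while the log threefold case is deferred. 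Granting this, (1), (3) and (4) are then routine.
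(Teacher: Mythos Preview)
Your proposal is correct and follows essentially the same approach as the paper for all four parts. The only minor point of precision the paper adds in part~(1) is to note explicitly that when $B_n\Om^1_{X/s}$ is regarded as an $f^{-1}(\kap)$-subsheaf of $\Om^1_{X/s}$ it carries the $\sigma^{-n}$-twisted $\kap$-structure (the paper writes $\sig_*^{-n}(B_n\Om^1_{X/s})$); since this does not affect dimensions, your colimit-and-squeeze argument goes through unchanged.
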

\begin{proof} 
(1): Set $d:=\dim \os{\circ}{X}$. 
By the following exact sequence 
\begin{align*} 
0\lo B_{\infty}\Om^1_{X/s}\lo \Om^1_{X/s}\lo 
\Om^1_{X/s}/B_{\infty}\Om^1_{X/s}\lo 0, 
\end{align*} 
we obtain the following exact sequence 
\begin{align*} 
0& \lo H^0(X,B_{\infty}\Om^1_{X/s})\lo 
H^0(X,\Om^1_{X/s})\lo H^0(X,\Om^1_{X/s}/B_{\infty}\Om^1_{X/s})
\lo \cdots  \\
&\lo H^d(X,B_{\infty}\Om^1_{X/s})\lo 
H^d(X,\Om^1_{X/s})\lo H^d(X,\Om^1_{X/s}/B_{\infty}\Om^1_{X/s})
\lo 0. 
\end{align*} 
Hence it suffices to prove that 
$H^q(X,B_{\infty}\Om^1_{X/s})$ $(q\in {\mab N})$ is finite dimensional. 
Consider $B_n\Om^1_{X/s}$ as a sheaf of 
$f^{-1}(\kap)$-submodules of $\Om^1_{X/s}$, where $f\col X\lo s$ 
is the structural morphism. 
We denote this resulting sheaf by $\sig_*^{-n}(B_n\Om^1_{X/s})$. 
Because $H^q(X,B_{\infty}\Om^1_{X/s})=
\vil H^q(X,\sig_*^{-n}(B_n\Om^1_{X/s}))$ 
and because 
$\dim_{\kap}H^q(X,\sig_*^{-n}(B_n\Om^1_{X/s}))
=\dim_{\kap}H^q(X,B_n\Om^1_{X/s})$, 
we see that $\dim_{\kap}H^q(X,B_{\infty}\Om^1_{X/s})<\infty$ by 
(\ref{theo:fsb0}). 
\par 
(2): The proof is the same as that of \cite[II (3.14)]{idw}. 
\par 
(3): In \cite[(6.1)]{j} Joshi has proved that 
$Y/\kap$ is of Hodge-Witt type if and only if 
$H^q(Y,{\cal W}({\cal O}_Y))$ $(q\in {\mab N})$ is 
a finitely generated ${\cal W}$-module. 
Hence (3) follows from (\ref{coro:jjfs}). 
\par 
(4): (4) follows from (3) and \cite[II (4.1)]{konp}. 
\end{proof}

\begin{exem}
Let $Y/\kap$ be a $K3$-surface with finite second Artin-Mazur height. 
Then $H^2(Y,{\cal W}({\cal O}_Y))$ is a finitely generated ${\cal W}$-module.  
(In \cite[p.~653]{idw} Illusie has already proved that they are of Hodge-Witt type.) 
\par 
More generally, let $Y/\kap$ be a $d$-dimensional Calabi-Yau variety 
with finite $d$-th Artin-Mazur height $h$. 
Then $H^d(Y,{\cal W}({\cal O}_Y))\simeq {\cal W}^{\oplus h}$.
Because $H^0(Y,{\cal W}({\cal O}_Y))={\cal W}$ and 
$H^q(Y,{\cal W}({\cal O}_Y))=0$ $(q\not=0, d)$, 
$H^q(Y,{\cal W}({\cal O}_Y))$ is a finitely generated ${\cal W}$-module for any $q$.
Consequently, if $d=3$, then $Y/\kap$ is of Hodge-Witt type by \cite[(6.1)]{j}. 
In particular, 
\begin{align*} 
H^q_{\rm crys}(Y/{\cal W})=
\bigoplus_{i+j=q}H^j(Y,{\cal W}\Om^i_Y) 
\quad (q\in {\mab N}). 
\end{align*}
\end{exem}

\begin{rema}\label{rema:fdst} 
(\ref{coro:bb}) is very important in 
Bloch-Stienstra's theory in (\cite{stjft}, \cite{stjf}). 
\par 
Let us recall their theory.  
\par 
Let $Y$ be a proper smooth scheme over $\kap$. 
Let ${\cal K}_{i,Y}$ $(i\in {\mab Z}_{\geq 1})$ 
be the sheafification of the following presheaf of abelian groups 
on $Y$: $U\lom K_i(\Gam(U,{\cal O}_Y))$,
where $U$ is an open subscheme of $Y$ and 
$K_i$ means the $i$-th Quillen's $K$-group. 

\par 
Let us consider the following inductive system
\begin{align*} 
{\cal W}_1({\cal O}_Y)\os{V}{\lo} {\cal W}_2({\cal O}_Y)\os{V}{\lo} \cdots 
\os{V}{\lo} {\cal W}_n({\cal O}_Y) \os{V}{\lo} 
{\cal W}_{n+1}({\cal O}_Y) \os{V}{\lo} \cdots. 
\end{align*} 
and set $\us{\lo}{\cal W}({\cal O}_Y):=\vil_{n} {\cal W}_n({\cal O}_Y)$.  
For $m, n\in {\mab Z}_{\geq 1}$, 
we consider a morphism 
$\partial_n \col \us{\lo}{\cal W}({\cal O}_Y)\lo {\cal W}_n\Om^1_{Y}$ 
defined by the following 
\begin{equation*} 
\partial_n\vert_{{\cal W}_m({\cal O}_Y)} := 
\begin{cases} 
-dV^{n-m} & {\rm if}~n\geq m, \\
-F^{m-n}d & {\rm if}~n\leq m.
\end{cases}
\end{equation*}  
Here ${\cal W}_n\Om^1_{Y}$ is 
the de Rham-Witt sheaf defined in \cite{idw}. 
(Note that the de Rham-Witt sheaf is isomorphic to 
the sheaf of $p$-typical curves defined in \cite{bl} (\cite[I, 5]{idw})). 
Then the projection 
$R\col {\cal W}_{n+1}\Om^1_{Y} \lo  {\cal W}_n\Om^1_{Y}$ 
induces the surjective morphism 
$\partial_{n+1}\us{\lo}{\cal W}({\cal O}_Y)\lo 
\partial_n\us{\lo}{\cal W}({\cal O}_Y)$. 
Set ${\cal W}\Om^1_{Y}/\partial \us{\lo}{\cal W}({\cal O}_Y)
:=
\vpl_n{\cal W}_n\Om^1_{Y}/\partial_n\us{\lo}{\cal W}({\cal O}_Y)$ 
and 
$\partial \us{\lo}{\cal W}({\cal O}_Y):=
{\rm Ker}({\cal W}\Om^1_{Y}\lo 
{\cal W}\Om^1_{Y}/\partial \us{\lo}{\cal W}({\cal O}_Y))$. 
Let $D(\kap)$ be the Dieudonn\'{e} ring of $\kap$. 
It is well-known 
that ${\cal W}\Om^1_{Y}$ and $\partial \us{\lo}{\cal W}({\cal O}_Y)$ 
are sheaves of left $D(\kap)$-module and 
$\partial \us{\lo}{\cal W}({\cal O}_Y)$ is a subsheaf of 
left $D(\kap)$-modules of ${\cal W}\Om^1_{Y}$. 
By replacing the roles of $F$ and $V$, 
$\partial \us{\lo}{\cal W}({\cal O}_Y)$ is a subsheaf of 
right $D(\kap)$-modules of ${\cal W}\Om^1_{Y}$. 
\par 
Let $A$ be an artinian $\kap$-algebra. 
Set $\wh{\rm C}{\rm K}_i(A):={\rm Ker}(K_i(A[x])\os{x\lom 0}{\lo} K_i(A))$. 
Let $F_m$ and $V_m$ $(m\in {\mab Z}_{\geq 1})$ 
be the standard operators on 
$\wh{\rm C}{\rm K}_i(A)$ induced by those on $K_i(A[x])$. 
Set $e:=\sum_{(m,p)=1}\dfrac{\mu(m)}{m}V_mF_m$. 
Set ${\rm T}\wh{\rm C}{\rm K}_i(A):=e\wh{\rm C}{\rm K}_i(A)$ 
and $\wh{W}(A):={\rm T}\wh{\rm C}{\rm K}_1(A)$. 
The last group is a left $D(\kap)$-module. 
The following functor arises in Bloch-Stienstra's theory: 
\begin{align*} 
{\rm Bl}^q_{Y/\kap}(A):=
H^q(Y,
{\cal W}\Om^1_{Y}/\partial \us{\lo}{\cal W}({\cal O}_Y)\otimes_{D(\kap)}\wh{W}(A))
\in ({\rm Ab})
\end{align*}
for artinian local $\kap$-algebras $A$'s with residue fields $\kap$. 
Assume that $Y/\kap$ is proper and smooth. 
Then the tangent space $T({\rm Bl}^q_{Y/\kap})$ of this functor 
is equal to $H^q(Y,\Om^1_{Y/\kap}/B_{\infty}\Om^1_{Y/\kap})$ 
(\cite[IV (3.16)]{stjft}). 
Hence we obtain the following corollary by (\ref{coro:bb}): 
\end{rema} 

\begin{coro}\label{coro:nm} 
Let the notations be as above. Assume that $Y$ is quasi-$F$-split. 
Then $T({\rm Bl}^q_{Y/\kap})$ is a finite dimensional $\kap$-vector space. 
\end{coro}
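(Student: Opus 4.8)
The plan is to deduce the finiteness of $T({\rm Bl}^q_{Y/\kap})$ directly from (\ref{coro:bb}) by way of Stienstra's tangent-space formula. First I would invoke the identification already recorded in (\ref{rema:fdst}), taken from \cite[IV (3.16)]{stjft}: for the proper smooth scheme $Y/\kap$ the tangent space of the Bloch-Stienstra functor is
\begin{align*}
T({\rm Bl}^q_{Y/\kap})=H^q(Y,\Om^1_{Y/\kap}/B_{\infty}\Om^1_{Y/\kap}).
\end{align*}
This is the essential bridge, since it converts a statement about the deformation functor ${\rm Bl}^q_{Y/\kap}$ into a statement about a single cohomology group of an ${\cal O}_Y$-module.

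Next I would apply (\ref{coro:bb}) in the special case of the trivial log structure, that is, with $X=Y$ and $s={\rm Spec}(\kap)$ endowed with the trivial log structure, so that $\os{\circ}{X}=Y$ and $\Om^1_{X/s}=\Om^1_{Y/\kap}$. Since $Y$ is assumed quasi-$F$-split, the hypothesis of (\ref{coro:bb}) holds, and its conclusion yields $\dim_{\kap}H^q(Y,\Om^1_{Y/\kap}/B_{\infty}\Om^1_{Y/\kap})<\infty$ for every $q\in {\mab N}$. Combining this with the displayed identification immediately gives $\dim_{\kap}T({\rm Bl}^q_{Y/\kap})<\infty$, which is exactly the assertion.

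I would stress that essentially all of the content lies upstream: the genuinely hard input is the boundedness of $\dim_{\kap}H^q(X,B_n\Om^1_{X/s})$ in $n$ established in (\ref{theo:fsb0}), from which (\ref{coro:bb}) follows by passing to the inductive limit $B_{\infty}\Om^1_{X/s}=\vil_n B_n\Om^1_{X/s}$ and using that $H^q$ commutes with this filtered colimit, together with the long exact cohomology sequence attached to $0\lo B_{\infty}\Om^1_{X/s}\lo \Om^1_{X/s}\lo \Om^1_{X/s}/B_{\infty}\Om^1_{X/s}\lo 0$. Once that boundedness is in hand the present corollary is purely formal, so I do not expect any real obstacle here; the only point demanding a little care is to confirm that Stienstra's computation of the tangent space applies verbatim to the functor as normalized in (\ref{rema:fdst}) (with the idempotent $e$ cutting out the $p$-typical part $T\wh{\rm C}{\rm K}_1$), which is precisely the content of the cited \cite[IV (3.16)]{stjft}.
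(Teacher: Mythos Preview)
Your proposal is correct and follows essentially the same approach as the paper: the paper simply notes in (\ref{rema:fdst}) that $T({\rm Bl}^q_{Y/\kap})=H^q(Y,\Om^1_{Y/\kap}/B_{\infty}\Om^1_{Y/\kap})$ by \cite[IV (3.16)]{stjft} and then deduces the corollary immediately from (\ref{coro:bb}). Your additional remarks about the upstream input (\ref{theo:fsb0}) and the passage to the colimit are accurate but already absorbed into the proof of (\ref{coro:bb}) given in (\ref{coro:j})~(1).
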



By the proof of (\ref{theo:fsb0}), we obtain the following:

\begin{coro}\label{coro:nqs}
{\rm (\ref{theo:ht})} holds. 
\end{coro}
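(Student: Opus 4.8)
The plan is to combine the exact dimension formula of (\ref{coro:chob}) with the monotonicity inequality that is extracted in the course of proving (\ref{theo:fsb0}). First I would observe that the hypotheses imposed in (\ref{theo:ht})---namely $H^q(X,{\cal O}_X)\simeq \kap$, $H^{q+1}(X,{\cal O}_X)=0$, the vanishing of the Bockstein operator $\bet$ for all $n\geq 2$, and the pro-representability of $\Phi^q_{\os{\circ}{X}/\kap}$---are exactly the hypotheses of (\ref{theo:nex}) applied to $Z:=\os{\circ}{X}$. Hence (\ref{coro:chob}) applies verbatim and gives
$$\dim_{\kap}H^q(X,B_n\Om^1_{X/s})=\min\{n,h^q(\os{\circ}{X}/\kap)-1\}$$
for every $n\geq 1$ (and trivially for $n=0$, where $B_0\Om^1_{X/s}=0$). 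Note that this records that the left-hand side is a nondecreasing function of $n$ which strictly increases until it stabilizes at the plateau value $h^q(\os{\circ}{X}/\kap)-1$.

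If $h_F(\os{\circ}{X})=\infty$ there is nothing to prove, so I would assume $h:=h_F(\os{\circ}{X})<\infty$, i.e. that $\os{\circ}{X}$ is quasi-$F$-split. The second ingredient is the inequality (\ref{ali:dbw}) obtained while proving (\ref{theo:fsb0}): for every $n\geq h$ one has
$$\dim_{\kap}H^q(X,B_n\Om^1_{X/s})\leq \dim_{\kap}H^q(X,B_{n-1}\Om^1_{X/s}).$$
This was derived there using only the quasi-$F$-splitness of $\os{\circ}{X}$---through the splitting of (\ref{ali:wrnyex}) for $n\geq h$ together with the sequence (\ref{ali:wrbyex})---and so it is available here with no further hypothesis. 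In particular it holds at $n=h$, since $h\geq 1$ by definition of the Yobuko height.

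The decisive step is then to feed the formula of (\ref{coro:chob}) into this inequality at $n=h$. Writing $c:=h^q(\os{\circ}{X}/\kap)-1$ and substituting (using $\dim_{\kap}H^q(X,B_{h-1}\Om^1_{X/s})=\min\{h-1,c\}$, valid also when $h-1=0$ since then both sides are $0$), I obtain
$$\min\{h,c\}\leq \min\{h-1,c\}.$$
I would argue by contradiction: if $c\geq h$, then the left-hand side is $h$ and the right-hand side is $h-1$, forcing $h\leq h-1$, which is absurd. Hence $c\leq h-1$, that is $h^q(\os{\circ}{X}/\kap)\leq h_F(\os{\circ}{X})$, as desired. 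The final assertion is then immediate: if $h^q(\os{\circ}{X}/\kap)=\infty$ then this inequality forces $h_F(\os{\circ}{X})=\infty$.

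As for the main obstacle: essentially all the analytic content is already packaged into (\ref{coro:chob}) and into the proof of (\ref{theo:fsb0}), so the present argument is formal. The one point requiring care is the bookkeeping of two competing monotonicity behaviors---(\ref{coro:chob}) makes $\dim_{\kap}H^q(X,B_n\Om^1_{X/s})$ nondecreasing in $n$, whereas the proof of (\ref{theo:fsb0}) makes it nonincreasing once $n\geq h$---and recognizing that their only possible reconciliation at $n=h$ is precisely what relates the plateau value $h^q(\os{\circ}{X}/\kap)-1$ to the threshold $h_F(\os{\circ}{X})$. I would also double-check the boundary case $h_F(\os{\circ}{X})=1$, where $B_{h-1}\Om^1_{X/s}=B_0\Om^1_{X/s}=0$, to be sure the $\min$ comparison remains valid there.
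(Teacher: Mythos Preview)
Your proposal is correct and follows essentially the same approach as the paper: combine the dimension formula (\ref{eqn:pkdefpw}) from (\ref{coro:chob}) with the decreasing inequality (\ref{ali:dbw}) extracted from the proof of (\ref{theo:fsb0}) to obtain $\min\{n,h^q(\os{\circ}{X}/\kap)-1\}\leq \min\{n-1,h^q(\os{\circ}{X}/\kap)-1\}$ for $n\geq h_F(\os{\circ}{X})$, which forces $h^q(\os{\circ}{X}/\kap)\leq h_F(\os{\circ}{X})$. Your version is simply a bit more explicit about verifying the hypotheses of (\ref{coro:chob}) and about the boundary case $h_F(\os{\circ}{X})=1$.
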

\begin{proof} 
Assume that $h_F(\os{\circ}{X})<\infty$, 
By (\ref{eqn:pkdefpw}) and (\ref{ali:dbw}),  
$${\rm min}\{n, h^q(\os{\circ}{X}/\kap)-1\}\leq 
{\rm min}\{n-1, h^q(\os{\circ}{X}/\kap)-1\}$$ 
for $n\geq h_F(\os{\circ}{X})$. 
This implies that $h^q(\os{\circ}{X}/\kap)\leq h_F(\os{\circ}{X})$. 
\par 
If $h_F(\os{\circ}{X})=\infty$, then there is nothing to prove. 
\end{proof}

\begin{rema}\label{rema:df}
It seemed to me at first that the proof of (\ref{coro:nqs}) 
was considerably mysterious because we do not consider 
${\rm Ext}^1_X(B_n\Om^1_{X/s},{\cal O}_X)$ at all 
nor do not use the log Serre duality of Tsuji (\cite[(2.21)]{tsp}); 
the decomposition (\ref{ali:dew}) enables us to obtain (\ref{coro:nqs}).  
\end{rema}

\par
More generally we would like to ask the following: 

\begin{prob}[{\bf Inequality problem between Artin-Mazur heights and 
a Yobuko height}]\label{prob:ine}
Let $Z/\kap$ be a proper geometrically connected scheme. 
Let $q$ be a nonnegative integer. 
If $h_F(Z)<\infty$, 
then does the following inequality 
\begin{align*}
{\rm rank}_{\cal W}\{H^q(Z,{\cal W}({\cal O}_Z))/({\rm torsion})\}\leq h_F(Z) 
\tag{4.13.1}\label{ali:qzf}
\end{align*}
hold?
If $\dim_{\kap}{}_pH^q(Z,{\cal W}({\cal O}_Z))=\infty$, 
then $h_F(Z)=\infty$?   
Here ${}_pH^q(Z,{\cal W}({\cal O}_Z))$ is the subgroup of $p$-torsion elements 
of $H^q(Z,{\cal W}({\cal O}_Z))$. 
\end{prob} 

\parno 
If the answer for this problem is Yes, 
if $\Phi^q_{Z/\kap}$ is representable and 
if $h_F(Z)<\infty$, then $h^q(Z/\kap)\leq h_F(Z)$. 

The following is a generalization of \cite[(11.4)]{j}: 

\begin{coro}\label{coro:jg}
{\rm (\ref{coro:c})} holds. 
\end{coro}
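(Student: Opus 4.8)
The plan is to deduce this directly by combining the finiteness result (\ref{theo:dw}) with Joshi's theorem (\ref{theo:c2}), after recording that the base change to $\ol{\kap}$ causes no difficulty. First I would note that $\ol{\kap}$, being an algebraic closure of the perfect field $\kap$, is itself a perfect field of characteristic $p$. Hence $Y_{\ol{\kap}}$ is a proper smooth scheme of pure dimension $3$ over $\ol{\kap}$, and equipping it with the trivial log structure over the trivial log point $s={\rm Spec}(\ol{\kap})$ turns it into a proper log smooth scheme of Cartier type whose underlying scheme $\os{\circ}{X}=Y_{\ol{\kap}}$ is quasi-$F$-split by hypothesis.

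Next I would apply (\ref{theo:dw}) with $X=Y_{\ol{\kap}}$ over $\ol{\kap}$. Since $\os{\circ}{X}$ is quasi-$F$-split, (\ref{theo:dw}) gives that $H^q(Y_{\ol{\kap}},{\cal W}({\cal O}_{Y_{\ol{\kap}}}))$ is a finitely generated ${\cal W}(\ol{\kap})$-module for every $q\in{\mab N}$; in particular the case $q=3$ yields that $H^3(Y_{\ol{\kap}},{\cal W}({\cal O}_{Y_{\ol{\kap}}}))$ is finitely generated over ${\cal W}(\ol{\kap})$. (Equivalently, one may route through (\ref{coro:dge1}) applied to $Y_{\ol{\kap}}/\ol{\kap}$, which shows $Y_{\ol{\kap}}/\ol{\kap}$ is of Hodge-Witt type and in particular furnishes the same finiteness by the definition of Hodge-Witt type; but only the finiteness of the degree-$3$ term is actually needed.) Finally I would invoke (\ref{theo:c2}): for the proper smooth threefold $Y/\kap$, the finite generatedness of $H^3(Y_{\ol{\kap}},{\cal W}({\cal O}_{Y_{\ol{\kap}}}))$ over ${\cal W}(\ol{\kap})$ implies that ${\rm CH}^2(Y_{\ol{\kap}})\{p\}$ is of finite cotype, which is precisely the assertion of (\ref{coro:c}).

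There is no genuine obstacle here: all the substantive work has already been carried out, on the one hand in (\ref{theo:dw}) — the finiteness of the Witt-vector cohomology of a quasi-$F$-split scheme, itself resting on the boundedness of $\dim_{\kap}H^q(X,B_n\Om^1_{X/s})$ established in (\ref{theo:fsb0}) — and on the other hand in (\ref{theo:c2}), whose proof uses the injectivity of the $p$-adic Abel-Jacobi map of Gros and Suwa together with Ekedahl's duality for dominoes. The only point deserving care is the bookkeeping of the base change: one must confirm that the hypotheses of (\ref{theo:dw}) are satisfied over the perfect field $\ol{\kap}$ rather than over $\kap$, and this is immediate because the quasi-$F$-splitness of $Y_{\ol{\kap}}$ is assumed outright in the statement of (\ref{coro:c}).
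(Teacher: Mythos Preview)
Your proposal is correct and follows essentially the same route as the paper. The only packaging difference is that you invoke (\ref{theo:c2}) as a black box after establishing the finiteness of $H^3(Y_{\ol{\kap}},{\cal W}({\cal O}_{Y_{\ol{\kap}}}))$, whereas the paper's proof unpacks one step further: it uses (\ref{coro:j}) (3) (i.e.~(\ref{coro:dge1})) over $\ol{\kap}$ to conclude that $Y_{\ol{\kap}}/\ol{\kap}$ is of Hodge-Witt type, hence all differentials $d\col H^j(Y_{\ol{\kap}},{\cal W}\Om^i_{Y_{\ol{\kap}}})\to H^j(Y_{\ol{\kap}},{\cal W}\Om^{i+1}_{Y_{\ol{\kap}}})$ vanish, in particular the one with $(i,j)=(1,2)$, and then cites \cite[III (4.7)]{gs} directly. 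Since (\ref{theo:c2}) is itself proved (as the paper notes) via this same vanishing of the differential together with the Gros--Suwa results, the two arguments coincide in content; your version is simply more compressed and is exactly what the Introduction advertises when it says (\ref{coro:c}) is a corollary of (\ref{theo:j}), (\ref{coro:dge1}) and (\ref{theo:c2}).
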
 
\begin{proof} 
By (\ref{coro:j}) (3) in the case where  
$\kap$ is an algebraically closed field, 
the induced morphism by the derivative 
$H^j(Y_{\ol{\kap}},{\cal W}\Om^i_{Y_{\ol{\kap}}})
\lo H^j(Y,{\cal W}\Om^{i+1}_{Y_{\ol{\kap}}})$ 
$(i,j\in {\mab N})$ is zero. 
In particular, the induced morphism by the derivative 
$H^2(Y_{\ol{\kap}},{\cal W}\Om^1_{Y_{\ol{\kap}}})
\lo H^2(Y,{\cal W}\Om^2_{Y_{\ol{\kap}}})$ is zero. 
Hence (\ref{coro:jg}) follows from \cite[III (4.7)]{gs}. 
\end{proof}

We recall the following theorem due to Yobuko. 

\begin{theo}[{\bf \cite[(3.5)]{y}}]\label{theo:yob}
Let $Y$ be a Calabi-Yau variety of pure dimension $d$. 
Then $h^d(Y/\kap)=h_F(Y)$. 
\end{theo}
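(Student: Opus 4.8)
The plan is to prove the two inequalities $h^d(Y/\kap)\le h_F(Y)$ and $h_F(Y)\le h^d(Y/\kap)$ separately, viewing $Y/\kap$ as the trivially logarithmic case of the theory developed above and writing $h:=h^d(Y/\kap)$. First I would record that a Calabi-Yau variety satisfies the hypotheses of (\ref{theo:nex}) in degree $q=d$: one has $H^d(Y,{\cal O}_Y)\simeq \kap$, $H^{d+1}(Y,{\cal O}_Y)=0$ since $\dim Y=d$, the functor $\Phi^d_{Y/\kap}$ is pro-representable, and the relevant Bockstein operator vanishes because $H^{d-1}(Y,{\cal O}_Y)=0$. (I treat $d\ge 2$; the case $d=1$ is the classical theory of elliptic curves and is checked directly.) Hence (\ref{coro:chob}) applies and gives $\dim_\kap H^d(Y,B_n\Om^1_{Y/\kap})=\min\{n,h-1\}$ for every $n$. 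The inequality $h^d(Y/\kap)\le h_F(Y)$ is then immediate from (\ref{theo:ht}) (equivalently (\ref{coro:nqs})), whose hypotheses are exactly those just verified; this also settles the equality when $h=\infty$, so I may assume $h<\infty$ and concentrate on the reverse bound.

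The reverse inequality $h_F(Y)\le h$ amounts to showing that the pushed-out sequence (\ref{ali:wrnyex}),
\begin{equation*}
0\lo {\cal O}_Y\lo {\cal E}_h\lo B_h\Om^1_{Y/\kap}\lo 0,
\end{equation*}
splits, since by the definition of the Yobuko height (cf.\ the proof of (\ref{theo:fsb0}) and \cite[(8.2)]{ny}) the splitting of (\ref{ali:wrnyex}) at level $n$ is equivalent to $h_F(Y)\le n$. To produce this splitting I would first pin down the connecting map $\partial_h\col H^{d-1}(Y,B_h\Om^1_{Y/\kap})\lo H^d(Y,{\cal O}_Y)$ of this sequence by computing $\dim_\kap H^d(Y,{\cal E}_h)$ in two ways. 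Using the companion sequence (\ref{ali:wrbyex}) together with $H^{d-1}(Y,{\cal O}_Y)=0$, the long exact sequence collapses to $0\lo H^d(Y,B_{h-1}\Om^1_{Y/\kap})\lo H^d(Y,{\cal E}_h)\lo H^d(Y,{\cal O}_Y)\lo 0$, whence $\dim_\kap H^d(Y,{\cal E}_h)=(h-1)+1=h$ by (\ref{eqn:pkdefpw}). On the other hand the long exact sequence of (\ref{ali:wrnyex}) gives $\dim_\kap H^d(Y,{\cal E}_h)=\dim_\kap H^d(Y,B_h\Om^1_{Y/\kap})+1-\dim_\kap({\rm Im}\,\partial_h)=(h-1)+1-\dim_\kap({\rm Im}\,\partial_h)$, again by (\ref{eqn:pkdefpw}). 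Comparing the two expressions forces $\partial_h=0$.

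The main obstacle is to upgrade the cohomological vanishing $\partial_h=0$ to an honest splitting of sheaves, and here I would invoke Serre duality on the Calabi-Yau $Y$. Since $\om_Y\simeq {\cal O}_Y$, the pairing
\begin{equation*}
{\rm Ext}^1_{{\cal O}_Y}(B_h\Om^1_{Y/\kap},{\cal O}_Y)\times H^{d-1}(Y,B_h\Om^1_{Y/\kap})\lo H^d(Y,{\cal O}_Y)\simeq \kap
\end{equation*}
is perfect ($B_h\Om^1_{Y/\kap}$ is coherent and $Y$ is smooth). Under this pairing the extension class $e_h$ of (\ref{ali:wrnyex}) is sent to the functional $\al\mapsto {\rm tr}(e_h\cup \al)$, and the cup product $e_h\cup(-)$ is precisely the connecting homomorphism $\partial_h$ of the extension. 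Since $\partial_h=0$ and the trace is an isomorphism, perfectness yields $e_h=0$; that is, (\ref{ali:wrnyex}) splits at $n=h$, so $h_F(Y)\le h$. Together with (\ref{theo:ht}) this gives $h_F(Y)=h^d(Y/\kap)$.

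The two delicate points I anticipate are the clean identification of the Serre-duality functional with $\partial_h$ — standard, but one must check that the trace isomorphism $H^d(Y,{\cal O}_Y)\simeq\kap$ intertwines the Yoneda pairing with the boundary map — and the bookkeeping of the Frobenius $\sig$-twists entering through $F_*$ in (\ref{ali:wrbyex}), which alter the $\kap$-module structures but leave all dimensions unchanged and hence do not affect the counting argument.
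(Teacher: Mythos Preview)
The paper does not give its own proof of this statement; it is quoted from Yobuko \cite[(3.5)]{y} and then immediately subsumed by the generalization (\ref{theo:cyh}), whose proof is likewise deferred to \cite{ny}. So there is nothing in the present paper to compare against directly.

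Your argument is correct and is in fact the same strategy as Yobuko's, reassembled from the pieces the paper has extracted from it. The inequality $h^d(Y/\kap)\le h_F(Y)$ is exactly (\ref{coro:nqs}); for the converse you compute $\dim_\kap H^d(Y,{\cal E}_h)$ via both (\ref{ali:wrnyex}) and (\ref{ali:wrbyex}) to force the boundary map $\partial_h$ to vanish, and then use Serre duality with $\om_Y\simeq{\cal O}_Y$ to identify $\partial_h$ with the functional associated to the extension class $e_h$, whence $e_h=0$. This Serre-duality step is precisely the mechanism appearing in (\ref{prop:exfsp}) of the present paper. The two points you flag as delicate (the Yoneda/Serre-duality identification and the $\sig$-twists from $F_*$) are routine, as you say.

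One small remark: you assert that the splitting of (\ref{ali:wrnyex}) at level $n$ is \emph{equivalent} to $h_F(Y)\le n$. In the text (and in the citation \cite[(8.2)(2)]{ny}) only the forward implication is stated. The converse is elementary --- a retraction $r\col {\cal E}_n\to{\cal O}_Y$ composed with the structural map $F_*({\cal W}_n({\cal O}_Y))\to{\cal E}_n$ from the pushout gives the $\rho$ required in the definition of $h_F$ --- but you should say this rather than cite a reference that may record only one direction.
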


\parno 
In fact we generalized this theorem in \cite{ny}: 
\begin{theo}[{\bf \cite[(10.1)]{ny}}]\label{theo:cyh} 
Let $X$ be a proper log smooth, integral
and saturated log scheme over $s$ of 
pure dimension $d$. Assume that $X/s$ is of Cartier type and of vertical type 
$($see {\rm \cite[(6.3)]{ny}} for the definition of the vertical type$)$. 
Assume also that the following three conditions hold$:$
\par 
$({\rm a})$ $H^{d-1}(X,{\cal O}_X)=0$ if $d\geq 2$, 
\par 
$({\rm b})$ $H^{d-2}(X,{\cal O}_X)=0$ if $d\geq 3$, 
\par 
$({\rm c})$ $\Omega^d_{X/s}\simeq {\cal O}_X$. 
\parno 
Then $h_F(\os{\circ}{X})=h^d(\os{\circ}{X}/\kap)$. 
\end{theo}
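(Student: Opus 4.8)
The plan is to prove the two inequalities $h^d(\os{\circ}{X}/\kap)\leq h_F(\os{\circ}{X})$ and $h_F(\os{\circ}{X})\leq h^d(\os{\circ}{X}/\kap)$ separately; together they give the equality and also settle the infinite cases (if $h^d=\infty$ the first inequality forces $h_F=\infty$, while if $h^d<\infty$ the second gives $h_F<\infty$). First I would check that the hypotheses of (\ref{theo:ht}) hold for $q=d$. Condition (c) identifies $\Om^d_{X/s}$ with the log dualizing sheaf, so log Serre duality of Tsuji (\cite{tsp}) together with geometric connectedness of $\os{\circ}{X}$ gives $H^d(X,{\cal O}_X)\simeq H^0(X,{\cal O}_X)^{\vee}\simeq \kap$, while $H^{d+1}(X,{\cal O}_X)=0$ for dimension reasons. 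The Bockstein $\bet\col H^{d-1}(X,{\cal O}_X)\lo H^d(X,{\cal W}_{n-1}({\cal O}_X))$ vanishes: its source is $0$ by (a) when $d\geq 2$, and when $d=1$ the group $H^0(X,{\cal O}_X)=\kap\cdot 1$ is generated by a class lifting to the Teichm\"uller section $1\in H^0(X,{\cal W}_n({\cal O}_X))$. Finally (a) and (b) force $\Phi^d_{\os{\circ}{X}/\kap}$ to be pro-representable. Hence (\ref{theo:ht}) applies and yields $h^d(\os{\circ}{X}/\kap)\leq h_F(\os{\circ}{X})$.

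The substance is the reverse inequality; set $h:=h^d(\os{\circ}{X}/\kap)$ and assume $h<\infty$. By (\ref{theo:nex}), $h=n^d(\os{\circ}{X})$ is the least $n$ with $F\not=0$ on $H^d(X,{\cal W}_n({\cal O}_X))$. The retraction defining the Yobuko height is exactly a splitting of the pushout (\ref{ali:wrnyex}) of the log Serre exact sequence (\ref{ali:wnox}) along $R^{n-1}\col {\cal W}_n({\cal O}_X)\lo {\cal O}_X$; thus $h_F(\os{\circ}{X})\leq n$ if and only if the class $[{\cal E}_n]=\del(R^{n-1})\in {\rm Ext}^1_{{\cal O}_X}(B_n\Om^1_{X/s},{\cal O}_X)$ vanishes, where $\del$ is the connecting map of ${\rm RHom}(-,{\cal O}_X)$ applied to (\ref{ali:wnox}). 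Using (c) and log Serre duality I would transport this obstruction to the dual side: ${\rm Ext}^1_{{\cal O}_X}(B_n\Om^1_{X/s},{\cal O}_X)\simeq H^{d-1}(X,B_n\Om^1_{X/s})^{\vee}$ and ${\rm Hom}_{{\cal O}_X}({\cal W}_n({\cal O}_X),{\cal O}_X)\simeq H^d(X,{\cal W}_n({\cal O}_X))^{\vee}$, so that $\del$ becomes dual to the connecting map $\prt\col H^{d-1}(X,B_n\Om^1_{X/s})\lo H^d(X,{\cal W}_n({\cal O}_X))$ of (\ref{ali:wnox}) in cohomology, and $R^{n-1}$ corresponds to the functional $H^d(R^{n-1})\col H^d(X,{\cal W}_n({\cal O}_X))\lo H^d(X,{\cal O}_X)=\kap$.

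Next I would exploit (a) and (b): a d\'evissage of $0\lo {\cal W}_{n-1}({\cal O}_X)\os{V}{\lo}{\cal W}_n({\cal O}_X)\lo {\cal O}_X\lo 0$ shows $H^{d-1}(X,{\cal W}_n({\cal O}_X))=0$ for all $n$, so $\prt$ is injective with image ${}_FH^d(X,{\cal W}_n({\cal O}_X))$. Dualizing the three-term sequence $H^{d-1}(B_n\Om^1_{X/s})\os{\prt}{\lo}H^d({\cal W}_n({\cal O}_X))\os{F}{\lo}H^d({\cal W}_n({\cal O}_X))$ then shows that $\del(R^{n-1})=0$ precisely when $H^d(R^{n-1})$ annihilates ${}_FH^d(X,{\cal W}_n({\cal O}_X))=\ker F$. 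It remains to verify this at $n=h$. Here I would use that $D:=H^d(X,{\cal W}({\cal O}_X))$ is the Dieudonn\'e module of the one-dimensional formal group $\Phi^d_{\os{\circ}{X}/\kap}$ of height $h$ (\cite{am}): in the standard basis $e_0,\dots,e_{h-1}$ one has $V^h=p$, $Fe_0=e_{h-1}$ and $Fe_j=pe_{j-1}$ $(j\geq 1)$, so on $H^d({\cal W}_h({\cal O}_X))=D/V^hD=D/pD$ the kernel of $F$ is $\langle\ol{e}_1,\dots,\ol{e}_{h-1}\rangle$, while $H^d(R^{h-1})$ is the projection $D/pD\lo D/VD$ sending $\ol{e}_0\mapsto 1$ and $\ol{e}_j\mapsto 0$. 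Thus $H^d(R^{h-1})$ kills $\ker F$, the class $[{\cal E}_h]$ vanishes, and $h_F(\os{\circ}{X})\leq h$.

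The main obstacle is the second paragraph: converting the sheaf-level splitting criterion into the cohomological $F$-vanishing statement. This needs a functorial form of Tsuji's log Serre duality compatible with the operators $F$, $V$, $R$ and with the Frobenius twist by $\sig$, applied to the possibly non-locally-free coherent sheaves ${\cal W}_n({\cal O}_X)$ and $B_n\Om^1_{X/s}$ on the possibly singular $\os{\circ}{X}$, together with the identification of $H^d(X,{\cal W}({\cal O}_X))$ with the Dieudonn\'e module of $\Phi^d_{\os{\circ}{X}/\kap}$ and of its height with $h$. Once these compatibilities and the dimension input of (\ref{coro:dim}) are in place, the Dieudonn\'e-module computation is routine; combining the two inequalities finishes the proof, and the case $d=1$, where (a) and (b) are vacuous, is checked directly from the curve case.
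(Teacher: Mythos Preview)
The paper does not give its own proof of this theorem: it is quoted from \cite[(10.1)]{ny}, which in turn generalizes Yobuko's original result \cite[(3.5)]{y} (stated here as (\ref{theo:yob})). So there is no in-paper argument to compare against directly. That said, your two-inequality strategy is exactly the natural one, and the first half is literally the content of (\ref{coro:nqs}) (= (\ref{theo:ht})) in this paper; your verification of the hypotheses for $q=d$ via log Serre duality, the Artin--Mazur pro-representability criterion, and the Bockstein vanishing is correct.

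Your outline for the reverse inequality $h_F\leq h^d$ is essentially Yobuko's argument transplanted to the log setting, and the Dieudonn\'e-module endgame is right. Two technical comments. First, ${\cal W}_n({\cal O}_X)$ is not an ${\cal O}_X$-module, so the expression ${\rm Hom}_{{\cal O}_X}({\cal W}_n({\cal O}_X),{\cal O}_X)$ and the formula ``$[{\cal E}_n]=\del(R^{n-1})$'' with $\del$ coming from ${\rm RHom}_{{\cal O}_X}(-,{\cal O}_X)$ applied to (\ref{ali:wnox}) are imprecise. The clean way is to stay with the ${\cal O}_X$-module extension (\ref{ali:wrnyex}): by Tsuji's log Serre duality and (c) the class $[{\cal E}_n]\in{\rm Ext}^1_{{\cal O}_X}(B_n\Om^1_{X/s},{\cal O}_X)\simeq H^{d-1}(X,B_n\Om^1_{X/s})^{\vee}$ corresponds to the connecting map $\delta\colon H^{d-1}(X,B_n\Om^1_{X/s})\to H^d(X,{\cal O}_X)=\kap$ of (\ref{ali:wrnyex}); the pushout square then identifies $\delta$ with $H^d(R^{n-1})\circ\partial$, where $\partial$ is the connecting map of (\ref{ali:wnox}). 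From there your computation that ${}_FH^d(X,{\cal W}_h({\cal O}_X))\subset\ker H^d(R^{h-1})$ on $D/pD$ goes through. Second, one of your stated worries is unfounded: $B_n\Om^1_{X/s}$ is locally free of finite rank (see \cite[(1.13)]{lodw}, used in the proof of (\ref{prop:woni})), so Serre duality applies to it without caveat. What does need care is the equivalence ``splitting of (\ref{ali:wrnyex}) as ${\cal O}_X$-modules $\Leftrightarrow$ existence of a ${\cal W}_n({\cal O}_X)$-linear retraction $\rho$ with $\rho\circ F=R^{n-1}$'', which is not entirely formal; in the paper this is borrowed from \cite[(8.2)(2)]{ny}. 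With these adjustments your sketch is a faithful reconstruction of the Yobuko/Nakkajima--Yobuko proof.
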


\begin{rema}\label{rema:fs}
Let $X/s$ be as in (\ref{theo:cyh}). 
\par 
(1) By (\ref{theo:cyh}) we see that 
$h^d(\os{\circ}{X}/\kap)$ is independent of 
the choice of the structural morphism $\os{\circ}{X}\lo \kap$; 
it depends only on $\os{\circ}{X}$. 
\par 
(2) 
Let the notations be in (\ref{theo:cyh}). 
By the equality $h_F(\os{\circ}{X}/\kap)=h^d(\os{\circ}{X}/\kap)$, 
$X$ is $F$-split if and only if $h^d(\os{\circ}{X}/\kap)=1$. 
\end{rema}

\begin{coro}
{\rm (\ref{coro:dgtfye1})} holds. 
\end{coro}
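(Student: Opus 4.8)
The plan is to deduce both parts formally from the Hodge-Witt property of $Y$, which is already supplied by (\ref{coro:dge1}): since $\dim Y=3$ and $Y$ is quasi-$F$-split, that corollary gives that $Y/\kap$ is of Hodge-Witt type. By the definition of Hodge-Witt type and Illusie-Raynaud's theorem (\cite{ir}), every $H^j(Y,{\cal W}\Om^i_Y)$ is then a finitely generated ${\cal W}$-module, the limit slope spectral sequence $E_1^{ij}=H^j(Y,{\cal W}\Om^i_Y)\Lo H^{i+j}_{\rm crys}(Y/{\cal W})$ degenerates at $E_1$, and in particular each differential $d\col H^j(Y,{\cal W}\Om^i_Y)\lo H^j(Y,{\cal W}\Om^{i+1}_Y)$ vanishes. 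These are exactly the hypotheses under which Ekedahl's analysis of the finite-level spectral sequences applies.

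For (1), I would invoke Ekedahl's theorem in \cite{ir}: for a Hodge-Witt variety the mod $p^n$ de Rham-Witt spectral sequence (\ref{ali:wnoy}) degenerates at $E_2$ for every $n\geq 1$. The point is that the dominoes attached to the limit differentials $d\col H^j(Y,{\cal W}\Om^i_Y)\lo H^j(Y,{\cal W}\Om^{i+1}_Y)$ all vanish because those differentials vanish; Ekedahl's comparison then forces every higher finite-level differential $d_r^{(n)}$ with $r\geq 2$ to be zero, which is the assertion of (1). Here the finite generatedness of the groups $H^j(Y,{\cal W}\Om^i_Y)$ is what guarantees the needed Mittag-Leffler and compatibility properties of the pro-system $\{H^j(Y,{\cal W}_n\Om^i_Y)\}_n$ relative to the limit spectral sequence.

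For (2), the remaining content is the vanishing of the $E_1$-differentials $d_1^{(n)}\col H^j(Y,{\cal W}_n\Om^i_Y)\lo H^j(Y,{\cal W}_n\Om^{i+1}_Y)$. Using Ekedahl's remark in \cite{ir} (compare \cite[II (2.5)]{konp}), injectivity of $F$ on all the groups $H^j(Y,{\cal W}_n\Om^i_Y)$ rules out the torsion in the kernels and cokernels of the restriction maps $R$ that would otherwise produce a nonzero $d_1^{(n)}$, so that $E_1^{(n)}=E_2^{(n)}$ and the sequence degenerates at $E_1$. To see that the crystalline hypothesis implies this injectivity, I would read the (now $E_1$-degenerate) limit slope spectral sequence backwards: for a threefold the groups $H^j(Y,{\cal W}\Om^i_Y)$ are nonzero only for $0\leq i,j\leq 3$, and only in the bidegrees with $2\leq i+j\leq 5$ can torsion occur; degeneration at $E_1$ relates this torsion to the torsion of the crystalline groups $H^q_{\rm crys}(Y/{\cal W})$ with $2\leq q\leq 5$, so torsion-freeness of $H^q_{\rm crys}(Y/{\cal W})$ in that range forces every relevant $H^j(Y,{\cal W}\Om^i_Y)$ to be a free ${\cal W}$-module, on which the $\sig$-linear, rationally bijective operator $F$ is automatically injective; the same then passes to each finite level.

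The genuinely hard input — that $Y$ is Hodge-Witt at all — is already packaged in (\ref{coro:dge1}), so I expect the obstacle here to be bookkeeping rather than a new idea: one must cite the precise form of Ekedahl's theorem and remark in \cite{ir} that converts the $E_1$-degeneration of the limit (resp.\ the injectivity of $F$) into $E_2$- (resp.\ $E_1$-) degeneration of the finite-level spectral sequences, and one must verify carefully that for $\dim Y=3$ the bidegrees carrying torsion are confined to $2\leq i+j\leq 5$, which is exactly the range appearing in the crystalline torsion-freeness hypothesis.
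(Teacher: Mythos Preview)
Your proposal is correct and follows essentially the same approach as the paper: you reduce to the Hodge-Witt property via (\ref{coro:dge1}) and then invoke Ekedahl's results in \cite{ir}, which is exactly what the paper does in its one-line proof citing (\ref{coro:j})~(3) and \cite[IV (4.7), (4.8)]{ir}. Your additional commentary on dominoes and on how torsion-freeness of $H^q_{\rm crys}(Y/{\cal W})$ for $2\leq q\leq 5$ forces injectivity of $F$ is consistent with, and merely unpacks, what those references contain.
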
 
\begin{proof}
This follows from (\ref{coro:j}) (3) and \cite[IV (4.7), (4.8)]{ir}. 
\end{proof} 

The following is a generalization of \cite[(11.3)]{j}. 

\begin{coro}\label{coro:tr}
Let the notations and the assumptions be as in {\rm (\ref{theo:cyh})}. 
Assume that the log structures of $s$ and $X$ are trivial, 
that is, $X$ is a proper smooth scheme over $\kap$. 
Assume that $h_F(\os{\circ}{X})<\infty$. 
Let $q\geq 2$ be an integer and 
assume that $X$ is of pure dimension $2q-1$.
Let $l\not=p$ be a prime number. 
Let $A^q(X_{\ol{\kap}})$ be the subgroup of 
${\rm CH}^q(X_{\ol{\kap}})$ generated by 
cycles which are algebraically equivalent to $0$. 
Let $A^q(X_{\ol{\kap}})\{l\}$ be the $l$-primary torsion part of 
$A^q(X_{\ol{\kap}})$. 
Then the following restriction of the $l$-adic Abel-Jacobi map of Bloch to 
$A^q(X_{\ol{\kap}})\{l\}$ 
\begin{align*} 
A^q(X_{\ol{\kap}})\{l\}\lo H^{2q-1}(X_{\ol{\kap}},{\mab Q}_l/{\mab Z}_l(q))
\tag{4.19.1}\label{ali:chx}
\end{align*} 
is not surjective. 
\end{coro}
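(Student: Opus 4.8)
The plan is to adapt Joshi's argument \cite[(11.3)]{j} from threefolds to a Calabi--Yau variety of dimension $d=2q-1$, using the finiteness theorems of this article to exhibit a class in the target that no algebraically trivial cycle can reach. First I would apply {\rm (\ref{theo:cyh})}: since the log structures are trivial, $\os{\circ}{X}$ is a proper smooth Calabi--Yau variety of dimension $d=2q-1$, so the hypothesis $h_F(\os{\circ}{X})<\infty$ gives $h_F(\os{\circ}{X})=h^{2q-1}(\os{\circ}{X}/\kap)=:h\geq 1$ and the Artin--Mazur formal group $\Phi^{2q-1}_{\os{\circ}{X}/\kap}$ is one-dimensional of finite height $h$. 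By {\rm (\ref{theo:dw})} together with the isomorphism between $H^{2q-1}(X,{\cal W}({\cal O}_X))$ and the Dieudonn\'e module of $\Phi^{2q-1}_{\os{\circ}{X}/\kap}$ recalled in the Introduction, this cohomology is a free ${\cal W}$-module of rank $h$; in Illusie's normalization it is exactly the part of $H^{2q-1}_{\rm crys}(\os{\circ}{X}/{\cal W})$ whose Frobenius slopes lie in $[0,1)$, and being attached to a one-dimensional formal group it is isoclinic of slope $1-1/h$.

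The crux is a slope obstruction to algebraicity. Following Bloch's theory in the form given by Gros--Suwa \cite{gs}, the image of the restricted Abel--Jacobi map on $A^q(X_{\ol{\kap}})\{l\}$ is contained in the coniveau piece $N^{q-1}H^{2q-1}(X_{\ol{\kap}},{\mab Q}_l/{\mab Z}_l(q))$: an algebraically trivial codimension-$q$ cycle is swept out by a smooth curve $C$, so its Abel--Jacobi class is the image of a class in $H^1(C,{\mab Q}_l/{\mab Z}_l(1))$ under a codimension-$q$ correspondence on $C\times X$; as that correspondence projects to a subvariety of codimension $\geq q-1$ in $X$, the resulting class has coniveau $\geq q-1$. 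By the Gysin description of the coniveau filtration, every class in $N^{q-1}H^{2q-1}$ has Frobenius slope $\geq q-1$ on the underlying ${\mab Q}_l$-cohomology. Since $q\geq 2$ forces $q-1\geq 1>1-1/h$, the slope-$(1-1/h)$ part produced above lies strictly outside $N^{q-1}H^{2q-1}$.

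To transfer the slope datum from crystalline to $l$-adic cohomology I would invoke the theorem of Katz and Messing: after spreading $\os{\circ}{X}$ out over a finite subfield, the characteristic polynomial of Frobenius on $H^{2q-1}_{\rm et}(X_{\ol{\kap}},{\mab Q}_l)$ agrees with that on $H^{2q-1}_{\rm crys}(\os{\circ}{X}/{\cal W})\otimes_{\cal W}{\rm Frac}({\cal W})$, so the $l$-adic cohomology genuinely carries a Frobenius eigenvalue of $p$-adic valuation $1-1/h<q-1$. Hence $N^{q-1}H^{2q-1}(X_{\ol{\kap}},{\mab Q}_l/{\mab Z}_l(q))$ is a \emph{proper} divisible subgroup of the full target, and as the image of {\rm (\ref{ali:chx})} sits inside it, the map cannot be surjective. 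I would stress that finiteness of $h_F$ is indispensable here: for a supersingular Calabi--Yau, $H^{2q-1}_{\rm crys}$ is isoclinic of slope $(2q-1)/2\geq q-1$, so no class of coniveau $<q-1$ survives and the obstruction vanishes.

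The main obstacle I anticipate is verifying rigorously that the Abel--Jacobi image is confined to $N^{q-1}$ and that this coniveau bound is compatible both with the $l$-adic realization and with passage to the $l$-primary torsion subgroups; this is precisely where the Gros--Suwa machinery \cite{gs} linking the Abel--Jacobi map to the coniveau spectral sequence must be deployed, exactly as in Joshi's threefold case. A subsidiary point is confirming that the one-dimensional formal group $\Phi^{2q-1}$ is isoclinic of slope exactly $1-1/h$ in Illusie's convention, so as to guarantee the strict inequality $1-1/h<q-1$ rather than merely $\leq q-1$.
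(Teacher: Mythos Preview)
Your approach is correct and follows the same underlying idea as the paper, namely a slope obstruction: $H^{2q-1}(X,{\cal W}({\cal O}_X))\otimes_{\cal W}K_0$ is nonzero with slopes in $[0,1)$, hence outside $[q-1,q]$ since $q-1\geq 1$, and this forces non-surjectivity of the $l$-adic Abel--Jacobi map. The difference is one of packaging. The paper's proof is four lines: after invoking {\rm (\ref{theo:cyh})} to get $1\leq h^{2q-1}(\os{\circ}{X}/\kap)=h_F(\os{\circ}{X})<\infty$ and hence $H^{2q-1}(X,{\cal W}({\cal O}_X))\otimes_{\cal W}K_0\neq 0$, it simply cites Suwa's theorem \cite[(3.4)]{suwa} (following \cite[(11.2)]{j}) as a black box. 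You instead unpack what Suwa's argument does---the coniveau bound on the Abel--Jacobi image, the Gysin/purity slope inequality, and the Katz--Messing comparison---which is fine but unnecessary here.

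Two minor points. First, you cite \cite{gs}, but that paper treats the \emph{$p$-adic} Abel--Jacobi map; the relevant reference for the $l$-adic statement is \cite{suwa}. Second, your precise computation of the slope as $1-1/h$ is correct but more than is needed: the paper only uses that the slopes of $H^{2q-1}(X,{\cal W}({\cal O}_X))$ lie in $[0,1)$, which already misses $[q-1,q]$. Your closing worry about the isoclinic convention is therefore moot, and your invocation of {\rm (\ref{theo:dw})} is redundant since the Dieudonn\'{e}-module description already gives freeness of rank $h$.
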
 
\begin{proof} 
By the equality 
$h_F(\os{\circ}{X}/\kap)=h^{2q-1}(\os{\circ}{X}/\kap)$, 
the Dieudonn\'{e} module of the Artin-Mazur formal group 
$\Phi^{2q-1}(\os{\circ}{X}/\kap)$ 
is a free ${\cal W}$-modules of finite rank $1\leq h_F(\os{\circ}{X}/\kap)<\infty$. 
Because this module is isomorphic to 
$H^{2q-1}(X,{\cal W}({\cal O}_X))$, 
$H^{2q-1}(X,{\cal W}({\cal O}_X))\otimes_{\cal W}K_0\not=0$. 
Hence the slopes of 
$H^{2q-1}(X,{\cal W}({\cal O}_X))\otimes_{\cal W}K_0$ is not contained 
in $[q-1,q]$ since $q-1\geq 1$. 
By \cite[(3.4)]{suwa} (as in \cite[(11.2)]{j}), 
the $l$-adic Abel-Jacobi map of Bloch (\ref{ali:chx}) is not surjective.  
\end{proof}

\par 
We can generalize the Yobuko height as follows. 
\par 
Let $i$ be a nonnegative integer. 
Then we have the following exact sequence
\begin{align*} 
0\lo F{\cal W}_{n+1}\Om^i_{X}\lo F_{{\cal W}_n(X)*}({\cal W}_n\Om^i_{X})
\os{F^{n-1}d}{\lo} B_n\Om^{i+1}_{X/s}\lo 0 \quad (n\in {\mab Z}_{>0})
\tag{4.19.2}\label{ali:in}
\end{align*} 
of ${\cal W}_n({\cal O}_X)$-modules. 
Consider the push-out of the exact sequence (\ref{ali:in}) by the projection  
$F{\cal W}_{n+1}\Om^i_{X}\lo F{\cal W}_2\Om^i_{X}$ 
and let 
\begin{align*} 
0\lo F{\cal W}_2\Om^i_{X}\lo {\cal E}^i_n
{\lo} B_n\Om^{i+1}_{X/s}\lo 0 \quad (n\in {\mab Z}_{>0}).
\tag{4.19.3}\label{ali:ipn}
\end{align*} 
be the resulting exact sequence of ${\cal O}_X$-modules. 
We say that $X/s$ has {\it height} $h<\infty$ at $i$ if (\ref{ali:ipn}) 
is split for $\forall n\geq h$. If (\ref{ali:ipn}) is not split, then we set $h=\infty$. 
(Note that, by (\ref{cd:nmwee}), if (\ref{ali:ipn}) is split for some 
$n\in {\mab Z}_{\geq 0}$,  then (\ref{ali:ipn}) for any $m\geq n$ is split.)
We denote $h$ by $h_F^i(X/s)$. 
It is easy to check that $h_F^0(X/s)=h_F(\os{\circ}{X})$. 
\par 
Assume that $h_F^i(X/s)<\infty$. 
Then, by the same proof as that of (\ref{theo:fsb0}), 
$H^q(X,B_n\Om^{i+1}_{X/s})$ is bounded for all $n$. 
Indeed, we have the following commutative diagram of exact sequences
\begin{equation*} 
\begin{CD} 
0@>>>F{\cal W}_2\Om^i_X @>>> {\cal E}^i_n@>>> B_n\Om^{i+1}_{X/s}@>>> 0\\
@. @| @VVV @VV{C^{n-1}}V \\
0@>>> F{\cal W}_2\Om^i_X
@>{\subset}>> {\cal E}^i_1=F_*(\Om^i_{X/s}) 
@>{d}>>B_1\Om^{i+1}_{X/s}@>>> 0\\
\end{CD}
\tag{4.19.4}\label{cd:dnyb}
\end{equation*} 
of ${\cal O}_X$-modules 
and the following exact sequence 
\begin{align*} 
0\lo   B_{n-1}\Om^{i+1}_{X/s}\lo {\cal E}^i_n\lo F_*(\Om^i_{X/s}) \lo 0 
\tag{4.19.5}\label{cd:dbeyb}
\end{align*} 
of ${\cal O}_X$-modules.  
Hence, by the same proof as that of (\ref{theo:fsb0}), 
we obtain the following inequality
\begin{align*} 
\dim_{\kap}H^q(X,B_n\Om^{i+1}_{X/s})\leq &
\dim_{\kap}H^q(X,B_{n-1}\Om^{i+1}_{X/s})
\\
&+\dim_{\kap}H^q(X,\Om^i_{X/s})-\dim_{\kap}H^q(X,F{\cal W}_2\Om^i_X)
\end{align*} 
for $n\geq h$. 
In this way, 
we see that $H^q(X,B_n\Om^{i+1}_{X/s})$ is bounded for all $n$. 
By (\ref{theo:fw}) the differential 
$H^q(X,{\cal W}\Om^i_X)\lo H^q(X,{\cal W}\Om^{i+1}_X)$ is zero.  
Consequently 
$H^q(X,{\cal W}\Om^i_X)/dH^q(X,{\cal W}\Om^{i-1}_X)$
is a finitely generated ${\cal W}$-module. 
\par 
If the log structures of $s$ and $X$ are trivial and if $h_F^1(X/s)<\infty$, 
then the differential 
$H^2(X_{\ol{\kap}},{\cal W}\Om^1_{X_{\ol{\kap}}})\lo 
H^2(X_{\ol{\kap}},{\cal W}\Om^2_{X_{\ol{\kap}}})$ is zero.  
Hence, by \cite[II (3.7)]{gs} ${\rm CH}^2(X_{\ol{\kap}})\{p\}$ 
is of finite cotype. 
Hence we obtain the following: 

\begin{theo}\label{theo:gn}
Let $X/\kap$ be a proper smooth scheme. 
If $h^1_F(X/\kap)<\infty$, then ${\rm CH}^2(X_{\ol{\kap}})\{p\}$ 
is of finite cotype. 
\end{theo}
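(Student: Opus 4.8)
The plan is to reduce the statement to the vanishing of a single Hodge--Witt differential and then to invoke the Gros--Suwa criterion. Concretely, \cite[II (3.7)]{gs} guarantees that ${\rm CH}^2(X_{\ol{\kap}})\{p\}$ is of finite cotype as soon as the differential $d\col H^2(X_{\ol{\kap}},{\cal W}\Om^1_{X_{\ol{\kap}}})\lo H^2(X_{\ol{\kap}},{\cal W}\Om^2_{X_{\ol{\kap}}})$ is zero, so the whole problem is to produce this vanishing from the hypothesis $h^1_F(X/\kap)<\infty$.

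First I would descend the hypothesis to $\ol{\kap}$. The generalized height $h^1_F$ is defined by the splitting of the sequence (\ref{ali:ipn}) (for $i=1$) of coherent ${\cal O}_X$-modules, and this sequence is built functorially from the log de Rham--Witt data, so it commutes with the flat base change $\kap\lo\ol{\kap}$. Since a splitting over $\kap$ base-changes to a splitting over $\ol{\kap}$, we obtain $h^1_F(X_{\ol{\kap}}/\ol{\kap})\leq h^1_F(X/\kap)<\infty$, and I may work over $\ol{\kap}$ from now on, writing simply $X$.

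The heart of the matter is then the $i=1$, $q=2$ case of the boundedness already carried out in the discussion preceding the statement. I would apply (\ref{theo:fw}) with $q=2$ and $i=1$: to force $d\col H^2(X,{\cal W}\Om^1_X)\lo H^2(X,{\cal W}\Om^2_X)$ to vanish it suffices that $\dim_{\ol{\kap}}H^1(X,B_n\Om^2_{X/s})$ stay bounded in $n$. That boundedness is exactly what $h^1_F(X)<\infty$ buys: the push-out (\ref{ali:ipn}) splits for $n\geq h^1_F(X)$, the splitting propagating to larger $n$ through the compatibility (\ref{cd:nmwee}), and this turns the short exact sequence (\ref{cd:dbeyb}) produced by the snake lemma from (\ref{cd:dnyb}) into the dimension bookkeeping of the proof of (\ref{theo:fsb0}), now in degree $i=1$. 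Tracking the Frobenius twist of the $\ol{\kap}$-structure on $H^q(F_*(\Om^1_{X/s}))=H^q(\Om^1_{X/s})$, this yields the recursive estimate on $\dim_{\ol{\kap}}H^q(X,B_n\Om^2_{X/s})$ displayed above the statement, hence the sought uniform bound.

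The main delicate point, already handled in that general-$i$ discussion, is precisely this bookkeeping. Unlike the $i=0$ case of (\ref{theo:fsb0}), where the sub-object is ${\cal O}_X$ and the terms cancel to give clean monotonicity $\dim H^q(B_n\Om^1_{X/s})\le\dim H^q(B_{n-1}\Om^1_{X/s})$, the $i=1$ push-out has sub-object $F{\cal W}_2\Om^1_X$, so the recursion carries the extra term $\dim H^q(\Om^1_{X/s})-\dim H^q(F{\cal W}_2\Om^1_X)$; the work is to verify that the resulting estimate still forces a genuine bound in $n$ rather than mere growth. Granting this, (\ref{theo:fw}) delivers the vanishing of $d$ in bidegree $(1,2)$, and \cite[II (3.7)]{gs} finishes the proof.
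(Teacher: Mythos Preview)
Your proposal is correct and follows the paper's approach essentially verbatim: the paper's proof of this theorem \emph{is} the discussion immediately preceding its statement, which reduces via (\ref{theo:fw}) to bounding $\dim_{\kap}H^{q}(X,B_n\Om^{2}_{X/s})$ through the splitting (\ref{ali:ipn}), the diagram (\ref{cd:dnyb}) and the sequence (\ref{cd:dbeyb}), and then concludes by \cite[II (3.7)]{gs}. Your observation that the recursion for $i\geq 1$ picks up the defect $\dim_{\kap}H^q(X,\Om^i_{X/s})-\dim_{\kap}H^q(X,F{\cal W}_2\Om^i_X)$, and hence does not yield the clean monotonicity of the $i=0$ case, is exactly right; the paper's treatment of this point (``In this way, we see that $H^q(X,B_n\Om^{i+1}_{X/s})$ is bounded for all $n$'') is at the same level of brevity as yours, so you have matched the paper's argument both in strategy and in rigor.
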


\section{Upper bounds of heights of Artin-Mazur formal groups}\label{sec:ham} 
Let $X/s$ be as in the beginning of the previous section. 
In this section we give a upper bound of the height of 
the Artin-Mazur formal group $\Phi^q_{\os{\circ}{X}/\kap}$ $(q\in {\mab N})$ 
by using the dimensions of log Hodge cohomologies of $X/s$.
This is a much more general upper bound than  
Katsura and Van der Geer's upper bound 
for the Artin-Mazur formal group of 
a Calabi-Yau variety over $\kap$ 
(\cite[(2.4)]{vgkht}). 
To give the upper bound, we use 
(\ref{eqn:rfvn}) and (\ref{prop:bodw}) in \S\ref{sec:rrafr}.  
The arguments in this section are influenced 
by the arguments in \cite[II (4.1)$\sim$(4.6)]{idw}.

\begin{theo}\label{theo:htb}
Let $q$ and $i$ be nonnegative integers. 
Assume that the operator 
$$F\col H^{h}(X,{\cal W}\Om^i_X)\lo H^h(X,{\cal W}\Om^i_X) \quad (h=q,q+1)$$ 
is injective. 
Furthermore, assume 
that the operator  
$$dV \col H^q(X,{\cal W}\Om^j_X)\lo H^q(X,{\cal W}\Om^{j+1}_X)$$  
is zero for $j=i-1$ and $j=i$. 
Then there exists the following diagram 
\begin{equation*} 
\begin{CD} 
0 @>>>  H^q(X,{\cal W}\Om^{i}_X)/V@>{F}>> H^q(X,{\cal W}\Om^{i}_X)/p
@>{\rm proj}.>>
H^q(X,{\cal W}\Om^{i}_X)/F
@>>> 0\\
@. @V{\bigcap}VV@. @VV{\simeq}V\\
@. H^q(X,{\cal W}\Om^{i}_X/V) @. @. H^q(X,{\cal W}\Om^{i}_X/F)\\
@. @V{\bigcap}VV@. @AAA\\
@. H^{q}(X,\Om^{i}_{X/s})  @. @. H^{q-1}(X,\Om^{i+1}_{X/s}), 
\end{CD} 
\tag{5.1.1}\label{eqn:xxs}
\end{equation*} 
where the morphism 
$H^{q-1}(X,\Om^{i+1}_{X/s})\lo H^q(X,{\cal W}\Om^{i}_X/F)$ is 
constructed in the proof of this theorem and it is surjective. 
\end{theo}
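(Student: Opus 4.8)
The plan is to read the four outer identifications in the diagram off a short list of exact sequences of log de Rham--Witt sheaves, feeding the two numerical hypotheses in at exactly the points where the comparison maps could fail to be mono/epi/iso. Throughout write $H^q:=H^q(X,{\cal W}\Om^i_X)$ and recall $p=FV=VF$. First I would obtain the top row by semilinear algebra on $H^q$. The operator $F$ descends to $F\col H^q/V\lo H^q/p$ because $FV=p$, and the projection $H^q/p\lo H^q/F$ exists since $pH^q=FVH^q\sus FH^q$; the composite is zero and $H^q/p\lo H^q/F$ is surjective with kernel the image of $F\col H^q/V\lo H^q/p$. Finally $F\col H^q/V\lo H^q/p$ is injective: if $Fx\in pH^q=FVH^q$ then $F(x-Vy)=0$, so $x=Vy$ by the hypothesis that $F$ is injective on $H^q$ (the case $h=q$). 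Hence the top row is exact.

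For the right vertical I would use the sheaf sequence $0\lo {\cal W}\Om^i_X\os{F}{\lo}{\cal W}\Om^i_X\lo {\cal W}\Om^i_X/F\lo 0$, whose left exactness is the injectivity of $F$ on the log de Rham--Witt sheaf (the log analogue of Illusie's theorem, available through the formal de Rham--Witt formalism of \S\ref{sec:rrafr} and (\ref{coro:locdw})). Its cohomology sequence gives $0\lo H^q/F\lo H^q(X,{\cal W}\Om^i_X/F)\lo {}_FH^{q+1}\lo 0$, and ${}_FH^{q+1}=0$ by the hypothesis $h=q+1$, yielding the isomorphism $H^q/F\os{\sim}{\lo}H^q(X,{\cal W}\Om^i_X/F)$. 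The upper left inclusion is obtained identically from $0\lo {\cal W}\Om^i_X\os{V}{\lo}{\cal W}\Om^i_X\lo {\cal W}\Om^i_X/V\lo 0$ (injectivity of $V$), which gives $0\lo H^q/V\lo H^q(X,{\cal W}\Om^i_X/V)\lo {}_VH^{q+1}\lo 0$ and in particular an injection of $H^q/V$.

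The heart of the proof is the upward surjection $H^{q-1}(X,\Om^{i+1}_{X/s})\lo H^q(X,{\cal W}\Om^i_X/F)$, which I would construct as $\psi\circ\del$. Here $\del$ is the connecting map of $0\lo {\rm Fil}^1{\cal W}\Om^{i+1}_X\lo {\cal W}\Om^{i+1}_X\lo \Om^{i+1}_{X/s}\lo 0$, using ${\cal W}_1\Om^{i+1}_X=\Om^{i+1}_{X/s}$ and ${\rm Fil}^1{\cal W}\Om^{i+1}_X=V{\cal W}\Om^{i+1}_X+dV{\cal W}\Om^i_X$ from (\ref{theo:inffilv}). The map $\psi\col {\rm Fil}^1{\cal W}\Om^{i+1}_X\lo {\cal W}\Om^i_X/F$ comes from (\ref{prop:dinvfn}) with $n=1$: the differential induces an injection $\ol{d}\col {\cal W}\Om^i_X/F\hookrightarrow {\cal W}\Om^{i+1}_X/p$ onto $\ol{d{\cal W}\Om^i_X}$, and since $F({\rm Fil}^1{\cal W}\Om^{i+1}_X)=p{\cal W}\Om^{i+1}_X+d{\cal W}\Om^i_X$, the composite of $F$ with reduction ${\rm mod}~p$ lands in $\ol{d{\cal W}\Om^i_X}$; I set $\psi:=\ol{d}^{-1}\circ({\rm mod}~p)\circ F$. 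From $FdV=d$ (\ref{eqn:earl}) one gets the crucial identity $\psi\circ dV={\rm proj}\col {\cal W}\Om^i_X\lo {\cal W}\Om^i_X/F$. For surjectivity I would take $\xi\in H^q$ and push it by $dV\col {\cal W}\Om^i_X\lo {\rm Fil}^1{\cal W}\Om^{i+1}_X$ to a class $dV\xi\in H^q(X,{\rm Fil}^1{\cal W}\Om^{i+1}_X)$; its image in $H^q(X,{\cal W}\Om^{i+1}_X)$ is $(dV)_*\xi=0$ by the hypothesis $dV=0$ on $H^q$ (the case $j=i$), so $dV\xi\in {\rm Im}\,\del$. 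Applying the identity, ${\rm proj}_*\xi=\psi_*(dV\xi)\in {\rm Im}(\psi\circ\del)$, and since ${\rm proj}_*\col H^q\lo H^q(X,{\cal W}\Om^i_X/F)$ is onto (again because ${}_FH^{q+1}=0$), the composite $\psi\circ\del$ is surjective.

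It remains to produce the lower left inclusion $H^q(X,{\cal W}\Om^i_X/V)\hookrightarrow H^q(X,\Om^i_{X/s})$. The sheaf surjection ${\cal W}\Om^i_X/V\lo {\cal W}_1\Om^i_X=\Om^i_{X/s}$ has kernel $Q={\rm Fil}^1{\cal W}\Om^i_X/V{\cal W}\Om^i_X$, and a computation with (\ref{prop:dinvfn}) identifies $Q\os{\sim}{\lo}{\cal W}\Om^{i-1}_X/F$ via $dV$, turning $Q\hookrightarrow {\cal W}\Om^i_X/V$ into the operator $dV$; the hypothesis $dV=0$ on $H^q(X,{\cal W}\Om^{i-1}_X)$ (the case $j=i-1$) then kills the relevant classes. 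The hard part will be exactly this last inclusion for $i>0$: the vanishing of $dV$ only directly annihilates the classes in $H^q(X,Q)$ that come from $H^q(X,{\cal W}\Om^{i-1}_X)$, and controlling the remaining part (a quotient related to ${}_FH^{q+1}(X,{\cal W}\Om^{i-1}_X)$, not obviously governed by the stated hypotheses) will require a finite-level Mittag--Leffler argument through the identifications ${\cal W}_n\Om^i_X/F\cong B_n\Om^{i+1}_{X/s}$ of (\ref{prop:fkervcoker}) and (\ref{coro:locdw}) together with the boundedness results of \S\ref{sec:fdlr}. I note that in the principal case $i=0$ of (\ref{theo:amh}) this subtlety evaporates, since ${\cal W}({\cal O}_X)/V={\cal O}_X=\Om^0_{X/s}$ makes the lower left map an equality; the only other point needing care throughout is the sheaf-level injectivity of $F$ and $V$ on ${\cal W}\Om^i_X$ used in the two preceding paragraphs.
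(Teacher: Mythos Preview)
Your overall plan is correct and closely parallels the paper's proof, but two points deserve comment.

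First, the paper's route to both the surjection and the lower-left injection is more direct than yours. From (\ref{eqn:rfvn}) with $r=1$ together with (\ref{coro:locdw}) and the log inverse Cartier isomorphism $C^{-1}\col \Om^{j+1}_{X/s}\os{\sim}{\lo}{\cal W}_1\Om^{j+1}_X$, one has the single short exact sequence of sheaves
\[
0\lo {\cal W}\Om^j_X/F \os{dV}{\lo} {\cal W}\Om^{j+1}_X/V \lo \Om^{j+1}_{X/s}\lo 0.
\]
For $j=i$ its connecting morphism in cohomology is exactly the map $H^{q-1}(X,\Om^{i+1}_{X/s})\lo H^q(X,{\cal W}\Om^i_X/F)$; your $\psi\circ\del$ construction (via ${\rm Fil}^1$ and $\ol{d}^{\,-1}\circ(\mathrm{mod}~p)\circ F$) is a hands-on unwinding of this same boundary map and can be replaced by a single citation. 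For $j=i-1$ the same sequence gives the map $H^q(X,{\cal W}\Om^i_X/V)\lo H^q(X,\Om^i_{X/s})$ with kernel the image of $dV$. Thus one exact sequence replaces both your ${\rm Fil}^1$-argument and your analysis of $Q={\rm Fil}^1/V$; indeed your identification $Q\simeq {\cal W}\Om^{i-1}_X/F$ via $dV$ \emph{is} the left-hand inclusion of that sequence.

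Second, concerning the point you flag as ``the hard part'': the paper does \emph{not} invoke any Mittag--Leffler argument or the boundedness results of \S\ref{sec:fdlr} here---those require the quasi-$F$-split hypothesis, which is absent from the present statement, so that route is not available to you. Instead the paper argues uniformly, for both $j=i-1$ and $j=i$, that $dV\col H^q(X,{\cal W}\Om^j_X/F)\lo H^q(X,{\cal W}\Om^{j+1}_X/V)$ vanishes via the commutative square
\[
\begin{CD}
H^q(X,{\cal W}\Om^j_X)/F @>{\sim}>> H^q(X,{\cal W}\Om^j_X/F)\\
@V{dV}VV @VV{dV}V\\
H^q(X,{\cal W}\Om^{j+1}_X)/V @>{\sus}>> H^q(X,{\cal W}\Om^{j+1}_X/V),
\end{CD}
\]
the left vertical being zero by hypothesis. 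You are right that the $\sim$ on the top arrow is what (\ref{ali:wooix}) supplies at index $i$ from $F$-injectivity on $H^{q+1}(X,{\cal W}\Om^i_X)$, whereas the stated hypotheses do not separately assert $F$-injectivity on $H^{q+1}(X,{\cal W}\Om^{i-1}_X)$; the paper nonetheless records the top arrow as an isomorphism in this square and uses it for both values of $j$. Your proposed workaround via \S\ref{sec:fdlr} is in any case not the intended argument, and---as you yourself observe---the issue is vacuous in the principal application (\ref{theo:amh}), where $i=0$ and ${\cal W}\Om^{-1}_X=0$.
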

\begin{proof} 
By the first assumption, 
we have the following exact sequence 
\begin{align*} 
0\lo H^q(X,{\cal W}\Om^i_X)/V
\os{F}{\lo} H^q(X,{\cal W}\Om^i_X)/p
\lo H^q(X,{\cal W}\Om^i_X)/F\lo 0. 
\tag{5.1.2}\label{ali:wfvnii}
\end{align*} 
By (\ref{eqn:rfvn}) for the case $r=1$ and (\ref{coro:locdw}), we have the following exact sequence 
\begin{align*} 
0\lo {\cal W}\Om^j_X/F
\os{dV}{\lo} {\cal W}\Om^{j+1}_X/V\lo \Om^{j+1}_{X/s}\lo 0. 
\tag{5.1.3}\label{ali:wfsfnii}
\end{align*} 
Here we have used the log inverse Cartier isomorphism 
$C^{-1}\col \Om^{j+1}_{X/s} \os{\sim}{\lo} {\cal H}^{j+1}(\Om^{\bul}_{X/s})$. 
Hence we have the following exact sequence: 
\begin{align*} 
\cdots & \lo H^{q-1}(X,\Om^{j+1}_{X/s})\lo 
H^{q}(X,{\cal W}\Om^{j}_X/F)
\os{dV}{\lo} H^{q}(X,{\cal W}\Om^{j+1}_X/V)
\tag{5.1.4}\label{ali:wmnii}\\
&\lo 
H^{q}(X,\Om^{j+1}_{X/s})\lo \cdots. 
\end{align*} 
By the exact sequence (\ref{ali:fwox})  
and the first assumption, 
we have the following isomorphism 
\begin{align*} 
H^q(X,{\cal W}\Om^i_X)/F\os{\sim}{\lo} 
H^q(X,{\cal W}\Om^i_X/F). 
\tag{5.1.5}\label{ali:wooix} 
\end{align*} 
By the exact sequence 
\begin{align*} 
0\lo {\cal W}\Om^{j+1}_X \os{V}{\lo} {\cal W}\Om^{j+1}_X
\lo {\cal W}\Om^{j+1}_X/V\lo 0, 
\end{align*}  
we have the following injection 
\begin{align*} 
H^q(X,{\cal W}\Om^{j+1}_X)/V\os{\subset}{\lo} 
H^q(X,{\cal W}\Om^{j+1}_X/V). 
\tag{5.1.6}\label{ali:wooivx} 
\end{align*} 
Since the following diagram 
\begin{equation*} 
\begin{CD}
H^q(X,{\cal W}\Om^j_X)/F
@>{\sim}>> H^q(X,{\cal W}\Om^j_X/F)
\\
@V{dV}VV @VV{dV}V \\
H^q(X,{\cal W}\Om^{j+1}_X)/V
@>>> H^q(X,{\cal W}\Om^{j+1}_X/V) 
\end{CD} 
\end{equation*}
is commutative, 
the morphism 
$$dV \col H^{q}(X,{\cal W}\Om^j_X/F)
\lo 
H^{q}(X,{\cal W}\Om^{j+1}_X/V)$$
is zero. 
Hence we see that the morphism 
$$H^{q-1}(X,\Om^{i+1}_{X/s})\lo H^{q}(X,{\cal W}\Om^{i}_X/F)
=H^q(X,{\cal W}\Om^{i}_X)/F$$ 
is surjective by considering the case $j=i$ in (\ref{ali:wmnii}).  
We also see that the morphism 
$$
H^{q}(X,{\cal W}\Om^{i}_X/V)
\lo 
H^{q}(X,\Om^{i}_{X/s})$$ 
is injective by considering the case $j=i-1$ in (\ref{ali:wmnii}). 
We have proved (\ref{theo:htb}). 
\end{proof}

\begin{coro}\label{coro:hb}
Let the assumptions be as in {\rm (\ref{theo:htb})}. 
Let $G^{iq}$ be the $p$-divisible group whose Cartier module 
is $H^q(X,{\cal W}\Om^{i}_X)$. 
Let $(G^{iq})^*$ be the Cartier dual of $G^{iq}$. 
Let $h(G^{iq})$ be the height of $G^{iq}$. 
Then 
\begin{align*} 
\dim G^{iq}\leq \dim_{\kap}
H^{q}(X,\Om^{i}_{X/s})
\tag{5.2.1}\label{ali:fxsi}
\end{align*}  
\begin{align*} 
\dim (G^{iq})^*\leq \dim_{\kap}
H^{q-1}(X,\Om^{i+1}_{X/s}).
\tag{5.2.2}\label{ali:fxdsi}
\end{align*}  
and 
\begin{align*} 
h(G^{iq})\leq {\dim}_{\kap}
H^{q-1}(X,\Om^{i+1}_{X/s})+\dim_{\kap}
H^{q}(X,\Om^{i}_{X/s}).
\tag{5.2.3}\label{ali:fki}
\end{align*}  
\end{coro}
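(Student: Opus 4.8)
The three inequalities are a translation of the diagram (\ref{eqn:xxs}) into the language of $p$-divisible groups, so the plan is to pin down the standard dictionary of covariant Dieudonn\'{e} theory and then read the bounds off the columns and the top row of that diagram; the genuine geometric content already lives in (\ref{theo:htb}). Set $M:=H^q(X,{\cal W}\Om^i_X)$, which by hypothesis is the Cartier module of $G^{iq}$ and is therefore a free ${\cal W}$-module of finite rank. First I would record the three facts I need: $h(G^{iq})={\rm rank}_{\cal W}M=\dim_{\kap}(M/pM)$; the Lie algebra of $G^{iq}$ is $M/VM$, so that $\dim G^{iq}=\dim_{\kap}(M/VM)$; and $F$ and $V$ are interchanged under Cartier duality, so that $\dim (G^{iq})^*=\dim_{\kap}(M/FM)$. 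In particular $\dim G^{iq}+\dim (G^{iq})^*=h(G^{iq})$.

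To prove (\ref{ali:fxsi}) I would follow the left vertical column of (\ref{eqn:xxs}): it exhibits $M/VM=H^q(X,{\cal W}\Om^i_X)/V$ as a $\kap$-subspace of $H^q(X,{\cal W}\Om^i_X/V)$, which in turn injects into $H^q(X,\Om^i_{X/s})$. Composing the two inclusions gives $\dim_{\kap}(M/VM)\leq \dim_{\kap}H^q(X,\Om^i_{X/s})$, which is (\ref{ali:fxsi}). For (\ref{ali:fxdsi}) I would read off the right column: the isomorphism (\ref{ali:wooix}) identifies $M/FM$ with $H^q(X,{\cal W}\Om^i_X/F)$, and (\ref{theo:htb}) supplies a surjection $H^{q-1}(X,\Om^{i+1}_{X/s})\lo H^q(X,{\cal W}\Om^i_X/F)$; hence $\dim_{\kap}(M/FM)\leq \dim_{\kap}H^{q-1}(X,\Om^{i+1}_{X/s})$.

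Finally (\ref{ali:fki}) follows by addition. The top row (\ref{ali:wfvnii}) of (\ref{eqn:xxs}) is short exact, so $\dim_{\kap}(M/pM)=\dim_{\kap}(M/VM)+\dim_{\kap}(M/FM)$; combining this with $h(G^{iq})=\dim_{\kap}(M/pM)$ and the two inequalities just established yields the asserted bound on the height.

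The only delicate point — the hard part, such as it is — is fixing the covariant convention correctly, so that the \emph{dimension} of $G^{iq}$ is computed by $V$ (via the Lie algebra $M/VM$) and the \emph{codimension} by $F$, rather than the reverse; an inadvertent swap would exchange (\ref{ali:fxsi}) and (\ref{ali:fxdsi}). I would pin the convention down by matching the case $i=0$ against (\ref{theo:amh}), where $G^{0q}=\Phi^q_{\os{\circ}{X}/\kap}$ satisfies $\dim \Phi^q_{\os{\circ}{X}/\kap}\leq \dim_{\kap}H^q(X,{\cal O}_X)$ and $h^q(\os{\circ}{X}/\kap)={\rm rank}_{\cal W}H^q(X,{\cal W}({\cal O}_X))$. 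Once the dictionary is aligned with that case, everything else is purely formal and no further computation is required.
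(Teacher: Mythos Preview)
Your proposal is correct and follows essentially the same approach as the paper: the paper's proof simply states the Dieudonn\'{e}-theoretic identifications $\dim G^{iq}=\dim_{\kap}(M/V)$, $\dim (G^{iq})^*=\dim_{\kap}(M/F)$, $h(G^{iq})=\dim_{\kap}(M/p)$ and then reads the three inequalities off the diagram (\ref{eqn:xxs}). Your version is more expansive (spelling out which column or row gives which bound, and your remark about pinning down the $F$/$V$ convention via the case $i=0$), but the content is identical.
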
 
\begin{proof} 
Because $\dim G^{iq}= \dim_{\kap}(H^q(X,{\cal W}\Om^{i}_X)/V)$, 
$\dim (G^{iq})^*= \dim_{\kap}(H^q(X,{\cal W}\Om^{i}_X)/F)$, 
and $h(G^{iq})=\dim_{\kap}H^q(X,{\cal W}\Om^{i}_X)/p$, 
we obtain the inequality (\ref{ali:fxsi}), (\ref{ali:fxdsi}) 
and 
(\ref{ali:fki}), respectively, by (\ref{eqn:xxs}). 
\end{proof}

The following is a generalization of \cite[(2.3)]{vgkht}; 
our assumption is much weaker than that of [loc.~cit.]:

\begin{coro}\label{coro:amh}
{\rm (\ref{theo:amh})} holds. 
\end{coro}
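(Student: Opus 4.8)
The plan is to deduce (\ref{theo:amh}) from (\ref{coro:hb}) by taking $i=0$. For $i=0$ we have $\Om^0_{X/s}={\cal O}_X$, so the Cartier module of the $p$-divisible group $G^{0q}$ is $H^q(X,{\cal W}\Om^0_X)=H^q(X,{\cal W}({\cal O}_X))$. By Artin--Mazur's theorem (\cite{am}) this module is canonically the Dieudonn\'e module of $\Phi^q_{\os{\circ}{X}/\kap}$, and since $h^q(\os{\circ}{X}/\kap)<\infty$ the group $\Phi^q_{\os{\circ}{X}/\kap}$ is a $p$-divisible formal group; hence $G^{0q}=\Phi^q_{\os{\circ}{X}/\kap}$, so that with the standard conventions $\dim G^{0q}=\dim(\Phi^q_{\os{\circ}{X}/\kap})$, $\dim (G^{0q})^*=\dim((\Phi^q_{\os{\circ}{X}/\kap})^*)$ and $h(G^{0q})=h^q(\os{\circ}{X}/\kap)$. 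With these identifications the three inequalities (\ref{ali:fxsi}), (\ref{ali:fxdsi}) and (\ref{ali:fki}) of (\ref{coro:hb}) for $i=0$ become precisely (\ref{ali:fadri}), (\ref{ali:faddri}) and (\ref{ali:fari}).

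So the only real work is to verify the hypotheses of (\ref{theo:htb}) (hence of (\ref{coro:hb})) for $i=0$. First I would check the injectivity of $F$. The assumption on $H^{q+1}$ is given outright. For $H^q$, the finiteness $h^q(\os{\circ}{X}/\kap)<\infty$ forces $H^q(X,{\cal W}({\cal O}_X))$ to be a free ${\cal W}$-module of finite rank $h^q(\os{\circ}{X}/\kap)$ (it is the Dieudonn\'e module of a finite-height formal group, as recalled in the Introduction); in particular $p$ acts injectively, and from $VF=p$ it follows that $F$ is injective on $H^q(X,{\cal W}({\cal O}_X))$. Next I would dispose of the two ``$dV=0$'' conditions: for $j=i-1=-1$ the group $H^q(X,{\cal W}\Om^{-1}_X)=0$, so there is nothing to prove, and the content is the case $j=i=0$, namely that $dV\col H^q(X,{\cal W}({\cal O}_X))\lo H^q(X,{\cal W}\Om^1_X)$ vanishes.

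I expect this last vanishing to be the main obstacle. The plan is to exploit that $\Phi^q_{\os{\circ}{X}/\kap}$, being representable of finite height, is connected, so that $V$ is topologically nilpotent on $M:=H^q(X,{\cal W}({\cal O}_X))$; concretely, since $V$ is nilpotent on the finite-dimensional $M/pM$, there is an integer $c$ with $V^{c}M\subset pM$, whence $V^{cn}M\subset p^{n}M$ for all $n$. Writing $N:=H^q(X,{\cal W}\Om^1_X)$ and, for a local section $x$ of $M$, $w_k:=dV^kx\in N$, the de Rham--Witt relation $FdV=d$ of (\ref{eqn:earl}) gives $Fw_{k+1}=w_k$, hence $w_1=F^{cn-1}w_{cn}$ for every $n$. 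On the other hand $w_{cn}=dV^{cn}x\in p^{n}\,dM\subset p^{n}N$, so $w_1=dVx$ lies in $\bigcap_{n}p^{n}N$. It then remains only to argue that $N$ is $p$-adically separated: this holds because $N=\vpl_{m} H^q(X,{\cal W}_m\Om^1_X)$ (the relevant $\vpl^1$-term vanishes by Mittag--Leffler, the groups $H^{q-1}(X,{\cal W}_m\Om^1_X)$ being of finite length since $\os{\circ}{X}$ is proper), and each $H^q(X,{\cal W}_m\Om^1_X)$ is of finite length over ${\cal W}$, so an element lying in $\bigcap_n p^nN$ maps to $0$ in every finite stage and is therefore $0$. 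This gives $dVx=0$, establishing the remaining hypothesis of (\ref{theo:htb}) and completing the deduction.
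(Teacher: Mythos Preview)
Your reduction to (\ref{coro:hb}) with $i=0$ and your verification of the $F$-injectivity hypotheses are exactly what the paper does. The one place you diverge is in establishing the vanishing of $dV$ on $H^q(X,{\cal W}({\cal O}_X))$. The paper does not argue this directly: it observes instead that the stronger statement $d=0$ on $H^q(X,{\cal W}({\cal O}_X))$ is already known, by \cite[(2.5)]{nyc} or by the log version of \cite[II~(3.8)]{ir} (this is the same input invoked in the proof of (\ref{theo:fw}): finite-dimensionality of the $F$-torsion, here in fact its vanishing, forces $d=0$). Your argument---using connectedness of $\Phi^q_{\os{\circ}{X}/\kap}$ to get topological nilpotence of $V$ on $M$, the relation $FdV=d$ to write $dVx=F^{cn-1}dV^{cn}x\in p^nN$, and $p$-adic separatedness of $N=\varprojlim_m H^q(X,{\cal W}_m\Om^1_X)$---is correct and self-contained, and it yields only $dV=0$, which is precisely what (\ref{theo:htb}) needs. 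So the two approaches agree in structure and differ only in this one step: the paper imports a known black-box vanishing of $d$, while you give an elementary Dieudonn\'e-module argument for the weaker (but sufficient) vanishing of $dV$.
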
 
\begin{proof} 
By the assumption, $H^q(X,{\cal W}({\cal O}_X))$ is a free ${\cal W}$-module 
of finite rank $h^q(\os{\circ}{X}/\kap)$.  
The induced morphism $d\col H^q(X,{\cal W}({\cal O}_X))\lo 
H^q(X,{\cal W}\Om^1_{X})$ by the derivative 
$d\col {\cal W}({\cal O}_X)\lo {\cal W}\Om^1_{X}$ 
is zero by \cite[(2.5)]{nyc} or the log version of \cite[II (3.8)]{ir}. 
Now (\ref{theo:amh}) follows from (\ref{coro:hb}). 
\end{proof} 

\begin{exem} 
Let $X/s$ be a log Calabi-Yau variety of pure dimension $d$. 
Assume that $h^d(\os{\circ}{X}/\kap)$ is finite. 
Then $h^d(\os{\circ}{X}/\kap)\leq H^{d-1}(X,\Om^{1}_{X/s})+1$. 
This is a log version of \cite[(2.4)]{vgkht}. 
As in the trivial logarithmic case, we say that 
$X/s$ is {\it rigid} if $H^{d-1}(X,\Om^{1}_{X/s})=0$. 
(This is equivalent to the vanishing of 
$H^1(X,\Om^{d-1}_{X/s})$ by the log Serre duality of Tsuji 
(\cite[(2.21)]{tsp}).)
Consequently the height of a rigid log Calabi-Yau variety is $1$ or $\infty$. 
\end{exem}

\begin{theo}\label{theo:dl}
Let $q$ and $i$ be nonnegative integers. 
Let the assumptions be as in {\rm (\ref{theo:htb})}. 
However, instead of the injectivity of the morphism 
$$F\col H^q(X,{\cal W}\Om^{i}_X)\lo H^q(X,{\cal W}\Om^{i}_X)$$ 
in {\rm (\ref{theo:htb})}, 
assume that the operator 
$$V\col H^q(X,{\cal W}\Om^{i}_X)\lo H^q(X,{\cal W}\Om^{i}_X)$$  
is injective. 
Then there exists the following diagram 
\begin{equation*} 
\begin{CD} 
0@>>>H^q(X,{\cal W}\Om^{i}_X)/F@>{V}>> H^q(X,{\cal W}\Om^{i}_X)/p@>>>
H^q(X,{\cal W}\Om^{i}_X)/V@>>> 0\\
@. @V{\simeq}VV@. @VV{\bigcap}V\\
@. H^q(X,{\cal W}\Om^{i}_X/F)@. @.H^q(X,{\cal W}\Om^{i}_X/V)\\
@. @AAA@. @VV{\bigcap}V\\
@. H^{q-1}(X,\Om^{i+1}_{X/s})@. @. H^{q}(X,\Om^{i}_{X/s}). 
\end{CD} 
\tag{5.5.1}\label{eqn:xxys}
\end{equation*}  
\end{theo}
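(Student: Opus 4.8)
The plan is to follow the proof of (\ref{theo:htb}) line by line, interchanging $F$ and $V$ only in the top horizontal short exact sequence of (\ref{eqn:xxys}). Write $H:=H^q(X,{\cal W}\Om^i_X)$. The sole place in the proof of (\ref{theo:htb}) where the injectivity of $F$ \emph{on $H$} (rather than on $H^{q+1}(X,{\cal W}\Om^i_X)$) is used is the construction of the exact sequence (\ref{ali:wfvnii}); all remaining steps rest only on the injectivity of $F$ on $H^{q+1}(X,{\cal W}\Om^i_X)$ and on the vanishing of $dV$, both of which are retained in the present hypotheses. So first I would replace (\ref{ali:wfvnii}) by its dual. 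By (\ref{eqn:earl}) and (\ref{coro:locdw}) we have $FV=VF=p$ on ${\cal W}\Om^i_X$, so $V$ carries $FH$ into $pH=FVH$ and hence induces a map $V\col H/F\lo H/p$. Its injectivity is exactly the new hypothesis that $V$ is injective on $H$: if $Vx\in pH=VFH$ then $x\in FH$ by injectivity of $V$. Its cokernel is $H/(VH+pH)=H/V$ because $pH=FVH\subset VH$. This produces the top row $0\lo H/F\os{V}{\lo}H/p\lo H/V\lo 0$.

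For the left column I would note that the isomorphism $H/F\os{\sim}{\lo}H^q(X,{\cal W}\Om^i_X/F)$ is (\ref{ali:wooix}), obtained from (\ref{ali:fwox}) together with the injectivity of $F$ on $H^{q+1}(X,{\cal W}\Om^i_X)$, which is among the hypotheses. To construct the upward arrow I would invoke (\ref{ali:wmnii}) with $j=i$, giving $H^{q-1}(X,\Om^{i+1}_{X/s})\lo H^q(X,{\cal W}\Om^i_X/F)\os{dV}{\lo}H^q(X,{\cal W}\Om^{i+1}_X/V)$; the hypothesis that $dV$ vanishes on $H^q(X,{\cal W}\Om^i_X)$, combined with the isomorphism just cited and the commutative square relating $dV$ on the two quotient cohomologies, forces $dV=0$ here, so $H^{q-1}(X,\Om^{i+1}_{X/s})\lo H^q(X,{\cal W}\Om^i_X/F)$ is surjective, exactly as in (\ref{theo:htb}).

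For the right column the injection $H/V\lo H^q(X,{\cal W}\Om^i_X/V)$ is (\ref{ali:wooivx}), which holds unconditionally from the $V$-analogue of (\ref{ali:fwox}) (the sheaf map $V$ on ${\cal W}\Om^i_X$ being injective). The second injection $H^q(X,{\cal W}\Om^i_X/V)\lo H^q(X,\Om^i_{X/s})$ I would read off from (\ref{ali:wmnii}) with $j=i-1$, namely from $H^q(X,{\cal W}\Om^{i-1}_X/F)\os{dV}{\lo}H^q(X,{\cal W}\Om^i_X/V)\lo H^q(X,\Om^i_{X/s})$, using the vanishing of $dV$ on $H^q(X,{\cal W}\Om^{i-1}_X)$ precisely as in the proof of (\ref{theo:htb}). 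Splicing the top row to these two columns yields (\ref{eqn:xxys}).

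Since everything outside the top row is transported verbatim from (\ref{theo:htb}), the proof is essentially bookkeeping; the one point I would verify carefully is that the hypothesis which changed --- injectivity of $V$ in place of $F$ on $H$ --- enters nowhere except in the dual exact sequence above. I expect this to be the only (and minor) obstacle: one must confirm against the proof of (\ref{theo:htb}) that the isomorphism (\ref{ali:wooix}) and the two vanishing statements for $dV$ depend only on $F$-injectivity in degree $q+1$ and on the $dV$-hypotheses, never on $F$-injectivity in degree $q$. Granting this, the diagram (\ref{eqn:xxys}) is assembled exactly as (\ref{eqn:xxs}) was.
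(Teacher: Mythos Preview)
Your proposal is correct and follows essentially the same approach as the paper. The paper's own proof simply says that the new assumption yields the exact sequence $0\to H^q(X,{\cal W}\Om^i_X)/F\os{V}{\to}H^q(X,{\cal W}\Om^i_X)/p\to H^q(X,{\cal W}\Om^i_X)/V\to 0$ and that ``the rest of the proof is the same as that of (\ref{theo:htb})''; you have spelled out exactly this, correctly isolating that injectivity of $F$ on $H^q$ is used only for (\ref{ali:wfvnii}) and replacing it by its $V$-analogue.
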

\begin{proof} 
By the new assumption, 
we have the following exact sequence 
\begin{align*} 
0\lo H^q(X,{\cal W}\Om^{i}_X)/F
\os{V}{\lo} H^q(X,{\cal W}\Om^{i}_X)/p
\lo H^q(X,{\cal W}\Om^{i}_X)/V\lo 0. 
\tag{5.5.2}\label{ali:wfvnnii}
\end{align*} 
The rest of the proof is the same as that of (\ref{theo:htb}). 
\end{proof}

\parno 
The following is a log version of \cite[II (4.6)]{idw}. 

\begin{coro}[{\bf cf.~\cite[II (4.6)]{idw}}]\label{coro:lvi}
Let $q$ be a nonnegative integer. 
For any $i$ and $j$ such that $i+j=q$, 
assume that the operators 
$$V\col H^j(X,{\cal W}\Om^{i}_X)\lo H^j(X,{\cal W}\Om^{i}_X),$$ 
$$F\col H^{j+1}(X,{\cal W}\Om^i_X)\lo H^{j+1}(X,{\cal W}\Om^i_X)$$
are injective and that the operator 
$$dV \col H^j(X,{\cal W}\Om^i_X)\lo H^q(X,{\cal W}\Om^{j+1}_X)$$ 
is zero. 
Then there exists the following exact sequence 
\begin{align*} 
0 &\lo  H^0(X,{\cal W}\Om^q_X)/p
\lo H^0(X,\Om^q_{X/s}) \lo H^1(X,{\cal W}\Om^{q-1}_X)/p
\lo H^1(X,\Om^{q-1}_{X/s})\tag{5.6.1}\label{ali:il}\\
&\lo H^2(X,{\cal W}\Om^{q-2}_X)/p\lo H^2(X,\Om^{q-2}_{X/s})
\lo \cdots \lo \cdots\\
&\lo H^{q-1}(X,\Om^{1}_{X/s})\lo 
H^q(X,{\cal W}({\cal O}_X))/p\lo H^q(X,{\cal O}_X)\lo 0. 
\end{align*} 
\end{coro}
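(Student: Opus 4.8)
The plan is to build the sequence by splicing two families of short exact sequences that are already at our disposal, running the argument along the diagonal $i+j=q$ in the spirit of \cite[II (4.6)]{idw}. Throughout I fix $q$ and, for $0\le j\le q$, write $i=q-j$, so that $H^j(X,{\cal W}\Om^{q-j}_X)$ and $H^j(X,\Om^{q-j}_{X/s})$ are the two types of groups occurring in the asserted sequence.

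First I would record, for each such pair $(i,j)$, the $\mathrm{mod}~p$ sequence furnished by (\ref{theo:dl}): since $V$ is injective on $H^j(X,{\cal W}\Om^i_X)$ we obtain
$$0\lo H^j(X,{\cal W}\Om^i_X)/F \os{V}{\lo} H^j(X,{\cal W}\Om^i_X)/p \lo H^j(X,{\cal W}\Om^i_X)/V\lo 0.$$
Exactly as in the proof of (\ref{theo:htb}), the injectivity of $F$ on $H^{j+1}(X,{\cal W}\Om^i_X)$, via the cohomology sequence of (\ref{ali:fwox}), yields the identification $H^j(X,{\cal W}\Om^i_X)/F\os{\sim}{\lo}H^j(X,{\cal W}\Om^i_X/F)$, while $V$-injectivity yields the injection $H^j(X,{\cal W}\Om^i_X)/V\lo H^j(X,{\cal W}\Om^i_X/V)$.

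Next I would invoke (\ref{eqn:rfvn}) for $r=1$, which by (\ref{coro:locdw}) and the log inverse Cartier isomorphism takes the form (\ref{ali:wfsfnii}), namely
$$0\lo {\cal W}\Om^{i-1}_X/F \os{dV}{\lo} {\cal W}\Om^{i}_X/V\lo \Om^{i}_{X/s}\lo 0.$$
Passing to cohomology and using the vanishing of the induced operator $dV$ on cohomology — which, as in (\ref{theo:htb}), follows from the hypothesis that $dV$ is zero on the de Rham--Witt cohomology together with the identifications just recorded (the commutative square in that proof forces the relevant map to vanish) — breaks this long exact sequence into
$$0\lo H^j(X,{\cal W}\Om^{i}_X/V)\lo H^j(X,\Om^{i}_{X/s})\os{\del}{\lo} H^{j+1}(X,{\cal W}\Om^{i-1}_X/F)\lo 0.$$
Splicing the two families then produces the claimed sequence: the arrow $H^j(X,{\cal W}\Om^{q-j}_X)/p\lo H^j(X,\Om^{q-j}_{X/s})$ is the composite of the projection onto $H^j(X,{\cal W}\Om^{q-j}_X)/V$, the injection into $H^j(X,{\cal W}\Om^{q-j}_X/V)$, and the inclusion of the second family, while $H^j(X,\Om^{q-j}_{X/s})\lo H^{j+1}(X,{\cal W}\Om^{q-j-1}_X)/p$ is $\del$ followed by the identification with $H^{j+1}(X,{\cal W}\Om^{q-j-1}_X)/F$ and by $V$. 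At the right end $H^{q+1}(X,{\cal W}\Om^{-1}_X/F)$ vanishes, so $H^q(X,{\cal W}({\cal O}_X))/p$ maps onto $H^q(X,{\cal O}_X)=H^q(X,\Om^0_{X/s})$, and the $j=0$ instance produces the leftmost $0\lo H^0(X,{\cal W}\Om^q_X)/p$.

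The hard part will be the bookkeeping of exactness at the Hodge nodes $H^j(X,\Om^{q-j}_{X/s})$: I must check that the injection $H^j(X,{\cal W}\Om^{q-j}_X)/V\lo H^j(X,{\cal W}\Om^{q-j}_X/V)$ lands exactly on $\ker\del$ and that $\del$ surjects onto the quotient by $F$, i.e. that every connecting $dV$ appearing in the two long exact sequences genuinely dies on cohomology and that the two identifications of $\,\cdot/F$ and $\,\cdot/V$ match up. This is precisely where the full strength of the injectivity of $F$ and of $V$ and of the vanishing of $dV$ along the relevant diagonals is consumed, and it is the step that must be carried out with the same care as in \cite[II (4.6)]{idw}; exactness at the $H^j(X,{\cal W}\Om^{q-j}_X)/p$-nodes, by contrast, is read off immediately from the first family.
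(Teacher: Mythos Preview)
Your approach is exactly the paper's: the paper's proof is the single sentence ``(\ref{coro:lvi}) follows from (\ref{theo:dl})'', and what you have written is precisely an unpacking of that line --- applying the diagram (\ref{eqn:xxys}) at each bidegree $(q-j,j)$ along the diagonal $i+j=q$ and splicing the resulting short exact sequences via the identifications $H^j(\cdot)/F\simeq H^j(\cdot/F)$ and $H^j(\cdot)/V\hookrightarrow H^j(\cdot/V)$ together with the long exact sequence (\ref{ali:wmnii}). Your honest flag about the bookkeeping at the Hodge nodes (whether the injection $H^j(\cdot)/V\hookrightarrow H^j(\cdot/V)$ hits exactly $\ker\del$) is well placed; the paper suppresses this verification entirely, so you are already more explicit than the source.
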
 
\begin{proof} 
(\ref{coro:lvi}) follows from (\ref{theo:dl}). 
\end{proof}

\begin{rema}\label{rema:fsls} 
(1) We leave the log version of \cite[II (4.5)]{idw} to the reader. 
\par 
(2) In the trivial log case, the assumptions in (\ref{coro:lvi}) 
are slightly weaker than those in \cite[II (4.6)]{idw}. 
\end{rema}

\section{Ordinary log schemes and $F$-split log schemes}\label{sec:fs} 
In this section we give the definition of the ordinarity  at a bidegree 
for a proper log smooth scheme of Cartier type.  
We also generalize results in \cite{j} and \cite{jr} for $F$-split log schemes over $s$. 
We also prove that the nontrivial exotic torsions of 
log crystalline cohomologies of 
$F$-split proper log smooth schemes do not exist. 
This is a log version of Joshi's 
result (\cite{j}). 
We also give the criterion of the $F$-splitness for certain log schemes. 
\par
Let the notations be as in the previous section. 

\begin{defi}[{\bf cf.~\cite[(7.2)]{blk}, \cite[IV (4.12), (4.13)]{ir}, 
\cite[\S4]{lodw}}]\label{defi:wo}
Let $q$ be a nonnegative integer. 
\par 
(1) We say that $X/s$ is {\it ordinary at} $(0,q)$ if $H^q(X,B\Om^1_{X/s})=0$. 
\par 
(2) We say that $X/s$ is {\it ordinary at} $(0,\star)$ (or simply at $0$)
if $H^q(X,B\Om^1_{X/s})=0$ for any $q\in {\mab N}$. 
\end{defi}

\begin{prop}\label{prop:wo}
Let $q$ be a nonnegative integer. 
Then the following are equivalent$:$
\par 
$(1)$ $X/s$ is ordinary at $(0,q)$. 
\par 
$(2)$
For any $n\in {\mab Z}_{\geq 1}$, $H^q(X,B_n\Om^1_{X/s})=0$. 
\par 
$(3)$ For any $n\in {\mab Z}_{\geq 1}$, 
$H^q(X,{\cal W}_n({\cal O}_X))/F=0={}_FH^{q+1}(X,{\cal W}_n({\cal O}_X))$.
\par 
$(4)$ $H^q(X,{\cal O}_X)/F=0={}_FH^{q+1}(X,{\cal O}_X)$. 
\end{prop}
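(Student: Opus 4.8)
The plan is to prove all four equivalences by shuttling between two short exact sequences already recorded above: the log Serre exact sequence (\ref{ali:wnox}) and the defining short exact sequence of $B_n\Om^1_{X/s}$. First I would apply cohomology to (\ref{ali:wnox}) for the case $Y=X$,
\begin{align*}
0\lo {\cal W}_n({\cal O}_X)\os{F}{\lo} F_*({\cal W}_n({\cal O}_X))\os{d_n}{\lo} B_n\Om^1_{X/s}\lo 0 .
\end{align*}
Because $\os{\circ}{F}$ is finite, $H^q(X,F_*({\cal W}_n({\cal O}_X)))=H^q(X,{\cal W}_n({\cal O}_X))$ and the map induced by $F$ is the Frobenius operator on $H^q(X,{\cal W}_n({\cal O}_X))$ (as in the proof of (\ref{theo:fsb0})); splitting the long exact sequence into kernel and cokernel then produces, for every $n\geq 1$, the short exact sequence
\begin{align*}
0\lo H^q(X,{\cal W}_n({\cal O}_X))/F\lo H^q(X,B_n\Om^1_{X/s})\lo {}_FH^{q+1}(X,{\cal W}_n({\cal O}_X))\lo 0 .
\end{align*}
Read for all $n$, this gives the equivalence of (2) and (3) at once, since the middle term vanishes iff both outer terms do; read for $n=1$, where ${\cal W}_1({\cal O}_X)={\cal O}_X$ and $B_1\Om^1_{X/s}=B\Om^1_{X/s}$, it gives the equivalence of (1) and (4).

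It then suffices to connect the $n=1$ statements to the all-$n$ statements, that is, to prove that (1) $\Rightarrow$ (2); the converse (2) $\Rightarrow$ (1) is just the case $n=1$. Here I would use the short exact sequence
\begin{align*}
0\lo F^{n-1}_*(B_1\Om^1_{X/s})\lo B_n\Om^1_{X/s}\os{C}{\lo} B_{n-1}\Om^1_{X/s}\lo 0 ,
\end{align*}
whose exactness follows from the defining relation $C^{-1}\col B_{n-1}\Om^1_{X'/s}\os{\sim}{\lo} B_n\Om^1_{X/s}/F^{n-1}_*(B_1\Om^1_{X/s})$ displayed before (\ref{prop:wus}), together with the isomorphism $\os{\circ}{X}{}'\os{\sim}{\lo}\os{\circ}{X}$, the map $C$ being the one in (\ref{prop:wus}) (2). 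I would then induct on $n$: the base case $n=1$ is hypothesis (1), and in the inductive step the long exact sequence squeezes $H^q(X,B_n\Om^1_{X/s})$ between $H^q(X,F^{n-1}_*(B_1\Om^1_{X/s}))=H^q(X,B_1\Om^1_{X/s})$, which is $0$ by (1), and $H^q(X,B_{n-1}\Om^1_{X/s})$, which is $0$ by the inductive hypothesis. Hence $H^q(X,B_n\Om^1_{X/s})=0$ for all $n$, i.e.\ (2) holds. This closes the cycle (1) $\Rightarrow$ (2) $\Rightarrow$ (3) $\Rightarrow$ (4) $\Rightarrow$ (1), the implications (3) $\Rightarrow$ (4) and (4) $\Rightarrow$ (1) being the $n=1$ cases of the first paragraph.

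I expect no genuine obstacle: once both exact sequences are in hand the argument is purely formal. The only points that need care are the two identifications invoked above — that a finite Frobenius pushforward leaves cohomology unchanged (twisting the $\kap$-structure by $\sig$, which is irrelevant for a vanishing statement), and that the connecting map attached to $F\col {\cal W}_n({\cal O}_X)\lo F_*({\cal W}_n({\cal O}_X))$ is genuinely the Frobenius operator on $H^q$ — together with the routine bookkeeping of the Frobenius twists appearing in $F^{n-1}_*(B_1\Om^1_{X/s})$, none of which interferes with the vanishing we need.
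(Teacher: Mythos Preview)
Your proof is correct and follows essentially the same approach as the paper: both use the long exact sequence of (\ref{ali:wnox}) to obtain (\ref{ali:fex}), which gives (2)$\iff$(3) and, at $n=1$, (1)$\iff$(4), and both prove (1)$\Rightarrow$(2) by induction on $n$ via a short exact sequence relating $B_n$, $B_{n-1}$, and $B_1$. The only cosmetic difference is that the paper uses the sequence $0\to F_*(B_{n-1}\Om^1_{X/s})\to B_n\Om^1_{X/s}\os{C^{n-1}}{\lo} B_1\Om^1_{X/s}\to 0$ (the right column of (\ref{cd:dnbx})) with $B_1$ as the quotient, whereas you use the single-step sequence with $B_1$ as the sub and $B_{n-1}$ as the quotient; either slicing of the same filtration works for the induction.
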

\begin{proof} 
$(1)\Lo (2)$: 
Recall the right vertical exact sequence in (\ref{cd:dnbx}):  
\begin{align*} 
0\lo F_*(B_{n-1}\Om^1_{X/s})\lo B_n\Om^1_{X/s}\os{C^{n-1}}{\lo}B_1\Om^1_{X/s}\lo 0. 
\tag{6.2.1}\label{ali:fbb} 
\end{align*} 
By noting that 
$H^q(X,F_*(B_{n-1}\Om^1_{X/s}))\simeq H^q(X,B_{n-1}\Om^1_{X/s})$ 
and that $H^q(X,B_{1}\Om^1_{X/s})\simeq H^q(X,B\Om^1_{X/s})$ 
and using induction on $n$,  
we obtain the implication $(1)\Lo (2)$. 
\par 
$(2)\Lo (3)$: 
By (\ref{ali:wnox}) we have the following exact sequence: 
\begin{align*} 
0\lo H^q(X,{\cal W}_n({\cal O}_X))/F\lo H^q(X,B_n\Om^1_{X/s})\lo 
{}_FH^{q+1}(X,{\cal W}_n({\cal O}_X))\lo 0. 
\tag{6.2.2}\label{ali:fex}
\end{align*} 
Hence we obtain the implication $(2)\Lo (3)$. 
\par 
$(3)\Lo (4)$: This is obvious. 
\par 
$(4)\Lo (1)$: By (\ref{ali:fex}) for the case $n=1$, 
we have the following exact sequence: 
\begin{align*} 
0\lo H^q(X,{\cal O}_X)/F\lo H^q(X,B_1\Om^1_{X/s})\lo 
{}_FH^{q+1}(X,{\cal O}_X)\lo 0. 
\tag{6.2.3}\label{ali:qbq}
\end{align*} 
Hence we obtain the implication $(4)\Lo (1)$. 
\end{proof} 

\begin{rema}\label{rema:od}
(1) (\cite[(1.4)]{blk}, (resp.~\cite[(1.3)]{nyt})) 
Let $X/s$ be an abelian variety 
(resp.~$K3$-surface) over $\kap$. 
Then $X/s$ is ordinary if and only if 
it is ordinary at $(0,1)$ (resp.~$(0,2)$). 
\par 
(2) The ordinarity at $(0,\star)$ is an interesting notion: 
see \cite{st} for the main theorem in [loc.~cit.].  
\end{rema}

\par 
We need the following remark for (\ref{prop:woni}) below. 

\begin{rema}\label{rema:rld}
(1) Let the notations be as in \cite[p.~256]{lodw}. 
Let ${\cal J}_n$ be the ideal sheaf of the closed immersion $X\os{\sus}{\lo} Z_n$. 
The definition of the morphism 
\begin{align*} 
d\log \col {\cal M}_X^{\rm gp}\vert_U\lo W_n\Om^1_{U/S}
\tag{6.4.1}\label{ali:mxwnus}
\end{align*} 
is mistaken in [loc.~cit.]. Though the lift $\wt{m}$ of a local section $m$ of 
${\cal M}_X^{\rm gp}\vert_U$ is taken in ${\cal M}_{D_n}^{\rm gp}$ 
in [loc.~cit.], we have to take a lift $\wt{m}$ in ${\cal M}^{\rm gp}_{Z_n}$
because 
$W_n\Om^1_{U/S}=
{\cal H}^1({\cal O}_{D_n}\otimes_{{\cal O}_{Z_n}}\Om^{\bul}_{Z_n/S_n})$. 
Furthermore, we have to take the cohomology class'' of $1\otimes d\log \wt{m}$ 
to define the morphism (\ref{ali:mxwnus}). 
If one uses an isomorphism 
\begin{equation*}
\Om^{\bul}_{D_n/S_n, [~]}
\os{\sim}{\lo} 
{\cal O}_{D_n}
\otimes_{{\cal O}_{Z_n}}
\Om^{\bul}_{Z_n/S_n}.   
\tag{6.4.2}\label{eqn:fwniwu}
\end{equation*}   
proved in \cite[(1.3.28.1)]{nb}, one can define the image of 
$m$ by the morphism (\ref{ali:mxwnus}) 
as the cohomology class of the image $d\log \wt{m}$, 
where $\wt{m}$ is a lift of $m$ in ${\cal M}_{D_n}^{\rm gp}$. 
Here $\Om^{\bul}_{D_n/S_n, [~]}$ is the quotient of 
$\bigoplus_{i\in {\mab N}}\Om^i_{D_n/S_n}$ by the ideal sheaf generated 
by local sections of the form $d(a^{[e]})-a^{[e-1]}da$ 
$(a\in {\cal J}_n, e\in {\mab Z}_{\geq 1})$.
\par 
(2) For a positive integer $q$, the definition of $W_n\Om^q_{X/S,{\log}}$ 
is not perfect in \cite[p.~257]{lodw}. 
The right definition of $W_n\Om^q_{X/S,{\log}}$ is as follows. 
The sheaf $W_n\Om^q_{X/S,{\log}}$ 
is an abelian subsheaf of $W_n\Om^q_{X/S}$ generated by 
the image of the following composite morphism 
\begin{align*} 
({\cal M}_X^{\rm gp})^{\otimes q}\os{(d\log)^{\otimes q}}{\lo} 
(W_n\Om^1_{X/S,\log})^{\otimes q}\lo (W_n\Om^1_{X/S})^{\otimes q}
\lo W_n\Om^q_{X/S}. 
\end{align*} 
Here all the tensor products are taken over ${\mab Z}$ and 
the morphism $(W_n\Om^1_{X/S})^{\otimes q}
\lo W_n\Om^q_{X/S}$ is the following local wedge product: 
\begin{align*} 
\underset{q~{\rm times}}
{\underbrace{[?]\wedge [?]\wedge \cdots [?] \wedge [?]}}: 
({\cal H}^1({\cal O}_{D_n}\otimes_{{\cal O}_{Z_n}}\Om^{\bul}_{Z_n/S_n}))^{\otimes q}
\lo 
{\cal H}^q({\cal O}_{D_n}\otimes_{{\cal O}_{Z_n}}\Om^{\bul}_{Z_n/S_n}). 
\end{align*} 
Furthermore, set $W_n\Om^0_{X/S,{\log}}:={\mab Z}/p^n$ on $\os{\circ}{X}_{\rm et}$.  
It is a routine work to check that this local wedge product is independent of the choice 
of the immersion $X\os{\sus}{\lo}Z_n$.   
\end{rema}

\par
Though all the following statements are not included in \cite[(7.3)]{blk},  
\cite[IV (4.13)]{ir} and \cite[(4.1)]{lodw}, 
almost all of them are essentially included in [loc.~cit.].  

\begin{prop}[{\bf cf.~\cite[(7.3)]{blk}, \cite[IV (4.13)]{ir},
\cite[(4.1)]{lodw}}]\label{prop:woni}
Let $q$ be a nonnegative integer. 
Denote ${\cal W}_1\Om^i_{X,\log}$ by $\Om^i_{X/s,\log}$ by abuse of notation. 
Then the following are equivalent$:$
\par 
$(1)$ $X/s$ is ordinary at $(0,\star)$. 
\par 
$(2)$
For any $n\in {\mab Z}_{\geq 1}$ and for any $q\in {\mab N}$, 
$H^q(X,B_n\Om^1_{X/s})=0$. 
\par 
$(3)$ 
For a positive integer $n$ and for any $q\in {\mab N}$, 
$H^q(X,B_n\Om^1_{X/s})=0$. 
\par 
$(4)$ 
For any $n\in {\mab Z}_{\geq 1}$ and any $q\in {\mab N}$, the operator 
\begin{align*} 
F\col H^q(X,{\cal W}_n({\cal O}_X))\lo H^q(X,{\cal W}_n({\cal O}_X))
\end{align*} 
is bijective. 
\par 
$(5)$ For a positive integer $n$ and any $q\in {\mab N}$, the operator 
\begin{align*} 
F\col H^q(X,{\cal W}_n({\cal O}_X))\lo H^q(X,{\cal W}_n({\cal O}_X))
\end{align*} 
is bijective. 
\par 
$(6)$ For any $q\in {\mab N}$, the operator 
\begin{align*} 
F\col H^q(X,{\cal O}_X)\lo H^q(X,{\cal O}_X)
\end{align*} 
is bijective. 
\par 
$(7)$ For any $q\in {\mab N}$, the operator 
\begin{align*} 
F\col H^q(X,{\cal W}({\cal O}_X))\lo H^q(X,{\cal W}({\cal O}_X))
\end{align*} 
is bijective. 
\par 
$(8)$ 
Set $X_{\ol{\kap}}:=X\otimes_{\kap}\ol{\kap}$. 
Then $\dim_{{\mab F}_p}H^q_{\rm et}(X_{\ol{\kap}},{\mab F}_p)
=\dim_{\kap}H^q(X,{\cal O}_{X})$ for any $q\in {\mab N}$. 
\par  
$(9)$  For any $q\in {\mab N}$, 
the natural morphism 
\begin{align*} 
H^q_{\rm et}(X_{\ol{\kap}},{\mab F}_p)\otimes_{{\mab F}_p}\ol{\kap} 
\lo H^q(X_{\ol{\kap}},{\cal O}_{X_{\ol{\kap}}})
\end{align*} 
is an isomorphism. 
\par 
$(10)$ For any $n\in {\mab Z}_{\geq 1}$ and for any $q\in {\mab N}$, 
the natural morphism 
\begin{align*} 
H^q_{\rm et}(X_{\ol{\kap}},{\mab Z}/p^n)
\otimes_{{\mab Z}/p^n}{\cal W}_n(\ol{\kap})
\lo H^q(X_{\ol{\kap}},{\cal W}_n({\cal O}_{X_{\ol{\kap}}}))
\end{align*} 
is an isomorphism. 
\par 
$(11)$ 
For any $q\in {\mab N}$, $H^q(X,B{\cal W}\Om^1_{X})=0$.
\par 
$(12)$ 
For any $n\in {\mab Z}_{\geq 1}$ and for any $q\in {\mab N}$, 
$H^q(X,B{\cal W}_n\Om^1_{X})=0$.
\end{prop}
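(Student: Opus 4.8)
The plan is to split the twelve conditions into three groups and to establish the equivalences inside each group together with bridges across them, pushing everything back to (\ref{prop:wo}) and to the exact sequences already at our disposal: the log Serre sequence (\ref{ali:wnox}) with its cohomological shadow (\ref{ali:fex}), and the level-comparison sequence (\ref{ali:fbb}). Group~A is the boundary-vanishing block $(1),(2),(3),(11),(12)$; group~B is the $F$-bijectivity block $(4),(5),(6),(7)$; group~C is the \'etale-comparison block $(8),(9),(10)$. The one structural fact I use repeatedly is that in characteristic $p$ the operator $F$ on ${\cal W}_n({\cal O}_X)$ is ${\cal W}_n$ applied to the absolute Frobenius of $X$, hence functorial; consequently it commutes with the natural transformations $R$ and $V$, every truncation and Verschiebung sequence of Witt sheaves is $F$-equivariant, and each group $H^q(X,-)$ in play is a finite-dimensional $\kap$-vector space equipped with a $\sig$-linear endomorphism $F$.

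First I would establish the bridges between groups~A and~B. Reading (\ref{prop:wo}) for every $q$ gives $(1)\Leftrightarrow(2)$ directly, and, since that proposition equates ordinarity at $(0,q)$ with the vanishing of the cokernel of $F$ on $H^q(X,{\cal O}_X)$ and of the kernel of $F$ on $H^{q+1}(X,{\cal O}_X)$, reading it over all $q$ yields $(1)\Leftrightarrow(6)$. The three-term exact sequence (\ref{ali:fex}) for a fixed $n$ shows that the vanishing of $H^q(X,B_n\Om^1_{X/s})$ for all $q$ is equivalent to $F$ being bijective on $H^q(X,{\cal W}_n({\cal O}_X))$ for all $q$, whence $(2)\Leftrightarrow(4)$ and $(3)\Leftrightarrow(5)$. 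The implication $(6)\Rightarrow(4)$ I would obtain by induction on $n$ from the $F$-equivariant sequence $0\to{\cal W}_{n-1}({\cal O}_X)\os{V}{\lo}{\cal W}_n({\cal O}_X)\os{R^{n-1}}{\lo}{\cal O}_X\to 0$ and the five lemma, while $(4)\Rightarrow(5)$ and $(2)\Rightarrow(3)$ are trivial specializations. For the de Rham--Witt boundaries in $(11)$ and $(12)$ I would use (\ref{prop:dinvfn}), which identifies $\ker(d\col{\cal W}({\cal O}_X)\to{\cal W}\Om^1_X)$ with $\bigcap_nF^n{\cal W}({\cal O}_X)$, together with the log Illusie sequence (\ref{ali:nmee}) transported by (\ref{coro:locdw}) and the identification (\ref{coro:nif}); these express the boundaries through cokernels of $F$ and so reduce $(11),(12)$ to $(4),(5)$ and $(7)$.

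The heart of the matter, and the step I expect to be the main obstacle, is to close the loop from the ``for some $n$'' conditions $(3),(5)$ back to $(6)$; every implication above only propagates vanishing upward in $n$. Here I would invoke the functorial Fitting decomposition $M=M^{\rm et}\oplus M^{\rm nil}$ of a finite-dimensional $\kap$-vector space with $\sig$-linear $F$ (available because $\kap$ is perfect), on which $F$ is bijective, respectively nilpotent; the functor $M\mapsto M^{\rm nil}$ is exact. Assuming $F$ bijective on $H^q(X,{\cal W}_n({\cal O}_X))$ for all $q$, I apply $(-)^{\rm nil}$ to the two $F$-equivariant sequences $0\to{\cal W}_{n-1}({\cal O}_X)\os{V}{\lo}{\cal W}_n({\cal O}_X)\os{R^{n-1}}{\lo}{\cal O}_X\to 0$ and $0\to{\cal O}_X\os{V^{n-1}}{\lo}{\cal W}_n({\cal O}_X)\os{R}{\lo}{\cal W}_{n-1}({\cal O}_X)\to 0$. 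Since the nilpotent part of the middle term vanishes, the long exact sequences collapse to isomorphisms $H^q(X,{\cal O}_X)^{\rm nil}\simeq H^{q+1}(X,{\cal W}_{n-1}({\cal O}_X))^{\rm nil}$ and $H^q(X,{\cal W}_{n-1}({\cal O}_X))^{\rm nil}\simeq H^{q+1}(X,{\cal O}_X)^{\rm nil}$; composing them gives $H^q(X,{\cal O}_X)^{\rm nil}\simeq H^{q+2}(X,{\cal O}_X)^{\rm nil}$ for all $q$. As these groups vanish for $q>\dim\os{\circ}{X}$, a descending induction forces all of them to vanish, so $F$ is bijective on every $H^q(X,{\cal O}_X)$; this is $(6)$, and groups~A and~B are now mutually equivalent.

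Finally I would handle the \'etale-comparison block and $(7)$. The logarithmic Artin--Schreier--Witt sequence $0\to{\mab Z}/p^n\to{\cal W}_n({\cal O}_X)\os{F-1}{\lo}{\cal W}_n({\cal O}_X)\to 0$ on $\os{\circ}{X}_{\rm et}$, base changed to $\ol{\kap}$, combined with the remark that $F-1$ is bijective on the nilpotent part and surjective (by Lang's theorem) on the \'etale part, yields a canonical identification of $H^q_{\rm et}(X_{\ol{\kap}},{\mab Z}/p^n)$ with the $F$-invariants of $H^q(X_{\ol{\kap}},{\cal W}_n({\cal O}_{X_{\ol{\kap}}}))$; the comparison map in $(9)$ and $(10)$ then has image the $F$-bijective part and is an isomorphism exactly when that part is everything, giving $(6)\Leftrightarrow(8)\Leftrightarrow(9)$ and $(4)\Leftrightarrow(10)$. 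The equivalence $(6)\Leftrightarrow(7)$ I would deduce by passing to the inverse limit over $n$, using the finiteness of the $E_2$-terms of the slope spectral sequence (\ref{theo:e2}) to identify $H^q(X,{\cal W}({\cal O}_X))$ with $\varprojlim_nH^q(X,{\cal W}_n({\cal O}_X))$ and to transfer bijectivity of $F$ between the levels and the limit. Assembling the three groups then delivers the full chain of equivalences.
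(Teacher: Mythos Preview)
Your overall architecture matches the paper's: reduce everything to (\ref{prop:wo}) and the log Serre sequence, then close the loop from the ``for one $n$'' conditions back to $(1)$/$(6)$ by a shift-by-two periodicity argument, and handle the \'etale block via Artin--Schreier--Witt. For the central step $(5)\Rightarrow(6)$ you take a genuinely different route. The paper works on the boundary side: it uses the two filtrations (\ref{ali:fbb}) and $0\to B_1\Om^1_{X/s}\to B_n\Om^1_{X/s}\to B_{n-1}\Om^1_{X'/s}\to 0$ of $B_n\Om^1_{X/s}$ to get $H^q(X,B_1\Om^1_{X/s})\simeq H^{q+2}(X,B_1\Om^1_{X/s})$, hence vanishing. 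You instead work on the Witt side with the Fitting decomposition $M=M^{\rm et}\oplus M^{\rm nil}$, applying the exact functor $(-)^{\rm nil}$ to the two $F$-equivariant sequences for ${\cal W}_n({\cal O}_X)$ to obtain $H^q(X,{\cal O}_X)^{\rm nil}\simeq H^{q+2}(X,{\cal O}_X)^{\rm nil}$. This is correct and arguably cleaner: it avoids the base-change identification $B_{n-1}\Om^1_{X'/s}\simeq B_{n-1}\Om^1_{X/s}$ that the paper needs, and the same Fitting idea then immediately yields your $(6)\Leftrightarrow(8)\Leftrightarrow(9)$ and $(4)\Leftrightarrow(10)$.

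There is, however, a real gap in your treatment of $(11)$ and $(12)$, and relatedly in the return from $(7)$. The sheaves $B{\cal W}_n\Om^1_X$ and $B{\cal W}\Om^1_X$ appearing in $(11)$, $(12)$ are the images of $d\colon{\cal W}_n({\cal O}_X)\to{\cal W}_n\Om^1_X$ and $d\colon{\cal W}({\cal O}_X)\to{\cal W}\Om^1_X$; they are \emph{not} the sheaves $B_n\Om^1_{X/s}$ that the log Illusie sequence (\ref{ali:nmee}) and (\ref{coro:nif}) compute. Those results give ${\cal W}_n({\cal O}_X)/F{\cal W}_{n+1}({\cal O}_X)\simeq B_n\Om^1_{X/s}$, which is condition $(2)$/$(3)$, not $(11)$/$(12)$. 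Your invocation of (\ref{prop:dinvfn}) does identify $\ker(d)$ on ${\cal W}({\cal O}_X)$ with $\bigcap_nF^n{\cal W}({\cal O}_X)$, but passing from bijectivity of $F$ on $H^q(X,{\cal W}({\cal O}_X))$ to vanishing of $H^q(X,B{\cal W}\Om^1_X)$ requires comparing $H^q$ of the sheaf $\bigcap_nF^n{\cal W}({\cal O}_X)$ with $\bigcap_nF^nH^q(X,{\cal W}({\cal O}_X))$, which you have not justified. The paper closes this loop differently: it proves $(7)\Rightarrow(12)$ by quoting the exact sequence from \cite[p.~263]{lodw},
\[
0\to H^q(X,{\cal W}({\cal O}_X))/(F^n+V^n)\to H^q(X,B{\cal W}_n\Om^1_X)\to (V^n)^{-1}F^nH^{q+1}/F^nH^{q+1}\to 0,
\]
and takes $(11)\Rightarrow(1)$ and $(7)\Leftrightarrow(12)$ from \cite[(4.2)]{lodw}. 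Without an analogous input your chain breaks at $(7)$: you have no clean arrow from $(7)$ back into the block $(1)$--$(6)$, and your ``transfer bijectivity between the levels and the limit via finiteness of $E_2$'' does not give $(7)\Rightarrow(6)$ on its own. You should either invoke the Lorenzon sequence as the paper does, or supply a direct argument for $(7)\Rightarrow(4)$ (for instance via the $F$-equivariant sequence $0\to F^n{\cal W}({\cal O}_X)\to{\cal W}({\cal O}_X)\to{\cal W}_n({\cal O}_X)\to 0$ and your Fitting functor).
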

\par 
$(13)$ The natural morphism 
$H^q(X,\Om^1_{X/s,\log})\otimes_{{\mab F}_p}\kap \lo 
H^q(X,\Om^1_{X/s})$ is an isomorphism for any $q\in {\mab N}$. 
\par 
$(14)$ The natural morphism 
$H^q(X,{\cal W}_n\Om^1_{X,\log})\otimes_{{\mab F}_p}\kap \lo 
H^q(X,{\cal W}_n\Om^1_{X})$ is an isomorphism for any $n\in {\mab Z}_{\geq 1}$ 
and any $q\in {\mab N}$. 
\par 
$(15)$ The natural morphism 
$H^q(X_{\ol{\kap}},
{\cal W}\Om^1_{X_{\ol{\kap}},\log})\otimes_{{\mab Z}_p}{\cal W}(\ol{\kap}) \lo 
H^q(X_{\ol{\kap}},{\cal W}\Om^1_{X_{\ol{\kap}}})$ 
is an isomorphism for any $q\in {\mab N}$. 
\begin{proof} 
The implications $(1)\Lo (2)\Lo (3)$,  $(2) \iff (4)$, $(3)\iff (5)$, $(1) \iff (6)$, $(6) \Lo (7)$ 
$(12) \Lo (1)$, $(12)\Lo (11)$, $(14)\Lo (13)$  and $(14)\Lo (15)$ 
immediately follows from (\ref{prop:wo}) or obvious. 
Hence it suffices to prove the implications $(3)\Lo (1)$, $(6)\iff (8)\iff (9)$, 
$(9)\iff (10)$, $(1)\iff (13)\Lo (14)$,  $(15)\Lo (7)\Lo (12)$  and $(11)\Lo (1)$. 
Assume that (3) holds. 
Let $n$ be a positive integer in (3).  
By (\ref{ali:fbb}) we have 
the following exact sequence of abelian groups: 
\begin{align*} 
\lo H^q(X,B_{n-1}\Om^1_{X/s})\lo H^q(X,B_n\Om^1_{X/s})
\lo H^q(X,B_1\Om^1_{X/s})\lo H^{q+1}(X,B_{n-1}\Om^1_{X/s}). 
\tag{6.5.1}\label{ali:qbbq}
\end{align*} 
Hence 
\begin{align*} 
H^q(X,B_1\Om^1_{X/s})=H^{q+1}(X,B_{n-1}\Om^1_{X/s})\quad (\forall q).
\tag{6.5.2}\label{ali:qexqxq}
\end{align*} 
\par 
On the other hand,  we have the following exact sequence 
of abelian sheaves by the definition of $B_n\Om^1_{X/s}$: 
\begin{align*} 
0\lo B_1\Om^1_{X/s}\lo B_n\Om^1_{X/s}
\lo B_{n-1}\Om^1_{X'/s}\lo 0. 
\tag{6.5.3}\label{ali:qexq}
\end{align*} 
Taking the long exact sequence of (\ref{ali:qexq}) and using  
the assumption, we have the following equality 
\begin{align*} 
H^q(X,B_{n-1}\Om^1_{X'/s})=H^{q+1}(X,B_1\Om^1_{X/s})\quad (\forall q).
\tag{6.5.4}\label{ali:qexxq}
\end{align*} 
Here we have identified abelian sheaves on 
$\os{\circ}{X}$ with those on $\os{\circ}{X}{}'$. 
By \cite[(1.13)]{lodw} 
the sheaf $B_m\Om^1_{X/s}$ $(m\in {\mab N})$ is a locally free sheaf of 
${\cal O}_X$-modules of finite rank and 
it commutes with the base changes of $s$. 
Hence $B_m\Om^1_{X'/s}=
\kap \otimes_{\sig,\kap}B_m\Om^1_{X/s}\simeq B_m\Om^1_{X/s}$, 
where $\sig$ is the Frobenius automorphism of $\kap$. 
Hence we have the following equality by (\ref{ali:qexxq}): 
\begin{align*} 
H^q(X,B_{n-1}\Om^1_{X/s})=H^{q+1}(X,B_1\Om^1_{X/s})\quad (\forall q).
\tag{6.5.5}\label{ali:qepxq}
\end{align*} 
By (\ref{ali:qexqxq}) and (\ref{ali:qepxq}) 
we have the following equality: 
\begin{align*} 
H^q(X,B_1\Om^1_{X/s})=H^{q+2}(X,B_1\Om^1_{X/s})\quad (\forall q).
\tag{6.5.6}\label{ali:qebpxq}
\end{align*}
If $q> \dim \os{\circ}{X}$, $H^q(X,B_1\Om^1_{X/s})=0$. 
Hence $H^q(X,B_1\Om^1_{X/s})=0$ for any $q\in {\mab N}$. 
We have proved the implication $(3)\Lo (1)$. 
\par 
By the following exact sequence 
\begin{align*} 
0\lo {\mab F}_p\lo {\mab G}_a \os{1-F}{\lo} {\mab G}_a\lo 0
\end{align*} 
on $(X_{\ol{\kap}})_{\rm et}$ and using the surjectivity of 
the morphism $1-F \col H^q(X_{\ol{\kap}},{\cal O}_{X_{\ol{\kap}}})\lo 
H^q(X_{\ol{\kap}},{\cal O}_{X_{\ol{\kap}}})$ (\cite[II (5.3)]{idw}),  
we have the following exact sequence 
\begin{align*} 
0\lo H^q_{\rm et}(X_{\ol{\kap}},{\mab F}_p)\lo 
H^q(X_{\ol{\kap}},{\cal O}_{X_{\ol{\kap}}}) 
\os{1-F}{\lo} H^q(X_{\ol{\kap}},{\cal O}_{X_{\ol{\kap}}})\lo 0. 
\end{align*} 
The implications $(6)\iff (8)\iff (9)$ are special cases of \cite[(3.3)]{cl} 
and \cite[\S2]{mus}. 
\par 
Since $F\col {\cal W}_{n}({\mab F}_p)\lo {\cal W}_{n}({\mab F}_p)$ 
is the identity of ${\cal W}_{n}({\mab F}_p)$ and $FV=p$, 
we have the following commutative diagram: 
\begin{equation*} 
\begin{CD} 
0@>>>{\mab Z}/p^{n-1}@>{p}>> {\mab Z}/p^{n} @>>>{\mab Z}/p@>>> 0\\
@. @| @| @| @.\\
0@>>>{\cal W}_{n-1}({\mab F}_p)
@>{V}>> {\cal W}_{n}({\mab F}_p)
@>>>{\mab F}_p@>>> 0\\
@. @VVV @VVV @VVV @. \\
0@>>>{\cal W}_{n-1}({\cal O}_{X_{\ol{\kap}}})
@>{V}>> {\cal W}_n({\cal O}_{X_{\ol{\kap}}}) 
@>>>{\cal O}_{X_{\ol{\kap}}}@>>> 0. 
\end{CD} 
\tag{6.5.7}\label{cd:obc}
\end{equation*} 
Since the morphism ${\mab Z}/p^n\lo {\cal W}_n(\ol{\kap})$ 
is flat and since 
$H^q_{\rm et}(X_{\ol{\kap}},{\mab Z}/p^{n-1})
\otimes_{{\mab Z}/p^n}{\cal W}_n(\ol{\kap})
=H^q_{\rm et}(X_{\ol{\kap}},{\mab Z}/p^{n-1})
\otimes_{{\mab Z}/p^{n-1}}{\cal W}_{n-1}(\ol{\kap})$, 
we have the following exact sequence: 
\begin{align*} 
\cdots &\lo H^q_{\rm et}(X_{\ol{\kap}},{\mab Z}/p^{n-1})
\otimes_{{\mab Z}/p^{n-1}}{\cal W}_{n-1}(\ol{\kap})
\lo H^q_{\rm et}(X_{\ol{\kap}},{\mab Z}/p^n)
\otimes_{{\mab Z}/p^n}{\cal W}_n(\ol{\kap})\tag{6.5.8}\label{ali:obc}\\
&\lo H^q_{\rm et}(X_{\ol{\kap}},{\mab F}_p)
\otimes_{{\mab F}_p}\ol{\kap} \lo \cdots.  
\end{align*} 
Now the implication $(9)\Lo (10)$ follows from 
the commutative diagram (\ref{cd:obc}), 
the exact sequence (\ref{ali:obc})  
and the induction on $n$: 
The implication $(10)\Lo (9)$ is obvious. 
\par 
The implication $(7)\Lo (12)$ follows from a special case of the following 
exact sequence proved in \cite[p.~263]{lodw}: 
\begin{align*} 
0&\lo  H^q(X,{\cal W}{\Om}^{i-1}_X)/(F^n+V^n)
H^q(X, {\cal W}{\Om}^{i-1}_X)\os{d}{\lo}
H^q(X,B{\cal W}_n{\Om}^i_X)\\
&\lo (V^n)^{-1}F^n 
H^{q+1}(X, {\cal W}{\Om}^{i-1}_X)/
F^nH^{q+1}(X, {\cal W}{\Om}^{i-1}_X)\lo 0 \quad (i\in {\mab N}). 
\end{align*} 
\par 
The equivalence $(7)\iff (12)$ has been essentially noted in \cite[(4.2)]{lodw}. 
\par 
The equivalence $(1)\iff (13)$ follows from the following exact sequence 
\begin{align*} 
0\lo \Om^i_{X/s,\log}\lo \Om^i_{X/s}\os{1-C^{-1}}{\lo} \Om^i_{X/s}/B\Om^i_{X/s}\lo 0
\end{align*} 
(\cite[p.~262]{lodw}, (cf.~\cite[(6.1.1)]{tsy}, \cite[(4.1)]{nlk3})). 
\par 
The implication $(13)\Lo (14)$ follows from the following exact sequence 
\begin{align*} 
0\lo {\cal W}_m\Om^i_{X,{\log}}\os{{\bf p}^n}{\lo} 
{\cal W}_{m+n}\Om^i_{X,{\log}} \lo {\cal W}_n\Om^i_{X,{\log}}\lo 0,
\end{align*} 
which has been proved in \cite[(2.12)]{lodw}. 
\par 
To prove the implication $(15)\Lo (7)$, we may assume that 
$\kap=\ol{\kap}$. In this case, the implication follows from 
the equality 
$H^q(X,{\cal W}\Om^i_{X,{\log}})=H^q(X,{\cal W}\Om^i_X)^F$ 
(\cite[(3.4.1)]{lodw}). 
\par 
The implication $(11)\Lo (1)$ has been essentially noted in \cite[(4.2)]{lodw}. 
\par 
We have completed the proof of (\ref{prop:woni}). 
\end{proof}

\begin{rema}\label{rema:woe}
I do not know whether the statement with 
the replacement of ``any $n\in {\mab Z}_{\geq 1}$'' in (10) 
by ``a positive integer $n$'' is equivalent to (1). 
\end{rema}

\begin{rema}\label{rema:goc}
As in \cite[Conjecture 1.1]{mus}, one can conjecture the following: 
\par
Let $X$ be a proper smooth scheme over a field of characteristic zero. 
Let ${\cal X}$ be a proper flat model of $X$ 
over a ${\mab Z}$-algebra $A$ of finite type. 
Then there exists a dense set of closed points $T\subset {\rm Spec}(A)$ 
such that ${\cal X}_t/t$ is ordinary at $(0,\star)$ for every $t\in T$. 
\end{rema}

\begin{prop}\label{prop:st}
The following hold$:$
\par 
$(1)$ Let $q$ be a nonnegative integer. 
Assume that $X/s$ is ordinary at $(0,q-1)$. 
Then the submodule of $p$-torsions of $H^q(X,{\cal W}({\cal O}_X))$ 
is equal to that of $V$-torsions of 
$H^q(X,{\cal W}({\cal O}_X))$.  
\par 
$(2)$ Assume that $\os{\circ}{X}$ is $F$-split. 
Then $X/s$ is ordinary at $(0,\star)$. 
\end{prop}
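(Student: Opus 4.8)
The plan is to treat the two parts separately, reducing each to the vanishing of a group of the form $H^{\bullet}(X,B_n\Om^1_{X/s})$ that is governed by ordinarity, so that both statements become consequences of the machinery already set up in \S\ref{sec:rrfr}, \S\ref{sec:rrafr} and \S\ref{sec:fdlr}.

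For (1), set $H^q:=H^q(X,{\cal W}({\cal O}_X))$ and write ${}_pH^q,{}_VH^q,{}_FH^q$ for the kernels of $p$, $V$, $F$ on $H^q$. The inclusion ${}_VH^q\subseteq {}_pH^q$ is immediate from $p=FV$, and the proof of (\ref{coro:vp}) already furnishes an injection $V\col {}_pH^q/{}_VH^q\hookrightarrow {}_FH^q$; hence it suffices to prove ${}_FH^q=0$. First I would use the exact sequence (\ref{ali:fwox}) for $i=0$, namely $0\lo {\cal W}({\cal O}_X)\os{F}{\lo}{\cal W}({\cal O}_X)\lo {\cal W}({\cal O}_X)/F\lo 0$, to obtain a surjection $H^{q-1}(X,{\cal W}({\cal O}_X)/F)\twoheadrightarrow {}_FH^q$. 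Then, exactly as in the proof of (\ref{theo:fw}), I would identify $H^{q-1}(X,{\cal W}({\cal O}_X)/F)=\vpl_n H^{q-1}(X,{\cal W}_n({\cal O}_X)/F{\cal W}_{n+1}({\cal O}_X))$ and, via (\ref{prop:fkervcoker}) (3) with $i=0$ together with (\ref{coro:locdw}), rewrite the $n$-th term as $H^{q-1}(X,B_n\Om^1_{X/s})$. The hypothesis that $X/s$ is ordinary at $(0,q-1)$ gives, by the equivalence $(1)\Leftrightarrow(2)$ in (\ref{prop:wo}) applied in degree $q-1$, the vanishing $H^{q-1}(X,B_n\Om^1_{X/s})=0$ for every $n$, so the inverse limit is $0$ and therefore ${}_FH^q=0$. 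This forces ${}_pH^q={}_VH^q$, which is the assertion.

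For (2), I would start from the log Serre exact sequence (\ref{ali:wnox}) for $n=1$ and $Y=X$, which reads $0\lo {\cal O}_X\os{F}{\lo} F_*({\cal O}_X)\os{d_1}{\lo} B_1\Om^1_{X/s}\lo 0$ since ${\cal W}_1({\cal O}_X)={\cal O}_X$. By definition, $F$-splitness of $\os{\circ}{X}$ means precisely that $F\col {\cal O}_X\lo F_*({\cal O}_X)$ admits an ${\cal O}_X$-linear section, so this sequence splits. Taking cohomology and using that $\os{\circ}{F}$ is finite, whence $H^q(X,F_*({\cal O}_X))\simeq H^q(X,{\cal O}_X)$ with the same $\kap$-dimension (the $\kap$-structure being twisted by the Frobenius $\sig$), the splitting forces $\dim_{\kap}H^q(X,B_1\Om^1_{X/s})=0$ for every $q$, by the very dimension count already used in (\ref{theo:fsb0}) in the case $h=1$. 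Since $B_1\Om^1_{X/s}=F_*(B\Om^1_{X/s})$ and $\os{\circ}{F}$ is a homeomorphism, this is the same as $H^q(X,B\Om^1_{X/s})=0$ for all $q$, i.e.\ $X/s$ is ordinary at $(0,\star)$.

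The only delicate point is the inverse-limit identification in (1), which a priori requires the tower $\{H^{q-1}(X,{\cal W}_n({\cal O}_X)/F{\cal W}_{n+1}({\cal O}_X))\}_n$ to have vanishing $\vpl^1$; here this is automatic, since every term of the tower vanishes once ordinarity is assumed, and the identification of the cohomology of the limit sheaf with the limit of the cohomologies is exactly the one already invoked in the proof of (\ref{theo:fw}). All the remaining steps are the standard transcription of the $F$-, $V$- and $p$-kernel exact sequences on $H^q(X,{\cal W}({\cal O}_X))$, so I expect no further obstacle.
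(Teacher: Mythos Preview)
Your proof is correct and, for part (2), matches the paper's approach exactly: the paper simply points to the argument of \cite[(2.4.1)]{jr} using the log Serre exact sequence (\ref{ali:wnox}) for $n=1$, which is precisely what you do.

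For part (1) your route is a bit more roundabout than the paper's. The paper argues in two lines: it observes directly that ${}_FH^q(X,{\cal W}({\cal O}_X))=\vpl_n{}_FH^q(X,{\cal W}_n({\cal O}_X))$, then invokes the equivalence $(1)\Leftrightarrow(3)$ of (\ref{prop:wo}) in degree $q-1$ (which already contains the vanishing ${}_FH^{q}(X,{\cal W}_n({\cal O}_X))=0$ for all $n$), and finally uses $FV=p$ to conclude ${}_pH^q={}_VH^q$. You instead pass through the surjection $H^{q-1}(X,{\cal W}({\cal O}_X)/F)\twoheadrightarrow {}_FH^q$, identify the source with $\vpl_n H^{q-1}(X,B_n\Om^1_{X/s})$, and appeal to $(1)\Leftrightarrow(2)$ of (\ref{prop:wo}). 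Both arguments land on ${}_FH^q=0$ and are equivalent in spirit; the paper's version simply avoids the detour through the quotient sheaf and the inverse-limit identification you flag at the end. Your final deduction of ${}_pH^q={}_VH^q$ via the injection of (\ref{coro:vp}) is fine, though once ${}_FH^q=0$ the relation $FV=p$ alone gives it immediately.
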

\begin{proof} 
(1): 
Since ${}_FH^q(X,{\cal W}({\cal O}_X))=\vpl_n{}_FH^q(X,{\cal W}_n({\cal O}_X))$, 
${}_FH^q(X,{\cal W}({\cal O}_X))=0$ by (\ref{prop:wo}). 
Since $FV=p$, we immediately obtain (1). 
\par 
(2): The proof of (2) is the same as that of \cite[(2.4.1)]{jr} 
by using the log Serre's exact sequence 
(\ref{ali:wnox}) for the case $n=1$. 
\end{proof}

\par 
Let $q$ be a nonnegative integer. 
Let $H^q_{\rm crys}(X/{\cal W}(s))$ be the log crystalline cohomology of 
$X/{\cal W}(s)$ (\cite{klog1}). 
Next we discuss exotic torsions in $H^q_{\rm crys}(X/{\cal W}(s))$ as in \cite{j}. 
\par 
Let $q$ be a nonnegative integer. 
Set $Q^q:={\rm Im}
(H^q_{\rm crys}(X/{\cal W}(s))_{\rm tor}\lo H^q(X,{\cal W}({\cal O}_X)))$. 
As in \cite[II (6.7.3)]{idw},  
we define the module $H^q_{\rm crys}(X/{\cal W}(s))_e$ 
of exotic torsions in $H^q_{\rm crys}(X/{\cal W}(s))$ as the following quotient 
\begin{align*} 
H^q_{\rm crys}(X/{\cal W}(s))_e:=
Q^q/(H^q(X,{\cal W}({\cal O}_X))_{V_{\rm tor}}\cap Q^q). 
\end{align*} 
(In [loc.~cit.] only the case $q=2$ has been considered.)

The following is a log version of a generalization of \cite[(7.3)]{j}. 

\begin{prop}\label{prop:lvof}
Assume that $X/s$ is ordinary at $(0,q)$.  
Then $H^q_{\rm crys}(X/{\cal W}(s))_e=0$. 
\end{prop}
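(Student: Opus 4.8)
The plan is to unwind the definition of the module of exotic torsion and reduce the assertion to a single inclusion. By definition
$$H^q_{\rm crys}(X/{\cal W}(s))_e=Q^q/(H^q(X,{\cal W}({\cal O}_X))_{V_{\rm tor}}\cap Q^q),$$
so this module vanishes exactly when $Q^q\subseteq H^q(X,{\cal W}({\cal O}_X))_{V_{\rm tor}}$, i.e. when every element of $Q^q$ is annihilated by a power of $V$. Since $Q^q$ is by construction the image of $H^q_{\rm crys}(X/{\cal W}(s))_{\rm tor}$, every element of $Q^q$ is a $p$-power-torsion element of $M:=H^q(X,{\cal W}({\cal O}_X))$. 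Thus the whole statement comes down to showing that, under the ordinarity hypothesis, each $p$-power-torsion element of $Q^q$ is in fact $V$-power-torsion.

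The key algebraic mechanism is the pair of relations $FV=VF=p$ on $M$ together with the commutativity of $F$ and $V$, exactly as exploited in the proof of (\ref{prop:st}) (1). Granting that $F$ acts injectively on $M$ (equivalently ${}_FM=0$), the reduction is immediate: if $x\in Q^q$ satisfies $p^Nx=0$, then $0=p^Nx=F^NV^Nx$, and injectivity of $F^N$ forces $V^Nx=0$, so $x$ is $V$-torsion. Hence it suffices to prove ${}_FM=0$. To pass from finite level to the limit I would use that ${\cal W}_n({\cal O}_X)$ is a coherent ${\cal O}_X$-module and $\os{\circ}{X}$ is proper over $\kap$, so each $H^q(X,{\cal W}_n({\cal O}_X))$ has finite length; consequently the groups ${}_FH^q(X,{\cal W}_n({\cal O}_X))$ form a projective system of finite groups, which is automatically Mittag-Leffler. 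Then from $H^q(X,{\cal W}({\cal O}_X))=\vpl_nH^q(X,{\cal W}_n({\cal O}_X))$ and ${}_FM=\vpl_n{}_FH^q(X,{\cal W}_n({\cal O}_X))$ the vanishing ${}_FM=0$ follows once $F$ is injective on every finite level $H^q(X,{\cal W}_n({\cal O}_X))$.

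The remaining and most delicate point is to extract this finite-level injectivity from ordinarity at $(0,q)$, and this is where I expect the main obstacle to lie. The natural tool is the exact sequence of (\ref{prop:wo}) in cohomological degree $q-1$,
$$0\lo H^{q-1}(X,{\cal W}_n({\cal O}_X))/F\lo H^{q-1}(X,B_n\Om^1_{X/s})\lo {}_FH^{q}(X,{\cal W}_n({\cal O}_X))\lo 0,$$
which shows that ${}_FH^{q}(X,{\cal W}_n({\cal O}_X))=0$ would follow from $H^{q-1}(X,B_n\Om^1_{X/s})=0$, that is, from ordinarity at $(0,q-1)$ rather than at $(0,q)$. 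Reconciling this apparent shift of bidegree is the crux of the proof: the hypothesis at $(0,q)$ directly supplies, via the analogous sequence in degree $q$, the vanishing $H^q(X,{\cal W}_n({\cal O}_X))/F=0$ and ${}_FH^{q+1}(X,{\cal W}_n({\cal O}_X))=0$, and the finer fact that $Q^q$ is the image of the crystalline torsion (not the full $p$-torsion of $M$) is what should couple the exotic part to the degree-$(q+1)$ data controlled by ordinarity at $(0,q)$. I would therefore spend the bulk of the argument identifying $Q^q$ precisely inside $M$—as a submodule of $E_\infty^{0,q}$ of the slope spectral sequence—and showing that the $F$-injectivity of the relevant piece is governed by ${}_FH^{q+1}$; once that matching is made, the computation of the second paragraph closes the argument and yields $H^q_{\rm crys}(X/{\cal W}(s))_e=0$.
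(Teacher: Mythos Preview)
Your first two paragraphs reproduce exactly the paper's argument: the paper's proof is literally the one-line reference ``This follows from (\ref{prop:st}) (1)'', and (\ref{prop:st}) (1) is precisely the statement that ${}_FH^q(X,{\cal W}({\cal O}_X))=0$ (obtained from the finite levels via (\ref{prop:wo})) forces the $p$-torsion and the $V$-torsion of $H^q(X,{\cal W}({\cal O}_X))$ to coincide, whence $Q^q\subseteq H^q(X,{\cal W}({\cal O}_X))_{V_{\rm tor}}$ and the exotic torsion vanishes.

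The degree shift you flag in your third paragraph is genuine, and it is not a defect of your argument but of the statement as printed. Proposition (\ref{prop:st}) (1), which the paper invokes, assumes ordinarity at $(0,q-1)$, and its proof uses exactly the exact sequence you wrote down in degree $q-1$ to get ${}_FH^q(X,{\cal W}_n({\cal O}_X))=0$. So the hypothesis in (\ref{prop:lvof}) should read ``ordinary at $(0,q-1)$''; with that correction the paper's one-line proof and your first two paragraphs are complete. Your final paragraph, attempting to salvage the result from ordinarity at $(0,q)$ by analysing $Q^q$ inside the slope spectral sequence, is therefore unnecessary --- and, as you suspected, ordinarity at $(0,q)$ by itself only yields $H^q/F=0$ and ${}_FH^{q+1}=0$, which does not obviously control the $V$-torsion of $Q^q$. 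Note that in the only application made in the paper (Corollary (\ref{coro:fspv})) the hypothesis is $F$-splitness, which gives ordinarity at $(0,\star)$ for all degrees, so the discrepancy is harmless there.
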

\begin{proof} 
This follows from (\ref{prop:st}) (1). 
\end{proof}


\begin{coro}\label{coro:fspv}
Assume that $\os{\circ}{X}$ is $F$-split.  
Then $H^q_{\rm crys}(X/{\cal W}(s))_e=0$ $(q\in {\mab N})$. 
\end{coro}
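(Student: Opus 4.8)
The plan is to chain together the two results immediately preceding this corollary. First I would invoke Proposition~\ref{prop:st}~(2): since $\os{\circ}{X}$ is $F$-split by hypothesis, that proposition yields that $X/s$ is ordinary at $(0,\star)$. By Definition~\ref{defi:wo}~(2) this means precisely that $H^q(X,B\Om^1_{X/s})=0$ for every $q\in {\mab N}$; equivalently, $X/s$ is ordinary at $(0,q)$ in the sense of Definition~\ref{defi:wo}~(1) for each individual $q\in {\mab N}$.

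Next, for each fixed $q\in {\mab N}$ I would apply Proposition~\ref{prop:lvof}, whose sole hypothesis is ordinarity at $(0,q)$, which has just been verified. Its conclusion is $H^q_{\rm crys}(X/{\cal W}(s))_e=0$. Since $q$ was arbitrary, this holds for all $q\in {\mab N}$, which is exactly the assertion of the corollary.

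I do not expect any genuine obstacle here: the substantive content has already been distributed between Proposition~\ref{prop:st}~(2) (the passage from $F$-splitness to ordinarity) and Proposition~\ref{prop:lvof} (the vanishing of exotic torsion under ordinarity), both of which may be assumed. The corollary is then only a matter of unwinding the notion ``ordinary at $(0,\star)$'' into ``ordinary at $(0,q)$'' for each $q$ and feeding the outcome into the exotic-torsion vanishing. The single point of care is bookkeeping: Proposition~\ref{prop:lvof} is phrased for one degree $q$ at a time, so one must run it once for every $q\in {\mab N}$ rather than expecting a single global invocation.
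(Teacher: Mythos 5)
Your proposal is correct and is exactly the paper's own argument: the paper proves this corollary by citing Proposition~(\ref{prop:st})~(2) and Proposition~(\ref{prop:lvof}), which is precisely the chain you describe, with the only extra content being your (correct) unwinding of ``ordinary at $(0,\star)$'' into ``ordinary at $(0,q)$ for every $q$'' so that (\ref{prop:lvof}) can be applied degree by degree.
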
 
\begin{proof} 
This follows from (\ref{prop:st}) (2) and (\ref{prop:lvof}). 
\end{proof}

The following is a log version of \cite[(2.4.2)]{jr} with slightly weaker assumption. 
Our proof is slightly more immediate than the proof in [loc.~cit.].

\begin{prop}\label{prop:exfsp}
Assume that, $X/s$ is of vertical type, and 
that $\Om^d_{X/s}$ is trivial and 
that $X/s$ is ordinary at $(0,d-1)$. 
Then $\os{\circ}{X}$ is $F$-split.
\end{prop}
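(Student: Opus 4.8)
The plan is to reduce $F$-splitness to the vanishing of a single $\mathrm{Ext}^1$-group and then kill that group by log Serre duality. First I would write down the log Serre exact sequence (\ref{ali:wnox}) of $X/s$ in level $n=1$. Since ${\cal W}_1({\cal O}_X)={\cal O}_X$ and $d_1=d$, it reads
\begin{align*}
0\lo {\cal O}_X\os{F^*}{\lo} F_*({\cal O}_X)\os{d}{\lo} B_1\Om^1_{X/s}\lo 0
\end{align*}
as a sequence of ${\cal O}_X$-modules, where $B_1\Om^1_{X/s}=B\Om^1_{X/s}$ under the usual identification via the homeomorphism $\os{\circ}{F}$. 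By the very definition of $F$-splitness recalled in the Introduction, $\os{\circ}{X}$ is $F$-split precisely when $F^*$ admits an ${\cal O}_X$-linear retraction, i.e.\ when this sequence splits; equivalently, when its extension class in $\mathrm{Ext}^1_{{\cal O}_X}(B\Om^1_{X/s},{\cal O}_X)$ vanishes. So the whole problem becomes the statement $\mathrm{Ext}^1_{{\cal O}_X}(B\Om^1_{X/s},{\cal O}_X)=0$.

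Next I would use the fact, recalled from \cite[(1.13)]{lodw} (and already invoked in the proof of (\ref{prop:woni})), that $B\Om^1_{X/s}$ is a locally free ${\cal O}_X$-module of finite rank. This makes the local $\mathcal{E}xt$-sheaves in positive degree vanish, so the local-to-global $\mathrm{Ext}$ spectral sequence collapses and yields $\mathrm{Ext}^1_{{\cal O}_X}(B\Om^1_{X/s},{\cal O}_X)\cong H^1(X,(B\Om^1_{X/s})^{\vee})$, where $(B\Om^1_{X/s})^{\vee}=\mathcal{H}om_{{\cal O}_X}(B\Om^1_{X/s},{\cal O}_X)$. Then I would apply the log Serre duality of Tsuji (\cite[(2.21)]{tsp}): because $X/s$ is proper, log smooth of Cartier type and of vertical type, duality holds with dualizing sheaf $\Om^d_{X/s}$, and applied to the locally free sheaf $(B\Om^1_{X/s})^{\vee}$ it gives a perfect pairing identifying $H^1(X,(B\Om^1_{X/s})^{\vee})$ with the $\kap$-dual of $H^{d-1}(X,B\Om^1_{X/s}\otimes_{{\cal O}_X}\Om^d_{X/s})$. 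Using the hypothesis $\Om^d_{X/s}\simeq {\cal O}_X$, the latter is $H^{d-1}(X,B\Om^1_{X/s})$, which is zero because $X/s$ is ordinary at $(0,d-1)$ by Definition (\ref{defi:wo}). Hence $\mathrm{Ext}^1_{{\cal O}_X}(B\Om^1_{X/s},{\cal O}_X)=0$, the level-$1$ log Serre sequence splits, and $\os{\circ}{X}$ is $F$-split.

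The main obstacle, and the place where the vertical-type hypothesis genuinely enters, is the correct invocation of log Serre duality: one must verify that Tsuji's duality is available in this logarithmic situation and that the dualizing sheaf is exactly $\Om^d_{X/s}$, so that the triviality of $\Om^d_{X/s}$ converts the $\mathrm{Ext}$-vanishing into the ordinarity statement $H^{d-1}(X,B\Om^1_{X/s})=0$. A secondary point to check carefully is the compatibility of the identifications: that $B\Om^1_{X/s}$ really is coherent and locally free as an ${\cal O}_X$-module through $\os{\circ}{F}$, so that both the collapse of the local-to-global $\mathrm{Ext}$ spectral sequence and Serre duality are legitimately applicable to it. Everything else—reading off the cokernel of $F^*$ as $B\Om^1_{X/s}$ and translating "splitting of an extension" into "vanishing of the extension class"—is formal, which is why this argument is more immediate than the explicit construction in \cite[(2.4.2)]{jr}.
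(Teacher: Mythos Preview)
Your proposal is correct and follows essentially the same approach as the paper: both reduce $F$-splitness to the vanishing of ${\rm Ext}^1_X(B_1\Om^1_{X/s},{\cal O}_X)$, replace ${\cal O}_X$ by $\Om^d_{X/s}$ using its triviality, and then apply Tsuji's log Serre duality together with the ordinarity at $(0,d-1)$ to conclude. The only cosmetic difference is that you pass through the local freeness of $B\Om^1_{X/s}$ and the local-to-global spectral sequence to rewrite ${\rm Ext}^1$ as $H^1$ of the dual sheaf before dualizing, whereas the paper applies Serre duality directly in the form ${\rm Ext}_X^1(B_1\Om^1_{X/s},\Om^d_{X/s})=H^{d-1}(X,B_1\Om^1_{X/s})^*$; this is the same computation.
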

\begin{proof} 
By using the log Serre duality of Tsuji 
(\cite[(2.21)]{tsp}) and using the ordinarity at $(0,d-1)$, 
we have the following equalities: 
\begin{align*} 
{\rm Ext}_X^1(B_1\Om^1_{X/s},{\cal O}_X)=
{\rm Ext}_X^1(B_1\Om^1_{X/s},\Om^d_{X/s})
=H^{d-1}(X,B_1\Om^1_{X/s})^*=0. 
\end{align*} 
Here $*$ means the dual vector space. 
\end{proof}



\par 
If $\dim \os{\circ}{X}\leq 2$, 
we can give explicit examples easily for 
an $F$-split proper degenerate log variety  
by the classification of lower dimensional proper smooth varieties. 

\begin{prop}\label{prop:nel}
Assume that $s$ is the log point of ${\rm Spec}(\kap)$. 
Let $X$ be a proper log Calabi-Yau variety over $s$. 
Assume that $\os{\circ}{X}/\kap$ is not smooth. 
Then the following hold$:$ 
\par 
$(1)$ Assume that $\os{\circ}{X}$ is of pure dimension $1$. 
$($In this case we say that $X/s$ is a {\it log elliptic curve}.$)$ 
Then $\os{\circ}{X}$ is $F$-split. 
\par 
$(2)$ Assume that $\os{\circ}{X}$ is of pure dimension $2$.
$($In {\rm \cite{nlk3}}, in this case, 
we have said that $X/s$ is a log $K3$-surface.$)$ 
If $\os{\circ}{X}$ is of Type II {\rm (\cite[\S3]{nlk3})}, 
then $\os{\circ}{X}$ is $F$-split if and only if 
the isomorphic double elliptic curve of $\os{\circ}{X}$ is ordinary. 
If $\os{\circ}{X}$ is of Type III {\rm ([loc.~cit.])}, 
then $\os{\circ}{X}$ is $F$-split. 
\par
$(3)$ Let the notations be as in $(2)$. 
If $\os{\circ}{X}$ is of Type II {\rm (\cite[\S3]{nlk3})} and if 
the isomorphic double elliptic curve of $\os{\circ}{X}$ is supersingular, 
then $h_F(\os{\circ}{X})=2$.  
\end{prop}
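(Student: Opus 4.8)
The plan is to reduce $h_F(\os{\circ}{X})$ to the height of the formal group of the double elliptic curve $E$ via Witt-vector cohomology. First I would invoke (\ref{theo:cyh}) with $d=2$: a proper log Calabi-Yau variety of Type II is proper log smooth, integral, saturated, of Cartier type and of vertical type, of pure dimension $2$, and the Calabi-Yau hypotheses supply $H^1(X,{\cal O}_X)=0$ (condition (a)) and $\Om^2_{X/s}\simeq {\cal O}_X$ (condition (c)), while (b) is vacuous for $d=2$. Hence $h_F(\os{\circ}{X})=h^2(\os{\circ}{X}/\kap)$, and it remains to prove $h^2(\os{\circ}{X}/\kap)=2$.

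Next I would study the Artin-Mazur formal group $\Phi^2_{\os{\circ}{X}/\kap}$ through its Dieudonn\'e module $H^2(\os{\circ}{X},{\cal W}({\cal O}_{\os{\circ}{X}}))$ (\cite{am}). By the classification of Type II log $K3$-surfaces in \cite{nlk3}, $\os{\circ}{X}=V_0\cup V_1\cup \cdots \cup V_m$ is a chain of smooth surfaces in which $V_0,V_m$ are rational, the $V_i$ $(0<i<m)$ are ruled over $E$, and the double curves $E_j:=V_j\cap V_{j+1}$ $(0\leq j\leq m-1)$ are all isomorphic to the fixed elliptic curve $E$. Applying ${\cal W}(-)$ to the normalization exact sequence and taking cohomology, and using that the $H^0$-terms form a short exact sequence of ${\cal W}$-modules, I would extract
\begin{align*}
\bigoplus_i H^1(V_i,{\cal W}({\cal O}))\lo \bigoplus_j H^1(E_j,{\cal W}({\cal O}))\lo H^2(\os{\circ}{X},{\cal W}({\cal O}))\lo \bigoplus_i H^2(V_i,{\cal W}({\cal O})).
\end{align*}

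Then I would compute the component terms. Since $V_0,V_m$ are rational and the $V_i$ $(0<i<m)$ are ${\mab P}^1$-bundles $\pi\col V_i\lo E$, one has $H^2(V_i,{\cal W}({\cal O}))=0$ for all $i$, while $H^1(V_i,{\cal W}({\cal O}))=0$ for the rational ends and $\pi^*\col H^1(E,{\cal W}({\cal O}))\os{\sim}{\lo} H^1(V_i,{\cal W}({\cal O}))$ for the ruled middles. As each double curve $E_j$ is a section of its adjacent ruled surface, the restriction maps are isomorphisms on $H^1({\cal W}({\cal O}))$. Writing $M:=H^1(E,{\cal W}({\cal O}))$, the arrow above becomes the incidence map $\phi\col \bigoplus_{i=1}^{m-1}M\lo \bigoplus_{j=0}^{m-1}M$ of a path; I would check that $\phi$ is injective (so $H^1(\os{\circ}{X},{\cal W}({\cal O}))=0$, consistent with (a)) and that its cokernel is $M$ via summation. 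All maps are compatible with $F$ and $V$, so $H^2(\os{\circ}{X},{\cal W}({\cal O}))\simeq M=H^1(E,{\cal W}({\cal O}))$ as Dieudonn\'e modules, whence $\Phi^2_{\os{\circ}{X}/\kap}\simeq \Phi^1_{E/\kap}$. For supersingular $E$ this formal group has height $2$, giving $h^2(\os{\circ}{X}/\kap)=2$ and therefore $h_F(\os{\circ}{X})=2$.

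The main obstacle is this second step: checking that ${\cal W}(-)$ applied to the normalization sequence of $\os{\circ}{X}$ yields a short exact sequence of sheaves of ${\cal W}$-modules, computing the cohomology of the components $F$- and $V$-equivariantly, and extracting the precise geometry (rational ends, elliptic-ruled middles, a common double curve) from \cite{nlk3}. An equivalent, perhaps more economical route would avoid the limit in $n$: the same Mayer-Vietoris argument at finite level gives an $F$-equivariant isomorphism $H^2(\os{\circ}{X},{\cal W}_n({\cal O}))\simeq H^1(E,{\cal W}_n({\cal O}))$, hence $n^2(\os{\circ}{X})=n^1(E)$, and (\ref{theo:nex}) applied to $\os{\circ}{X}$ at $q=2$ and to $E$ at $q=1$ yields $h^2(\os{\circ}{X}/\kap)=h^1(E/\kap)=2$ for supersingular $E$.
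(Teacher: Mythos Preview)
Your proposal for part (3) is correct and follows essentially the same route as the paper: both invoke (\ref{theo:cyh}) to reduce to computing $h^2(\os{\circ}{X}/\kap)$, then identify $H^2(\os{\circ}{X},{\cal W}({\cal O}_{\os{\circ}{X}}))$ with $H^1(E,{\cal W}({\cal O}_E))$ via the Witt-vector normalization/component sequence, and conclude $h^2=2$ for supersingular $E$. The paper's write-up is terser: it packages the component computation as the spectral sequence (\ref{ali:xag}) obtained from the exact sequence (\ref{ali:csfp}) and cites \cite[Theorem 1]{rs} for its exactness, which addresses precisely the ``main obstacle'' you flagged about whether ${\cal W}(-)$ applied to the normalization sequence stays exact; so your concern is already handled in the literature rather than needing a fresh argument.
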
 
\begin{proof} 
Set $d:=\dim \os{\circ}{X}$. 
By (\ref{theo:cyh}), $\os{\circ}{X}$ is $F$-split if and only if 
$h^d(\os{\circ}{X}/\kap)=1$. 
Let $\os{\circ}{X}{}^{(i)}$ $(i\in {\mab Z}_{\geq 0})$ 
be the disjoint union of the $(i+1)$-fold 
intersections of the different irreducible components of 
$\os{\circ}{X}$.  
Then, by \cite[Theorem 1]{rs}, we have the following spectral sequence 
\begin{align*} 
E^{ij}=H^j(\os{\circ}{X}{}^{(i)},{\cal W}({\cal O}_{\os{\circ}{X}{}^{(i)}}))
\Lo H^{i+j}(X,{\cal W}({\cal O}_X))
\tag{6.12.1}\label{ali:xag}
\end{align*}
obtained by the following exact sequence 
\begin{align*} 
0\lo {\cal W}({\cal O}_X)\lo {\cal W}({\cal O}_{\os{\circ}{X}{}^{(0)}})\lo 
{\cal W}({\cal O}_{\os{\circ}{X}{}^{(1)}})\lo \cdots.  
\tag{6.12.2}\label{ali:csfp}
\end{align*}
Let $D(\Phi^q_{\os{\circ}{X}/\kap})$ $(q\in {\mab N}_{\geq 1})$ be 
the Dieudonn\'{e} module of $\Phi^q_{\os{\circ}{X}/\kap}$. 
Then $D(\Phi^q_{\os{\circ}{X}/\kap})=H^q(X,{\cal W}({\cal O}_X))$ 
(\cite{am}). 
\par 
(1): By the easier proof than that of \cite[(3.4)]{nlk3}, 
it is easy to see that $\os{\circ}{X}$ is an $n$-gon $(n\geq 2)$. 
By (\ref{ali:xag}) 
we easily see that 
\begin{align*}D(\Phi^1_{\os{\circ}{X}/\kap})
&=H^1(X,{\cal W}({\cal O}_X))
={\rm Coker}(H^0(\os{\circ}{X}{}^{(0)},{\cal W}({\cal O}_{\os{\circ}{X}{}^{(0)}}))\lo 
H^0(\os{\circ}{X}{}^{(1)},{\cal W}({\cal O}_{\os{\circ}{X}{}^{(1)}})))\\
&={\cal W}.
\end{align*}  
Hence $h^1(\os{\circ}{X}/\kap)=1$. 
By (\ref{theo:cyh}) we obtain (1).  
\par 
(2): By the criterion of \cite[(5.4)]{ndw} and (\ref{theo:cyh}), we obtain (2).  
\par 
(3) Let $E$ be the double elliptic curve over $\kap$. 
By (\ref{ali:xag}) 
we easily see that 
$$D(\Phi^2_{\os{\circ}{X}/\kap})=H^1(X,{\cal W}({\cal O}_X))
=H^1(E,{\cal W}({\cal O}_E)).$$ 
Hence $h^2(\os{\circ}{X}/\kap)=2$. 
By (\ref{theo:cyh}) we obtain (3).  
\end{proof}

\begin{rema}\label{rema:hhf}
I do not know whether 
if $Y/\kap$ is only a combinatorial $K3$-surface of Type II or III, 
then the conclusions of (\ref{prop:nel}) hold. 
\end{rema}

\bigskip
\bigskip
\parno
Yukiyoshi Nakkajima 
\parno
Department of Mathematics,
Tokyo Denki University,
5 Asahi-cho Senju Adachi-ku,
Tokyo 120--8551, Japan. 
\parno
{\it E-mail address\/}: 
nakayuki@cck.dendai.ac.jp


\begin{thebibliography} {EGA IV-3}


\bibitem[AM]{am}Artin, M., Mazur, B. 
\newblock{\em Formal groups arising from algebraic varieties}. 
\newblock Ann.~Scient.~\'{E}c.~Norm.~Sup.~$4^e$ 
s\'{e}rie 10 (1977), 87--131.

\bibitem[B]{bl}Bloch, S.
\newblock{\em Algebraic $K$-theory and crystalline cohomology}. 
\newblock IHES Publ.~Math.~47 (1977), 187--268. 

\bibitem[BK]{blk}Bloch, S., Kato, K.
\newblock{\em p-adic \'{e}tale cohomology}. 
\newblock IHES Publ.~Math.~63 (1986), 107--152. 

\bibitem[BM]{bm}Bloch, S., Murre, J.~P.
\newblock{\em On the Chow group of certain types of Fano threefolds}. 
\newblock Compositio Math.~39 (1979), 47--105

\bibitem[CL]{cl}Chambert-Loir, A. 
\newblock{\em Cohomologie cristalline: un survol}. 
\newblock Expo.~Math.~16 (1998), 333--382.

\bibitem[DI]{di}Deligne, P., L. Illusie. 
\newblock{\em Rel\`{e}vements modulo $p^2$ et d\'{e}composition 
du complexe de de Rham}. 
\newblock Invent.~Math.~89 (1987), 247--270.


\bibitem[E]{ekd}Ekedahl, T.   
\newblock{\em Diagonal complexes and $F$-gauge structures}. 
\newblock  Travaux en Cours. Hermann, Paris (1986).

\bibitem[GS]{gs}Gros, M., Suwa, N.
\newblock{\em Application d'Abels-Jacobi $p$-adique et cycles alg\'{e}briques}.
\newblock Duke Math.~J.~57 (1988), 579--613.

\bibitem[H]{hp}Hyodo, O.
\newblock{\em A note on $p$-adic \'{e}tale cohomology 
in the semi-stable reduction case}.
\newblock Invent.~Math.~91 (1988), 543--558. 



\bibitem[HK]{hk}Hyodo, O., Kato, K. 
\newblock{\em Semi-stable reduction and 
crystalline cohomology with logarithmic poles}. 
\newblock In:  P\'eriodes $p$-adiques, 
Seminaire de Bures, 1988, Ast\'{e}risque 223, Soc.~Math.~de France 
(1994),  221--268. 

\bibitem[I1]{idw}Illusie, L.                                            
\newblock{\em Complexe de de Rham-Witt et cohomologie cristalline}.
\newblock  Ann.~Scient.~\'{E}c.~Norm.~Sup.~$4^e$ 
s\'{e}rie 12 (1979), 501--661.





\bibitem[IR]{ir}Illusie, L., Raynaud, M.
\newblock{\em Les suites spectrales 
 associ\'ees au complexe de de Rham-Witt}.  
\newblock  IHES Publ.~Math.~57 (1983), 73--212. 

\bibitem[J]{j}Joshi, K. 
\newblock{\em Exotic torsion, Frobenius splitting and the slope spectral sequence}. 
\newblock Canad.~Math.~Bull.~50 (2007), 567--578.

\bibitem[JR]{jr}Joshi, K., Rajan, C.~S. 
\newblock{\em Frobenius splitting and ordinarity}. 
\newblock Int.~Math.~Res.~Not. (2003), 109--121. 


\bibitem[K1]{konp}Kato, K.
\newblock{\em On $p$-adic vanishing cycles 
$($Application of ideas of Fontaine-Messing$)$}.
\newblock Adv.~Stud.~Pure Math.~10, 
Mathematical Society of Japan (1987), 207--251


\bibitem[K2]{klog1}Kato, K.
\newblock{\em Logarithmic structures of Fontaine-Illusie}.
\newblock In: Algebraic analysis, 
geometry, and number theory, 
Johns Hopkins Univ. Press (1989), 191--224.

\bibitem[K3]{klog2}Kato, K.
\newblock{\em Toric singularities}.
\newblock Amer.~J.~Math.~116 (1994),  1073--1099.


\bibitem[L]{lodw}Lorenzon, P.
\newblock{\em  Logarithmic Hodge-Witt forms and Hyodo-Kato cohomology}. 
\newblock  J.~Algebra 249 (2002), 247--265.


\bibitem[M]{msemi}Mokrane, A.
\newblock{\em La suite spectrale 
des poids en cohomologie de Hyodo-Kato}.
\newblock Duke Math.~J.~72 (1993), 301--336. 

\bibitem[MR]{mr}Mehta, V.~B., Ramanathan, A. 
\newblock{\em Frobenius splitting and cohomology vanishing for Schubert varieties}. 
\newblock Ann.~Math.~122 (1985),  27--40.

\bibitem[MS]{mus}Mustata, M., Srinivas, V.
\newblock{\em Ordinary varieties and the comparison between 
multiplier ideals and test Ideals}. 
\newblock Nagoya Math.~J. 204 (2011), 125--157. 

\bibitem[N1]{nlk3}Nakkajima, Y.
\newblock{\em Liftings of log $K3$ surfaces and classical log Enriques 
surfaces in mixed characteristics}.
\newblock J.~Algebraic Geometry 9 (2000), 355--393.

\bibitem[N2]{ndw}Nakkajima, Y. 
\newblock{\em $p$-adic weight spectral sequences of log varieties}.
\newblock  J.~Math.~Sci.~Univ.~Tokyo 12 (2005), 513--661. 

\bibitem[N3]{nb}Nakkajima, Y.
\newblock{\em Limits of weight filtrations and limits of slope filtrations 
on infinitesimal cohomologies in mixed characteristics I}. 
\newblock Preprint: available from http://arxiv.org/abs/1902.00182. 


\bibitem[Ny1]{nyc}Nygaard, N.~O. 
\newblock{\em Closedness of regular 1-forms on algebraic surfaces}. 
\newblock Ann.~Scient.~\'{E}c.~Norm.~Sup.~$4^e$ 
s\'{e}rie 12 (1979), 33--45.

\bibitem[Ny2]{nyt}Nygaard, N.~O.                                            
\newblock{\em 
The Tate conjecture for ordinary $K3$ surfaces over finite fields}.
\newblock  Invent.~Math.~74 (1983),  213--237.


\bibitem[NY]{ny}Nakkajima, Y.,  Yobuko, F. 
\newblock{\em Degenerations of log Hodge de Rham spectral sequences 
and log Kodaira vanishing theorem in characteristic $p>0$,  
and log weak Lefschetz conjecture for log crystalline cohomologies}. 
\newblock Preprint: available from https://arxiv.org/pdf/1902.09110.pdf. 

\bibitem[RS]{rs}Rudakov, A., Shafarevich, I.                                            
\newblock{\em Inseparable morphism of algebraic surfaces}.
\newblock  Izv.~Akad.~Nauk.~SSSR Ser.~Mat.~40 (1976), 1205--1237.


\bibitem[Se]{smxco}Serre, J.-P. 
\newblock{\em Sur la topologie des vari\'{e}t\'{e}s 
alg\'{e}briques en caract\'{e}ristique $p$}. 
\newblock Symposium internacional de topologie algebraica,  
Mexico City (1958), 24--53.

\bibitem[SGA 5]{sga5-2}Grothendieck, A.~et al.
\newblock{\em Cohomologie $l$-adique et fonctions $L$}. 
\newblock Lecture Notes in Math.~589, 
Springer-Verlag (1977). 

\bibitem[St1]{stjft}Stienstra, J. 
\newblock{\em Deformation of the second Chow group}. 
\newblock Thesis, Utrecht Univ.~(1978). 

\bibitem[St2]{stjf}Stienstra, J. 
\newblock{\em 
The formal completion of the second Chow group, a K-theoretic approach}. 
\newblock In: Journ\'{e}es de g\'{e}om\'{e}trie 
alg\'{e}brique de Rennes - (Juillet 1978) (II): 
Groupes formels, repr\'{e}sentations galoisiennes 
et cohomologie des vari\'{e}t\'{e}s 
de caract\'{e}ristique positive, Ast\'{e}risque 64 (1979), 
149--168




\bibitem[ST]{st}Takagi, S., Srinivas, V.
\newblock{\em 
Nilpotence of Frobenius action and the Hodge filtration on local cohomology}.  
\newblock Advances in Mathematics 305 (2017), 456--478. 

\bibitem[Su]{suwa}Suwa, N.
\newblock{\em Sur l'image de l'application d'Abel-Jacobi de Bloch}. 
\newblock Bull.~Soc.~Math.~France 116 (1988),  page 69--101. 


\bibitem[Ts1]{tsy}Tsuji, T.
\newblock{\em Syntomic complexes and p-adic vanishing cycles}. 
\newblock J.~reine angew.~Math.~472 (1996), 69--138.

\bibitem[Ts2]{tsp}Tsuji, T.
\newblock{\em Poincar\'{e} duality for 
logarithmic crystalline cohomology}. 
\newblock Compositio Math.~118 (1999), 11--41. 

\bibitem[vGK]{vgk}Van der Geer, G, Katsura, T.
\newblock{\em On a stratification of the moduli of K3 surfaces}. 
\newblock J.~Eur.~Math.~Soc. 2 (2000), 259--290. 

\bibitem[vGK2]{vgkht}Van der Geer, G, Katsura, T.
\newblock{\em On the Height of Calabi-Yau Varieties in Positive Characteristic}. 
\newblock Documenta Math.~97 (2003). 

\bibitem[Y1]{y}Yobuko, F. 
\newblock{\em Quasi-Frobenius splitting and lifting of 
Calabi-Yau varieties in characteristic $p$}. 
\newblock To appear in Math.~Zeitschrift. 

\bibitem[Y2]{yoh}Yobuko, F. 
\newblock{\em On the Frobenius-splitting height of varieties in positive characteristic}. 
\newblock To appear in RIMS 
K$\hat{{\rm o}}$ky$\hat{{\rm u}}$roku Bessatsu 2018.

\end{thebibliography}
\end{document}